\newtheorem{theorem}{Theorem}
\newtheorem{corollary}[theorem]{Corollary}
\newtheorem{definition}[theorem]{Definition}
\newtheorem{example}[theorem]{Example}
\newtheorem{lemma}[theorem]{Lemma}
\newtheorem{proposition}[theorem]{Proposition}
\newtheorem{remark}[theorem]{Remark}
\begin{document}
\title{The eta invariant on two-step nilmanifolds}
\date{\today}
\author{Ruth Gornet}
\author{Ken Richardson}
\address{Department of Mathematics\\
University of Texas at Arlington\\
Arlington, Texas 76019-0408}
\email{rgornet@uta.edu}
\address{Department of Mathematics\\
Texas Christian University\\
Box 298900 \\
Fort Worth, Texas 76129}
\email{k.richardson@tcu.edu}
\subjclass[2010]{Primary 58J28,22E25; Secondary 53C27, 35P20}
\keywords{eta invariant, nilmanifold, spectrum, Dirac operator}
\date{revised April, 2024}

\begin{abstract}
The eta invariant appears regularly in index theorems but is known to be
directly computable from the spectrum only in certain examples of locally
symmetric spaces of compact type. In this work, we derive some general
formulas useful for calculating the eta invariant on closed manifolds.
Specifically, we study the eta invariant on nilmanifolds by decomposing the
spin Dirac operator using Kirillov theory. In particular, for general
Heisenberg three-manifolds, the spectrum of the Dirac operator and the eta
invariant are computed in terms of the metric, lattice, and spin structure
data. There are continuous families of geometrically, spectrally different
Heisenberg three-manifolds whose Dirac operators have constant eta
invariant. In the appendix, some needed results of L. Richardson and C. C.
Moore are extended from spaces of functions to spaces of spinors.
\end{abstract}

\maketitle

\setcounter{tocdepth}{1}
\tableofcontents

\section{Introduction}

The eta invariant was introduced in the famous paper of M. F. Atiyah, V. K.
Patodi, and I. M. Singer (see \cite{APS1}), in order to produce an index
theorem for manifolds with boundary. The eta invariant of a linear
self-adjoint operator is roughly the difference between the number of
positive eigenvalues and the number of negative eigenvalues, which of course
is undefined when these numbers are both infinite. However, this quantity
may be regularized to make it well-defined for classical pseudodifferential
operators, using methods similar to the zeta-function regularization of the
determinant of the Laplacian and methods used by physicists to regularize
divergent integrals. The eta function is analogous to Dirichlet $L$%
-functions in the same way that the zeta function of elliptic operators is
analogous to the Riemann zeta function.

Let $D:C^{\infty }\left( E\right) \rightarrow C^{\infty }\left( E\right) $
be an essentially self-adjoint elliptic classical pseudodifferential
operator of order $d$ on sections of a vector bundle $E\rightarrow M$, where 
$M$ is a closed (compact, without boundary) Riemannian manifold of dimension 
$n$. Let $\left\{ \lambda \right\} $ be the collection of eigenvalues with
multiplicity. The eta function is defined as 
\begin{equation*}
\eta \left( s\right) =\sum_{\lambda \neq 0}\mathrm{sgn}\left( \lambda
\right) \left\vert \lambda \right\vert ^{-s}.
\end{equation*}%
This reduces to the zeta function if $D$ has only nonnegative eigenvalues.
The eta function is holomorphic in $s$ for large $\mathrm{Re}\left( s\right) 
$ and can be analytically continued to a meromorphic function using heat
kernel techniques. It is true but not obvious that $\eta \left( s\right) $
is regular at $s=0$, and $\eta \left( 0\right) $ is always real; the eta
invariant is defined as $\eta \left( 0\right) $. See \cite{APS1}, \cite{APS2}%
, \cite{Gilk} for general information about the eta invariant.

The eta function and its generalizations have been studied and utilized in
index theorems for noncompact manifolds and for families of operators and in
gluing formulas. The sign of the eta invariant of the boundary signature
operator on a $4$-manifold with boundary has important geometric content; in
the case of a ball, it determines whether the conformal class of the
boundary metric contains a metric induced from a self-dual Einstein metric
on the interior (see \cite{Hi}). In physics, the eta invariant of the spin
Dirac operators has practical importance, for example in the regularization
of Feynman path integrals (see \cite{W1}). Recently, in the work of J. Br%
\"{u}ning, F. W. Kamber, and K. Richardson, the eta invariant is utilized in
a new equivariant index formula for $G$-manifolds and an index formula for
Riemannian foliations (see \cite{BKR1}, \cite{BKR2}, \cite{BKR3}).

It is very difficult to calculate the eta invariant for a given operator
such as a Dirac operator on a Riemannian manifold; much work has been done
to calculate this invariant for space forms, lens spaces and flat tori (see,
for example, \cite{Do}, \cite{Gi}, \cite{BoGi}). More recently, S. Goette
has calculated formulas for the eta invariant and equivariant eta invariants
on homogeneous spaces of the form $G\diagup H$ with $G$ compact (see \cite%
{Go}). In \cite{ADS}, M. Atiyah, H. Donnelly and I. Singer computed the eta
invariant of the boundary signature operator of a framed solvmanifold in
terms of the signature defect of a manifold whose boundary is that
solvmanifold. In \cite{DenSing}, C. Deninger and W. Singhof computed eta
invariants of modified versions of Dirac operators on Heisenberg manifolds
and were able to compute the eta invariants up to local correction terms. In 
\cite{LMP}, P. Loya, S. Moroianu and J. Park studied the spectrum of the
Dirac operator on a certain three-dimensional circle bundle over a
noncompact Riemann surface with cusps, that is, a noncompact manifold that
is a cofinite quotient of $PSL\left( 2,\mathbb{R}\right) $. They also study
the adiabatic limit of the eta invariant as the fibers are collapsed. The
first explicit computations of eigenvalues of Dirac operators on homogeneous
spaces corresponding to noncompact Lie groups has been done by B. Ammann and
C. B\"{a}r (see \cite{Am-Ba}, \cite{Ba}), where the eigenvalues of the spin
Dirac operator on certain (rectangular) Heisenberg manifolds were computed
explicitly. In \cite{MiaPod}, R. Miatello and R. Podest\'{a} compute the eta
invariant on compact flat spin manifolds with cyclic holonomy of odd prime
order (see also \cite{MiaPod2} for related work). While different techniques
are employed, the Miatello-Podest\'{a} result has a similar flavor to our
main result, in that the final statement relies on metric data, spin
structure data, lattice data and prominently exploits group actions.

A Riemannian nilmanifold is a closed manifold of the form $\left( \Gamma
\diagdown G,g\right) $ where $G$ is a simply connected nilpotent Lie group, $%
\Gamma $ is a cocompact (i.e., $\Gamma \diagdown G$ is compact) discrete
subgroup of $G$, and $g$ is a left-invariant metric on $G$, which descends
to a Riemannian metric on $\Gamma \diagdown G$ that is also denoted by $g$.
A Heisenberg manifold is a two-step Riemannian nilmanifold whose covering
Lie group $G$ is one of the $(2n+1)$-dimensional Heisenberg Lie groups (see,
for example, \cite{GW1}). The study of nilmanifolds and nilpotent Lie groups
has long been relevant to inverse spectral problems (see \cite{GGt} for a
survey). Nilmanifolds play an important role in the study of Dirac
eigenvalues, as was shown in a paper of Ammann and C. Sprouse (see \cite%
{AmSp}). They show that if a Riemannian spin manifold with bounded sectional
curvature and finite diameter has scalar curvature bounded from below by a
sufficiently small negative number and if the smallest Dirac eigenvalue $%
\lambda $ is sufficiently close to zero, then the manifold is diffeomorphic
to a nilmanifold.

In this paper, we prove several results concerning the computation of the
eta invariant on closed manifolds. In Section \ref{etaZetaPerturbedSection},
we discuss the interesting relationships between the zeta and eta functions
of operators, which can be derived from \cite[Proposition 2.10]{APS3}. The
main point is Proposition \ref{etaZetaProposition}, the formula $\frac{%
\partial }{\partial c}\eta _{c}\left( s\right) =-s\zeta _{\left( D+c\right)
^{2}}\left( \frac{s+1}{2}\right) $, where $\eta _{c}$ is the eta function
corresponding to the operator $D+c=D+c\mathbf{1}$, where $c$ is a real
number, and where $\zeta _{\left( D+c\right) ^{2}}$ is the zeta function
corresponding to the operator $\left( D+c\right) ^{2}$. From this we see
that changes in the eta invariant of an elliptic first order operator on a
closed, odd-dimensional manifold is related to a particular residue of a
pole of the zeta function corresponding to the second order operator $\left(
D+c\right) ^{2}$. This residue is, up to a constant, a coefficient in the
asymptotic expansion of the trace of the heat operator $\exp \left( -t\left(
D+c\right) ^{2}\right) $. In Section \ref{HeatKernelAsymptoticsSection},
this coefficient is computed explicitly as a function of $c$.

Using these general results about $\frac{\partial }{\partial c}\eta
_{c}\left( 0\right) $, if $\eta _{c}\left( 0\right) $ is known at a single
value of $c$, the heat kernel asymptotic formula and knowledge of small
eigenvalues determine $\eta _{0}\left( 0\right) $, the eta invariant of $D$.
In Theorem \ref{etaArbitraryDimensionTheorem}, we prove a general formula
for the eta invariant of a Dirac-type operator on a closed manifold in the
case that the spectrum of the operator is symmetric about a
certain real number $\overline{\lambda }$, or when
$\eta _{-\overline{\lambda }}\left( 0\right) $ is known. We deduce from this formula a more specific formula
for Dirac-type operators on three-manifolds in Section \ref%
{zetaEtaThreeMflds}, which calculates the eta invariant in terms of the
volume, the total scalar curvature, the total trace of the twisting
curvature, and small eigenvalues of the Dirac-type operator (notation
defined in that section):%
\begin{eqnarray*}
\eta \left( 0\right) -\eta _{-\overline{\lambda }}\left( 0\right) &=&-\frac{%
\widehat{n}\overline{\lambda }^{3}}{6\pi ^{2}}\mathrm{vol}\left( M\right) +%
\frac{\overline{\lambda }}{4\pi ^{2}}\left( \frac{\widehat{n}}{12}\int_{M}%
\mathrm{Scal}+\int_{M}Tr\left( F^{W}\right) \right) \\
&&+\mathrm{sgn}\left( \overline{\lambda }\right) \left( 2\#\left( \sigma
\left( D\right) \cap \left( 0,\overline{\lambda }\right) \right) +\#\left(
\sigma \left( D\right) \cap \left\{ 0,\overline{\lambda }\right\} \right)
\right) .
\end{eqnarray*}

Using Kirillov theory, the spin Dirac operator on two-step nilmanifolds is
decomposed explicitly in terms of irreducible subspaces of the right
quasi-regular representation in Section \ref{DiracOp2StepSection}. To that
end, occurrence and multiplicity conditions for Dirac eigenspinors are
developed in Section \ref{AnaloguePescesThmSpinorsSection} in analogy to
Pesce's known work \cite{Pe} concerning the Laplacian. It is here that we
utilize analogues of the work of C.C. Moore \cite{Moo}\ and L. Richardson 
\cite{Ri}, developed in the appendix, Section \ref%
{MooreRichardsonPapersSection}. Explicit formulas for the Dirac operator are
computed in terms of a special basis of spinors for each invariant subspace.

For general Heisenberg three-manifolds, the spectrum of the spin Dirac
operator and the eta invariant are computed in terms of the metric, the
lattice and spin structure in Section \ref{EtaInvt3DHeisenSection}. The
formula for the eta invariant has the form%
\begin{equation*}
\eta \left( 0\right) =\frac{r^{2}m_{v}}{96\pi ^{2}A^{2}}+\frac{m_{v}}{24}%
\left( 3\varepsilon _{3}+1\right) -N\left( A,r,w_{2},m_{v},m_{w},\varepsilon
\right) ,
\end{equation*}%
where $N\left( A,r,w_{2},m_{v},m_{w},\varepsilon \right) $ is a nonnegative
integer specified in terms of $A;r,w_{2},m_{v},m_{w};\varepsilon $, the
metric, lattice, and spin structure data. In this section, we exhibit
continuous families of geometrically, spectrally different Heisenberg
three-manifolds whose spin Dirac operators have constant eta invariant.
Computations for a general Heisenberg nilmanifold are done in Section \ref%
{DiracEigenvaluesGenHeisenbergSection}; in particular, we show how to
calculate the Dirac spectrum for any example. We explore symmetries of the
Dirac spectrum in higher-dimensional Heisenberg manifolds in Section \ref%
{VanishingEtaSection}. In Section \ref{nonHeisenbergExampleSection}, we
compute the Dirac operator of a particular five-dimensional non-Heisenberg
nilmanifold, and we show that the techniques used in previous sections do
not yield explicit formulas for the eigenvalues in this case.

The authors would like to thank the referees for a very thorough reading of
the original manuscript. We also thank Michael Stone \cite{Stone} for
pointing out a mistake in the calculation of Dirac eigenvalues of Heisenberg
three-manifolds in a previous version.

\section{The eta invariant}

\subsection{Eta and zeta functions of perturbed operators\label%
{etaZetaPerturbedSection}}

\vspace{0in}In this section, we exhibit some general results relating
families of eta and zeta functions that may be well-known to experts. In
particular, Proposition 2.10 in \cite{APS3} relates the derivative of the
eta invariant of a family of operators to a trace that can be identified
with the zeta function in our particular application. Also, in \cite[Lemma
2.1]{BransGi} and in \cite[Lemma 9]{BransGi2}, the researchers use the same
idea to relate the residues at the poles of the eta function to the
asymptotics of a heat kernel. For the sake of exposition and completeness,
we include the proofs of very specific results that have not been previously
stated in this form, which will be needed in later sections.

In this section and throughout the paper, we will often use the notation $%
\left( D+c\right) $ for an operator, where $D$ is an operator and $c$ is a
scalar, and we regard $c$ in this expression as $c$ times the identity. We
also use the notation $\sigma \left( D\right) $ to denote the spectrum of $D$%
, with multiplicities.

\begin{proposition}
\label{etaZetaProposition}Let $D$ be any self-adjoint operator for which $%
\eta \left( s\right) $ is defined and analytic at $s=0$. Suppose in addition
that there exists an interval $I\subset \mathbb{R}$ and a constant $B>0$
such that for all $c\in I$,

\begin{enumerate}
\item $\sum_{\lambda }\mathrm{sgn}\left( \lambda +c\right) \left\vert
\lambda +c\right\vert ^{-s}$ and $\sum_{\lambda }\left( \left( \lambda
+c\right) ^{2}\right) ^{-\frac{s+1}{2}}$ converge absolutely for $\mathrm{Re}%
\left( s\right) >B$, and

\item $-c$ is not an eigenvalue of $D$.
\end{enumerate}

Then the eta function $\eta _{c}\left( s\right) $ corresponding to the
operator $D+c$ satisfies, on its domain,%
\begin{equation*}
\frac{d}{dc}\eta _{c}\left( s\right) =-s\zeta _{\left( D+c\right)
^{2}}\left( \frac{s+1}{2}\right) ,
\end{equation*}%
where $\zeta _{\left( D+c\right) ^{2}}$ is the zeta function corresponding
to the nonnegative operator $\left( D+c\right) ^{2}$, that is 
\begin{equation*}
\zeta _{\left( D+c\right) ^{2}}\left( s\right) =\sum_{\mu >0}\mu ^{-s},
\end{equation*}%
where the sum is over all positive eigenvalues with multiplicity $\left\{
\mu \right\} $ of the operator $\left( D+c\right) ^{2}$. In particular, if $%
D $ is a first-order, elliptic, essentially self-adjoint differential
operator, then $\frac{d}{dc}\eta _{c}\left( 0\right) $ is the residue of the
simple pole of the meromorphic function $\zeta _{\left( D+c\right)
^{2}}\left( \frac{s+1}{2}\right) $ at $s=0$. (If $\zeta _{\left( D+c\right)
^{2}}\left( \frac{s+1}{2}\right) $ is regular at $s=0$, then $\frac{d}{dc}%
\eta _{c}\left( 0\right) =0$.)
\end{proposition}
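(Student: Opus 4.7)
The plan is to verify the identity term by term and then justify the interchange of differentiation with the infinite sum. The key algebraic observation is that for $\mu = \lambda+c \neq 0$ one can write
\[
\mathrm{sgn}(\mu)\,|\mu|^{-s} = \mu\,(\mu^{2})^{-(s+1)/2},
\]
which is $C^{\infty}$ in $c$ whenever $\mu \neq 0$. Differentiating in $c$ via the product and chain rules gives
\[
\frac{d}{dc}\bigl[\mu\,(\mu^{2})^{-(s+1)/2}\bigr]
= (\mu^{2})^{-(s+1)/2} - (s+1)\mu^{2}(\mu^{2})^{-(s+3)/2}
= -s\,(\mu^{2})^{-(s+1)/2}.
\]
Summing formally over $\lambda \in \sigma(D)$ yields precisely the claimed right-hand side $-s\,\zeta_{(D+c)^{2}}((s+1)/2)$.

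To legitimize this formal manipulation I would restrict first to $\mathrm{Re}(s) > B$. Hypothesis~(2), together with a compactness argument, furnishes for any compact subinterval $I_{0}\subset I$ a uniform lower bound $|\lambda+c| \geq \delta > 0$ valid for all $\lambda\in\sigma(D)$ and $c\in I_{0}$; combined with the absolute convergence in hypothesis~(1) at a reference point $c_{0}\in I_{0}$, this shows that the termwise differentiated series is dominated uniformly in $c\in I_{0}$ by a convergent numerical series, so differentiation under the summation is valid. One concludes that $\frac{d}{dc}\eta_{c}(s) = -s\,\zeta_{(D+c)^{2}}((s+1)/2)$ on the half-plane $\mathrm{Re}(s) > B$. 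Both sides admit meromorphic continuation in $s$ — the left side is analytic at $s=0$ by hypothesis, and the right side inherits the standard meromorphic continuation of the zeta function of a nonnegative elliptic-type operator — so the identity persists as an identity of meromorphic functions and, in particular, holds at $s=0$.

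For the final consequence, if $D$ is a first-order elliptic, essentially self-adjoint differential operator on a closed $n$-manifold, then $(D+c)^{2}$ is a nonnegative second-order elliptic differential operator, and Seeley's theorem yields a meromorphic extension of $\zeta_{(D+c)^{2}}(z)$ with at worst simple poles at $z = (n-k)/2$, $k\in\mathbb{Z}_{\geq 0}$, which are moreover regular at nonpositive integers. Hence $\zeta_{(D+c)^{2}}((s+1)/2)$ is either regular at $s=0$ — in which case $-s\,\zeta_{(D+c)^{2}}((s+1)/2)$ vanishes there — or has a simple pole at $s=0$, in which case the limit $\lim_{s\to 0}[-s\,\zeta_{(D+c)^{2}}((s+1)/2)]$ extracts the residue (up to sign), matching the stated formula.

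I expect the only genuine obstacle to be the uniform-in-$c$ domination needed to differentiate under the infinite sum; hypothesis~(2) is indispensable there, for without it some summand $\mathrm{sgn}(\lambda+c)|\lambda+c|^{-s}$ would fail to be smooth in $c$ on $I$. The remaining ingredients are elementary algebraic manipulation together with standard facts about the meromorphic continuation of zeta functions of elliptic operators.
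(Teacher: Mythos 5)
Your argument is essentially the paper's: rewrite each summand so the sign factor is absorbed into $(\lambda+c)((\lambda+c)^2)^{-(s+1)/2}$, differentiate termwise in $c$ to obtain $-s((\lambda+c)^2)^{-(s+1)/2}$, sum, and extend by analytic continuation. You add a genuine improvement the paper leaves implicit — the uniform lower bound $|\lambda+c|\geq\delta>0$ on compact subintervals (from hypothesis (2) and closedness of $\sigma(D)$) used to dominate the differentiated series and legitimize the interchange of $\frac{d}{dc}$ with the sum — and you correctly identify the pole structure via Seeley's theorem for the final assertion.
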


\textbf{Remark:} It is known that second-order essentially self-adjoint
elliptic differential operators such as $\left( D+c\right) ^{2}$ on a
manifold of dimension $n$ yield zeta functions with at most simple poles,
and they are located at $s=\frac{n}{2}$, $s=\frac{n}{2}-1$, $s=\frac{n}{2}-2$%
, ... for $n$ odd and at $s=\frac{n}{2}$, $s=\frac{n}{2}-1$, ... , $s=1$ for 
$n$ even. See \cite{Gilk} for specifics. Further, the residues at these
poles are given by explicitly computable integrals of locally-defined
functions.

\begin{proof}
We know that for each eigenvalue $\lambda $ of $D$, $sgn\left( \lambda
+c\right) $ does not vary with $c\in I$. Then for large $\mathrm{Re}\left(
s\right) $, 
\begin{eqnarray*}
\eta _{c}\left( s\right) &=&\sum_{\lambda }\mathrm{sgn}\left( \lambda
+c\right) \left( \left( \lambda +c\right) ^{2}\right) ^{-s/2} \\
\frac{d}{dc}\eta _{c}\left( s\right) &=&\sum_{\lambda }\mathrm{sgn}\left(
\lambda +c\right) \left( -\frac{s}{2}\left( \left( \lambda +c\right)
^{2}\right) ^{-s/2-1}\right) 2\left( \lambda +c\right) \\
&=&-s\sum_{\lambda }\mathrm{sgn}\left( \lambda +c\right) \left\vert \lambda
+c\right\vert ^{-s-2}\left( \lambda +c\right) \\
&=&-s\sum_{\lambda }\left( \left( \lambda +c\right) ^{2}\right) ^{-\frac{s+1%
}{2}}=-s\zeta _{\left( D+c\right) ^{2}}\left( \frac{s+1}{2}\right) \cdot
\end{eqnarray*}%
Since both sides are analytic in $s$ for large $\mathrm{Re}\left( s\right) $%
, the statement must remain true after analytic continuation.
\end{proof}

We are interested in the eta invariant, which is $\eta _{c}\left( 0\right) $%
. By the formula in the proposition above, the relevant information is the
residue of the pole of the zeta function $\zeta _{\left( D+c\right)
^{2}}\left( z\right) $ at $z=\frac{1}{2}$. For odd-dimensional manifolds,
this is a constant times one of the heat invariants. If the manifold is
even-dimensional, there is no pole at $z=\frac{1}{2}$, so that $\frac{d}{dc}%
\eta _{c}\left( 0\right) =0$.

\begin{corollary}
If the manifold is even-dimensional, then $\frac{d}{dc}\eta _{c}\left(
0\right) =0$, so that the eta invariant is constant with respect to $c$ on
intervals where $D+c$ has trivial kernel, and then it changes by integral
jumps in general.
\end{corollary}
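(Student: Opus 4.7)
The plan is to apply Proposition \ref{etaZetaProposition} directly, using the pole structure of the zeta function recorded in the Remark to conclude that the derivative vanishes, and then to argue separately about what happens when $-c$ passes through the spectrum of $D$.

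First I would write, for $c$ in an interval on which $-c$ is not an eigenvalue of $D$,
\begin{equation*}
\frac{d}{dc}\eta_c(0) \;=\; \lim_{s\to 0}\Bigl(-s\,\zeta_{(D+c)^2}\!\left(\tfrac{s+1}{2}\right)\Bigr),
\end{equation*}
using the proposition and the fact that both sides are meromorphic. The key input is then the Remark: when $n$ is even, the meromorphic function $\zeta_{(D+c)^2}(z)$ has at most simple poles located at $z = \frac{n}{2},\ \frac{n}{2}-1,\ \ldots,\ 1$, all positive integers. In particular, $z=\tfrac{1}{2}$ is not among these, so $\zeta_{(D+c)^2}(\tfrac{s+1}{2})$ is holomorphic at $s=0$. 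Multiplying a function finite at $s=0$ by $-s$ and letting $s\to 0$ yields $0$. Hence $\frac{d}{dc}\eta_c(0)=0$ throughout the interval, so $c\mapsto \eta_c(0)$ is locally constant on the open set $\{c \in \mathbb{R} : -c\notin\sigma(D)\}$.

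Next I would address the jump behavior at the exceptional values $c_0$ with $-c_0 \in \sigma(D)$. Recall that the definition of $\eta_c(0)$ excludes zero eigenvalues of $D+c$. As $c$ increases through such a $c_0$, each eigenvalue $\lambda$ of $D$ with $\lambda = -c_0$ of multiplicity $m$ contributes a term $\mathrm{sgn}(\lambda+c)$ to (the regularized sum for) $\eta_c(0)$ that switches from $-1$ to $+1$, producing a jump of exactly $+2m$; all other eigenvalues contribute continuously. Summing over the (finite, by discreteness of the spectrum) eigenvalues crossed yields an integral change in $\eta_c(0)$.

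There is no substantive obstacle here: once Proposition \ref{etaZetaProposition} and the Remark are in hand, the only mild care needed is in the jump calculation, specifically the bookkeeping about whether zero eigenvalues are counted and in what direction the sign flips. A one-line verification on each crossing handles this and shows the jumps are in fact even integers (twice the multiplicity), hence in particular integers, establishing the corollary.
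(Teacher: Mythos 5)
Your proposal is correct and follows essentially the same route the paper implicitly takes: the paper presents the corollary immediately after the Remark on the pole locations of $\zeta_{(D+c)^2}$ and the observation that there is no pole at $z = \tfrac12$ when $n$ is even, which is exactly the input you use. You have simply spelled out the sign-flip bookkeeping for the integer (indeed, even-integer) jumps slightly more carefully than the text does.
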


We also have the following result about perturbations of zeta functions.

\begin{proposition}
With the assumptions of Proposition \ref{etaZetaProposition}, 
\begin{equation*}
\frac{d}{dc}\zeta _{\left( D+c\right) ^{2}}\left( s\right) =-2s\eta
_{c}\left( 2s+1\right) .
\end{equation*}

\begin{proof}
For large $\mathrm{Re}\left( s\right) $,%
\begin{eqnarray*}
\frac{d}{dc}\zeta _{\left( D+c\right) ^{2}}\left( s\right) &=&\frac{d}{dc}%
\sum_{\lambda }\left( \left( \lambda +c\right) ^{2}\right) ^{-s} \\
&=&\sum_{\lambda }-s\left( \left( \lambda +c\right) ^{2}\right)
^{-s-1}2\left( \lambda +c\right) =-2s\sum_{\lambda }\left\vert \lambda
+c\right\vert ^{-2s-2}\left( \lambda +c\right) \\
&=&-2s\sum_{\lambda }\mathrm{sgn}\left( \lambda +c\right) \left\vert \lambda
+c\right\vert ^{-2s-1}=-2s\eta _{c}\left( 2s+1\right) .
\end{eqnarray*}%
Since both sides are analytic in $s$ for large $\mathrm{Re}\left( s\right) $%
, the statement must remain true after analytic continuation.
\end{proof}
\end{proposition}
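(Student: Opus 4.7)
The plan is to mirror the proof of Proposition \ref{etaZetaProposition}, exploiting the duality that differentiating in $c$ the eigenvalue series for $\zeta_{(D+c)^2}(s)$ reintroduces a linear factor $(\lambda+c)$ that splits as $\mathrm{sgn}(\lambda+c)\,|\lambda+c|$, turning an unsigned sum (the zeta series) into a signed sum (the eta series at a shifted argument). I would first establish the identity on a right half-plane of absolute convergence and then extend it to the common meromorphic domain by analytic continuation.

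Concretely, I would fix $c$ in the interior of $I$, so that $-c$ is bounded away from the spectrum of $D$, and work in the region $\mathrm{Re}(s)>B$ where hypothesis (1) of Proposition \ref{etaZetaProposition} provides absolute convergence of $\sum_{\lambda}((\lambda+c)^{2})^{-s}$. On a small open neighborhood of $c$ in $I$, the formally differentiated series
$$\sum_{\lambda}\,-2s\,(\lambda+c)\,|\lambda+c|^{-2s-2}=\sum_{\lambda}\,-2s\,\mathrm{sgn}(\lambda+c)\,|\lambda+c|^{-2s-1}$$
is dominated by $2|s|\sum_{\lambda}|\lambda+c|^{-2s-1}$, which converges absolutely on the same half-plane by the shifted-argument version of hypothesis (1). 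This domination legitimates the interchange of $d/dc$ with the infinite sum. A one-line chain rule computation,
$$\frac{d}{dc}\bigl((\lambda+c)^{2}\bigr)^{-s}=-2s\,(\lambda+c)\,|\lambda+c|^{-2s-2}=-2s\,\mathrm{sgn}(\lambda+c)\,|\lambda+c|^{-2s-1},$$
then identifies the sum as $-2s\,\eta_{c}(2s+1)$.

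Finally, since $\zeta_{(D+c)^{2}}(s)$ admits a meromorphic continuation in $s$ (hence so does its $c$-derivative, $c$ ranging in an open interval), and $\eta_{c}(2s+1)$ is meromorphic by the hypothesis of Proposition \ref{etaZetaProposition}, the identity proved on $\mathrm{Re}(s)>B$ extends to the full common domain by analytic continuation. The only step that could be called an obstacle is the justification of termwise differentiation, but this follows immediately from the absolute convergence built into hypothesis (1), so essentially no new analysis beyond that already used in Proposition \ref{etaZetaProposition} is required.
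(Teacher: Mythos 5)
Your proposal is correct and follows essentially the same route as the paper's own proof: term-by-term differentiation of the eigenvalue series on the half-plane of absolute convergence, the chain-rule identity converting the unsigned sum into a signed one with shifted exponent, and extension to the meromorphic domain by analytic continuation. The extra care you take in justifying the interchange of $d/dc$ with the sum via domination is implicit in the paper's argument and is a reasonable elaboration rather than a departure.
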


\subsection{Heat Kernel Asymptotics\label{HeatKernelAsymptoticsSection}}

Because of Proposition \ref{etaZetaProposition}, we will be interested in
the residues of $\zeta _{\left( D+c\right) ^{2}}\left( s\right) $ at its
poles, which are determined by the heat kernel asymptotics (see Section \ref%
{etaInvtArbMfldsSection}). Specifically, we need the asymptotics as $%
t\rightarrow 0^{+}$ of 
\begin{equation*}
Tr\left( \exp \left( -t\left( D+c\right) ^{2}\right) \right)
=\int_{M}Tr~K_{c}\left( t,x,x\right) ~d\mathrm{vol},
\end{equation*}%
where we assume $D=\sum \left( e_{j}\diamond \right) \nabla
_{e_{j}}:C^{\infty }\left( E\right) \rightarrow C^{\infty }\left( E\right) $
is a Dirac-type operator and $c\in \mathbb{R}$. That is, the Leibniz rule $%
\nabla _{X}\left( v\diamond s\right) =\left( \nabla _{X}^{M}v\right)
\diamond s+v\diamond \nabla _{X}s$ is satisfied for all vector fields $X$
and $v$ and sections $s\in C^{\infty }\left( E\right) $, where $\nabla ^{M}$
is the Levi-Civita connection. We will let $n$ be the dimension of the
manifold $M$, and we will let $\widehat{n}$ be the rank of the vector bundle 
$E$. Here and in what follows, we use the $\diamond $ symbol to denote
Clifford multiplication. The element $K_{c}\left( t,x,x\right) \in \mathrm{%
End}\left( E_{x}\right) $ is 
\begin{equation*}
K_{c}\left( t,x,x\right) =e^{-t\left( D+c\right) ^{2}}\left( x,x\right) ,
\end{equation*}%
which satisfies%
\begin{eqnarray*}
\left( \frac{\partial }{\partial t}+\left( D+c\right) ^{2}\right)
K_{c}\left( t,x,y\right) &=&0 \\
\lim_{t\rightarrow 0^{+}}K_{c}\left( t,x,y\right) &=&\delta _{xy},
\end{eqnarray*}%
where $\delta _{xy}$ is the Dirac delta distribution. To find the
asymptotics as $t\rightarrow 0^{+}$, we need to solve for $u_{k}\left(
x,y\right) \in \mathrm{Hom}\left( E_{y},E_{x}\right) $, where 
\begin{equation}
K_{c}\left( t,x,y\right) \sim \frac{1}{\left( 4\pi t\right) ^{n/2}}%
e^{-r^{2}/4t}\left( u_{0}\left( x,y\right) +tu_{1}\left( x,y\right)
+t^{2}u_{2}\left( x,y\right) +...\right)  \label{K(txy)_asymptotics}
\end{equation}%
where $r=\mathrm{dist}\left( x,y\right) $. Such an asymptotic expansion
exists, since $\left( D+c\right) ^{2}$ is a generalized Laplacian (see \cite%
{Be-G-V}, \cite{Gilk}, \cite{Roe}).

We will assume that we have chosen geodesic normal coordinates $x=\left(
x_{1},...,x_{n}\right) $ centered at $y=0$ and that the frame field $\left(
e_{1},...,e_{n}\right) $ is parallel translated radially from the origin
(i.e. $y$) such that 
\begin{equation*}
e_{j}\left( 0\right) =\partial _{j}~.
\end{equation*}

Then in these coordinates, we may map $E_{x}$ to $E_{y}$ via radial parallel
translation, so that each $u_{k}\left( x,y\right) $ may be regarded as a
matrix-valued function of $x$, with $\mathbb{R}^{\widehat{n}}$ identified
with $E_{y}$. Observe that the Dirac operator may be expressed as%
\begin{equation*}
D=\sum_{j}e_{j}\diamond \nabla _{e_{j}}=\sum_{p,q}g^{pq}\partial
_{p}\diamond \nabla _{\partial _{q}}~,
\end{equation*}%
where in the first case we are summing over an orthonormal frame, and in the
second case we are using the coordinate vector fields, with $\left(
g^{pq}\right) $ the inverse of the metric matrix $\left( g_{ij}\right) $.

We have, using the Einstein summation convention, 
\begin{eqnarray}
-\left( D+c\right) ^{2} &=&-\left( e_{i}\diamond \nabla _{e_{i}}+c\right)
^{2}  \notag \\
&=&-\left( e_{i}\diamond \nabla _{e_{i}}\right) \left( e_{j}\diamond \nabla
_{e_{j}}\right) -2c\left( e_{i}\diamond \nabla _{e_{i}}\right) -c^{2}  \notag
\\
&=&-\left( e_{i}\diamond \right) \left( e_{j}\diamond \right) \nabla
_{e_{i}}\nabla _{e_{j}}+\left[ -\left( e_{i}\diamond \right) \left( \left(
\nabla _{e_{i}}e_{j}\right) \diamond \right) -2c\left( e_{j}\diamond \right) %
\right] \nabla _{e_{j}}-c^{2}  \notag \\
&=&\nabla _{e_{i}}\nabla _{e_{i}}-\sum_{i<j}\left( e_{i}\diamond \right)
\left( e_{j}\diamond \right) \left[ \nabla _{e_{i}},\nabla _{e_{j}}\right] +%
\left[ -\left( e_{i}\diamond \right) \left( \left( \nabla
_{e_{i}}e_{j}\right) \diamond \right) -2c\left( e_{j}\diamond \right) \right]
\nabla _{e_{j}}-c^{2}  \notag \\
&=&\nabla _{e_{i}}\nabla _{e_{i}}-\sum_{i<j}\left( e_{i}\diamond \right)
\left( e_{j}\diamond \right) \left( \nabla _{\left[ e_{i},e_{j}\right]
}\right) +\left[ -\left( e_{i}\diamond \right) \left( \left( \nabla
_{e_{i}}e_{j}\right) \diamond \right) -2c\left( e_{j}\diamond \right) \right]
\nabla _{e_{j}}  \notag \\
&&-\sum_{i<j}\underset{\text{define this to be }K_{ij}}{\underbrace{\left(
e_{i}\diamond \right) \left( e_{j}\diamond \right) \left( \left[ \nabla
_{e_{i}},\nabla _{e_{j}}\right] -\nabla _{\left[ e_{i},e_{j}\right] }\right) 
}}-c^{2}.  \label{Kformula}
\end{eqnarray}%
Further, let $K=\sum_{i<j}K_{ij}\in \mathrm{End}\left( E_{x}\right) $.

\vspace{1pt}Next, let $s$ be a bundle endomorphism, and let $f$ be any
function. Let $h=\frac{1}{\left( 4\pi t\right) ^{n/2}}e^{-r^{2}/4t}$, and
let $g=\det \left( g_{ij}\right) $, where $r$ is the geodesic distance to $%
y=0$. Then from the formulas in \cite[pp. 99-100]{Roe} (extended, as is
common, to endomorphisms),%
\begin{eqnarray*}
\nabla h &=&-\frac{h}{2t}r\partial _{r} \\
\frac{\partial h}{\partial t}+\Delta h &=&\frac{rh\partial _{r}g}{4gt} \\
D\left( fs\right) -fDs &=&\left( \nabla f\right) \diamond s \\
D^{2}\left( fs\right) -fD^{2}s &=&\left( \Delta f\right) s-2\nabla _{\nabla
f}s,
\end{eqnarray*}%
so%
\begin{eqnarray*}
\left( -\left( D+c\right) ^{2}\right) \left( fs\right) &=&-\left(
D^{2}+2cD+c^{2}\right) \left( fs\right) \\
&=&-\left( fD^{2}s+\left( \Delta f\right) s-2\nabla _{\nabla f}s\right)
-2c\left( fDs+\left( \nabla f\right) \diamond s\right) -c^{2}fs \\
&=&-f\left( D+c\right) ^{2}s-\left( \Delta f\right) s+2\nabla _{\nabla
f}s-2c\left( \nabla f\right) \diamond s.
\end{eqnarray*}%
Then

\begin{eqnarray*}
\frac{1}{h}\left( \partial _{t}+\left( D+c\right) ^{2}\right) \left(
hs\right) &=&\left( -\frac{1}{h}\Delta h+\frac{r\partial _{r}g}{4gt}\right)
s+\partial _{t}s+\left( D+c\right) ^{2}s \\
&&+\left( \frac{\Delta h}{h}\right) s-\frac{2}{h}\nabla _{\nabla h}s+\frac{2c%
}{h}\left( \nabla h\right) \diamond s \\
&=&\partial _{t}s+\left( D+c\right) ^{2}s+\frac{r}{4gt}\partial _{r}gs+\frac{%
1}{t}\nabla _{r\partial _{r}}s-\frac{c}{t}\left( r\partial _{r}\right)
\diamond s.
\end{eqnarray*}%
Writing%
\begin{equation*}
s=u_{0}+tu_{1}+t^{2}u_{2}+...,
\end{equation*}%
we solve $\left( \partial _{t}+\left( D+c\right) ^{2}\right) \left(
hs\right) =0$ and get the equations%
\begin{equation}
\nabla _{r\partial _{r}}u_{j}+\left( j+\frac{r\partial _{r}g}{4g}-c\left(
r\partial _{r}\diamond \right) \right) u_{j}=-\left( D+c\right) ^{2}u_{j-1}~,
\label{0thRecursiveFormula}
\end{equation}%
or%
\begin{equation}
\nabla _{\partial _{r}}u_{j}+\left( \frac{j}{r}+\frac{\partial _{r}g}{4g}%
-c\left( \partial _{r}\diamond \right) \right) u_{j}=-\frac{1}{r}\left(
D+c\right) ^{2}u_{j-1}~  \label{firstRecursiveFormula}
\end{equation}%
This is an ordinary differential equation along a geodesic emanating from $y$%
, the center of the geodesic coordinates.

Note that for any smooth function $f$, 
\begin{eqnarray*}
\exp \left( f\left( r\right) \left( \partial _{r}\diamond \right) \right)
&=&\sum_{k\geq 0}\frac{1}{\left( 2k\right) !}f\left( r\right) ^{2k}\left(
\partial _{r}\diamond \right) ^{2k}+\sum_{k\geq 0}\frac{1}{\left(
2k+1\right) !}f\left( r\right) ^{2k+1}\left( \partial _{r}\diamond \right)
^{2k+1} \\
&=&\sum_{k\geq 0}\frac{\left( -1\right) ^{k}}{\left( 2k\right) !}f\left(
r\right) ^{2k}+\left( \sum_{k\geq 0}\frac{\left( -1\right) ^{k}}{\left(
2k+1\right) !}f\left( r\right) ^{2k+1}\right) \left( \partial _{r}\diamond
\right) \\
&=&\cos \left( f\left( r\right) \right) \mathbf{1}+\sin \left( f\left(
r\right) \right) \left( \partial _{r}\diamond \right) .
\end{eqnarray*}%
We also have the operator equation 
\begin{eqnarray*}
&&\nabla _{\partial _{r}}\left[ \cos \left( f\left( r\right) \right) \mathbf{%
1}+\sin \left( f\left( r\right) \right) \left( \partial _{r}\diamond \right) %
\right] \\
&=&-f^{\prime }\left( r\right) \sin \left( f\left( r\right) \right) \mathbf{%
1+}\cos \left( f\left( r\right) \right) \nabla _{\partial _{r}}+f^{\prime
}\left( r\right) \cos \left( f\left( r\right) \right) \left( \partial
_{r}\diamond \right) +\sin \left( f\left( r\right) \right) \left( \partial
_{r}\diamond \right) \nabla _{\partial _{r}} \\
&=&\left[ \cos \left( f\left( r\right) \right) \mathbf{1}+\sin \left(
f\left( r\right) \right) \left( \partial _{r}\diamond \right) \right] \nabla
_{\partial _{r}}+-f^{\prime }\left( r\right) \sin \left( f\left( r\right)
\right) \mathbf{1}+f^{\prime }\left( r\right) \cos \left( f\left( r\right)
\right) \left( \partial _{r}\diamond \right) \\
&=&\left[ \cos \left( f\left( r\right) \right) \mathbf{1}+\sin \left(
f\left( r\right) \right) \left( \partial _{r}\diamond \right) \right] \left(
\nabla _{\partial _{r}}+f^{\prime }\left( r\right) \left( \partial
_{r}\diamond \right) \right) .
\end{eqnarray*}%
Thus we multiply (\ref{firstRecursiveFormula}) by $r^{j}g^{1/4}\left[ \cos
\left( -cr\right) \mathbf{1}+\sin \left( -cr\right) \left( \partial
_{r}\diamond \right) \right] $. Then observe that%
\begin{eqnarray*}
&&\nabla _{\partial _{r}}\left( r^{j}g^{1/4}\left[ \cos \left( -cr\right) 
\mathbf{1}+\sin \left( -cr\right) \left( \partial _{r}\diamond \right) %
\right] u_{j}\right) \\
&=&r^{j}g^{1/4}\left[ \cos \left( -cr\right) \mathbf{1}+\sin \left(
-cr\right) \left( \partial _{r}\diamond \right) \right] \left( \nabla
_{\partial _{r}}+\left( \frac{j}{r}+\frac{\partial _{r}g}{4g}-c\left(
\partial _{r}\diamond \right) \right) \right) u_{j} \\
&=&-\frac{1}{r}r^{j}g^{1/4}\left[ \cos \left( -cr\right) \mathbf{1}+\sin
\left( -cr\right) \left( \partial _{r}\diamond \right) \right] \left(
D+c\right) ^{2}u_{j-1},
\end{eqnarray*}%
so the new recursion formula is%
\begin{eqnarray}
&&\nabla _{\partial _{r}}\left( r^{j}g^{1/4}\left[ \cos \left( -cr\right) 
\mathbf{1}+\sin \left( -cr\right) \left( \partial _{r}\diamond \right) %
\right] u_{j}\right)  \notag \\
&=&-r^{j-1}g^{1/4}\left[ \cos \left( -cr\right) \mathbf{1}+\sin \left(
-cr\right) \left( \partial _{r}\diamond \right) \right] \left( D+c\right)
^{2}u_{j-1}~.  \label{finalRecursionFormula}
\end{eqnarray}%
Substituting $j=0$, we see that $g^{1/4}\left[ \cos \left( -cr\right) 
\mathbf{1}+\sin \left( -cr\right) \left( \partial _{r}\diamond \right) %
\right] u_{0}$ is parallel along radial geodesics, which means that 
\begin{eqnarray}
u_{0}\left( r\right) &=&g^{-1/4}\left[ \cos \left( -cr\right) \mathbf{1}%
-\sin \left( -cr\right) \left( \partial _{r}\diamond \right) \right]  \notag
\\
&=&g^{-1/4}\left[ \cos \left( cr\right) \mathbf{1}+\sin \left( cr\right)
\left( \partial _{r}\diamond \right) \right] .  \label{u0formula}
\end{eqnarray}

In other words, $u_{0}\left( r\right) $ is the linear map from $E_{y}$ to $%
E_{x}$ (with $y$ being the origin of the geodesic coordinate system and $r$
being the distance from $y$ to $x$) defined by%
\begin{equation*}
s\left( y\right) \mapsto g^{-1/4}\left[ \cos \left( cr\right) \mathbf{1}%
+\sin \left( cr\right) \left( \partial _{r}\diamond \right) \right] s\left(
x\right) ,
\end{equation*}%
where $s\left( x\right) $ is the radial parallel translate of $s\left(
y\right) $ along the geodesic connecting $y$ to $x$.

By writing 
\begin{equation*}
u_{1}=u_{1}\left( 0\right) +\mathcal{O}\left( r\right) ,
\end{equation*}%
from (\ref{0thRecursiveFormula}) we see

\begin{equation*}
u_{1}+r\left( \nabla _{\partial _{r}}u_{1}+\left( \frac{\partial _{r}g}{4g}%
-c\left( \partial _{r}\diamond \right) \right) u_{1}\right) =-\left(
D+c\right) ^{2}u_{0}.
\end{equation*}%
\vspace{1pt}In particular,%
\begin{equation*}
u_{1}\left( 0\right) =\left( -\left( D+c\right) ^{2}u_{0}\right) \left(
0\right) .
\end{equation*}%
We have $r^{2}=x_{j}x_{j}$ , $r\partial _{r}=x_{j}\partial _{j}$ , and $g=1%
\mathbf{+}\frac{1}{3}R_{ipqi}x_{p}x_{q}+\mathcal{O}\left( r^{3}\right) $ in
geodesic normal coordinates in terms of the Riemann curvature tensor $%
R_{ijk\ell }$ at $x=0$ (see \cite[p. 104]{Roe}), using the convention that $%
R_{ijkl}=\left\langle \left( \nabla _{\partial _{k}}^{M}\nabla _{\partial
_{\ell }}^{M}-\nabla _{\partial _{\ell }}^{M}\nabla _{\partial
_{k}}^{M}\right) \partial _{j},\partial _{i}\right\rangle $. Using the
binomial expansion,

\begin{eqnarray*}
u_{0} &=&g^{-1/4}\left[ \cos \left( cr\right) \mathbf{1}+\sin \left(
cr\right) \left( \partial _{r}\diamond \right) \right] \\
&=&\mathbf{1+}cr\left( \partial _{r}\diamond \right) -\frac{c^{2}r^{2}}{2}%
\mathbf{1}-\frac{1}{12}R_{ijki}x_{j}x_{k}\mathbf{\mathbf{1}+}\mathcal{O}%
\left( r^{3}\right) \\
&=&\mathbf{1+}cx_{j}\left( \partial _{j}\diamond \right) -\frac{%
c^{2}x_{j}x_{j}}{2}\mathbf{1}-\frac{1}{12}R_{ijki}x_{j}x_{k}\mathbf{1+}%
\mathcal{O}\left( r^{3}\right) .
\end{eqnarray*}%
Then at $0$, 
\begin{eqnarray*}
\left( Du_{0}\right) \left( 0\right) &=&g^{pq}\left( \partial _{p}\diamond
\right) \nabla _{\partial _{q}}u_{0} \\
&=&\left( \partial _{p}\diamond \right) \nabla _{\partial _{p}}u_{0} \\
&=&\left( \partial _{p}\diamond \right) c\left( \partial _{p}\diamond
\right) =-nc\mathbf{1}.
\end{eqnarray*}%
At $0$, $\nabla _{\partial _{p}}\partial _{q}=0$ for all $p,q$ ; thus, from (%
\ref{Kformula}) and the above, 
\begin{eqnarray*}
\left( D^{2}u_{0}\right) \left( 0\right) &=&\left( -\nabla _{\partial
_{p}}\nabla _{\partial _{p}}+K\right) u_{0} \\
&=&\left( nc^{2}\mathbf{+}\frac{1}{6}R_{ijji}+K\right) \mathbf{1}=\left(
nc^{2}-\frac{1}{6}\mathrm{Scal}+K\right) \mathbf{1},
\end{eqnarray*}%
where $\mathrm{Scal}$ denotes the scalar curvature. Then 
\begin{eqnarray}
u_{1}\left( 0\right) &=&\left( -\left( D^{2}+2cD+c^{2}\right) u_{0}\right)
\left( 0\right)  \notag \\
&=&-\left( nc^{2}-\frac{1}{6}\mathrm{Scal}+K-2cnc+c^{2}\right) \mathbf{1} 
\notag \\
&=&\left( \left( n-1\right) c^{2}+\frac{1}{6}\mathrm{Scal}\right) \mathbf{1}%
-K\mathbf{.}  \label{u1Formula}
\end{eqnarray}%
We have shown that the heat kernel for $\left( D+c\right) ^{2}$ has the
expansion%
\begin{eqnarray*}
K_{c}\left( t,x,x\right) &:=&\exp \left( -t\left( D+c\right) ^{2}\right)
\left( x,x\right) \\
&=&\frac{1}{\left( 4\pi t\right) ^{n/2}}\left( \mathbf{1}+t\left( \left(
\left( n-1\right) c^{2}+\frac{1}{6}\mathrm{Scal}\right) \mathbf{1}-K\right) +%
\mathcal{O}\left( t^{2}\right) \right) ,  \notag
\end{eqnarray*}%
\begin{multline}
\mathrm{Tr}\exp \left( -t\left( D+c\right) ^{2}\right) =\frac{1}{\left( 4\pi
t\right) ^{n/2}}\Biggm(\widehat{n}\mathrm{vol}\left( M\right)  \notag \\
+t\left[ \widehat{n}\left( n-1\right) c^{2}\mathrm{vol}\left( M\right) +%
\frac{\widehat{n}}{6}\int_{M}\mathrm{Scal}-\int_{M}Tr\left( K\right) \right]
+\mathcal{O}\left( t^{2}\right) \Biggm).  \label{heatInvariants}
\end{multline}%
Here, $n$ is the dimension of the manifold, and $\widehat{n}$ is the rank of
the bundle $E$.

\vspace{1pt}The Clifford contracted curvature term $K$ has the form (see 
\cite[pp. 48--49]{Roe}, \cite[Thm. 3.52]{Be-G-V})%
\begin{equation*}
K=\frac{\mathrm{Scal}}{4}+F^{E\diagup S}.
\end{equation*}%
On a spin manifold, if $S$ is the spinor bundle, then $E\cong S\otimes W$
with connection $\nabla ^{S\otimes W}=\nabla ^{W}\otimes 1+1\otimes \nabla
^{S}$, and $F^{E\diagup S}$ is the twisting curvature of $E$, meaning%
\begin{equation*}
F^{E\diagup S}=F^{W}=\sum_{i<j}F^{W}\left( e_{i},e_{j}\right) \left(
e^{i}\diamond \right) \left( e^{j}\diamond \right) ,
\end{equation*}%
with $F^{W}$ the curvature of $\nabla ^{W}$. In particular, if $D$ is the
spin Dirac operator on a spin manifold, then $F^{W}=0$ and%
\begin{equation*}
\mathrm{Tr}\exp \left( -t\left( D+c\right) ^{2}\right) =\frac{1}{\left( 4\pi
t\right) ^{n/2}}\left( \widehat{n}\mathrm{vol}\left( M\right) +t\left[ 
\widehat{n}\left( n-1\right) c^{2}\mathrm{vol}\left( M\right) -\frac{%
\widehat{n}}{12}\int_{M}\mathrm{Scal}\right] +\mathcal{O}\left( t^{2}\right)
\right) .
\end{equation*}%
\vspace{1pt}

Observe that our first recursion formula (\ref{firstRecursiveFormula}) for
the heat invariant endomorphism $u_{j}$ corresponding to $\left( D+c\right)
^{2}$ is 
\begin{equation*}
\nabla _{\partial _{r}}u_{j}+\left( \frac{j}{r}+\frac{\partial _{r}g}{4g}%
-c\left( \partial _{r}\diamond \right) \right) u_{j}=-\frac{1}{r}\left(
D+c\right) ^{2}u_{j-1}~,
\end{equation*}%
where $r$ is the distance from the origin of the coordinate system, and the
differential equation holds along a geodesic from $0$ to $x$. For $j\geq 0$,
we expand 
\begin{equation*}
u_{j}=\sum_{k=0}^{K}c^{k}u_{j,k}~+\mathcal{O}\left( c^{K+1}\right) ,
\end{equation*}%
where each $u_{j,k}$ is independent of $c\in \mathbb{R}$. For consistency we
declare that $u_{j,k}=0$ if either $j$ or $k$ is negative. Our recursive
formula above implies that (collecting powers of $c$)%
\begin{equation}
\nabla _{\partial _{r}}u_{j,k}+\left( \frac{j}{r}+\frac{\partial _{r}g}{4g}%
\right) u_{j,k}=\left( \partial _{r}\diamond \right) u_{j,\left( k-1\right)
}-\frac{1}{r}D^{2}u_{\left( j-1\right) ,k}-\frac{2}{r}Du_{\left( j-1\right)
,\left( k-1\right) }-\frac{1}{r}u_{\left( j-1\right) ,\left( k-2\right) }.
\label{ujkRecursiveFormula}
\end{equation}

\begin{proposition}
We have 
\begin{equation*}
u_{j,k}=\mathcal{O}\left( r^{\max \left\{ k-2j,0\right\} }\right) .
\end{equation*}%
In particular, 
\begin{equation*}
u_{j,k}\left( 0\right) =0
\end{equation*}%
if $k>2j$, so that $u_{j}$ is a polynomial in $c$ of degree at most $2j$.
\end{proposition}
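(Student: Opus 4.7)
The plan is to prove the bound $u_{j,k}=\mathcal{O}(r^{\max\{k-2j,0\}})$ by a double induction: outer induction on $j$ and, for each fixed $j$, inner induction on $k$. Since each $u_{j,k}$ is smooth in a neighborhood of the origin (being a coefficient in the smooth heat kernel asymptotic expansion), it is automatically bounded there, so the conclusion in the case $k \leq 2j$ is immediate; the substantive case is $k > 2j$. For the base case $j = 0$, the explicit formula (\ref{u0formula}) reads
\[
u_0(r) = g^{-1/4}\cos(cr)\,\mathbf{1} + g^{-1/4}\sin(cr)(\partial_r\diamond),
\]
and Taylor expanding in $c$ gives $u_{0,k}$ as a nonzero scalar multiple of $g^{-1/4} r^k (\partial_r\diamond)^{k \bmod 2}$, which is manifestly $\mathcal{O}(r^k)$.

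For the inductive step I fix $j \geq 1$, assume the outer hypothesis $u_{j-1,k'} = \mathcal{O}(r^{\max\{k'-2j+2,0\}})$ for all $k'$, and induct on $k$. Trivializing $E$ along radial geodesics by parallel transport converts $\nabla_{\partial_r}$ into $\partial_r$, and then the recursion (\ref{ujkRecursiveFormula}) admits the integrating factor $r^j g^{1/4}$, yielding $\partial_r\bigl(r^j g^{1/4} u_{j,k}\bigr) = r^j g^{1/4} F_{j,k}(r)$, where $F_{j,k}$ denotes the right-hand side of (\ref{ujkRecursiveFormula}). Since $j \geq 1$, smoothness of $u_{j,k}$ at $r = 0$ forces the constant of integration to vanish, giving
\[
u_{j,k}(r) = r^{-j} g^{-1/4}\int_0^r s^j g(s)^{1/4} F_{j,k}(s)\,ds,
\]
so any bound $F_{j,k}(s) = \mathcal{O}(s^q)$ with $q > -j-1$ implies $u_{j,k}(r) = \mathcal{O}(r^{q+1})$.

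Now assume $k > 2j$. I then check that each of the four summands of $F_{j,k}$ is $\mathcal{O}(r^{k-2j-1})$: (i) $(\partial_r\diamond) u_{j,k-1}$, by the inner hypothesis applied at $k-1$; (ii) $-r^{-1}D^2 u_{j-1,k}$, since $u_{j-1,k} = \mathcal{O}(r^{k-2j+2})$ by the outer hypothesis and $D^2$ lowers the order of vanishing at the origin by at most $2$; (iii) $-2r^{-1}D u_{j-1,k-1}$, by the analogous fact that $D$ lowers the order of vanishing by at most $1$; and (iv) $-r^{-1}u_{j-1,k-2}$, directly from the outer hypothesis. Summing, $F_{j,k}(r) = \mathcal{O}(r^{k-2j-1})$, whence the integral formula yields $u_{j,k}(r) = \mathcal{O}(r^{k-2j})$, completing the induction. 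The main obstacle is verifying that $D$ and $D^2$ lower orders of vanishing by at most $1$ and $2$, respectively: this amounts to unpacking $D = g^{pq}(\partial_p\diamond)\nabla_{\partial_q}$ and observing that in geodesic normal coordinates the coefficients $g^{pq}$, the connection symbols on $E$ (which vanish at the origin), and Clifford multiplication are all smooth and order-preserving, so only the coordinate derivatives $\partial_q$ can lower the order of vanishing of a smooth function, and each does so by exactly one.
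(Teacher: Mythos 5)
Your proof is correct and takes essentially the same approach as the paper: a double induction on $(j,k)$ using the recursion (\ref{ujkRecursiveFormula}), with the right-hand side bounded by the inductive hypotheses together with the fact that $D$ (resp.\ $D^2$) lowers the order of vanishing by at most one (resp.\ two). Your one refinement is that you make the passage from the $\mathcal{O}(r^{k-2j-1})$ bound on the right-hand side to the $\mathcal{O}(r^{k-2j})$ bound on $u_{j,k}$ explicit via the integrating factor $r^j g^{1/4}$ and the resulting integral representation, whereas the paper states this step more tersely (and the paper already records exactly this integrating-factor identity immediately after the proposition); also, your closing remark that each coordinate derivative lowers the vanishing order ``by exactly one'' should read ``by at most one,'' though this does not affect the estimate.
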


\begin{proof}
Clearly, $u_{j,0}=\mathcal{O}\left( 1\right) $ for all $j\geq 0$, as these
refer to the standard heat invariants (with $c=0$). Also, the formula holds
for $u_{0,k}$ by Taylor analysis of the expicit formula (\ref{u0formula}).
We prove the general case by induction; assume that the theorem holds for
all $\left( j,k\right) $ such that $0\leq j<J$ and $k\geq 0$ or $j=J$ and $%
0\leq k\leq K$, with $J\geq 1$ and $K\geq 0$. Then the formula preceding the
statement implies that%
\begin{equation*}
r\nabla _{\partial _{r}}u_{J,K+1}+\left( J+\frac{r\partial _{r}g}{4g}\right)
u_{J,K+1}=r\left( \partial _{r}\diamond \right) u_{J,K}-D^{2}u_{\left(
J-1\right) ,\left( K+1\right) }-2Du_{\left( J-1\right) ,K}-u_{\left(
J-1\right) ,\left( K-1\right) }~.
\end{equation*}%
Note that, given $A\left( r\right) =\mathcal{O}\left( r^{p}\right) $ is
smooth in $r$, we have $r\partial _{r}A\left( r\right) =\mathcal{O}\left(
r^{p}\right) $ if $p\neq 0$ and $r\partial _{r}A\left( r\right) =\mathcal{O}%
\left( r\right) $ if $p=0$. Similarly, $r\left( \partial _{r}\diamond
\right) A\left( r\right) =\mathcal{O}\left( r^{p+1}\right) $ since $\partial
_{r}\diamond $ is bounded and has constant norm. Then, by the induction
hypothesis,%
\begin{eqnarray*}
u_{J,K+1} &=&\mathcal{O}\left( r^{\max \left\{ K-2J,0\right\} +1}\right) +%
\mathcal{O}\left( r^{\max \left\{ K-2J+3-2,0\right\} }\right) \\
&&+\mathcal{O}\left( r^{\max \left\{ K-2J+2-1,0\right\} }\right) +\mathcal{O}%
\left( r^{\max \left\{ K-2J+1,0\right\} }\right) \\
&=&\mathcal{O}\left( r^{\max \left\{ K-2J+1,0\right\} }\right) ,
\end{eqnarray*}%
since $D\left( \mathcal{O}\left( r^{p}\right) \right) =\mathcal{O}\left(
r^{\max \left\{ p-1,0\right\} }\right) $ as long as the quantities are
smooth in $r$.
\end{proof}

Because $\left( \partial _{r}\diamond \right) ^{2j}=\left( -1\right) ^{j}$
and $\left( \partial _{r}\diamond \right) ^{2j+1}=\left( -1\right)
^{j}\left( \partial _{r}\diamond \right) $, from (\ref{u0formula}) we have 
\begin{equation*}
u_{0,k}=\frac{1}{k!}g^{-1/4}r^{k}\left( \partial _{r}\diamond \right) ^{k}.
\end{equation*}%
Also, since all of the $u_{j,0}$ are known (the standard heat invariants),
we may use (\ref{ujkRecursiveFormula}) to calculate $u_{j,k}$ for all $j\geq
0$, $k\geq 0$. That is,%
\begin{equation*}
\nabla _{\partial _{r}}\left( r^{j}g^{1/4}u_{j,k}\right) =r^{j}g^{1/4}\left(
\left( \partial _{r}\diamond \right) u_{j,\left( k-1\right) }-\frac{1}{r}%
D^{2}u_{\left( j-1\right) ,k}-\frac{2}{r}Du_{\left( j-1\right) ,\left(
k-1\right) }-\frac{1}{r}u_{\left( j-1\right) ,\left( k-2\right) }\right) ,
\end{equation*}%
and so the expression may be integrated along a radial geodesic to solve for 
$u_{j,k}$. We note that the formulas above and below for $u_{j,k}$ are
well-known for the case $k=0$ (see, for example, \cite[pp. 101ff]{Roe}, \cite%
{Gilk}); they are not easily found in the literature for general $k$ but may
be known to experts. From the formulas for $u_{0,k}$ and (\ref{u1Formula})
we have 
\begin{equation*}
u_{0,0}\left( 0\right) =1,~~u_{1,0}\left( 0\right) =\left( \frac{1}{6}%
\mathrm{Scal}\right) \mathbf{1}-K,~~u_{1,1}\left( 0\right)
=0,~~u_{1,2}\left( 0\right) =\left( n-1\right) \mathbf{1.}
\end{equation*}%
Let%
\begin{equation*}
a_{j,k}=\int_{M}\mathrm{tr}\left( u_{j,k}\left( x,x\right) \right) ~\mathrm{d%
\mathrm{\mathrm{vo}l}}~,
\end{equation*}%
where $u_{j,k}\left( x,x\right) $ is the expression at $r=0$ of $u_{j,k}$
found above. In particular, if $n$ is the dimension of the manifold $M$ and $%
\widehat{n}$ is the rank of the bundle $E$,

\begin{equation}
a_{0,0}=\widehat{n}\mathrm{vol}\left( M\right) ,~~a_{1,0}=\frac{\widehat{n}}{%
6}\int_{M}\mathrm{Scal}-\int_{M}Tr\left( K\right) ,~~a_{1,1}=0,~~a_{1,2}=%
\widehat{n}\left( n-1\right) \mathrm{vol}\left( M\right) .
\label{ajkFormulas}
\end{equation}%
Then the heat invariants $a_{j}\left( c\right) $ corresponding to $\left(
D+c\right) ^{2}$ satisfy%
\begin{equation}
a_{j}\left( c\right) =\int_{M}\mathrm{tr}\left( u_{j}\left( x,x\right)
\right) ~\mathrm{d\mathrm{\mathrm{vo}l}}=~\sum_{k=0}^{2j}c^{k}a_{j,k}~.
\label{aj(c)Formula}
\end{equation}

\subsection{The eta invariant for arbitrary manifolds with spectral symmetry 
\label{etaInvtArbMfldsSection}}

Suppose that $M$ is a closed Riemannian manifold of dimension $n$. Recall
from Proposition \ref{etaZetaProposition}, we wish to calculate $%
\lim\limits_{s\rightarrow 0}-s\zeta _{\left( D+c\right) ^{2}}\left( \frac{s+1%
}{2}\right) $, at a particular value of $c$ where $\dim \ker \left(
D+c\right) ^{2}=\left\{ 0\right\} $. From (\ref{K(txy)_asymptotics}), as $%
t\rightarrow 0^{+}$,%
\begin{equation*}
\sum_{\mu }e^{-t\mu }=\int_{M}\mathrm{tr}K_{c}\left( t,x,x\right) ~dV\left(
x\right) \sim \frac{1}{\left( 4\pi t\right) ^{n/2}}\left(
a_{0}+ta_{1}+t^{2}a_{2}+...\right) ,
\end{equation*}%
where $\left\{ \mu \right\} $ are the eigenvalues of $\left( D+c\right) ^{2}$
with multiplicities. The standard derivation of the analytic continuation of
the zeta function is as follows. For large $\mathrm{Re}\left( s\right) $, 
\begin{eqnarray*}
\zeta _{\left( D+c\right) ^{2}}\left( s\right) &=&\sum_{\mu }\mu ^{-s}=\frac{%
1}{\Gamma \left( s\right) }\int_{0}^{\infty }t^{s-1}\left( \sum_{\mu
}e^{-t\mu }\right) dt \\
&=&\frac{1}{\Gamma \left( s\right) }\int_{0}^{1}t^{s-1}\left( \frac{1}{%
\left( 4\pi t\right) ^{n/2}}\left( a_{0}+a_{1}t+...+a_{N}t^{N}\right)
\right) dt \\
&&+\frac{1}{\Gamma \left( s\right) }\int_{0}^{1}t^{s-1}\left( \sum e^{-t\mu
}-\frac{1}{\left( 4\pi t\right) ^{n/2}}\left(
a_{0}+a_{1}t+...+a_{N}t^{N}\right) \right) \\
&&+\frac{1}{\Gamma \left( s\right) }\int_{1}^{\infty }t^{s-1}\left( \sum
e^{-t\mu }\right) dt
\end{eqnarray*}%
\begin{equation*}
=\frac{1}{\left( 4\pi \right) ^{n/2}\Gamma \left( s\right) }%
\sum_{j=0}^{N}a_{j}\int_{0}^{1}t^{s-1-\frac{n}{2}+j}dt+\phi _{N}\left(
s\right) =\frac{1}{\left( 4\pi \right) ^{n/2}\Gamma \left( s\right) }%
\sum_{j=0}^{N}\frac{a_{j}}{s-\frac{n}{2}+j}+\phi _{N}\left( s\right) ,
\end{equation*}%
where $\phi _{N}\left( s\right) $ is holomorphic for $\mathrm{Re}s>\frac{n}{2%
}-N-1$, $\Gamma \left( \cdot \right) $ is the Gamma function, and $a_{j}$ is
the heat invariant corresponding to $\left( D+c\right) ^{2}$:%
\begin{equation*}
a_{j}=\int_{M}\mathrm{tr}\left( u_{j}\left( x,x\right) \right) ~\mathrm{d%
\mathrm{\mathrm{vo}l~.}}
\end{equation*}%
Then, since $\Gamma \left( \frac{1}{2}\right) =\sqrt{\pi }$, 
\begin{equation*}
\lim_{s\rightarrow 0}-s\zeta _{\left( D+c\right) ^{2}}\left( \frac{s+1}{2}%
\right) =\lim_{s\rightarrow 0}\frac{-s}{\left( 4\pi \right) ^{n/2}\Gamma
\left( \frac{s+1}{2}\right) }\frac{a_{\frac{n-1}{2}}}{\left( \frac{s+1}{2}-%
\frac{1}{2}\right) }=-2^{1-n}\pi ^{-\left( n+1\right) /2}a_{\frac{n-1}{2}},
\end{equation*}%
or%
\begin{equation*}
\frac{d}{dc}\eta _{c}\left( 0\right) =-2^{1-n}\pi ^{-\left( n+1\right) /2}a_{%
\frac{n-1}{2}}\left( c\right) .
\end{equation*}%
Note that if $n$ is even, $\frac{d}{dc}\eta _{c}\left( 0\right) =0$.

\vspace{0in}Now, suppose that there is a point of symmetry, $\overline{%
\lambda }<0$, in the spectrum $\sigma \left( D\right) $ of $D$, meaning that 
$\sigma \left( D\right) -\overline{\lambda }$ is symmetric about $0$ in $%
\mathbb{R}$. Then $\eta _{-\overline{\lambda }}\left( 0\right) =0$. We then
integrate the formula above from $c=0$ to $c=-\overline{\lambda }$. We have
a discontinuity (a jump of $+2$) at each $c\in \left( 0,-\overline{\lambda }%
\right) $ that is an eigenvalue of $-D$, due to the fact that $c\mapsto 
\mathrm{sgn}\left( \lambda +c\right) $ has a similar discontinuity near $%
c=-\lambda $. Also, if either $0$ or $-\overline{\lambda }$ are contained in
the spectrum of $-D$, then we will have a jump discontinuity of $+1$ at
those points. Let $c_{1}\leq ...\leq c_{k}$ be the points of $\left( 0,-%
\overline{\lambda }\right) $ that are eigenvalues of $-D$. Let $n_{0}$ be
the multiplicity of $0$ in $\sigma \left( D\right) $, $n_{-\overline{\lambda 
}}$ be the multiplicity of $\overline{\lambda }$ in $\sigma \left( D\right) $%
. Then the fundamental theorem of calculus yields%
\begin{eqnarray*}
\int_{0}^{c_{1}}\frac{d}{dc}\eta _{c}\left( 0\right) ~dc &=&\eta
_{c_{1}}\left( 0\right) -\eta _{0}\left( 0\right) -1-n_{0}, \\
\int_{c_{j}}^{c_{j+1}}\frac{d}{dc}\eta _{c}\left( 0\right) ~dc &=&\eta
_{c_{j+1}}\left( 0\right) -\eta _{c_{j}}\left( 0\right) -2, \\
\int_{c_{k}}^{-\overline{\lambda }}\frac{d}{dc}\eta _{c}\left( 0\right) ~dc
&=&\eta _{-\overline{\lambda }}\left( 0\right) -\eta _{k}\left( 0\right)
-1-n_{-\overline{\lambda }},
\end{eqnarray*}%
which add to%
\begin{equation*}
\int_{0}^{-\overline{\lambda }}\frac{d}{dc}\eta _{c}\left( 0\right) ~dc=\eta
_{-\overline{\lambda }}\left( 0\right) -\eta _{0}\left( 0\right)
-2k-n_{0}-n_{-\overline{\lambda }}.
\end{equation*}%
Therefore, since $\eta _{-\overline{\lambda }}\left( 0\right) =0$ and $\eta
_{0}\left( 0\right) =\eta \left( 0\right) $, 
\begin{eqnarray*}
\eta \left( 0\right) &=&-\int_{0}^{-\overline{\lambda }}\frac{d}{dc}\eta
_{c}\left( 0\right) ~dc-2k-n_{0}-n_{-\overline{\lambda }} \\
&=&\int_{-\overline{\lambda }}^{0}\frac{d}{dc}\eta _{c}\left( 0\right)
~dc-2k-n_{0}-n_{-\overline{\lambda }}
\end{eqnarray*}%
In the case where the point of symmetry is positive ($\overline{\lambda }>0$%
), the calculation above may be adapted in the following ways. We integrate
the formula for $\frac{d}{dc}\eta _{c}\left( 0\right) $ from $c=-\overline{%
\lambda }$ to $c=0$, and if $c_{1}\leq ...\leq c_{k}$ are the points of $%
\left( -\overline{\lambda },0\right) $ that are eigenvalues of $-D$, we have%
\begin{eqnarray*}
\int_{-\overline{\lambda }}^{c_{1}}\frac{d}{dc}\eta _{c}\left( 0\right) ~dc
&=&\eta _{c_{1}}\left( 0\right) -\eta _{-\overline{\lambda }}\left( 0\right)
-1-n_{-\overline{\lambda }}, \\
\int_{c_{j}}^{c_{j+1}}\frac{d}{dc}\eta _{c}\left( 0\right) ~dc &=&\eta
_{c_{j+1}}\left( 0\right) -\eta _{c_{j}}\left( 0\right) -2, \\
\int_{c_{k}}^{0}\frac{d}{dc}\eta _{c}\left( 0\right) ~dc &=&\eta _{0}\left(
0\right) -\eta _{k}\left( 0\right) -1-n_{0},
\end{eqnarray*}%
which yields%
\begin{equation*}
\eta _{0}\left( 0\right) =\int_{-\overline{\lambda }}^{0}\frac{d}{dc}\eta
_{c}\left( 0\right) ~dc+2k+n_{0}+n_{-\overline{\lambda }},
\end{equation*}%
with $n_{0},~n_{-\overline{\lambda }}$ defined above.

In general, if $\overline{\lambda }$ is the point of symmetry of $\sigma
\left( D\right) $,%
\begin{multline}
\eta _{0}\left( 0\right) =\eta _{-\overline{\lambda }}\left( 0\right)
+\int_{-\overline{\lambda }}^{0}\frac{d}{dc}\eta _{c}\left( 0\right) ~dc+%
\mathrm{sgn}\left( \overline{\lambda }\right) \left( 2\#\left( \sigma \left(
D\right) \cap I_{\overline{\lambda }}\right) +\#\left( \sigma \left(
-D\right) \cap \left\{ 0,-\overline{\lambda }\right\} \right) \right)  \notag
\\
=-2^{1-n}\pi ^{-\left( n+1\right) /2}\int_{-\overline{\lambda }}^{0}a_{\frac{%
n-1}{2}}\left( c\right) ~dc+\mathrm{sgn}\left( \overline{\lambda }\right)
2\#\left( \sigma \left( D\right) \cap I_{\overline{\lambda }}\right) +%
\mathrm{sgn}\left( \overline{\lambda }\right) \#\left( \sigma \left(
D\right) \cap \left\{ 0,\overline{\lambda }\right\} \right) ,
\label{etaArbitraryFirstForm}
\end{multline}%
where $I_{\overline{\lambda }}$ is $\left( 0,\overline{\lambda }\right) $ or 
$\left( \overline{\lambda },0\right) $, depending on the sign of $\overline{%
\lambda }$, and where the last two terms include multiplicities.

Thus, from the formula above and the expression for the heat invariant
coefficients $a_{j,k}$ in (\ref{aj(c)Formula}), we now have a formula for $%
\eta \left( 0\right) =\eta _{0}\left( 0\right) $. The same argument also
yields a formula for $\eta \left( 0\right) -\eta _{-\overline{\lambda }%
}\left( 0\right) $ in general.

\begin{theorem}
\label{etaArbitraryDimensionTheorem}\label%
{etaArbitraryDimensionTheoremNoSymmetry}

\begin{enumerate}
\item[]\text{\hspace{6in}}\newline

\item[(A)] Let $\sigma \left( D\right) -\overline{\lambda }$ be symmetric
about $0$ in $\mathbb{R}$. Then the eta invariant satisfies%
\begin{eqnarray*}
\eta \left( 0\right) &=&-2^{1-n}\pi ^{-\left( n+1\right) /2}\left(
~\sum_{k=0}^{n-1}\frac{\left( -1\right) ^{k}}{k+1}\overline{\lambda }%
^{k+1}a_{\frac{n-1}{2},k}~\right) \\
&&+\mathrm{sgn}\left( \overline{\lambda }\right) 2\#\left( \sigma \left(
D\right) \cap I_{\overline{\lambda }}\right) +\mathrm{sgn}\left( \overline{%
\lambda }\right) \#\left( \sigma \left( D\right) \cap \left\{ 0,\overline{%
\lambda }\right\} \right) ,
\end{eqnarray*}%
\newline
where $I_{\overline{\lambda }}$ is the open interval between $0$ and $%
\overline{\lambda }$, and where implicitly the last two terms include
multiplicities.

\item[(B)] Let $\overline{\lambda }$ be any real number. Then the eta
invariant satisfies%
\begin{eqnarray*}
\eta \left( 0\right) -\eta _{-\overline{\lambda }}\left( 0\right)
&=&-2^{1-n}\pi ^{-\left( n+1\right) /2}\left( ~\sum_{k=0}^{n-1}\frac{\left(
-1\right) ^{k}}{k+1}\overline{\lambda }^{k+1}a_{\frac{n-1}{2},k}~\right) \\
&&+\mathrm{sgn}\left( \overline{\lambda }\right) 2\#\left( \sigma \left(
D\right) \cap I_{\overline{\lambda }}\right) +\mathrm{sgn}\left( \overline{%
\lambda }\right) \#\left( \sigma \left( D\right) \cap \left\{ 0,\overline{%
\lambda }\right\} \right) ,
\end{eqnarray*}%
\newline
where $I_{\overline{\lambda }}$ is the open interval between $0$ and $%
\overline{\lambda }$, and where implicitly the last two terms include
multiplicities.
\end{enumerate}
\end{theorem}

\subsection{The zeta function and the eta invariant for three-manifolds\label%
{residuesNHeatInvts}\label{zetaEtaThreeMflds}}

By Theorem \ref{etaArbitraryDimensionTheoremNoSymmetry}, for $n=3$ we have%
\begin{eqnarray*}
\eta \left( 0\right) -\eta _{-\overline{\lambda }}\left( 0\right)
&=&-2^{-2}\pi ^{-2}\left( \overline{\lambda }^{1}a_{1,0}-\frac{1}{2}%
\overline{\lambda }^{2}a_{1,1}~+\frac{1}{3}\overline{\lambda }%
^{3}a_{1,2}~\right) \\
&&+\mathrm{sgn}\left( \overline{\lambda }\right) 2\#\left( \sigma \left(
D\right) \cap I_{\overline{\lambda }}\right) +\mathrm{sgn}\left( \overline{%
\lambda }\right) \#\left( \sigma \left( D\right) \cap \left\{ 0,\overline{%
\lambda }\right\} \right) ,
\end{eqnarray*}

From (\ref{ajkFormulas}), 
\begin{eqnarray*}
\eta \left( 0\right) -\eta _{-\overline{\lambda }}\left( 0\right) &=&-\frac{%
\widehat{n}\overline{\lambda }^{3}}{6\pi ^{2}}\mathrm{vol}\left( M\right) -%
\frac{\overline{\lambda }}{4\pi ^{2}}\left( \frac{\widehat{n}}{6}\int_{M}%
\mathrm{Scal}-\int_{M}Tr\left( K\right) \right) \\
&&+\mathrm{sgn}\left( \overline{\lambda }\right) \left( 2\#\left( \sigma
\left( D\right) \cap \left( 0,\overline{\lambda }\right) \right) +\#\left(
\sigma \left( D\right) \cap \left\{ 0,\overline{\lambda }\right\} \right)
\right) ,
\end{eqnarray*}%
where implicitly the last two terms include multiplicities. Note that every
three-manifold is spin, and thus if we let $F^{W}$ be the twisting
curvature, then 
\begin{equation*}
\int_{M}Tr\left( K\right) =\int_{M}\frac{\widehat{n}\mathrm{Scal}}{4}%
+\int_{M}Tr\left( F^{W}\right) .
\end{equation*}%
Then we have%
\begin{eqnarray}
\eta \left( 0\right) -\eta _{-\overline{\lambda }}\left( 0\right) &=&-\frac{%
\widehat{n}\overline{\lambda }^{3}}{6\pi ^{2}}\mathrm{vol}\left( M\right) +%
\frac{\overline{\lambda }}{4\pi ^{2}}\left( \frac{\widehat{n}}{12}\int_{M}%
\mathrm{Scal}+\int_{M}Tr\left( F^{W}\right) \right)  \notag \\
&&+\mathrm{sgn}\left( \overline{\lambda }\right) \left( 2\#\left( \sigma
\left( D\right) \cap \left( 0,\overline{\lambda }\right) \right) +\#\left(
\sigma \left( D\right) \cap \left\{ 0,\overline{\lambda }\right\} \right)
\right) .  \label{etaInvt3MfldsNegCase}
\end{eqnarray}

\section{Two-step Nilmanifolds and Dirac operators}

\subsection{Two-step Nilmanifolds and the Laplace-Beltrami operator\label%
{2StepN ilmfldsLaplacianSection}}

We review known results about the Laplacian on two-step nilmanifolds in this
section. A Lie algebra $\mathfrak{g}$ is two-step nilpotent if its derived
algebra $\mathfrak{z}^{\prime }=\left[ \mathfrak{g},\mathfrak{g}\right]
\not\equiv 0$ is contained in its center; i.e., $\left[ \mathfrak{g},\left[ 
\mathfrak{g},\mathfrak{g}\right] \right] \equiv 0$ but\ $\left[ \mathfrak{g},%
\mathfrak{g}\right] \not\equiv 0$. A Lie group $G$ is two-step nilpotent if
its Lie algebra is. Let $G$ be a simply connected two-step nilpotent Lie
group of dimension $n$ with Lie algebra $\mathfrak{g}$. Let $\Gamma $ be a
cocompact (i.e., $\Gamma \diagdown G$ compact), discrete subgroup of $G$,
and denote $M=\Gamma \diagdown G$. Fix an inner product $\left\langle
\;,\;\right\rangle $ on $\mathfrak{g}$, which corresponds to a
left-invariant metric on $G$, and which descends to a Riemannian metric on $%
M $. Note that left translation by noncentral elements is no longer an
isometry on $M$. Let $\left\{ X_{i}\right\} $ be an orthonormal basis of
left-invariant vector fields of $\mathfrak{g}$.

All nilpotent Lie groups are unimodular \cite[Proposition 1.2.10]{CoGr}, so
that the Laplace-Beltrami operator acting on smooth functions on $G$ can be
expressed as 
\begin{equation*}
\Delta =-\sum X_{i}^{2}\text{.}
\end{equation*}%
Denote by $\rho $ the (right) quasi-regular representation of $G$ on $%
L^{2}\left( \Gamma \diagdown G\right) $; i.e., for $g\in G$, $f\in
L^{2}\left( \Gamma \diagdown G\right) $,%
\begin{equation*}
\left( \rho \left( g\right) f\right) \left( x\right) =f\left( xg\right) 
\text{.}
\end{equation*}%
This is a unitary representation of $G$, and $\rho $ is the induced
representation of the trivial representation of $\Gamma $. Denote by $\rho
_{\ast }$ the associated unitary action of $\mathfrak{g}$ on $C^{\infty
}\left( \Gamma \diagdown G\right) \subset L^{2}\left( \Gamma \diagdown
G\right) $; i.e., for $X\in \mathfrak{g}$, $f\in C^{\infty }\left( \Gamma
\diagdown G\right) $, 
\begin{equation*}
\left( \rho _{\ast }\left( X\right) f\right) \left( x\right) =\left. \frac{d%
}{dt}\right\vert _{0}f\left( x\exp \left( tX\right) \right) \text{.}
\end{equation*}%
Because on smooth functions\ $\rho _{\ast }\left( X\right) f=Xf$, we may
rewrite the Laplacian as%
\begin{equation*}
\Delta =-\sum \left( \rho _{\ast }X_{i}\right) ^{2}.
\end{equation*}

By expressing the Laplace-Beltrami operator in terms of the representation $%
\rho $, we see that irreducible subspaces of the representation are also
invariant subspaces of the Laplacian. By restricting $\Delta $ to an
irreducible subspace of $L^{2}\left( \Gamma \diagdown G\right) $, \ Gordon,
Wilson, and Pesce (\cite{GW1}, \cite{Pe}) have been able in the two-step
nilpotent case to explicitly solve for its eigenvalues and eigenfunctions.
The Laplace spectrum of $\Gamma \diagdown G$ is then the union over all
irreducible subspaces of the spectrum of the restricted Laplacian. The
multiplicity of an eigenvalue is the sum over the irreducible subspaces of $%
L^{2}\left( \Gamma \diagdown G\right) $ of the eigenvalue's multiplicity in
the irreducible subspace times the multiplicity of the irreducible subspace
in $L^{2}\left( \Gamma \diagdown G\right) $.\vspace{1pt} The key ingredient
that distinguishes the nilpotent case in general, and the two-step nilpotent
case in particular, is that occurrence conditions, eigenvalues,
eigenfunctions, and multiplicities can be explicitly expressed in terms of $%
\log \Gamma $ and $\left( \mathfrak{g},\left\langle ~,~\right\rangle \right) 
$ using Kirillov theory. For more details, see \cite{GGt}.

Kirillov (\cite{Kir1}, \cite{Kir2}) proved that equivalence classes of
irreducible unitary representations of nilpotent Lie groups $G$ are in $1$-$%
1 $ correspondence with the orbits of the coadjoint action of $G$ on $%
\mathfrak{g}^{\ast }$. The coadjoint action is defined by, for $x\in
G,~\alpha \in \mathfrak{g}^{\ast },$%
\begin{equation*}
x\cdot \alpha =\alpha \circ \mathrm{Ad}\left( x^{-1}\right) \text{.}
\end{equation*}%
Given a fixed representative $\alpha \in \mathfrak{g}^{\ast }$ corresponding
to a coadjoint orbit, let $\pi _{\alpha }$ denote the associated irreducible
unitary representation of $G$ with representation space $W_{\alpha }$. The
possible dimensions of $W_{\alpha }$ are either $1$ (characters) or
infinite. L. F. Richardson (\cite{Ri}) computed the decomposition of $\rho $
into irreducibles.

\textbf{Notation: }Given $\alpha \in \mathfrak{g}^{\ast }$, let $B_{\alpha }:%
\mathfrak{g}\times \mathfrak{g}\rightarrow \mathbb{R}$ be defined by 
\begin{equation*}
B_{\alpha }\left( X,Y\right) =\alpha \left( \left[ X,Y\right] \right) .
\end{equation*}%
Let $\mathfrak{g}_{\alpha }=\ker \left( B_{\alpha }\right) =\left\{ X\in 
\mathfrak{g}:B_{\alpha }\left( X,Y\right) =0\text{ for all }Y\in \mathfrak{g}%
\right\} $, let $\overline{B_{\alpha }}$ be the nondegenerate skew-symmetric
bilinear form induced by $B_{\alpha }$ on $\mathfrak{g}\diagup \mathfrak{g}%
_{\alpha }$, and denote by $\pm i~d_{1},...,\pm i~d_{r}$ \ the eigenvalues
of $\overline{B_{\alpha }}$. Note $\log \Gamma $ generates a lattice $%
\mathcal{L}$ in $\mathfrak{g}$ \cite[proof of Thm 2.4]{GW1}. Let $\mathcal{A}%
_{\alpha }=\mathcal{L}\diagup \left( \mathcal{L}\cap \mathfrak{g}_{\alpha
}\right) $. Let 
\begin{equation*}
\Delta _{\alpha }=\left. \Delta \right\vert _{W_{\alpha }}\text{.}
\end{equation*}

In the two-step nilpotent case, H. Pesce explicitly calculated the spectrum
of the restricted Laplace-Beltrami operator $\Delta _{\alpha }$ as follows.

\begin{proposition}
(\cite[Section II\ and Appendix A]{Pe}) We continue the notation above.

\begin{enumerate}
\item $\pi _{\alpha }$ occurs in the representation $L^{2}\left( \Gamma
\diagdown G\right) $ if and only if 
\begin{equation*}
\alpha \left( \log \Gamma \cap \mathfrak{g}_{\alpha }\right) \subset \mathbb{%
Z.}
\end{equation*}

\item If $\pi _{\alpha }$ occurs and $\alpha \left( \left[ \mathfrak{g},%
\mathfrak{g}\right] \right) =\left\{ 0\right\} $, then $\pi _{\alpha }$ is
one-dimensional and occurs with multiplicity $m_{a}=1$. The Laplace spectrum
associated to this irreducible subspace is 
\begin{equation*}
spec\left( \Delta _{\alpha }\right) =\left\{ 4\pi ^{2}\left\Vert \alpha
\right\Vert ^{2}\right\} .
\end{equation*}

\item If $\pi _{\alpha }$ occurs and $\alpha \left( \left[ \mathfrak{g},%
\mathfrak{g}\right] \right) \neq \left\{ 0\right\} $, then $\pi _{\alpha }$
is infinite-dimensional and occurs with multiplicity 
\begin{equation*}
m_{\alpha }=\sqrt{\det \left( \overline{B_{\alpha }}\right) },
\end{equation*}%
where the determinant is computed with respect to (any) lattice basis of $%
\mathcal{A}_{\alpha }\subset \mathfrak{g}\diagup \mathfrak{g}_{\alpha }$.
The Laplace spectrum associated to this irreducible subspace is%
\begin{equation*}
\mathrm{spec}\left( \Delta _{\alpha }\right) =\left\{ \mu \left( \alpha
,p\right) :p\in \left( \mathbb{Z}_{\geq 0}\right) ^{m}\right\} ,
\end{equation*}%
where 
\begin{equation*}
\mu \left( \alpha ,p\right) =4\pi ^{2}\sum \alpha \left( Z_{i}\right)
^{2}+2\pi \sum \left( 2p_{j}+1\right) d_{j},
\end{equation*}%
with $\left\{ Z_{1},...,Z_{k}\right\} $ an orthonormal basis of $\mathfrak{g}%
_{\alpha }$. The multiplicity of $\mu $ in $spec\left( \Delta _{\alpha
}\right) $ is the number of $p\in \left( \mathbb{Z}_{\geq 0}\right) ^{m}$
satisfying $\mu \left( \alpha ,p\right) =\mu $.
\end{enumerate}
\end{proposition}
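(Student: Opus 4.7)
The approach I would take rests on three inputs: Kirillov's classification of $\widehat{G}$ by coadjoint orbits, L.~Richardson's explicit decomposition of the right quasi-regular representation into irreducibles (the version stated and extended in Section~\ref{MooreRichardsonPapersSection}), and a direct computation of the restricted Laplacian in a Schr\"odinger realization of each $\pi_\alpha$. The algebraic setup is short: since $\mathfrak{g}$ is two-step, $[\mathfrak{g},\mathfrak{g}]\subset\mathfrak{z}\subset\mathfrak{g}_\alpha$, and $\mathrm{Ad}(\exp Y)=I+\mathrm{ad}(Y)$ shows $\mathfrak{g}_\alpha$ is an Ad-invariant ideal. The induced nondegenerate form $\overline{B_\alpha}$ on $\mathfrak{g}/\mathfrak{g}_\alpha$ has even rank $2r$, and one may pick an orthonormal basis $\{Z_1,\ldots,Z_l,X_1,Y_1,\ldots,X_r,Y_r\}$ of $\mathfrak{g}$ in which $\{Z_i\}$ spans $\mathfrak{g}_\alpha$ and the pairs $(X_k,Y_k)$ diagonalize $\overline{B_\alpha}$ with $B_\alpha(X_k,Y_k)=d_k$, all other $B_\alpha$-pairings vanishing.

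For (1) I would specialize the nilpotent decomposition theorem of Richardson--Moore (in the form worked out in Section~\ref{MooreRichardsonPapersSection}) to the two-step case. A polarizing subalgebra $\mathfrak{p}\supset\mathfrak{g}_\alpha$ subordinate to $\alpha$ is obtained by adjoining a Lagrangian of $\overline{B_\alpha}$ to $\mathfrak{g}_\alpha$, and the integrality condition characterizing when $\pi_\alpha=\mathrm{Ind}_P^G\chi_\alpha$ occurs in $L^2(\Gamma\setminus G)$ reduces to asking that $\chi_\alpha(\exp W)=e^{2\pi i\alpha(W)}$ be trivial on $\Gamma\cap G_\alpha$, i.e.\ $\alpha(\log\Gamma\cap\mathfrak{g}_\alpha)\subset\mathbb{Z}$.

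Case (2) is essentially a character computation: if $\alpha|_\mathfrak{z}=0$, then $B_\alpha\equiv 0$, so $\mathfrak{g}_\alpha=\mathfrak{g}$ and $\pi_\alpha$ is one-dimensional. The $\Gamma$-invariant representative is $f_\alpha(x)=e^{2\pi i\alpha(\log x)}$; since $\rho_\ast(X_i)f_\alpha=2\pi i\alpha(X_i)f_\alpha$, we obtain $\Delta f_\alpha=4\pi^2\|\alpha\|^2 f_\alpha$, and distinct characters are linearly independent on $\Gamma\setminus G/[G,G]$, whence $m_\alpha=1$. For case (3) I would realize $\pi_\alpha$ on $L^2(\mathbb{R}^r)$ via the Schr\"odinger model of $\mathrm{Ind}_P^G\chi_\alpha$; the Kirillov construction produces, on the adapted basis,
\begin{equation*}
\pi_{\alpha\ast}(Z_i)=2\pi i\,\alpha(Z_i)\,I,\qquad \pi_{\alpha\ast}(X_k)=\partial_{x_k},\qquad \pi_{\alpha\ast}(Y_k)=2\pi i\,d_k\,x_k,
\end{equation*}
so that
\begin{equation*}
\Delta_\alpha=-\sum_i\pi_{\alpha\ast}(X_i)^2=4\pi^2\sum_{i=1}^l\alpha(Z_i)^2\,I+\sum_{k=1}^r\bigl(-\partial_{x_k}^2+4\pi^2 d_k^2 x_k^2\bigr),
\end{equation*}
a scalar plus a commuting sum of rescaled one-dimensional harmonic oscillators with spectra $\{2\pi d_k(2p_k+1):p_k\in\mathbb{Z}_{\geq 0}\}$. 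Summing oscillator eigenvalues reproduces $\mu(\alpha,p)$, while the multiplicity $m_\alpha=\sqrt{\det\overline{B_\alpha}}$ is extracted from Richardson's formula as the Pfaffian-type covolume of $\mathcal{A}_\alpha\subset\mathfrak{g}/\mathfrak{g}_\alpha$ with respect to the symplectic form $\overline{B_\alpha}$.

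The main obstacle, as I see it, is not the harmonic-oscillator diagonalization---which is classical once the model is in place---but the careful verification of the Kirillov action formulas on this particular orthonormal basis (in particular, that every $Z\in\mathfrak{g}_\alpha$ acts as the scalar $2\pi i\alpha(Z)$, which uses both that $\mathfrak{g}_\alpha$ is an ideal lying inside the polarization and that $\alpha(Z)$ is a coadjoint-orbit invariant), together with the precise identification of the Richardson multiplicity with $\sqrt{\det\overline{B_\alpha}}$. The interplay between the left-invariant inner product on $\mathfrak{g}$, which fixes the frequencies $d_k$, and the rational structure supplied by $\log\Gamma$, which determines $\mathcal{A}_\alpha$ and hence $m_\alpha$, is exactly what the appendix is designed to make rigorous.
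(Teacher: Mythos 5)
The paper does not prove this proposition---it is cited verbatim from Pesce~\cite{Pe}---so there is no ``own proof'' to compare against, but your Kirillov-theoretic Schr\"odinger-model-plus-harmonic-oscillator outline is precisely Pesce's method and is the same machinery the paper redeploys in Sections~\ref{AnaloguePescesThmSpinorsSection} and~\ref{Case2Section} when extending the result to spinors. The outline is correct; the two points you compress are exactly where the work lies, and both appear in detail in the paper's spinor analogue: the reduction of the Richardson--Moore $\varepsilon$-integral-point condition on the full orbit of $(\overline{\alpha},H)$ to an integrality condition on $\Gamma\cap G_\alpha$ alone uses the two-step fact that a coadjoint translate $\alpha'=\alpha\circ(I+\mathrm{ad}(X))$ agrees with $\alpha$ on $\mathfrak{g}_\alpha$ (compare the proof following Lemma~\ref{PesceLemmaA4}), and the action formula $\pi_{\alpha\ast}(Y_k)=2\pi i\,d_k x_k$ only holds after conjugating by the translation $T_w$ with $w=(\alpha(V_1)/d_1,\ldots,\alpha(V_m)/d_m)$, otherwise one gets an extra constant $2\pi i\,\alpha(Y_k)$ (compare~(\ref{RLowerStarCalcs})). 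With those two steps filled in, the oscillator diagonalization and the identification of $m_\alpha$ with the covolume $\sqrt{\det\overline{B_\alpha}}$ of $\mathcal{A}_\alpha$ go through as you describe.
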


\begin{remark}
In other words, the multiplicity of an eigenvalue $\lambda $ is the sum of
the multiplicity of $\lambda $ as an eigenvalue in each $\Delta _{\alpha }$
times the multiplicity of $\pi _{a}$ in the representation $L^{2}\left(
\Gamma \diagdown G\right) $.
\end{remark}

\subsection{The Dirac operator on two-step nilmanifolds\label%
{DiracOp2StepSection}}

\vspace{0in}As we intend to calculate the eta invariant of the spin Dirac
operator, we now extend Pesce's results to the Dirac setting. Recall that $G$
is a simply connected $n$-dimensional two-step nilpotent Lie group with Lie
algebra $\mathfrak{g}$ and $\Gamma $ is a cocompact, discrete subgroup of $G$%
. We fix an inner product on $\mathfrak{g}$, which corresponds to a
left-invariant metric on $G$, which descends to a Riemannian metric on $%
\Gamma \diagdown G$.

Let $\Sigma _{n}$ be a standard irreducible spinor representation (see \cite[%
Section 3.2]{Be-G-V}), also considered as a trivial bundle over $G$. A spin
structure and the corresponding spinor bundle $\Sigma _{\varepsilon }$ over $%
\Gamma \diagdown G$ are determined by $\Sigma _{n}$ and a homomorphism $%
\varepsilon :\Gamma \rightarrow \left\{ \pm 1\right\} $ (see \cite[Prop
3.34, p. 114]{Be-G-V}). We have 
\begin{equation}
L^{2}\left( \Gamma \diagdown G,\Sigma _{\varepsilon }\right) \cong
L_{\varepsilon }^{2}\left( \Gamma \diagdown G\right) \otimes _{\mathbb{C}%
}\Sigma _{n},  \label{L2TensorProductIsomorphism}
\end{equation}%
where $L_{\varepsilon }^{2}\left( \Gamma \diagdown G\right) $ is defined by%
\begin{equation}
L_{\varepsilon }^{2}\left( \Gamma \diagdown G\right) =\left\{ f\in L_{%
\mathrm{loc}}^{2}\left( G\right) :f\left( \gamma x\right) =\varepsilon
\left( \gamma \right) f\left( x\right) \text{ for all }\gamma \in \Gamma
,x\in G\right\} .  \label{L2epsilon}
\end{equation}%
The isomorphism from $L_{\varepsilon }^{2}\left( \Gamma \diagdown G\right)
\otimes _{\mathbb{C}}\Sigma _{n}$ to $L^{2}\left( \Gamma \diagdown G,\Sigma
_{\varepsilon }\right) $ is $f\otimes s\mapsto fs$, where $\Sigma _{n}$ is
identified with the constant sections $G\rightarrow G\times \Sigma _{n}$.
Clifford multiplication by elements of $T\left( \Gamma \diagdown G\right)
\cong \Gamma \diagdown G\times \mathfrak{g}$ is given by the standard
Clifford action $\diamond $ of $\mathbb{C}\mathrm{l}\left( \mathfrak{g}%
\right) ~$on $\Sigma _{n}$. That is, $\xi \in \mathfrak{g}$ acts on $%
L_{\varepsilon }^{2}\left( \Gamma \diagdown G\right) \otimes _{\mathbb{C}%
}\Sigma _{n}$ by%
\begin{equation*}
\xi \diamond \left( fs\right) =f\left( \xi \diamond s\right) .
\end{equation*}%
By construction, $\left( \xi \diamond \right) $ is a constant matrix on $%
\Gamma \diagdown G$ for every left-invariant vector field $\xi $.

Note that the (Clifford) connection on any spinor bundle is given by%
\begin{equation}
\nabla _{E_{i}}^{\Sigma }=\partial _{E_{i}}+\frac{1}{4}\sum_{j,k}\Gamma
_{ij}^{k}\left( E_{j}\diamond \right) \left( E_{k}\diamond \right)
\label{connection Formula}
\end{equation}%
according to the Ammann-B\"{a}r formula \cite[formula 1.1]{Am-Ba}, where $%
\left\{ E_{j}\right\} $ is a left-invariant orthonormal basis of the tangent
space, $\Gamma _{ij}^{k}$ are the Christoffel symbols associated to the
metric and frame, and $\partial _{E_{i}}$ is a directional derivative. In
our case, we use the left-invariant metric on $\mathfrak{g}$, yielding a
metric on $\Gamma \diagdown G$. Then the Dirac operator $D$ on $\Gamma
\diagdown G$ acts on $L_{\varepsilon }^{2}\left( \Gamma \diagdown G\right)
\otimes \Sigma _{n}$ by%
\begin{eqnarray*}
D &=&\sum \left( E_{i}\diamond \right) \nabla _{E_{i}}^{\Sigma } \\
&=&\sum_{i}\left( E_{i}\diamond \right) \partial _{E_{i}}\mathbf{+}\frac{1}{4%
}\sum_{i,j,k}\Gamma _{ij}^{k}\left( E_{i}\diamond E_{j}\diamond
E_{k}\diamond \right)
\end{eqnarray*}%
If $\rho _{\varepsilon }$ denotes right multiplication acting on $%
L_{\varepsilon }^{2}\left( \Gamma \diagdown G\right) $, we have 
\begin{eqnarray*}
\partial _{E_{i}} &=&\left. \frac{d}{dt}\right\vert _{0}\rho _{\varepsilon
}\left( \exp \left( tE_{i}\right) \right) \\
&=&\rho _{\varepsilon \ast }\left( E_{i}\right) .
\end{eqnarray*}%
Note that $\rho _{\varepsilon }$ is the induced representation of $%
\varepsilon :\Gamma \rightarrow \left\{ \pm 1\right\} $ to $G$. The
Christoffel symbols are defined by 
\begin{equation*}
\nabla _{E_{i}}E_{j}=\sum \Gamma _{ij}^{k}E_{k},
\end{equation*}%
and the Koszul formula gives 
\begin{equation*}
2\Gamma _{ij}^{k}=-\left\langle E_{i},\left[ E_{j},E_{k}\right]
\right\rangle +\left\langle E_{j},\left[ E_{k},E_{i}\right] \right\rangle
+\left\langle E_{k},\left[ E_{i},E_{j}\right] \right\rangle .
\end{equation*}%
At this point, the formulas given above are completely general for any Lie
group $G$ with a left-invariant metric.

We now assume $G$ is 2-step nilpotent, so that $\left\langle E_{i},\left[
E_{j},E_{k}\right] \right\rangle =0$ unless $E_{i}$ is in the center of $%
\mathfrak{g}$. If $\mathfrak{g}=\mathfrak{z}\oplus \mathfrak{v}$ with $%
\mathfrak{z}$ the center and $\mathfrak{v}=\mathfrak{z}^{\bot }$, its
orthogonal complement, then the inner product on $\mathfrak{g}$ is
determined by and determines the map $j:\mathfrak{z}\rightarrow \mathfrak{so}%
\left( \mathfrak{v}\right) $ defined as 
\begin{equation}
\left\langle j\left( Z\right) X,A\right\rangle =\left\langle Z,\left[ X,A%
\right] \right\rangle  \label{j map}
\end{equation}%
for all $Z\in \mathfrak{z}$ and all $X,A\in \mathfrak{v}$. See, for example, 
\cite[p.618ff]{Eb}. Note that if $\left\langle Z,\left[ \mathfrak{g},%
\mathfrak{g}\right] \right\rangle =0$, then $j\left( Z\right) $ is the zero
map.

Let $k_{0}$ be the dimension of the center and $k_{0}+m_{0}$ the dimension
of $\mathfrak{g}$, and we choose the orthonormal basis $\left\{
Z_{1},...,Z_{k_{0}},X_{1},...,X_{m_{0}}\right\} $ so that $\left\{
Z_{i}\right\} $ is an orthonormal basis of $\mathfrak{z}$ and $\left\{
X_{i}\right\} $ is an orthonormal basis of $\mathfrak{v}$. Then one easily
verifies that%
\begin{equation*}
\nabla _{Z_{i}}X_{k}=\nabla _{X_{k}}Z_{i}=-\frac{1}{2}j\left( Z_{i}\right)
X_{k},~~\nabla _{X_{i}}X_{k}=\frac{1}{2}\left[ X_{i},X_{k}\right] ,~~\nabla
_{Z_{i}}Z_{k}=0.
\end{equation*}

We label $%
E_{1}=Z_{1},...,E_{k_{0}}=Z_{k_{0}},E_{k_{0}+1}=X_{1},...,E_{k_{0}+m_{0}}=X_{m_{0}} 
$. The Christoffel symbols satisfy $\Gamma _{pq}^{r}=0$ if at least two of $%
p,q,r$ are $\leq k_{0}$ or if $p,q,r>k_{0}$. If $a\leq k_{0}$, $b,q>k_{0}$,%
\begin{eqnarray*}
2\Gamma _{bq}^{a} &=&-2\Gamma _{qb}^{a}=2\Gamma _{aq}^{b}=2\Gamma
_{qa}^{b}=-2\Gamma _{ab}^{q}=-2\Gamma _{ba}^{q} \\
&=&\left\langle Z_{a},\left[ X_{b-k_{0}},X_{q-k_{0}}\right] \right\rangle
=\left\langle j\left( Z_{a}\right) X_{b-k_{0}},X_{q-k_{0}}\right\rangle .
\end{eqnarray*}

Letting $\partial _{E_{i}}=\partial _{i}$, $C_{i}=\left( E_{i}\diamond
\right) $, $C_{abq}=\left( E_{a}\diamond E_{b}\diamond E_{q}\diamond \right) 
$, etc., the Dirac operator is%
\begin{eqnarray*}
D &=&\sum_{i}\partial _{i}C_{i}\mathbf{+}\frac{1}{4}\sum_{i,j,k}\Gamma
_{ij}^{k}C_{ijk} \\
&=&\sum_{i}\partial _{i}C_{i}\mathbf{+}\frac{1}{4}\sum_{a\leq
k_{0};~b,q>k_{0}}\left( \Gamma _{bq}^{a}C_{bqa}+\Gamma
_{aq}^{b}C_{aqb}+\Gamma _{ba}^{q}C_{baq}\right)  \\
&=&\sum_{i}\partial _{i}C_{i}\mathbf{+}\frac{1}{4}\sum_{a\leq
k_{0};~q>b>k_{0}}\left( \Gamma _{bq}^{a}C_{bqa}+\Gamma
_{qb}^{a}C_{qba}+\Gamma _{aq}^{b}C_{aqb}+\Gamma _{ab}^{q}C_{abq}+\Gamma
_{ba}^{q}C_{baq}+\Gamma _{qa}^{b}C_{qab}\right)  \\
&=&\sum_{i}\partial _{i}C_{i}\mathbf{+}\frac{1}{2}\sum_{a\leq
k_{0};~q>b>k_{0}}\Gamma _{bq}^{a}C_{abq} \\
&=&\sum_{i}\partial _{i}C_{i}\mathbf{+}\frac{1}{4}\sum_{a\leq
k_{0};~~q>b>k_{0}}\left\langle Z_{a},\left[ X_{b-k_{0}},X_{q-k_{0}}\right]
\right\rangle \left( Z_{a}\diamond X_{b-k_{0}}\diamond X_{q-k_{0}}\diamond
\right) ,
\end{eqnarray*}%
so%
\begin{eqnarray}
D &=&\sum_{i}\left( E_{i}\diamond \right) \partial _{E_{i}}\mathbf{+}\frac{1%
}{4}\sum_{a\leq k_{0};~b<i\leq m_{0}}\left\langle Z_{a},\left[ X_{b},X_{i}%
\right] \right\rangle \left( Z_{a}\diamond X_{b}\diamond X_{i}\diamond
\right)   \notag \\
&=&\sum_{i}\left( E_{i}\diamond \right) \rho _{\varepsilon \ast }\left(
E_{i}\right) \mathbf{+}\frac{1}{2}\sum_{a\leq k_{0}}Z_{a}\diamond j\left(
Z_{a}\right) .  \label{Dirac2Step}
\end{eqnarray}%
In the expression above, we have used the fact that $j\left( Z_{a}\right)
\in \mathfrak{so}\left( m_{0}\right) =\mathfrak{spin}\left( m_{0}\right) $
and have therefore identified 
\begin{equation*}
j\left( Z_{a}\right) :=\frac{1}{2}\sum_{b<i\leq m_{0}}\left\langle j\left(
Z_{a}\right) X_{b},X_{i}\right\rangle X_{b}\diamond X_{i}\diamond .
\end{equation*}%
The formula above works for any two-step nilmanifold.

\begin{example}
In the three-dimensional Heisenberg case, for some constant $A>0$, we let
\linebreak $\left\{ X_{1}=\frac{1}{\sqrt{A}}X,X_{2}=\frac{1}{\sqrt{A}}%
Y,Z\right\} $ be an orthonormal frame with $\left[ X,Y\right] =Z$. We choose
a basis of $\Sigma _{3}\cong \mathbb{C}^{2}$ so that 
\begin{equation*}
\left( Z\diamond \right) =\left( 
\begin{array}{cc}
i & 0 \\ 
0 & -i%
\end{array}%
\right) ,~\left( X_{1}\diamond \right) =\left( 
\begin{array}{cc}
0 & i \\ 
i & 0%
\end{array}%
\right) ,~\left( X_{2}\diamond \right) =\left( 
\begin{array}{cc}
0 & -1 \\ 
1 & 0%
\end{array}%
\right) .
\end{equation*}%
Then 
\begin{eqnarray*}
\left\langle Z,\left[ X_{1},X_{2}\right] \right\rangle &=&\frac{1}{A}, \\
\left( Z\diamond X_{1}\diamond X_{2}\diamond \right) &=&-\mathbf{1},
\end{eqnarray*}%
so the equation above becomes%
\begin{equation*}
D=\sum_{i=1}^{3}\left( E_{i}\diamond \right) \partial _{e_{i}}-\frac{1}{4A},
\end{equation*}%
as seen in \cite[Equation 3.2]{Am-Ba}, with $d^{2}T=\frac{1}{A}$ in their
notation.
\end{example}

\subsection{Analogue of Pesce's theorem for spinors\label%
{AnaloguePescesThmSpinorsSection}}

In this section, we decompose $L_{\varepsilon }^{2}\left( \Gamma \diagdown
G\right) $ as a direct sum of irreducible representations. Let $\alpha \in 
\mathfrak{g}^{\ast }$. Recall $B_{\alpha }\left( X,Y\right) :=\alpha \left( %
\left[ X,Y\right] \right) $, $\mathfrak{g}_{\alpha }=\ker B_{\alpha }$, so
that $\alpha \left( \left[ \mathfrak{g}_{\alpha },\mathfrak{g}\right]
\right) =0$. Let $\mathfrak{g}^{\alpha }$ be a maximal polarizer of $\alpha $%
, meaning that it is a subalgebra of $\mathfrak{g}$ such that $\alpha \left( %
\left[ \mathfrak{g}^{\alpha },\mathfrak{g}^{\alpha }\right] \right) =0$ and
there does not exist a subalgebra $\mathfrak{h}$ with the same property such
that $\mathfrak{g}^{\alpha }\subsetneq \mathfrak{h}\subseteq \mathfrak{g}$.
Note that for every $\alpha \in \mathfrak{g}^{\ast }$ and every choice of $%
\mathfrak{g}^{\alpha }$,%
\begin{equation*}
\mathfrak{g}_{\alpha }\subset \mathfrak{g}^{\alpha }.
\end{equation*}%
Given $\mathfrak{g}^{\alpha }$, let $G^{\alpha }=\exp \left( \mathfrak{g}%
^{\alpha }\right) $.

\begin{lemma}
(Lemma 4 from \cite[Appendix A]{Pe}) \label{PesceLemmaA4}Let $\alpha \in 
\mathfrak{g}^{\ast }$, $\alpha \left( \left[ \mathfrak{g},\mathfrak{g}\right]
\right) \neq 0$ and $B_{\alpha }\left( X,Y\right) =\alpha \left( \left[ X,Y%
\right] \right) \in \mathbb{Z}$ for all $X,Y\in \log \Gamma $. Then there
exists a basis $\left\{
U_{1},...,U_{m},V_{1},...,V_{m},W_{1},...,W_{k}\right\} $of $\mathfrak{g}$
formed of elements of $\log \Gamma $, and there exist integers $%
r_{1},...,r_{k}$ such that

\begin{enumerate}
\item We have%
\begin{eqnarray*}
B_{\alpha }\left( U_{i},V_{i}\right) &=&\alpha \left( \left[ U_{i},V_{i}%
\right] \right) =r_{i}, \\
B_{\alpha }\left( U_{i},V_{j}\right) &=&0\text{ if }i\neq j,\text{ and} \\
B_{\alpha }\left( U_{i},U_{j}\right) &=&B_{\alpha }\left( V_{i},V_{j}\right)
=0\text{ for all }i,j.
\end{eqnarray*}

\item $\left\{ W_{1},...,W_{k}\right\} $ is a basis of $\mathfrak{g}_{\alpha
}$, $\left\{ W_{1},...,W_{k_{1}}\right\} $ is a basis of $\left[ \mathfrak{g}%
,\mathfrak{g}\right] $, $k_{1}\leq k$,

\item $\left[ \mathfrak{g},\mathfrak{g}\right] \cap \log \Gamma =\mathrm{span%
}_{\mathbb{Z}}\left\{ W_{1},...,W_{k_{1}}\right\} $.
\end{enumerate}
\end{lemma}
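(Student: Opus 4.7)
The plan is to assemble the basis in three stages: first a $\mathbb{Z}$-basis of $\mathfrak{z}\cap\log\Gamma$, then an extension to elements of $\log\Gamma$ spanning $\mathfrak{g}_\alpha$ over $\mathbb{R}$, and finally a symplectic complement produced by the elementary divisor theorem applied to the induced form $\overline{B_\alpha}$ on the quotient lattice. I would begin by observing that the hypothesis $B_\alpha(X,Y)\in\mathbb{Z}$ for $X,Y\in\log\Gamma$ makes $B_\alpha$ integer-valued on the lattice $\mathcal{L}\subset\mathfrak{g}$ generated by $\log\Gamma$, whence $\mathfrak{g}_\alpha=\ker B_\alpha$ is a rational subspace and $\mathcal{L}\cap\mathfrak{g}_\alpha$ is a lattice of full rank in $\mathfrak{g}_\alpha$. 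Since $[\mathfrak{g},\mathfrak{g}]\subset\mathfrak{z}$ in the two-step case, $\mathfrak{z}\subset\mathfrak{g}_\alpha$; by Mal'cev, $\log\Gamma\cap\mathfrak{z}$ is a lattice of rank $k_0=\dim\mathfrak{z}$ in $\mathfrak{z}$, from which I extract a $\mathbb{Z}$-basis $W_1,\dots,W_{k_0}$. I then extend this collection to elements of $\log\Gamma$ spanning $\mathfrak{g}_\alpha$ over $\mathbb{R}$, using the fact that $n\cdot X\in\log\Gamma$ whenever $X\in\log\Gamma$ and $n\in\mathbb{Z}$ to rescale any $\mathbb{Q}$-basis of $\mathfrak{g}_\alpha$ inside $\mathcal{L}$ into $\log\Gamma$.

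For the symplectic pairs I would apply the elementary divisor theorem for alternating forms over a PID. The form $\overline{B_\alpha}$ descends to a nondegenerate $\mathbb{Z}$-valued alternating form on the free $\mathbb{Z}$-module $\mathcal{L}/(\mathcal{L}\cap\mathfrak{g}_\alpha)$ of even rank $2m=\dim\mathfrak{g}-\dim\mathfrak{g}_\alpha>0$ (nontriviality uses $\alpha(\mathfrak{z})\neq 0$). The theorem then supplies a $\mathbb{Z}$-basis $\bar U_1,\bar V_1,\dots,\bar U_m,\bar V_m$ of the quotient in which the matrix of $\overline{B_\alpha}$ is block-diagonal with $2\times 2$ blocks $\left(\begin{smallmatrix}0 & r_i\\ -r_i & 0\end{smallmatrix}\right)$ for positive integers $r_i$. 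Lifting each $\bar U_i,\bar V_i$ to an element of $\mathcal{L}$ and rescaling by positive integers so that the lifts sit in $\log\Gamma$, I obtain the desired $U_i, V_i\in\log\Gamma$: the cross pairings vanish because $B_\alpha$ annihilates $\mathfrak{g}_\alpha$ and is therefore unchanged under change of representative, while the diagonal pairings $B_\alpha(U_i,V_i)$ remain nonzero integers (with the rescalings absorbed into new values of the $r_i$). Together, $\{U_1,\dots,U_m,V_1,\dots,V_m,W_1,\dots,W_k\}$ is an $\mathbb{R}$-basis of $\mathfrak{g}$ formed of elements of $\log\Gamma$ satisfying (1)--(3).

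The principal obstacle is the integer Darboux theorem for alternating forms, which is stronger than its real-linear counterpart: one must show that a nondegenerate skew-symmetric integer matrix can be brought to block-diagonal Darboux shape by an integer change of basis respecting antisymmetry. This is carried out by a symmetric variant of Smith normal form, placing a minimal nonzero entry in the $(1,2)$ position, clearing the remainder of the first two rows and columns by simultaneous integer row/column operations, and recursing on the complementary $(2m-2)\times(2m-2)$ block. A secondary care point is verifying that the scalings needed to land in $\log\Gamma$ rather than merely in $\mathcal{L}$ preserve both the vanishing conditions of (1) and the precise $\mathbb{Z}$-span condition (3) for $\mathfrak{z}\cap\log\Gamma$; these are straightforward because no scaling of the $W_j$ is necessary (they already sit in $\log\Gamma$ by construction) and scaling of the $U_i, V_i$ commutes with all the bilinear identities required.
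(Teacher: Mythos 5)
The paper quotes this lemma directly from Pesce's Appendix~A \cite{Pe} without supplying a proof, so there is no in-paper argument to compare against. Your approach --- rationality of $\mathfrak{g}_\alpha$, Mal'cev theory for the central lattice, the elementary-divisor (symplectic Smith) normal form for the induced integer alternating form on the quotient lattice, followed by lifting and rescaling --- is the standard one, and the overall structure of the argument is sound.

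There is, however, a real gap in the rescaling step. You want to pass from elements of $\mathcal{L}$ (the additive $\mathbb{Z}$-span of $\log\Gamma$) into $\log\Gamma$ by multiplying by positive integers, and you cite only the fact that $nX\in\log\Gamma$ for $X\in\log\Gamma$ and $n\in\mathbb{Z}$. That fact moves elements already inside $\log\Gamma$; it says nothing about $\mathcal{L}\setminus\log\Gamma$, and in a two-step nilpotent group $\log\Gamma$ is genuinely not additively closed (already for the integer Heisenberg lattice, the sum of the two noncentral generators lies in $\mathcal{L}$ but its exponential has a half-integer entry and is not in $\Gamma$). The gap is filled by a short Baker--Campbell--Hausdorff computation showing that $2X\in\log\Gamma$ for every $X\in\mathcal{L}$ in the two-step case: writing $X=\sum_i a_iY_i$ with $a_i\in\mathbb{Z}$ and $Y_i\in\log\Gamma$, one checks
\[
\prod_i\exp(2a_iY_i)=\exp\Bigl(2X+2\sum_{i<j}a_ia_j[Y_i,Y_j]\Bigr),
\]
and since each $[Y_i,Y_j]\in\log\Gamma$ is central, multiplying on the right by $\exp\bigl(-2\sum_{i<j}a_ia_j[Y_i,Y_j]\bigr)\in\Gamma$ gives $\exp(2X)\in\Gamma$. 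For the $W_j$ there is also a cleaner route: $\mathfrak{g}_\alpha$ is a rational \emph{ideal} of $\mathfrak{g}$ (since $[\mathfrak{g}_\alpha,\mathfrak{g}]\subset[\mathfrak{g},\mathfrak{g}]\subset\mathfrak{z}\subset\mathfrak{g}_\alpha$), so Mal'cev's theorem gives that $\Gamma\cap\exp(\mathfrak{g}_\alpha)$ is cocompact in $\exp(\mathfrak{g}_\alpha)$, hence $\log\Gamma\cap\mathfrak{g}_\alpha$ already spans $\mathfrak{g}_\alpha$ over $\mathbb{R}$ with no rescaling at all; the lifts of the $\bar U_i,\bar V_i$, however, do require the BCH step. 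With this repaired, the rest of the bookkeeping --- invariance of the cross pairings under choice of lift and rescaling, and the observation that rescalings multiply the $r_i$ by positive integers but keep them nonzero integers --- is correct.
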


\begin{remark}
It follows from Pesce's proof of this Lemma that we may also choose $\left\{
W_{1},...,W_{k_{0}}\right\} $ to be a basis of $\mathfrak{z}$, with $%
k_{1}\leq k_{0}\leq k$.
\end{remark}

As before, $\log \Gamma $ generates a lattice $\mathcal{L}$ in $\mathfrak{g}$%
. Let $\mathcal{A}_{\alpha }=\mathcal{L}\diagup \left( \mathcal{L}\cap 
\mathfrak{g}_{\alpha }\right) $. When $\pi _{\alpha }$ occurs, this will be
a lattice in $\mathfrak{g}\diagup \mathfrak{g}_{\alpha }$.

\begin{proposition}
(Version of Pesce Occurrence Condition (\cite[Proposition 9 of Appendix A]%
{Pe}) for Dirac spinors) The representation $\pi _{\alpha }$ appears in $%
L_{\varepsilon }^{2}\left( \Gamma \diagdown G\right) $ if and only if 
\begin{equation}
\alpha \left( \log \gamma \right) \in \mathbb{Z}+\frac{1-\varepsilon \left(
\gamma \right) }{4}  \label{occurrenceConditionForSpinors}
\end{equation}%
for all $\gamma \in \Gamma \cap G_{\alpha }$. In this case, the multiplicity
of $\pi _{\alpha }$ is $m_{\alpha }=1$ if $\alpha \left( \left[ \mathfrak{g},%
\mathfrak{g}\right] \right) =\left\{ 0\right\} $, and otherwise%
\begin{equation*}
m_{\alpha }=\sqrt{\det \left( \overline{B_{\alpha }}\right) },
\end{equation*}%
where the determinant is computed with respect to (any) lattice basis of $%
\mathcal{A}_{\alpha }\subset \mathfrak{g}\diagup \mathfrak{g}_{\alpha }$.
\end{proposition}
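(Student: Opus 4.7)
The plan is to adapt Pesce's proof of the occurrence and multiplicity theorem for $L^{2}(\Gamma\diagdown G)$ to the twisted space $L^{2}_{\varepsilon}(\Gamma\diagdown G)$, using the spinor extensions of the Moore and L.~Richardson theorems developed in the appendix (Section~\ref{MooreRichardsonPapersSection}). The key observation is that $L^{2}_{\varepsilon}(\Gamma\diagdown G)$ is naturally identified with the induced representation $\mathrm{Ind}_{\Gamma}^{G}\varepsilon$, where $\varepsilon$ is regarded as a one-dimensional unitary character of $\Gamma$; hence we must decompose an induced representation from a twisted character rather than from the trivial one.

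First I realize $\pi_{\alpha}$ via Kirillov's orbit method as $\pi_{\alpha}=\mathrm{Ind}_{G^{\alpha}}^{G}\chi_{\alpha}$, where $G^{\alpha}=\exp(\mathfrak{g}^{\alpha})$ for a maximal polarizer $\mathfrak{g}^{\alpha}$ of $\alpha$ and $\chi_{\alpha}(\exp X)=e^{2\pi i\alpha(X)}$. Applying the spinor versions of L.~Richardson's and Moore's results from the appendix, the condition for $\pi_{\alpha}$ to occur in $\mathrm{Ind}_{\Gamma}^{G}\varepsilon$ reduces, after eliminating the dependence on the choice of polarizer (using $\mathfrak{g}_{\alpha}\subset \mathfrak{g}^{\alpha}$), to the equality of $\chi_{\alpha}$ and $\varepsilon$ on $\Gamma\cap G_{\alpha}$:
\begin{equation*}
e^{2\pi i\alpha(\log\gamma)}=\varepsilon(\gamma),\qquad \gamma\in\Gamma\cap G_{\alpha}.
\end{equation*}
Since $\varepsilon(\gamma)\in\{\pm 1\}$, this rewrites precisely as $\alpha(\log\gamma)\in \mathbb{Z}+\tfrac{1-\varepsilon(\gamma)}{4}$, which is the stated occurrence condition.

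For the multiplicity I would follow Pesce's strategy, applying it to the basis $\{U_{i},V_{i},W_{j}\}$ furnished by Lemma~\ref{PesceLemmaA4}. When $\alpha(\mathfrak{z})=\{0\}$, $\pi_{\alpha}=\chi_{\alpha}$ is a one-dimensional character, and the twisted occurrence condition directly yields $m_{\alpha}=1$. When $\alpha(\mathfrak{z})\neq\{0\}$, $\pi_{\alpha}$ is infinite-dimensional, and the multiplicity is computed as the covolume of the lattice $\mathcal{A}_{\alpha}\subset \mathfrak{g}/\mathfrak{g}_{\alpha}$ with respect to the induced nondegenerate symplectic form $\overline{B_{\alpha}}$, giving $\sqrt{\det(\overline{B_{\alpha}})}$, exactly as in Pesce's untwisted case.

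The main obstacle will be verifying that the twist $\varepsilon$ affects only the occurrence condition and leaves the multiplicity formula unchanged. This is plausible because $\varepsilon$ is a unitary one-dimensional character and the multiplicity is a symplectic lattice invariant, but the verification requires re-doing the counting arguments of Moore and L.~Richardson in the spinor setting; this is precisely the purpose of the extensions carried out in the appendix. Once one checks that, given the twisted occurrence condition, the extensions of $\chi_{\alpha}$ to $G^{\alpha}$ compatible with $\varepsilon$ on $\Gamma\cap G^{\alpha}$ are parameterized by the same lattice data as in the untwisted case, the formula $m_{\alpha}=\sqrt{\det(\overline{B_{\alpha}})}$ follows from the classical symplectic volume computation.
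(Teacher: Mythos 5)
Your plan matches the paper's proof in all essential respects: invoke the spinor-valued occurrence and multiplicity theorems from the appendix (Theorems \ref{occurInL2EpsTheorem} and \ref{multiplicityAndOccurInL2EpsTheorem}), dispose of the case $\alpha(\mathfrak{z})=0$ directly, and for $\alpha(\mathfrak{z})\neq 0$ use the special basis of Lemma~\ref{PesceLemmaA4} to exhibit and count $\Gamma$-orbits of $\varepsilon$-integral points, concluding with $\det(\overline{B_\alpha})=(r_1\cdots r_m)^2$. One caveat: the step you dismiss parenthetically --- passing from the appendix condition on a polarizer $G^{\alpha'}$ to the stated condition on $\Gamma\cap G_\alpha$ --- is not a consequence of the inclusion $\mathfrak{g}_\alpha\subset\mathfrak{g}^\alpha$ alone, but uses the two-step structure in both directions: forward, that $G^{\alpha'}=G^\alpha$ and $\alpha'=\alpha\circ(I+\mathrm{ad}\,X)$ agrees with $\alpha$ on $\log(\Gamma\cap G_\alpha)$; backward, an explicit conjugation of the polarizer of Lemma~\ref{PesceLemmaA4} by $\prod\exp(x_jU_j)$ with each $x_j$ chosen to make $(\overline{\alpha}\circ I_x)(\exp V_j)=\varepsilon(\exp V_j)$.
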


\begin{proof}
Items 4 through 8 in \cite[Appendix A]{Pe} apply in this situation.

If $\alpha \left( \left[ \mathfrak{g},\mathfrak{g}\right] \right) =0$, then $%
\mathfrak{g}=\mathfrak{g}_{\alpha }=\mathfrak{g}^{\alpha }$. Then condition (%
\ref{occurrenceConditionForSpinors}) is equivalent to Theorem \ref%
{occurInL2EpsTheorem}. In addition, using Theorem \ref%
{multiplicityAndOccurInL2EpsTheorem}, 
\begin{eqnarray*}
m\left( \pi _{\alpha },L_{\varepsilon }^{2}\left( \Gamma \diagdown G\right)
\right) &=&\#\left( \left( G^{\alpha }\diagdown G\right) _{\varepsilon
}\diagup \Gamma \right) \\
&=&\#\left( \left( G\diagdown G\right) _{\varepsilon }\diagup \Gamma \right)
=1.
\end{eqnarray*}%
For the remainder of the proof, we assume $\alpha \left( \left[ \mathfrak{g},%
\mathfrak{g}\right] \right) \neq 0$. First, we assume $\pi _{\alpha }$
appears in $L_{\varepsilon }^{2}\left( \Gamma \diagdown G\right) $. Then, by
Theorem \ref{occurInL2EpsTheorem}, there exists $\alpha ^{\prime }$ in the
coadjoint orbit of $\alpha $ such that $\left( \overline{\alpha ^{\prime }}%
,G^{\alpha ^{\prime }}\right) $ is an $\varepsilon $-integral point, where $%
\overline{\alpha ^{\prime }}=\overline{\alpha }\circ I_{x}$ ($I_{x}=$
conjugation by $x$), $\alpha ^{\prime }=\alpha \circ $\textrm{$Ad$}$\left(
x\right) $ and $G^{\alpha ^{\prime }}=I_{x^{-1}}\left( G^{\alpha }\right) $
such that 
\begin{equation*}
\alpha ^{\prime }\left( \log \gamma \right) \in \left\{ 
\begin{array}{ll}
\mathbb{Z} & \text{if~}\varepsilon \left( \gamma \right) =1 \\ 
\frac{1}{2}+\mathbb{Z~} & \text{if~}\varepsilon \left( \gamma \right) =-1%
\end{array}%
\right.
\end{equation*}%
for all $\gamma \in \Gamma \cap G^{\alpha ^{\prime }}$. In the two-step
case, $G^{\alpha ^{\prime }}=G^{\alpha }$ since $I_{x}\left( y\right)
y^{-1}\in Z\left( G\right) $ for all $x,y\in G$, and $Z\left( G\right)
\subseteq G_{\alpha }\subseteq G^{\alpha ^{\prime }}$. Also in the two-step
case, if $\alpha ,\alpha ^{\prime }$ lie in the same coadjoint orbit, then
there exists $X\in \mathfrak{g}$ such that $\alpha ^{\prime }=\alpha \circ
\left( I+\mathrm{ad}\left( X\right) \right) $. Thus%
\begin{equation*}
\alpha \left( \log \gamma +\mathrm{ad}\left( X\right) \log \gamma \right)
\in \left\{ 
\begin{array}{ll}
\mathbb{Z} & \text{if~}\varepsilon \left( \gamma \right) =1 \\ 
\frac{1}{2}+\mathbb{Z~} & \text{if~}\varepsilon \left( \gamma \right) =-1%
\end{array}%
\right.
\end{equation*}%
for all $\gamma \in \Gamma \cap G^{\alpha }$. This implies the same
condition is met for all $\gamma \in \Gamma \cap G_{\alpha }$, in which case 
$\alpha \left( \mathrm{ad}\left( X\right) \log \gamma \right) =\alpha \left( %
\left[ X,\log \gamma \right] \right) =0$, by definition of $G_{\alpha }$.

On the other hand, suppose%
\begin{equation*}
\alpha \left( \log \gamma \right) \in \mathbb{Z}+\frac{1-\varepsilon \left(
\gamma \right) }{4}
\end{equation*}%
for all $\gamma \in \Gamma \cap G_{\alpha }$. Note that if $X,Y\in \log
\Gamma $, then $\left[ X,Y\right] \in \log \Gamma $ since $\left[ \exp
X,\exp Y\right] =\exp \left( \left[ X,Y\right] \right) $ (since $G$ is
two-step). Therefore, $\alpha \left( \left[ X,Y\right] \right) \in \mathbb{Z}
$, since $\varepsilon \left( \left[ \exp X,\exp Y\right] \right) =1$. We can
then use Lemma \ref{PesceLemmaA4} to construct a basis $\left\{
U_{1},...,U_{m},V_{1},...,V_{m},W_{1},...,W_{k}\right\} \subset \log \Gamma $
of $\mathfrak{g}$ and integers $r_{1},...,r_{m}$ such that $\alpha \left( %
\left[ U_{j},V_{j}\right] \right) =r_{j}$. Set $\mathfrak{h}=\mathrm{span}_{%
\mathbb{R}}\left\{ V_{1},...,V_{m},W_{1},...,W_{k}\right\} $. Then $%
\mathfrak{h}$ is a rational ideal of $\mathfrak{g}$, since $\left[ \mathfrak{%
h},\mathfrak{g}\right] \subseteq \mathfrak{z}\subseteq \mathfrak{h}$
(two-step condition), and $\mathfrak{h}$ is a polarizer of $\alpha $. Set $%
H=\exp \left( \mathfrak{h}\right) $, which is a normal subgroup of $G$. Note
that%
\begin{equation*}
H=\left\{ \prod_{i=1}^{m}\exp \left( y_{i}V_{i}\right) \prod_{j=1}^{k}\exp
\left( z_{j}W_{j}\right) :y_{i},z_{j}\in \mathbb{R}\right\} .
\end{equation*}%
Define $\overline{\alpha }\left( \exp \left( X\right) \right) =\exp \left(
2\pi i\alpha \left( X\right) \right) $ for all $X\in \mathfrak{h}$. By
Theorem \ref{occurInL2EpsTheorem}, to prove that $\pi _{\alpha }$ occurs, we
need only construct an $\varepsilon $-integral point in the $G$-orbit of $%
\left( \overline{\alpha },H\right) $. For $x\in G$, define $x_{i}$, $y_{i}$, 
$z_{j}$ by the formula 
\begin{equation*}
x=\prod_{i^{\prime }=1}^{m}\exp \left( x_{i^{\prime }}U_{i^{\prime }}\right)
\prod_{i=1}^{m}\exp \left( y_{i}V_{i}\right) \prod_{j=1}^{k}\exp \left(
z_{j}W_{j}\right) ,
\end{equation*}%
and define $p_{i}$, $q_{i}$, $\eta _{j}$ by 
\begin{equation*}
\alpha \left( \sum u_{i}U_{i}+v_{i}V_{i}+\sum w_{j}W_{j}\right) =\sum \left(
p_{i}u_{i}+q_{i}v_{i}\right) +\sum \eta _{j}w_{j},
\end{equation*}%
for all $u_{i},v_{i},w_{j}\in \mathbb{R}$, $1\leq i\leq m,1\leq j\leq k$. By 
\cite[Lemma 7, Appendix A]{Pe}, \cite[Theorem 8, Appendix A]{Pe}, 
\begin{equation*}
H\cap \Gamma =\left\{ \prod_{i=1}^{m}\exp \left( t_{i}V_{i}\right)
\prod_{j=1}^{k}\exp \left( s_{j}W_{j}\right) :t_{i},s_{j}\in \mathbb{Z}%
\right\} .
\end{equation*}%
We need to show that there exists $x\in G$ such that $\left( \overline{%
\alpha }\circ I_{x}\right) \left( \gamma \right) =\varepsilon \left( \gamma
\right) $ for all $\gamma \in H\cap \Gamma $. First note that 
\begin{eqnarray*}
\left( \overline{\alpha }\circ I_{x}\right) \left( \exp \left( W_{j}\right)
\right) &=&\overline{\alpha }\left( \exp \left( W_{j}+\left[ \log \left(
x\right) ,W_{j}\right] \right) \right) \\
&=&\exp \left( 2\pi i\alpha \left( W_{j}+\left[ \log \left( x\right) ,W_{j}%
\right] \right) \right) \\
&=&\exp \left( 2\pi i\alpha \left( W_{j}\right) \right) \text{ since }%
W_{j}\in \mathfrak{g}_{\alpha } \\
&=&\overline{\alpha }\left( \exp \left( W_{j}\right) \right) =\varepsilon
\left( \exp \left( W_{j}\right) \right)
\end{eqnarray*}%
since $W_{j}\in \left( \log \Gamma \right) \cap \mathfrak{g}_{\alpha }$.
Next, 
\begin{eqnarray}
\left( \overline{\alpha }\circ I_{x}\right) \left( \exp \left( V_{j}\right)
\right) &=&\overline{\alpha }\left( \exp \left( V_{j}+\left[ \log \left(
x\right) ,V_{j}\right] \right) \right)  \notag \\
&=&\exp \left( 2\pi i\alpha \left( V_{j}+\left[ \log \left( x\right) ,V_{j}%
\right] \right) \right)  \notag \\
&=&\exp \left( 2\pi i\alpha \left( V_{j}+x_{j}\left[ U_{j},V_{j}\right]
\right) \right)  \notag \\
&=&\exp \left( 2\pi i\left( q_{j}+x_{j}r_{j}\right) \right) .
\label{qjPrime}
\end{eqnarray}%
By setting $x_{j}=-\frac{q_{j}}{r_{j}}$ or $-\frac{q_{j}+\frac{1}{2}}{r_{j}}$
depending on whether $\varepsilon \left( \exp \left( V_{j}\right) \right)
=\pm 1$, we conclude. 
\begin{equation*}
\left( \overline{\alpha }\circ I_{x}\right) \left( \exp \left( V_{j}\right)
\right) =\varepsilon \left( \exp \left( V_{j}\right) \right) \text{.}
\end{equation*}%
We have shown that for all $\gamma \in H\cap \Gamma $ there exists $x\in G$
such that $\left( \overline{\alpha }\circ I_{x}\right) \left( \gamma \right)
=\varepsilon \left( \gamma \right) $.

We now calculate the multiplicity with which $\pi _{\alpha }$ appears. In
fact, for $x\in G$, the calculations above show that for all $X\in \mathfrak{%
h,}$ $\left( \overline{\alpha }\circ I_{x}\right) \left( \exp \left(
X\right) \right) $ depends only on the $x_{i}$ and not on $y_{i}$ or $z_{j}$%
, $1\leq i\leq m,1\leq j\leq k$. Thus from (\ref{qjPrime}) the orbit of $%
\left( \overline{\alpha },H\right) $ is the set of characters of $H$ 
\begin{equation*}
\left\{ \left( \chi _{q^{\prime }},H\right) :q^{\prime }\in \mathbb{R}%
^{m}\right\} ,
\end{equation*}%
where where after a bit of calculation identical to \cite[p.453, lines -8
through -5]{Pe} 
\begin{equation*}
\chi _{q^{\prime }}\left( \prod_{i=1}^{m}\exp \left( t_{i}V_{i}\right)
\prod_{j=1}^{k}\exp \left( s_{j}W_{j}\right) \right) =\exp \left( 2\pi
i\left( \sum_{i=1}^{m}q_{i}^{\prime }t_{i}+\sum_{j=1}^{k}\eta
_{j}s_{j}\right) \right) \text{.}
\end{equation*}%
Then $\left( \chi _{q^{\prime }},H\right) $ is an $\varepsilon $-integer
point if and only if%
\begin{eqnarray*}
q_{i}^{\prime } &\in &\mathbb{Z}\text{ whenever }\varepsilon \left( \exp
\left( V_{i}\right) \right) =1, \\
q_{i}^{\prime } &\in &\frac{1}{2}+\mathbb{Z}\text{ whenever }\varepsilon
\left( \exp \left( V_{i}\right) \right) =-1.
\end{eqnarray*}%
Note also from (\ref{qjPrime}) that two $\varepsilon $-integer points $%
\left( \chi _{q^{\prime }},H\right) $ and $\left( \chi _{q^{\prime \prime
}},H\right) $ are in the same $\Gamma $-orbit if and only if $q_{i}^{\prime
}-q_{i}^{\prime \prime }\in r_{i}\mathbb{Z}$, $i=1,...,m$. So the number $%
m_{\alpha }$ of $\Gamma $-orbits in the $\varepsilon $-integer points is $%
r_{1}r_{2}...r_{m}$. Next, it is clear that the images of $%
U_{1},...,U_{m},V_{1},...,V_{m}$ form a basis of $\mathcal{A}_{\alpha }$. So%
\begin{equation*}
\det \left( \overline{B_{\alpha }}\right) =\det \left( B_{\alpha }\left(
U_{i},V_{j}\right) \right) ^{2}=\left( r_{1}r_{2}...r_{m}\right)
^{2}=m_{\alpha }^{2}.
\end{equation*}
\end{proof}

\section{ Decomposition of the Dirac operator on two-step nilmanifolds\label%
{2StepDiracSection}}

\vspace{0in}We continue with the notation of the previous section; recall
that $k_{0}$ is the dimension of the center $\mathfrak{z}$ and $%
n=k_{0}+m_{0} $ is the dimension of $\mathfrak{g}=\mathfrak{z}\oplus 
\mathfrak{v}$, and we will choose the orthonormal basis $\left\{
E_{1},...,E_{n}\right\} =\left\{
Z_{1},...,Z_{k_{1}},...,Z_{k_{0}},X_{1},...,X_{m_{0}}\right\} $ so that $%
\left\{ Z_{j}\right\} _{j=1}^{k_{1}}$ is an orthonormal basis of $\left[ 
\mathfrak{g},\mathfrak{g}\right] $, $\left\{ Z_{j}\right\} _{j=1}^{k_{0}}$
is an orthonormal basis of $\mathfrak{z}$ and $\left\{ X_{j}\right\} $ is an
orthonormal basis of $\mathfrak{v}$. From formula (\ref{Dirac2Step}) and
this choice of basis, the Dirac operator is now%
\begin{equation*}
D=\sum_{i=1}^{n}\left( E_{i}\diamond \right) \rho _{\varepsilon \ast }\left(
E_{i}\right) \mathbf{+}\frac{1}{2}\sum_{a\leq k_{1}}Z_{a}\diamond j\left(
Z_{a}\right) ,
\end{equation*}%
acting on 
\begin{equation}
\mathcal{H}=L^{2}\left( \Gamma \diagdown G,G\times _{\varepsilon }\Sigma
_{n}\right) \cong L_{\varepsilon }^{2}\left( \Gamma \diagdown G\right)
\otimes \Sigma _{n},  \label{isomorphismToTensorProduct}
\end{equation}%
which we decompose using Kirillov theory.

Choose an element $\alpha \in \mathfrak{g}^{\ast }$. Our strategy is as
follows. We first construct a subspace $\mathcal{H}_{\alpha }$ of $%
L^{2}\left( \Gamma \diagdown G,G\times _{\varepsilon }\mathbb{C}^{k}\right) $
that is invariant with respect to $\rho _{\varepsilon }$ and invariant by $D$%
. Once we have done this, by Kirillov theory, let $\overline{\mathcal{H}%
_{\alpha }}$ be the irreducible $\rho _{\varepsilon }$-subspace of $%
L_{\varepsilon }^{2}\left( \Gamma \diagdown G\right) $ corresponding to the
coadjoint orbit of $\alpha $, and let 
\begin{equation*}
\mathcal{H}_{\alpha }\cong \overline{\mathcal{H}_{\alpha }}\otimes \Sigma
_{n}
\end{equation*}%
through the isomorphism above. While $\overline{\mathcal{H}_{\alpha }}$ is $%
\rho _{\varepsilon }$-irreducible, $\mathcal{H}_{\alpha }$ is not for $n\geq
2.$ We express $D$ acting on $\mathcal{H}_{\alpha }$, and because of the
two-step structure, we are able to solve explicitly the partial differential
equation for eigenvalues via Hermite functions.

Since $\sum_{i=1}^{n}\rho _{\varepsilon \ast }\left( E_{i}\right) \left(
E_{i}\diamond \right) $ is independent of the choice of basis $\left\{
E_{1},...,E_{n}\right\} $, the second term is similarly independent of
choices and independent of the representation $\rho _{\varepsilon }$. Define%
\begin{eqnarray}
D_{\rho _{\varepsilon }} &=&\sum_{i=1}^{n}\left( E_{i}\diamond \right) \rho
_{\varepsilon \ast }\left( E_{i}\right)   \notag \\
M &=&\frac{1}{2}\sum_{a\leq k_{0}}Z_{a}\diamond j\left( Z_{a}\right) =\frac{1%
}{2}\sum_{a\leq k_{1}}Z_{a}\diamond j\left( Z_{a}\right) ,
\label{M_definition1}
\end{eqnarray}%
so that $D=D_{\rho _{\varepsilon }}+M$ with $M$ a hermitian linear
transformation independent of invariant subspace. We have used the notation (%
\ref{Dirac2Step}). Note that $\rho _{\varepsilon }$ and $\left( Y\diamond
\right) $ commute if $\left( Y\diamond \right) $ is a constant
transformation --- that is, if $Y$ is left-invariant. Thus $M$ commutes with 
$\rho _{\varepsilon }$ because each $\left\langle Z_{a},\left[ X_{b},X_{i}%
\right] \right\rangle $ is constant on $\Gamma \diagdown G$.

As before, we define the symplectic form on $\mathfrak{g}$ by $B_{\alpha
}\left( U,V\right) :=\alpha \left( \left[ U,V\right] \right) $, and let $%
\mathfrak{g}_{\alpha }=\ker B_{\alpha }=\left\{ U\in \mathfrak{g}:B_{\alpha
}\left( U,\cdot \right) =0\right\} $, $k_{\alpha }=\dim \mathfrak{g}_{\alpha
}$. We have two cases.

\subsection{\textbf{Finite-dimensional }$\overline{\mathcal{H}_{\protect%
\alpha }}$-\textbf{irreducible subspaces:} $k_{\protect\alpha }=n$, i.e. $%
\protect\alpha \left( \left[ \mathfrak{g},\mathfrak{g}\right] \right) =0$.}

In this case, $\mathfrak{g}_{\alpha }=\mathfrak{g}$, and $\mathfrak{g}%
^{\alpha }=\mathfrak{g}$ is a maximal polarizer of $\alpha $. Then $%
G^{\alpha }=\exp \left( \mathfrak{g}^{\alpha }\right) =G.$ Define 
\begin{eqnarray*}
\mathcal{H}_{\alpha } &=&\left\{ \sigma \in \mathcal{H}:\sigma \left(
hx\right) =\overline{\alpha }\left( h\right) \sigma \left( x\right) \text{
for all }h\in G^{\alpha },x\in G\right\} \\
&=&\left\{ \sigma \in \mathcal{H}:\sigma \left( hx\right) =\overline{\alpha }%
\left( h\right) \sigma \left( x\right) \text{ for all }h\in G,x\in G\right\}
\\
&=&\overline{\alpha }\left( \cdot \right) \otimes \Sigma _{n},
\end{eqnarray*}%
where 
\begin{equation*}
\overline{\alpha }\left( h\right) =e^{2\pi i\alpha \left( \log h\right) }.
\end{equation*}%
For $\sigma \in \mathcal{H}_{\alpha }$, we have, since $\alpha \left( \left[ 
\mathfrak{g},\mathfrak{g}\right] \right) =0$, for $p\in \Gamma \diagdown G$, 
\begin{eqnarray*}
\rho _{\varepsilon \ast }\left( U\right) \sigma \left( p\right) &=&\left. 
\frac{d}{dt}\right\vert _{0}\sigma \left( p\exp \left( tU\right) \mathbf{1}%
\right) =\left. \frac{d}{dt}\right\vert _{0}e^{2\pi i\alpha \log \left(
p\exp \left( tU\right) \right) }\sigma \left( \mathbf{1}\right) \\
&=&\left. \frac{d}{dt}\right\vert _{0}e^{2\pi i\alpha \left( \log \left(
p\right) +tU+\frac{1}{2}\left[ \log \left( p\right) ,tU\right] \right)
}\sigma \left( \mathbf{1}\right) \\
&=&\left. \frac{d}{dt}\right\vert _{0}e^{2\pi i\alpha \left( \log \left(
p\right) +tU\right) }\sigma \left( \mathbf{1}\right) .
\end{eqnarray*}%
We have $\rho _{\varepsilon \ast }\left( Z_{a}\right) \sigma =0$, and $\rho
_{\varepsilon \ast }\left( X_{i}\right) \sigma =\frac{\partial }{\partial
x_{i}}\sigma =2\pi i\alpha \left( X_{i}\right) \sigma $. Thus, 
\begin{eqnarray}
\left. D\right\vert _{\mathcal{H}_{\alpha }} &=&\sum_{i=1}^{m_{0}}2\pi
i\alpha \left( X_{i}\right) \left( X_{i}\diamond \right) +\sum_{Z_{j}\not\in %
\left[ \mathfrak{g},\mathfrak{g}\right] }2\pi i\alpha \left( Z_{j}\right)
\left( Z_{j}\diamond \right)  \notag \\
&&\mathbf{+}\frac{1}{2}\sum_{a\leq k_{1}}Z_{a}\diamond j\left( Z_{a}\right) ,
\label{FiniteDimDiracOperatorFormula}
\end{eqnarray}%
which is a constant matrix. The eigenvalues of $\left. D\right\vert _{%
\mathcal{H}_{\alpha }}$ are then the eigenvalues of this Hermitian matrix.

\subsection{\textbf{Infinite-dimensional }$\overline{\mathcal{H}_{ \protect%
\alpha }}$-\textbf{irreducible subspaces:} $k_{\protect\alpha }<n$, so that $%
\protect\alpha \left( \left[ \mathfrak{g},\mathfrak{g}\right] \right) $ is
not identically zero.\label{Case2Section}}

\vspace{1pt}Choose a new orthonormal basis of $\mathfrak{g}$:%
\begin{equation*}
\left\{ W_{1},...,W_{k_{\alpha }},U_{1},...,U_{m},V_{1},...,V_{m}\right\} ,
\end{equation*}%
where $n=k_{\alpha }+2m$, $\left\{ W_{j}\right\} $ is a basis of $\mathfrak{g%
}_{\alpha }$ with $W_{1}=Z_{1},...,W_{k_{0}}=Z_{k_{0}}\in \mathfrak{z}$, $%
W_{k_{0}+1},...,W_{k_{\alpha }}\in \mathfrak{g}_{\alpha }\cap \mathfrak{z}%
^{\bot }$.%
\begin{eqnarray*}
B_{\alpha }\left( U_{i},V_{i}\right) &=&\alpha \left( \left[ U_{i},V_{i}%
\right] \right) =d_{i}>0,0<d_{1}\leq d_{2}\leq \cdots \leq d_{m}, \\
B_{\alpha }\left( U_{i},V_{j}\right) &=&0\text{ if }i\neq j \\
B_{\alpha }\left( U_{i},U_{j}\right) &=&B_{\alpha }\left( V_{i},V_{j}\right)
=0\text{ for all }i,j.
\end{eqnarray*}%
Note the similarity with Lemma \ref{PesceLemmaA4}, but we have replaced some
of the $V_{j}$ with their negatives in order to make $d_{j}$ positive. We
may assume $n-k_{\alpha }$ is even, since the restriction of $B_{\alpha }$
to $\mathfrak{g}_{\alpha }^{\bot }$ is a symplectic form. Then the
polarizing subalgebra $\mathfrak{g}^{\alpha }$ (meaning that $\mathfrak{g}%
^{\alpha }$ is a subalgebra of $\mathfrak{g}$ such that $\alpha \left( \left[
\mathfrak{g}^{\alpha },\mathfrak{g}^{\alpha }\right] \right) =0$ and is
maximal with respect to inclusion) will be chosen to be%
\begin{equation*}
\mathfrak{g}^{\alpha }=\mathrm{span}\left\{
V_{1},...,V_{m},W_{1},...,W_{k_{\alpha }}\right\} ,
\end{equation*}%
and again $G^{\alpha }:=\exp \left( \mathfrak{g}^{\alpha }\right) $. We
have, with $\overline{\alpha }\left( h\right) =\exp \left( 2\pi i\alpha
\left( \log h\right) \right) $,%
\begin{equation*}
\mathcal{H}_{\alpha }=\left\{ \sigma \in \mathcal{H}:\sigma \left( hx\right)
=\overline{\alpha }\left( h\right) \sigma \left( x\right) \text{ for all }%
h\in G^{\alpha },x\in G\right\} .
\end{equation*}%
Let $\overline{\mathcal{H}_{\alpha }}$ be the $\rho _{\varepsilon }$%
-irreducible subspace of $L_{\varepsilon }^{2}\left( \Gamma \diagdown
G\right) $ such that 
\begin{equation*}
\mathcal{H}_{\alpha }\cong \overline{\mathcal{H}_{\alpha }}\otimes \Sigma
_{n}
\end{equation*}%
through the isomorphism (\ref{L2TensorProductIsomorphism}). Let $\beta :%
\overline{\mathcal{H}_{\alpha }}\rightarrow L_{\mathbb{C}}^{2}\left( \mathbb{%
R}^{m}\right) $ be the unitary isomorphism defined by $\beta \left( F\right)
\left( t\right) =F\left( \exp \left( t_{1}U_{1}\right) ...\exp \left(
t_{m}U_{m}\right) \right) $. Note that the map%
\begin{equation*}
t\in \mathbb{R}^{k}\mapsto G^{\alpha }\exp \left( t_{1}U_{1}\right) ...\exp
\left( t_{m}U_{m}\right) \in G^{\alpha }\diagdown G
\end{equation*}%
pushes the Euclidean metric onto a right $G$-invariant metric on $G^{\alpha
}\diagdown G$. Note that $\overline{\mathcal{H}_{\alpha }}=\beta ^{-1}\left(
L_{\mathbb{C}}^{2}\left( \mathbb{R}^{m}\right) \right) $, and for $x=h\exp
\left( t_{1}U_{1}\right) ...\exp \left( t_{m}U_{m}\right) $ an arbitrary
element of $G$ with $h\in G^{\alpha }$, and $f\in $ $L_{\mathbb{C}%
}^{2}\left( \mathbb{R}^{m}\right) $,%
\begin{eqnarray*}
\left( \beta ^{-1}f\right) \left( x\right) &=&\left( \beta ^{-1}f\right)
\left( h\exp \left( t_{1}U_{1}\right) ...\exp \left( t_{m}U_{m}\right)
\right) \\
&=&\overline{\alpha }\left( h\right) f\left( t_{1},...,t_{m}\right) .
\end{eqnarray*}%
Here $\pi _{\alpha }$ is the representation of $G$ on $\overline{\mathcal{H}%
_{\alpha }}$ induced from the character $\overline{\alpha }$ of $G^{\alpha }$%
; we have for $f\in \overline{\mathcal{H}_{\alpha }}$,%
\begin{equation*}
\left( \pi _{\alpha }\left( x\right) f\right) \left( y\right) =\left( \rho
_{\varepsilon x}f\right) \left( y\right) =f\left( yx\right) .
\end{equation*}

We define the representation $\pi _{\alpha }^{\prime }$ of $G$ on $L_{%
\mathbb{C}}^{2}\left( \mathbb{R}^{m}\right) $ by 
\begin{equation*}
\pi _{\alpha }^{\prime }\left( x\right) =\beta \circ \pi _{\alpha }\left(
x\right) \circ \beta ^{-1}
\end{equation*}%
for all $x\in G$.

For any $x,y\in G$, let $[x,y]=xyx^{-1}y^{-1}$. To compute the action of $G$
on $L_{\mathbb{C}}^{2}\left( \mathbb{R}^{m}\right) $, recall that since $G$
is $2$-step (following \cite[p. 447, proof of Prop. 9]{Pe}), for any $%
h_{0}\in G^{\alpha }$,

\begin{itemize}
\item $\prod\limits_{j=1}^{m}\exp \left( t_{j}U_{j}\right) h_{0}=\left[
\prod\limits_{j=1}^{m}\exp \left( t_{j}U_{j}\right) ,h_{0}\right]
h_{0}\prod\limits_{j=1}^{m}\exp \left( t_{j}U_{j}\right) $

\item $\prod\limits_{\ell =1}^{m}\exp \left( t_{\ell }U_{\ell }\right)
\prod\limits_{j=1}^{m}\exp \left( s_{j}U_{j}\right) =\exp \left(
-\sum_{1\leq j<\ell\leq m}t_{\ell }s_{j}\left[ U_{j},U_{\ell }\right]
\right) \prod\limits_{j=1}^{m}\exp \left( \left( t_{j}+s_{j}\right)
U_{j}\right) $

\item $\left[ \prod\limits_{j=1}^{m}\exp \left( t_{j}U_{j}\right) ,h_{0}%
\right] =\exp \left[ \sum_{j=1}^{m}t_{j}U_{j},\log h_{0}\right] .$
\end{itemize}

For any $x\in G$, by the calculations above, there exists $h_{0}\in
G^{\alpha }$ and real numbers $s_{\ell }\in \mathbb{R}$ such that $%
x=h_{0}\prod\limits_{j =1}^{m}\exp \left( s_{j}U_{j}\right) $. For any $f\in
L_{\mathbb{C}}^{2}\left( \mathbb{R}^{m}\right) $, $t,s\in \mathbb{R}^{m}$%
\begin{equation*}
\left( \pi _{\alpha }^{\prime }\left( x\right) f\right) \left( t\right)
=\left( \beta ^{-1}f\right) \left( \prod\limits_{\ell =1}^{m}\exp \left(
t_{\ell }U_{\ell }\right) h_{0}\prod\limits_{j=1}^{m}\exp \left(
s_{j}U_{j}\right) \right) .
\end{equation*}%
Since $\left( \beta ^{-1}f\right) \left( hg\right) =\overline{\alpha }\left(
h\right) \left( \beta ^{-1}f\right) \left( g\right) $, we see%
\begin{equation*}
\left( \pi _{\alpha }^{\prime }\left( x\right) f\right) \left( t\right) =%
\overline{\alpha }\left( \left[ \prod\limits_{\ell =1}^{m}\exp \left(
t_{\ell }U_{\ell }\right) ,h_{0}\right] h_{0}\exp \left( -\sum_{1\leq
j<\ell\leq m}t_{\ell }s_{j}\left[ U_{j},U_{\ell }\right] \right) \right)
f\left( t+s\right) .
\end{equation*}%
We have used the fact that $\exp \left( \left[ \mathfrak{g},\mathfrak{g}%
\right] \right) \subset Z\left( G\right) \subset G^{\alpha }$ and the
calculations above. Since the restriction of $B_{\alpha }$ to $\mathbb{R}%
U_{1}\oplus ...\oplus \mathbb{R}U_{m}\times \mathbb{R}U_{1}\oplus ...\oplus 
\mathbb{R}U_{m}$ is zero, we have%
\begin{equation*}
\left( \pi _{\alpha }^{\prime }\left( x\right) f\right) \left( t\right)
=f\left( t+s\right) e^{2\pi i\alpha \left( \log h_{0}+\left[
\sum_{j=1}^{m}t_{j}U_{j},\log h_{0}\right] \right) }.
\end{equation*}%
Now, define the vector $w\in \mathbb{R}^{m}$ by 
\begin{equation*}
w:=\left( \frac{\alpha \left( V_{1}\right) }{d_{1}},...,\frac{\alpha \left(
V_{m}\right) }{d_{m}}\right) .
\end{equation*}%
Define the unitary isomorphism $T_{w}:L_{\mathbb{C}}^{2}\left( \mathbb{R}%
^{m}\right) \rightarrow L_{\mathbb{C}}^{2}\left( \mathbb{R}^{m}\right) $ by%
\begin{equation*}
\left( T_{w}f\right) \left( t\right) =f\left( t-w\right) ,
\end{equation*}%
and define 
\begin{equation*}
\pi _{\alpha }^{\prime \prime }\left( x\right) =T_{w}\circ \pi _{\alpha
}^{\prime }\left( x\right) \circ T_{w}^{-1}
\end{equation*}%
for all $x\in G$. We claim that the representation $\pi _{\alpha \ast
}^{\prime \prime }=\rho _{\varepsilon \ast }$ is given by%
\begin{eqnarray}
\pi _{\alpha \ast }^{\prime \prime }\left( U_{j}\right) f\left( t\right) &=&%
\frac{\partial }{\partial t_{j}}f\left( t\right) ,  \notag \\
\pi _{\alpha \ast }^{\prime \prime }\left( V_{j}\right) f\left( t\right)
&=&2\pi it_{j}d_{j}f\left( t\right) ,  \notag \\
\pi _{\alpha \ast }^{\prime \prime }\left( W_{j}\right) f\left( t\right)
&=&2\pi i\alpha \left( W_{j}\right) f\left( t\right) .
\label{RLowerStarCalcs}
\end{eqnarray}%
To see this (see also \cite[Section 3]{Pe}), we have for $r\in \mathbb{R}$, 
\begin{eqnarray*}
\pi _{\alpha }^{\prime \prime }(\exp (rU_{j}))f(t) &=&(T_{w}\pi _{\alpha
}^{\prime }(\exp (rU_{j}))T_{-w}f)(t) \\
&=&\left( \pi _{\alpha }^{\prime }(\exp (rU_{j}))T_{-w}f\right) (t-w) \\
&=&(T_{-w}f)(t-w+re_{j}) \\
&=&f(t+re_{j}),
\end{eqnarray*}%
with $e_{j}$ the $j^{\mathrm{th}}$ standard unit vector in $\mathbb{R}^{m}$.
Also,%
\begin{eqnarray*}
\pi _{\alpha }^{\prime \prime }(\exp (rV_{j}))f(t) &=&(\pi _{\alpha
}^{\prime }\left( \exp (rV_{j})\right) T_{-w}f)(t-w) \\
&=&(T_{-w}f)(t-w)e^{2\pi i\alpha (rV_{j}+[(t_{j}-w_{j})U_{j},V_{j}])} \\
&=&f(t)e^{2\pi i(r\alpha (V_{j})+t_{j}d_{j}r-w_{j}d_{j}r)} \\
&=&f(t)e^{2\pi it_{j}d_{j}r}.
\end{eqnarray*}

We have 
\begin{eqnarray*}
\pi _{\alpha }^{\prime \prime }(\exp (rW_{j}))f(t) &=&(\pi _{\alpha
}^{\prime }\left( \exp (rW_{j})\right) T_{-w}f)(t-w) \\
&=&f(t)e^{2\pi i\alpha (rW_{j})}.
\end{eqnarray*}

With $W_{1}=Z_{1},...,W_{k_{0}}=Z_{k_{0}}$, equation (\ref{Dirac2Step})
becomes (see (\ref{M_definition1})) 
\begin{eqnarray*}
\left. D\right\vert _{\mathcal{H}_{\alpha }} &=&\sum_{j=1}^{n}E_{j}\diamond
\rho _{\varepsilon \ast }\left( E_{j}\right) \mathbf{+}\frac{1}{2}%
\sum_{a\leq k_{1}}Z_{a}\diamond j\left( Z_{a}\right) \\
&=&\sum_{j=1}^{n}E_{j}\diamond \rho _{\varepsilon \ast }\left( E_{j}\right)
+M \\
&=&\sum_{j=1}^{k_{\alpha }}\left( W_{j}\diamond \right) \rho _{\varepsilon
\ast }\left( W_{j}\right) +\sum_{j=1}^{m}\left( U_{j}\diamond \right) \rho
_{\varepsilon \ast }\left( U_{j}\right) +\sum_{j=1}^{m}\left( V_{j}\diamond
\right) \rho _{\varepsilon \ast }\left( V_{j}\right) +M \\
&=&\sum_{j=1}^{k_{\alpha }}2\pi i\alpha \left( W_{j}\right) \left(
W_{j}\diamond \right) +\sum_{j=1}^{m}\left( U_{j}\diamond \right) \frac{%
\partial }{\partial t_{j}}+\sum_{j=1}^{m}2\pi it_{j}d_{j}\left(
V_{j}\diamond \right) +M \\
&=&\sum_{j=1}^{m}\left( U_{j}\diamond \right) \frac{\partial }{\partial t_{j}%
}+\sum_{j=1}^{m}2\pi id_{j}\left( V_{j}\diamond \right) t_{j}+M_{\alpha
}^{\prime },
\end{eqnarray*}%
where $M_{\alpha }^{\prime }$ is defined as the constant Hermitian
transformation%
\begin{equation}
M_{\alpha }^{\prime }=M+\sum_{j=1}^{k_{\alpha }}2\pi i\alpha \left(
W_{j}\right) \left( W_{j}\diamond \right) ,  \label{M_alpha_prime_definition}
\end{equation}%
with $M$ as in (\ref{M_definition1}).

We have%
\begin{eqnarray}
\left. D\right\vert _{\mathcal{H}_{\alpha }} &=&M_{\alpha }^{\prime
}+\sum_{j=1}^{m}\left( \left( U_{j}\diamond \right) \frac{\partial }{%
\partial t_{j}}+2\pi id_{j}\left( V_{j}\diamond \right) t_{j}\right)
\label{DOnH_alpha_No_Cjs} \\
&=&M_{\alpha }^{\prime }+\sum_{j=1}^{m}\left( U_{j}\diamond \right) \left( 
\frac{\partial }{\partial t_{j}}-2\pi id_{j}\left( U_{j}\diamond \right)
\left( V_{j}\diamond \right) t_{j}\right) ,  \notag
\end{eqnarray}%
so that%
\begin{equation}
\left. D\right\vert _{\mathcal{H}_{\alpha }}=M_{\alpha }^{\prime
}+\sum_{j=1}^{m}\left( U_{j}\diamond \right) \left( \frac{\partial }{%
\partial t_{j}}+\Theta _{j}t_{j}\right) ,
\label{D_on_H_alpha_infinite_formula}
\end{equation}%
where we define 
\begin{equation}
\Theta _{j}=-2\pi id_{j}\left( U_{j}\diamond \right) \left( V_{j}\diamond
\right) ,  \label{C_j_definition}
\end{equation}%
a Hermitian symmetric linear transformation.

\subsection{Matrix choices\label{matrixChoicesSection}}

We now make specific choices of the matrices $\left( U_{j}\diamond \right) $%
, $\left( V_{j}\diamond \right) $, where $U_{j}$, $V_{r}$, $W_{k}$ are from
the basis chosen at the beginning of Section \ref{Case2Section} relative to
a particular $\alpha $. We continue to use the positive real numbers $d_{j}$
as defined in that section as well. Note that any other choices would yield
the same Dirac spectrum. See \cite[Part I, section 5]{LawM} for details on
the representations of Clifford algebras, on which much of this material is
based.

Let%
\begin{eqnarray*}
\sigma _{1} &=&\left( 
\begin{array}{cc}
0 & 1 \\ 
1 & 0%
\end{array}%
\right) ,~\sigma _{2}=\left( 
\begin{array}{cc}
0 & i \\ 
-i & 0%
\end{array}%
\right) \\
\mathbf{1}^{\prime } &=&\left( 
\begin{array}{cc}
1 & 0 \\ 
0 & -1%
\end{array}%
\right) ,~\mathbf{1}=\left( 
\begin{array}{cc}
1 & 0 \\ 
0 & 1%
\end{array}%
\right)
\end{eqnarray*}%
We view $\Sigma _{n}=\mathbb{C}^{2^{\left\lfloor n/2\right\rfloor
}}=\bigotimes\limits_{\left\lfloor n/2\right\rfloor \text{ times}}\mathbb{C}%
^{2}$. Observe that multiplication satisfies

\begin{equation}
\begin{tabular}{|c|c|c|c|}
\hline
$\mathbf{1}$ & $\mathbf{1}^{\prime }$ & $\sigma _{1}$ & $\sigma _{2}$ \\ 
\hline
$\mathbf{1}^{\prime }$ & $\mathbf{1}$ & $-i\sigma _{2}$ & $i\sigma _{1}$ \\ 
\hline
$\sigma _{1}$ & $i\sigma _{2}$ & $\mathbf{1}$ & $-i\mathbf{1}^{\prime }$ \\ 
\hline
$\sigma _{2}$ & $-i\sigma _{1}$ & $i\mathbf{1}^{\prime }$ & $\mathbf{1}$ \\ 
\hline
\end{tabular}
\label{CayleyTable}
\end{equation}%
with multiplication on the left given by the column items.

Let 
\begin{equation*}
\left( U_{1}\diamond \right) =i\sigma _{1}\otimes \mathbf{1\otimes
...\otimes 1},~\left( V_{1}\diamond \right) =i\sigma _{2}\otimes \mathbf{%
1\otimes ...\otimes 1},
\end{equation*}%
and in general, for $1\leq j\leq m$,

\begin{eqnarray}
\left( U_{j}\diamond \right) &=&i\mathbf{1}^{\prime }\otimes ...\mathbf{%
\otimes 1}^{\prime }\otimes \sigma _{1}\otimes \mathbf{1\otimes ...\otimes 1}%
,  \notag \\
\left( V_{j}\diamond \right) &=&i\mathbf{1}^{\prime }\otimes ...\mathbf{%
\otimes 1}^{\prime }\otimes \sigma _{2}\otimes \mathbf{1\otimes ...\otimes 1,%
}  \label{UjVjDiamond}
\end{eqnarray}%
where each $\left( U_{j}\diamond \right) $ and each $\left( V_{j}\diamond
\right) $ has $j-1$ leading factors of $\mathbf{1}^{\prime }$ and a total of 
$n^{\prime }=\left\lfloor \frac{n}{2}\right\rfloor =m+\left\lfloor \frac{%
k_{\alpha }}{2}\right\rfloor $ matrix factors of size $2\times 2$ .
Continuing, each $\left( W_{k}\diamond \right) $, $1\leq k\leq k_{\alpha }$,
is chosen to be%
\begin{equation}
\left( W_{k}\diamond \right) =i\mathbf{1}^{\prime }\otimes ...\mathbf{%
\otimes 1}^{\prime }\otimes \sigma \otimes \mathbf{1\otimes ...\otimes 1,}
\label{WkDiamond}
\end{equation}%
with $\sigma $ being $\sigma _{1}$ or $\sigma _{2}$ according to whether $k$
is odd or even, such that there are at least $m$ leading factors of $\mathbf{%
1}^{\prime }$ in the above expression. If the dimension $k_{\alpha }$ is
odd, then the last matrix is chosen to be 
\begin{equation}
\left( W_{k_{\alpha }}\diamond \right) =i\mathbf{1}^{\prime }\otimes ...%
\mathbf{\otimes 1}^{\prime }.  \label{WkalphaDiamond}
\end{equation}

With these choices, observe that from (\ref{C_j_definition}), 
\begin{equation*}
\Theta _{j}=2\pi d_{j}\left( \mathbf{1}\otimes \mathbf{1}\otimes ...\mathbf{%
\otimes 1}\otimes \mathbf{1}^{\prime }\otimes \mathbf{1\otimes ...\otimes 1}%
\right) ,
\end{equation*}%
with $\mathbf{1}^{\prime }$ in the $j^{\mathrm{th}}$ slot. We let%
\begin{equation}
v_{\ell }=e_{\ell _{1}}\otimes e_{\ell _{2}}\otimes ...\otimes e_{\ell
_{n^{\prime }}}  \label{v_ell_definition}
\end{equation}%
where each $e_{\ell _{\bullet }}$ is either $e_{1}=\left( 
\begin{array}{c}
1 \\ 
0%
\end{array}%
\right) $, $e_{-1}=\left( 
\begin{array}{c}
0 \\ 
1%
\end{array}%
\right) $, with $\ell =\left( \ell _{1},...,\ell _{n^{\prime }}\right) \in
\left\{ 1,-1\right\} ^{n^{\prime }}$, then $\left\{ v_{\ell }\right\} $
forms a basis of $\Sigma _{n}$. Then%
\begin{eqnarray*}
\Theta _{j}v_{\ell } &=&2\pi d_{j}\ell _{j}v_{\ell }, \\
\left( U_{j}\diamond \right) v_{\ell } &=&i\ell _{1}\ell _{2}...\ell
_{j-1}v_{\ell ^{j}}=\pm iv_{\ell ^{j}}~,
\end{eqnarray*}%
with $\ell ^{j}=\left( \ell _{1},...,-\ell _{j},..,\ell _{n^{\prime
}}\right) $, and where we mean $\ell _{1}\ell _{2}...\ell _{j-1}=1$ when $%
j=1 $.

We see that $\Theta _{j}$ commutes with every $\Theta _{j^{\prime }}$, and $%
\Theta _{j}^{2}=4\pi ^{2}d_{j}^{2}\mathbf{Id}$. Note that \linebreak $%
\left\{ \left( 2\pi d_{j}\ell _{j},v_{\ell }\right) :\ell \in \left\{
1,-1\right\} ^{n^{\prime }}\right\} $ is the set of eigenvalues and
simultaneous orthonormal eigenvectors of every $\Theta _{j}$, $j=1,...,m$. 
\vspace{1pt}

Let $\mathbf{p}=\left( p_{1},...,p_{m}\right) \in \mathbb{Z}^{m}$. We let $%
h_{\mathbf{p}}\left( t\right) =h_{p_{1}}\left( t_{1}\right)
...h_{p_{m}}\left( t_{m}\right) $ using the Hermite functions 
\begin{eqnarray}
h_{p}\left( t\right) &=&e^{t^{2}/2}\left( \frac{d}{dt}\right) ^{p}e^{-t^{2}}%
\text{ for}~p\geq 0,  \label{HermiteFunctions} \\
h_{p}\left( t\right) &=&0\text{ for~}p<0,  \notag
\end{eqnarray}%
which satisfy

\begin{gather*}
h_{p}^{\prime }\left( t\right) =th_{p}\left( t\right) +h_{p+1}\left( t\right)
\\
h_{p+2}\left( t\right) +2th_{p+1}\left( t\right) +2\left( p+1\right)
h_{p}\left( t\right) =0.
\end{gather*}%
The first equality is just the chain rule. To see the second equality, note
that by the product rule and the binomial theorem (or by induction), 
\begin{equation*}
\left( \frac{d}{dt}\right) ^{p+2}e^{-t^{2}}=\left( \frac{d}{dt}\right)
^{p+1}\left( -2te^{-t^{2}}\right) =-2t\left( \frac{d}{dt}\right)
^{p+1}e^{-t^{2}}-2(p+1)\left( \frac{d}{dt}\right) ^{p}e^{-t^{2}},
\end{equation*}%
and the result follows. Combining the two 
\begin{equation*}
h_{p}^{\prime }\left( t\right) =th_{p}\left( t\right) -2th_{p}\left(
t\right) -2ph_{p-1}\left( t\right) =-th_{p}\left( t\right) -2ph_{p-1}\left(
t\right) .
\end{equation*}%
Note that $\left\{ h_{\mathbf{p}}\left( t\right) :\mathbf{p}\in \left( 
\mathbb{Z}_{\geq 0}\right) ^{m}\right\} $ is a basis of $L^{2}\left( \mathbb{%
R}^{m},\mathbb{C}\right) $.

For $\mathbf{p}\in \mathbb{Z}^{m}$, let 
\begin{equation}
u_{\mathbf{p},\ell }\left( t\right) =h_{p_{1}}\left( \sqrt{2\pi d_{1}}%
t_{1}\right) h_{p_{2}}\left( \sqrt{2\pi d_{2}}t_{2}\right)
...h_{p_{m}}\left( \sqrt{2\pi d_{m}}t_{m}\right) v_{\ell },
\label{u_p_ell_formula}
\end{equation}%
with $v_{\ell }$ as in (\ref{v_ell_definition}). Observe that $u_{\mathbf{p},%
\mathbf{\ell }}=0$ if any coordinate of $\mathbf{p}$ is negative. In what
follows, we assume the coordinates of $\mathbf{p}$ are nonnegative. Then,
using the formulas above for $h_{p}^{\prime }\left( t\right) $,%
\begin{eqnarray*}
\frac{\partial }{\partial t_{j}}u_{\mathbf{p},\ell }\left( t\right) &=&-2\pi
d_{j}t_{j}u_{\mathbf{p},\ell }\left( t\right) -2p_{j}\sqrt{2\pi d_{j}}u_{%
\mathbf{p}-e_{j},\ell }\left( t\right) \\
&=&2\pi d_{j}t_{j}u_{\mathbf{p},\ell }\left( t\right) +\sqrt{2\pi d_{j}}u_{%
\mathbf{p}+e_{j},\ell }\left( t\right)
\end{eqnarray*}%
\begin{eqnarray*}
\left( \frac{\partial }{\partial t_{j}}+t_{j}\Theta _{j}\right) u_{\mathbf{p}%
,\ell }\left( t\right) &=&\left( 2\pi d_{j}(\ell _{j}-1)\right) t_{j}u_{%
\mathbf{p},\ell }\left( t\right) -2p_{j}\sqrt{2\pi d_{j}}u_{\mathbf{p}%
-e_{j},\ell }\left( t\right) \\
&=&\left( 2\pi d_{j}(\ell _{j}+1)\right) t_{j}u_{\mathbf{p},\ell }\left(
t\right) +\sqrt{2\pi d_{j}}u_{\mathbf{p}+e_{j},\ell }\left( t\right)
\end{eqnarray*}%
Recall that $\mathbf{p}$ has dimension $m$, and $\mathbf{\ell }$ has
dimension $n^{\prime }=\left\lfloor \frac{n}{2}\right\rfloor \geq m$. Now,
from (\ref{D_on_H_alpha_infinite_formula}) we have 
\begin{eqnarray}
Du_{\mathbf{p},\ell }\left( t\right) &=&\left( M_{\alpha }^{\prime
}+\sum_{j=1}^{m}\left( U_{j}\diamond \right) \left( \frac{\partial }{%
\partial t_{j}}+t_{j}\Theta _{j}\right) \right) u_{\mathbf{p},\ell }\left(
t\right)  \notag \\
&=&-2\sum_{j\leq m,\ell _{j}=1}p_{j}\sqrt{2\pi d_{j}}\left( U_{j}\diamond
\right) u_{\mathbf{p}-e_{j},\ell }\left( t\right)  \notag \\
&&+\sum_{j\leq m,\ell _{j}=-1}\sqrt{2\pi d_{j}}\left( U_{j}\diamond \right)
u_{\mathbf{p}+e_{j},\ell }\left( t\right) +M_{\alpha }^{\prime }u_{\mathbf{p}%
,\ell }\left( t\right)  \notag \\
&=&-2\sum_{j\leq m,\ell _{j}=1}ip_{j}\sqrt{2\pi d_{j}}\ell _{1}\ell
_{2}...\ell _{j-1}u_{\mathbf{p}-e_{j},\ell ^{j}}\left( t\right)  \notag \\
&&+\sum_{j\leq m,\ell _{j}=-1}i\sqrt{2\pi d_{j}}\ell _{1}\ell _{2}...\ell
_{j-1}u_{\mathbf{p}+e_{j},\ell ^{j}}\left( t\right) +M_{\alpha }^{\prime }u_{%
\mathbf{p},\ell }\left( t\right) .  \label{DequationWithSQRoots}
\end{eqnarray}%
Often the eigensections can be found as linear combinations of the $u_{%
\mathbf{p},\ell }\left( t\right) $.

We modify the basis so that it is more convenient. For fixed $\mathbf{p}%
=\left( p_{1},...,p_{m}\right) $ and $\mathbf{\ell }=\left( \ell
_{1},...,\ell _{m},...,\ell _{n^{\prime }}\right) $, let $\mathbf{E}_{%
\mathbf{\ell }}$ be the $m$-tuple defined by 
\begin{equation*}
\left( \mathbf{E}_{\mathbf{\ell }}\right) _{a}=\left\{ 
\begin{array}{ll}
0 & \text{if }\ell _{a}=1 \\ 
-1~ & \text{if }\ell _{a}=-1%
\end{array}%
\right. .
\end{equation*}%
Then%
\begin{equation}
\overline{u}_{\mathbf{p},\mathbf{\ell }}\left( t\right) =\left( \prod_{j\leq
m,~\ell _{j}=-1}\sqrt{2p_{j}}\right) u_{\mathbf{p}+\mathbf{E}_{\mathbf{\ell }%
},\mathbf{\ell }}\left( t\right) .  \label{ubar_p_ell}
\end{equation}

Using the fact that $\mathbf{p}+\mathbf{E}_{\ell }+e_{j}=\mathbf{p}+\mathbf{E%
}_{\ell ^{j}}$ if $\ell _{j}=-1$ and $\mathbf{p}+\mathbf{E}_{\ell }-e_{j}=%
\mathbf{p}+\mathbf{E}_{\ell ^{j}}$ if $\ell _{j}=1$, we compute%
\begin{eqnarray}
D\overline{u}_{\mathbf{p},\mathbf{\ell }}\left( t\right) &=&-\sum_{j\leq
m,\ell _{j}=1}2i\sqrt{\pi d_{j}p_{j}}\ell _{1}\ell _{2}...\ell _{j-1}%
\overline{u}_{\mathbf{p},\mathbf{\ell }^{j}}\left( t\right)  \notag \\
&&+\sum_{j\leq m,\ell _{j}=-1}2i\sqrt{\pi d_{j}p_{j}}\ell _{1}\ell
_{2}...\ell _{j-1}\overline{u}_{\mathbf{p},\mathbf{\ell }^{j}}\left(
t\right) +M_{\alpha }^{\prime }\overline{u}_{\mathbf{p},\mathbf{\ell }%
}\left( t\right) ,  \notag
\end{eqnarray}%
\qquad so that

\begin{equation}
D\overline{u}_{\mathbf{p},\mathbf{\ell }}\left( t\right) =-2i\sum_{j\leq m}%
\sqrt{\pi d_{j}p_{j}}\ell _{1}\ell _{2}...\ell _{j}\overline{u}_{\mathbf{p},%
\mathbf{\ell }^{j}}\left( t\right) +M_{\alpha }^{\prime }\overline{u}_{%
\mathbf{p},\mathbf{\ell }}\left( t\right) .  \label{D_eqn_new_basis}
\end{equation}

\section{Heisenberg Examples\label{HeisenbergExamplesSection}}

Heisenberg Lie algebras are the only two-step nilpotent Lie algebras with
one-dimensional center. Let $n=2m+1$; define the $n$-dimensional Heisenberg
Lie algebra by \linebreak $\mathfrak{g}=\mathrm{span}\left\{
X_{1},...,X_{m},Y_{1},...,Y_{m},Z\right\} $ with $\left[ X_{j},Y_{k}\right]
=\delta _{jk}Z$ and other basis brackets not defined by skew-symmetry equal
to zero. The $n$-dimensional Heisenberg Lie group $G$ is the simply
connected Lie group with Lie algebra $\mathfrak{g}$. A Heisenberg manifold
is a quotient of $G$ by a cocompact discrete subgroup $\Gamma $, where the
metric comes from a left-invariant metric on $G$. From \cite[Proposition 2.16%
]{Gord}, we see that every Heisenberg manifold is isometric to one with the
following metric and lattice. The metric may be chosen for $\Gamma \diagdown
G$ on $\left( X_{1},...,X_{m},Y_{1},...,Y_{m},Z\right) $ to be 
\begin{equation*}
g_{A}=\left( 
\begin{array}{ccc}
A & 0 & 0 \\ 
0 & A & 0 \\ 
0 & 0 & 1%
\end{array}%
\right) =\left( 
\begin{array}{cc}
\overline{g_{A}} & 0 \\ 
0 & 1%
\end{array}%
\right)
\end{equation*}%
where $A=\mathrm{diag}\left( a_{1},...,a_{m}\right) $ is a diagonal $m\times
m$ matrix with positive nondecreasing entries.

We identify $X_{i}$ with the matrix $E_{1,i+1}$, which is the matrix with $1$
in the $\left( 1,i+1\right) $-entry and all other entries zero. Similarly,
we identify $Y_{j}$ with $E_{j+1,m+2}$ and $Z$ with $E_{m+2,m+2}$. In this
section, we define $\exp \left( X_{i}\right) $ to be the matrix exponential $%
\exp \left( E_{1,i+1}\right) =I+E_{1,i+1}$, and we define $\exp \left(
Y_{j}\right) $ and $\exp \left( Z\right) $ in a similar way. For $v\in 
\mathbb{R}^{2m}$ and $z\in \mathbb{R}$, we denote 
\begin{equation*}
\left( v,z\right) =\left( 
\begin{array}{ccccc}
1 & v_{1} & ... & v_{m} & z \\ 
0 & 1 & ... & 0 & v_{m+1} \\ 
\vdots & \vdots & I & \vdots & \vdots \\ 
0 & 0 & ... & 1 & v_{2m} \\ 
0 & 0 & ... & 0 & 1%
\end{array}%
\right) ,
\end{equation*}
With this notation,%
\begin{multline}
\exp \left( x_{1}X_{1}+...+x_{m}X_{m}+y_{1}Y_{1}+...+y_{m}Y_{m}+zZ\right)
=\left( x_{1},...,x_{m},y_{1},...,y_{m},z+\frac{1}{2}x\cdot y\right) , \\
\log \left( x_{1},...,x_{m},y_{1},...,y_{m},z\right)
=x_{1}X_{1}+...+x_{m}X_{m}+y_{1}Y_{1}+...+y_{m}Y_{m}+\left( z-\frac{1}{2}%
x\cdot y\right) Z.  \label{explogformulas}
\end{multline}%
To get from the matrix coordinates to the exponential coordinates, we use
the change of coordinate mapping

\begin{equation*}
(v,z)\mapsto \exp (v_{1}X_{1}+\ldots
+v_{m}X_{m}+v_{m+1}Y_{1}+..+v_{2m}Y_{m}+(z-\frac{1}{2}(v_{1}v_{m+1}+\ldots
+v_{m}v_{2m}))Z).
\end{equation*}

Every cocompact discrete subgroup $\Gamma $ can be generated by $\exp \left( 
\mathcal{L}\right) $ and $\exp \left( rZ\right) $, where $\mathcal{L}$ is a $%
2m$-dimensional lattice in $\mathbb{R}^{2m}=\mathrm{span}\left\{
X_{1},...,X_{m},Y_{1},...,Y_{m}\right\} $, and $\exp \left( rZ\right) $, $%
r>0 $, generates a one-dimensional lattice in the center of $G$. We denote $%
\Gamma =\Gamma \left( \mathcal{L},r\right) $; note $\left( \mathcal{L}%
,r\right) $ will yield a cocompact discrete subgroup if and only if for all $%
V,V^{\prime }\in \mathcal{L}$, $\left[ \exp \left( V\right) ,\exp \left(
V^{\prime }\right) \right] =\exp \left( krZ\right) $ for some $k\in \mathbb{Z%
}$ \cite[proof of Theorem 2.4]{GW1}. Two such Heisenberg manifolds
determined by $\left( \mathcal{L},r,g_{A}\right) $ and $\left( \mathcal{L}%
^{\prime },r^{\prime },g_{A^{\prime }}\right) $ are isometric iff $%
g_{A}=g_{A^{\prime }}$, $r=r^{\prime }$, and there exists a matrix $\Phi \in 
\widetilde{Sp}\left( m,\mathbb{R}\right) \cap O\left( 2m,\overline{g_{A}}%
\right) \subset M_{2m}\left( \mathbb{R}\right) $ such that%
\begin{equation*}
\Phi \left( \mathcal{L}\right) =\mathcal{L}^{\prime }.
\end{equation*}%
(See \cite[Proposition 2.16]{Gord}). Here, $O\left( 2m,\overline{g_{A}}%
\right) $ is the orthogonal group, and $\widetilde{Sp}\left( m,\mathbb{R}%
\right) =\left\{ \beta \in GL\left( 2m,\mathbb{R}\right) :\beta ^{t}J\beta
=\pm J\right\} ,$ where $J=\left( 
\begin{array}{cc}
0 & I \\ 
-I & 0%
\end{array}%
\right) $.

\subsection{Three-dimensional case\label{3DHeisenbergExampleSection}}

\subsubsection{Eigenvalues}

For our Heisenberg manifold, we choose $\left\{ X,Y,Z\right\} $ so that $%
\left[ X,Y\right] =Z$ and $\left\{ \frac{1}{\sqrt{A}}X,\frac{1}{\sqrt{A}}%
Y,Z\right\} $ is an orthonormal frame, with $A>0$. With notation as in the
general case, we choose an element $\alpha \in \mathfrak{g}^{\ast }$, which
fixes a coadjoint orbit.

\textbf{Finite-dimensional irreducible subspaces: }If the one-form $\alpha
\left( Z\right) =0$, then $\mathfrak{g}_{\alpha }=\mathfrak{g}$, and $%
\mathfrak{g}^{\alpha }=\mathfrak{g}$ is the maximal polarizer of $\alpha $.
Then $G^{\alpha }=\exp \left( \mathfrak{g}^{\alpha }\right) =G$. Let 
\begin{equation*}
\mathcal{H}_{\alpha }=\left\{ f:\mathfrak{g}\rightarrow \Sigma _{n}~|~\text{%
for some }s\in \Sigma _{n},\text{all }h\in G,f\left( h\right) =\overline{%
\alpha }\left( h\right) s\text{ }\right\} ,
\end{equation*}%
where 
\begin{equation*}
\overline{\alpha }\left( h\right) =e^{2\pi i\alpha \left( \log h\right) }.
\end{equation*}%
From (\ref{FiniteDimDiracOperatorFormula}), 
\begin{eqnarray*}
\left. D\right\vert _{\mathcal{H}_{\alpha }} &=&\frac{2\pi i}{A}\alpha
\left( X\right) \left( X\diamond \right) +\frac{2\pi i}{A}\alpha \left(
Y\right) \left( Y\diamond \right) \\
&&\mathbf{+}\frac{1}{4A^{2}}\left\langle Z,\left[ X,Y\right] \right\rangle
\left( Z\diamond X\diamond Y\diamond \right) \\
&=&\frac{2\pi i}{A}\alpha \left( X\right) \left( X\diamond \right) +\frac{%
2\pi i}{A}\alpha \left( Y\right) \left( Y\diamond \right) \mathbf{+}\frac{1}{%
4A^{2}}\left( Z\diamond X\diamond Y\diamond \right) ,
\end{eqnarray*}%
which is a constant matrix. The eigenvalues of $\left. D\right\vert _{%
\mathcal{H}_{\alpha }}$ are then the eigenvalues of this Hermitian matrix.
We set%
\begin{multline*}
\left( X\diamond \right) =i\sqrt{A}\sigma _{1}=\left( 
\begin{array}{cc}
0 & i\sqrt{A} \\ 
i\sqrt{A} & 0%
\end{array}%
\right) ,~~\left( Y\diamond \right) =i\sqrt{A}\sigma _{2}=\left( 
\begin{array}{cc}
0 & -\sqrt{A} \\ 
\sqrt{A} & 0%
\end{array}%
\right) ,~~ \\
\left( Z\diamond \right) =i\mathbf{1}^{\prime }=\left( 
\begin{array}{cc}
i & 0 \\ 
0 & -i%
\end{array}%
\right) ,
\end{multline*}%
\vspace{1pt}

The matrix is%
\begin{equation*}
\left. D\right\vert _{\mathcal{H}_{\alpha }}=\left( 
\begin{array}{cc}
-\frac{1}{4A} & -\frac{2\pi }{\sqrt{A}}\left( \alpha \left( X\right)
+i\alpha \left( Y\right) \right) \\ 
-\frac{2\pi }{\sqrt{A}}\left( \alpha \left( X\right) -i\alpha \left(
Y\right) \right) & -\frac{1}{4A}%
\end{array}%
\right) .
\end{equation*}

The eigenvalues are%
\begin{equation}
\sigma _{\alpha }=\left\{ -\frac{1}{4A}+\frac{2\pi }{\sqrt{A}}\left\Vert
\alpha \right\Vert ,-\frac{1}{4A}-\frac{2\pi }{\sqrt{A}}\left\Vert \alpha
\right\Vert \right\} .  \label{torusSpectrum3dHeisenberg}
\end{equation}

\textbf{Infinite-dimensional irreducible subspaces:} On the other hand,
suppose $\alpha \left( Z\right) =\alpha \left( \left[ X,Y\right] \right) =d$
is nonzero.

From (\ref{D_eqn_new_basis}), (\ref{M_alpha_prime_definition}), and (\ref%
{v_ell_definition}), with $U=\frac{X}{\sqrt{A}}$, $V=\mathrm{sgn}\left(
d\right) \frac{Y}{\sqrt{A}}$, $W=\mathrm{sgn}\left( d\right) Z$, and with%
\begin{multline*}
\left( X\diamond \right) =i\sqrt{A}\sigma _{1}=\left( 
\begin{array}{cc}
0 & i\sqrt{A} \\ 
i\sqrt{A} & 0%
\end{array}%
\right) ,~~\left( \mathrm{sgn}\left( d\right) Y\diamond \right) =i\sqrt{A}%
\sigma _{2}=\left( 
\begin{array}{cc}
0 & -\sqrt{A} \\ 
\sqrt{A} & 0%
\end{array}%
\right) ,~~ \\
\left( W\diamond \right) =i\mathbf{1}^{\prime }=\left( 
\begin{array}{cc}
i & 0 \\ 
0 & -i%
\end{array}%
\right) ,
\end{multline*}%
we have, since $d_{1}=\frac{\left\vert d\right\vert }{A},m=1,\ell =\pm
1,E_{\ell }=\frac{\ell -1}{2}$,%
\begin{eqnarray*}
u_{p,\ell }\left( t\right) &=&h_{p}\left( \sqrt{2\pi d_{1}}t\right) v_{\ell
}, \\
\overline{u}_{p,\ell }\left( t\right) &=&\left\{ 
\begin{array}{cc}
u_{p,\ell }\left( t\right) & \ell =1 \\ 
\sqrt{2p}u_{p-1,\ell }\left( t\right) & \ell =-1%
\end{array}%
\right. ,
\end{eqnarray*}%
using (\ref{D_eqn_new_basis}). Note that $\overline{u}_{0,-1}=0$ and is not
included in the basis of sections. The invariant subspaces are $\mathrm{span}%
\left\{ \overline{u}_{0,1}\right\} $ and $\mathrm{span}\left\{ \overline{u}%
_{p,1},\overline{u}_{p,-1}\right\} $ for $p>0$: 
\begin{eqnarray*}
D\overline{u}_{0,1}\left( t\right) &=&M_{\alpha }^{\prime }\overline{u}%
_{0,1}\left( t\right) \\
D\overline{u}_{p,\ell }\left( t\right) &=&-2i\ell \sqrt{\pi \frac{\left\vert
d\right\vert }{A}p~}\overline{u}_{_{p,-\ell }}\left( t\right) +M_{\alpha
}^{\prime }\overline{u}_{_{p,\ell }}\left( t\right) \text{ for }p>0.
\end{eqnarray*}

From (\ref{M_alpha_prime_definition}) we have%
\begin{equation*}
M=\frac{1}{2}W\diamond j\left( W\right) ,
\end{equation*}

\begin{eqnarray*}
M_{\alpha }^{\prime } &=&M+2\pi i\alpha \left( W\right) \left( W\diamond
\right) \\
&=&\frac{1}{4A^{2}}\left\langle \mathrm{sgn}(d)Z,\left[ X,\mathrm{sgn}(d)Y%
\right] \right\rangle \left( W\diamond X\diamond \mathrm{sgn}(d)Y\diamond
\right) +2\pi i\alpha \left( W\right) \left( W\diamond \right) \\
&=&\frac{1}{4A^{2}}\left( 
\begin{array}{cc}
i & 0 \\ 
0 & -i%
\end{array}%
\right) \left( 
\begin{array}{cc}
0 & i\sqrt{A} \\ 
i\sqrt{A} & 0%
\end{array}%
\right) \left( 
\begin{array}{cc}
0 & -\sqrt{A} \\ 
\sqrt{A} & 0%
\end{array}%
\right) +2\pi i\left\vert d\right\vert \left( 
\begin{array}{cc}
i & 0 \\ 
0 & -i%
\end{array}%
\right) \\
&=&\left( 
\begin{array}{cc}
-2\pi \left\vert d\right\vert -\frac{1}{4A} & 0 \\ 
0 & 2\pi \left\vert d\right\vert -\frac{1}{4A}%
\end{array}%
\right) ,
\end{eqnarray*}%
so that%
\begin{equation*}
M_{\alpha }^{\prime }v_{\ell }=\left( -2\pi \left\vert d\right\vert \ell -%
\frac{1}{4A}\right) v_{\ell }.
\end{equation*}%
This implies that $M_{\alpha }^{\prime }\overline{u}_{_{p,\ell }}\in \mathrm{%
span}\left\{ \overline{u}_{_{p,\ell }}\right\} $ for all $\ell \in \left\{
-1,1\right\} $, $p\in \mathbb{Z}_{\geq 0}$. For $p>0$, 
\begin{equation*}
D\overline{u}_{p,\ell }\left( t\right) =-2i\ell \sqrt{\pi \frac{\left\vert
d\right\vert }{A}p~}\overline{u}_{_{p,-\ell }}\left( t\right) +\left( -2\pi
\left\vert d\right\vert \ell -\frac{1}{4A}\right) \overline{u}_{_{p,\ell
}}\left( t\right) .
\end{equation*}%
The $p=0,\ell =1$ case ($\overline{u}_{0,-1}=u_{-1,-1}=0$) is%
\begin{equation*}
D\overline{u}_{0,1}\left( t\right) =\left( -2\pi \left\vert d\right\vert -%
\frac{1}{4A}\right) \overline{u}_{0,1}\left( t\right) .
\end{equation*}%
The matrix for $D$ restricted to the span of $\left\{ \overline{u}_{p,1},%
\overline{u}_{p,-1}\right\} $ for $p\geq 1$ is%
\begin{equation*}
\left( 
\begin{array}{cc}
-2\pi \left\vert d\right\vert -\frac{1}{4A} & 2i\sqrt{\pi \frac{\left\vert
d\right\vert }{A}p~} \\ 
-2i\sqrt{\pi \frac{\left\vert d\right\vert }{A}p~} & 2\pi \left\vert
d\right\vert -\frac{1}{4A}%
\end{array}%
\right) ,
\end{equation*}%
which has eigenvalues 
\begin{equation*}
-\frac{1}{4A}\pm 2\sqrt{\frac{\pi \left\vert d\right\vert p}{A}+\pi
^{2}d^{2}~}.
\end{equation*}

Thus, the list of all eigenvalues for the $\alpha \left( Z\right) =d\neq 0$
case is%
\begin{equation*}
\sigma _{\alpha }=\left\{ -\frac{1}{4A}-2\pi \left\vert d\right\vert
\right\} \cup \left\{ -\frac{1}{4A}\pm 2\sqrt{\frac{\pi \left\vert
d\right\vert p}{A}+\pi ^{2}d^{2}~}:p\geq 1\right\} .
\end{equation*}

\subsubsection{Occurrence conditions for lattice}

Here, the lattice $\mathcal{L}$ should be a two-dimensional lattice, say
spanned by $v=\left( v_{1},v_{2}\right) $ (corresponding to the matrix
element $\left( v_{1},v_{2},0\right) $ ) and $w=\left( w_{1},w_{2}\right) $.
The central lattice is spanned by $r$ (corresponding to $\left( 0,0,r\right) 
$ ). Let $\widetilde{Sp}\left( 1,\mathbb{R}\right) =\left\{ \beta \in
GL\left( 2,\mathbb{R}\right) :\beta ^{t}J\beta =\pm J\right\} $. The
condition $\beta ^{t}J\beta =\pm J$ is equivalent to $\det \beta =\pm 1$, so
in fact $\widetilde{Sp}\left( 1,\mathbb{R}\right) \cap O\left( 2,\mathbb{R}%
\right) =O\left( 2,\mathbb{R}\right) .$ This means we can rotate so that $%
v=\left( v_{1},0\right) $ with $v_{1}>0$, and so that $w=\left(
w_{1},w_{2}\right) $ with $w_{2}>0$. Because $v,w\cdot ,$ and $r$ generate a
cocompact discrete subgroup, we must have, for any $%
h_{1},h_{2},h,k_{1},k_{2},k\in \mathbb{Z}$, 
\begin{multline*}
\left( h_{1}v+h_{2}w,hr\right) \left( k_{1}v+k_{2}w,kr\right) \\
=\left( \left( h_{1}+k_{1}\right) v+\left( h_{2}+k_{2}\right) w,r\left(
h+k\right) +h_{1}k_{2}v_{1}w_{2}+h_{2}k_{2}w_{1}w_{2}\right) .
\end{multline*}%
is an element of the lattice, by closure for multiplication. Thus, for any
choice of integers $h_{1},h_{2},k_{1},k_{2},$ we must have $%
h_{1}k_{2}v_{1}w_{2}+h_{2}k_{2}w_{1}w_{2}\in r\mathbb{Z}$, i.e. $%
v_{1}w_{2}~,w_{1}w_{2}\in r\mathbb{Z}$. Letting $v_{1}w_{2}=rm_{v}$, $%
w_{1}w_{2}=rm_{w}$ , we have $v=\left( \frac{rm_{v}}{w_{2}},0\right) $, $%
w=\left( \frac{rm_{w}}{w_{2}},w_{2}\right) $. The parameters are%
\begin{equation}
A>0,r>0,w_{2}>0,m_{v}\in \mathbb{Z}_{>0},m_{w}\in \mathbb{Z}.
\label{conditions_on_parameters}
\end{equation}

In our matrix coordinate system, from the beginning of Section \ref%
{HeisenbergExamplesSection}, we have 
\begin{multline*}
\log \left( \frac{rm_{v}}{w_{2}}h_{1}+\frac{rm_{w}}{w_{2}}%
h_{2},w_{2}h_{2},rh\right) =\left( \frac{rm_{v}}{w_{2}}h_{1}+\frac{rm_{w}}{%
w_{2}}h_{2}\right) X+\left( w_{2}h_{2}\right) Y \\
+\left( hr-\frac{1}{2}rh_{1}h_{2}m_{v}-\frac{1}{2}rh_{2}^{2}m_{w}\right) Z.
\end{multline*}%
The commutator satisfies

\begin{equation*}
\left[ \left( \frac{rm_{v}}{w_{2}}h_{1}+\frac{rm_{w}}{w_{2}}%
h_{2},w_{2}h_{2},rh\right) ,\left( \frac{rm_{v}}{w_{2}}k_{1}+\frac{rm_{w}}{%
w_{2}}k_{2},w_{2}k_{2},kr\right) \right] =\left( 0,0,rm_{v}\left(
h_{1}k_{2}-h_{2}k_{1}\right) \right) .\allowbreak
\end{equation*}

We now determine a spin structure by fixing $\varepsilon :\Gamma \rightarrow
\left\{ 1,-1\right\} $. Let%
\begin{eqnarray*}
\varepsilon _{1} &=&\varepsilon \left( v,0\right) =\varepsilon \left( \frac{%
rm_{v}}{w_{2}},0,0\right) , \\
\varepsilon _{2} &=&\varepsilon \left( w,0\right) =\varepsilon \left( \frac{%
rm_{w}}{w_{2}},w_{2},0\right) , \\
\varepsilon _{3} &=&\varepsilon \left( 0,0,r\right) .
\end{eqnarray*}%
Since $\varepsilon _{1}\varepsilon _{2}\varepsilon _{1}^{-1}\varepsilon
_{2}^{-1}=\left( \varepsilon _{3}\right) ^{m_{v}}=1$ is the only relation,
the values of $\varepsilon _{1}$ and $\varepsilon _{2}$ are arbitrary ($\pm
1 $), but it may be that $\varepsilon _{3}$ is restricted by $\varepsilon
_{3}{}^{m_{v}}=1$. If $m_{v}$ is even, there is no restriction, but%
\begin{equation}
\text{if }m_{v}\text{ is odd, then }\varepsilon _{3}=1.
\label{m_v_odd_eps_condition}
\end{equation}%
Now we choose an arbitrary element $\alpha \in \mathfrak{g}^{\ast },$ we may
either choose $\alpha =\alpha _{3}Z^{\ast }$ or $\alpha =\alpha _{1}X^{\ast
}+\alpha _{2}Y^{\ast }$, since all possible coadjoint orbits may be
parametrized by such elements. The occurrence condition is calculated on $v$
and $w$. In particular, $\alpha \left( v\right) $ must be an integer or half
integer depending on whether $\varepsilon _{1}=\pm 1$. Likewise for $\alpha
\left( w\right) $. From Section \ref{MooreRichardsonPapersSection}, the
occurrence conditions are:%
\begin{eqnarray}
\alpha _{1}\frac{rm_{v}}{w_{2}} &\in &\mathbb{Z}+\frac{1-\varepsilon _{1}}{4}%
,  \label{torusCondition1} \\
\alpha _{1}\frac{rm_{w}}{w_{2}}+\alpha _{2}w_{2} &\in &\mathbb{Z}+\frac{%
1-\varepsilon _{2}}{4},  \label{torusCondition2} \\
\alpha _{3}r &\in &\mathbb{Z}+\frac{1-\varepsilon _{3}}{4}.
\label{centralCondition}
\end{eqnarray}

The multiplicities corresponding to these representations are as follows. If
we choose $\alpha \in \mathfrak{g}^{\ast }$ such that $\alpha =\alpha
_{1}X^{\ast }+\alpha _{2}Y^{\ast }$, then $m_{\alpha }=1$ (see Section \ref%
{MooreRichardsonPapersSection}). If we choose $\alpha \in \mathfrak{g}^{\ast
}$ such that $\alpha =\alpha _{3}Z^{\ast }$, then $\mathfrak{g}_{\alpha }=%
\mathfrak{z}$. We have 
\begin{equation*}
m_{\alpha }=\sqrt{\det \left( \left. B_{\alpha }\right\vert _{\text{span}%
\left\{ X,Y\right\} }\right) }
\end{equation*}%
with respect to a lattice basis of $\mathcal{L}$, chosen to be $v=\frac{%
rm_{v}}{w_{2}}X$, $w=\frac{rm_{w}}{w_{2}}X+w_{2}Y$, and thus 
\begin{equation*}
\left( 
\begin{array}{cc}
B_{\alpha }\left( v,v\right) & B_{\alpha }\left( v,w\right) \\ 
B_{\alpha }\left( w,v\right) & B_{\alpha }\left( w,w\right)%
\end{array}%
\right) =\left( 
\begin{array}{cc}
0 & \alpha \left( \left[ v,w\right] \right) \\ 
-\alpha \left( \left[ v,w\right] \right) & 0%
\end{array}%
\right) .
\end{equation*}%
So 
\begin{equation*}
m_{\alpha }=\left\vert \alpha \left( \left[ v,w\right] \right) \right\vert
=\left\vert \alpha _{3}\right\vert rm_{v}\in \mathbb{Z}_{>0}.
\end{equation*}%
The conditions (\ref{m_v_odd_eps_condition}) and (\ref{centralCondition})
confirm that $m_{\alpha }$ is an integer.

\vspace{1pt}Now we are ready to calculate the spectrum of the Dirac operator
on a general Heisenberg $3$-manifold with spin structure. Such a manifold
with spin structure is given by $\left( \mathcal{L},r,g_{A},\varepsilon
\right) $, and it is determined by the lattice basis $v=\frac{rm_{v}}{w_{2}}%
X,$ $w=\frac{rm_{w}}{w_{2}}X+w_{2}Y$ for $\mathcal{L}$ and $\varepsilon
_{1},\varepsilon _{2},\varepsilon _{3}$ as above with conditions (\ref%
{conditions_on_parameters}), (\ref{m_v_odd_eps_condition}), (\ref%
{torusCondition1}), (\ref{torusCondition2}), (\ref{centralCondition}).

We now calculate the part of the spectrum corresponding to each coadjoint
orbit in $\mathfrak{g}^{\ast }$. There are two cases, $\alpha _{3}=0$ and $%
\alpha _{3}\neq 0$. If $\varepsilon _{3}=-1$, the condition (\ref%
{centralCondition}) does not permit $\alpha _{3}=0$. As a consequence,
finite-dimensional irreducible subspaces do not occur. If $\varepsilon
_{3}=1 $, condition (\ref{centralCondition})\ is satisfied and $\alpha
=\alpha _{1}X^{\ast }+\alpha _{2}Y^{\ast }$. The conditions (\ref%
{torusCondition1}) and (\ref{torusCondition2}) are satisfied if and only if
there exist $j_{1},j_{2}\in \mathbb{Z}$ such that{\LARGE \ } 
\begin{align*}
\alpha _{1}& =\frac{w_{_{2}}}{rm_{v}}\left( j_{1}+\frac{1-\varepsilon _{_{1}}%
}{4}\right) , \\
\alpha _{2}& =\frac{1}{w_{2}}\left[ \left( j_{2}+\frac{1-\varepsilon _{2}}{4}%
\right) -\frac{m_{w}}{m_{v}}\left( j_{1}+\frac{1-\varepsilon _{_{1}}}{4}%
\right) \right] ,
\end{align*}%
with eigenvalues%
\begin{eqnarray*}
\sigma _{\alpha } &=&\left\{ -\frac{1}{4A}+2\pi \left\Vert \alpha
\right\Vert ,-\frac{1}{4A}-2\pi \left\Vert \alpha \right\Vert \right\} \\
&=&\left\{ -\frac{1}{4A}+2\pi \sqrt{\frac{\alpha _{1}^{2}+\alpha _{2}^{2}}{A}%
},-\frac{1}{4A}-2\pi \sqrt{\frac{\alpha _{1}^{2}+\alpha _{2}^{2}}{A}}%
\right\} ,~
\end{eqnarray*}%
and the multiplicity of this representation is $m_{\alpha }=1$. If $\alpha
=0 $ is permitted --- i.e. $\varepsilon =1$ --- then $\mathcal{H}_{\alpha }$
is no longer irreducible, and the eigenspace corresponding to $-\frac{1}{4A}$
is two-dimensional.

We now consider the case $\alpha _{3}\neq 0$. By (\ref{centralCondition}) we
may choose $\alpha \in \mathfrak{g}^{\ast }$ in the coadjoint orbit such
that $\alpha =dZ^{\ast }=\frac{1}{r}\left( \kappa +\frac{1-\varepsilon _{3}}{%
4}\right) Z^{\ast }\neq 0$ with $\kappa \in \mathbb{Z}$, with eigenvalues%
\begin{equation*}
\left\{ -\frac{1}{4A}-2\pi \left\vert d\right\vert \right\} \cup \left\{ -%
\frac{1}{4A}\pm 2\sqrt{\frac{\pi \left\vert d\right\vert p}{A}+\pi ^{2}d^{2}}%
:p\geq 1\right\} , 
\end{equation*}%
or in other words%
\begin{align*}
\sigma _{\alpha }& =\left\{ -\frac{1}{4A}-\frac{2\pi }{r}\left\vert \kappa +%
\frac{1-\varepsilon _{3}}{4}\right\vert \right\} \cup \\
& \left\{ -\frac{1}{4A}\pm 2\sqrt{\frac{\pi p}{rA}\left\vert \kappa +\frac{%
1-\varepsilon _{3}}{4}\right\vert +\frac{\pi ^{2}}{r^{2}}\left( \kappa +%
\frac{1-\varepsilon _{3}}{4}\right) ^{2}}:p\in \mathbb{Z}_{>0}\right\} ,
\end{align*}%
and the multiplicity of this representation is 
\begin{equation*}
m_{\alpha }=m_{v}\left\vert \kappa +\frac{1-\varepsilon _{3}}{4}\right\vert
>0. 
\end{equation*}%
A special case occurs in \cite{Am-Ba}, with $r=T^{\prime }$, $A=\left(
d^{\prime }\right) ^{2}T^{\prime }$, $m_{v}=r^{\prime }$, $p=p^{\prime }$, $%
d=\frac{\tau ^{\prime }}{T^{\prime }}$, $m_{w}=0$, where the primes indicate
the notation used in \cite{Am-Ba}.

\subsection{Eta invariant of three-dimensional Heisenberg manifolds\label%
{EtaInvt3DHeisenSection}}

From (\ref{etaInvt3MfldsNegCase}), the eta invariant of the spin Dirac
operator corresponding to a spin structure on a three-dimensional manifold
is ($n=3,\widehat{n}=2,W$ is trivial so that $\mathrm{tr}\left( F^{W}\right)
=0$)

\begin{equation*}
\eta \left( 0\right) -\eta _{-\overline{\lambda }}\left( 0\right) =-\frac{%
\overline{\lambda }^{3}}{3\pi ^{2}}\mathrm{vol}\left( M\right) +\frac{%
\overline{\lambda }}{24\pi ^{2}}\int_{M}\mathrm{Scal}-2\#\left( \sigma
\left( D\right) \cap \left( \overline{\lambda },0\right) \right) -\#\left(
\sigma \left( D\right) \cap \left\{ 0,\overline{\lambda }\right\} \right) ,
\end{equation*}%
where $\overline{\lambda }=-\frac{1}{4A}$, and where the last two terms
count multiplicities. (Recall the rank of the spinor bundle is two.)

We now calculate $\eta _{-\overline{\lambda }}\left( 0\right) $. If $%
\varepsilon _{3}=1$, and from the formulas for $\sigma_\alpha$ and $m_\alpha$ in the last section, we have after deleting terms that cancel,

\begin{eqnarray*}
\eta _{-\overline{\lambda }}\left( s\right) &=&\sum_{\kappa \in \mathbb{%
Z\setminus }\left\{ 0\right\} }\left( -1\right) m_{v}\left\vert \kappa
\right\vert \left( \frac{2\pi }{r}\left\vert \kappa \right\vert \right) ^{-s}
\\
&=&-m_{v}\frac{\left( 2\pi \right) ^{-s}}{r^{-s}}\sum_{\kappa \in \mathbb{%
Z\setminus }\left\{ 0\right\} }\left( \left\vert \kappa \right\vert \right)
^{-s+1} \\
&=&-2m_{v}\frac{\left( 2\pi \right) ^{-s}}{r^{-s}}\sum_{\kappa \in \mathbb{Z}%
_{>0}}\kappa ^{-s+1} \\
&=&-2m_{v}\frac{\left( 2\pi \right) ^{-s}}{r^{-s}}\sum_{\kappa \in \mathbb{Z}%
_{>0}}\kappa ^{-\left( s-1\right) }=-2m_{v}\frac{\left( 2\pi \right) ^{-s}}{%
r^{-s}}\zeta _{R}\left( s-1\right) ,
\end{eqnarray*}%
where $\zeta _{R}$ is the Riemann zeta function. Thus, if $\varepsilon
_{3}=1 $.%
\begin{eqnarray*}
\eta _{-\overline{\lambda }}\left( 0\right) &=&-2m_{v}\zeta _{R}\left(
-1\right) \\
&=&-2m_{v}\left( -\frac{1}{12}\right) \\
&=&\frac{m_{v}}{6}.
\end{eqnarray*}%
On the other hand, if $\varepsilon _{3}=-1$, which implies $m_{v}$ is even, 
\begin{eqnarray*}
\eta _{-\overline{\lambda }}\left( s\right) &=&\sum_{\kappa \in \mathbb{Z}%
}\left( -1\right) m_{v}\left\vert \kappa +\frac{1}{2}\right\vert \left( 
\frac{2\pi }{r}\left\vert \kappa +\frac{1}{2}\right\vert \right) ^{-s} \\
&=&-m_{v}\frac{\left( 2\pi \right) ^{-s}}{r^{-s}}\sum_{\kappa \in \mathbb{Z}%
}\left\vert \frac{2\kappa +1}{2}\right\vert ^{-s+1} \\
&=&-m_{v}\frac{\left( 2\pi \right) ^{-s}}{r^{-s}}2^{s-1}\sum_{\kappa \in 
\mathbb{Z}}\left\vert 2\kappa +1\right\vert ^{-s+1} \\
&=&-2m_{v}\frac{\left( 2\pi \right) ^{-s}}{r^{-s}}2^{s-1}\zeta _{R,\text{odd}%
}\left( s-1\right) ,
\end{eqnarray*}%
where $\zeta _{R,\text{odd}}\left( s\right) =\sum_{n=0}^{\infty }\left(
2n+1\right) ^{-s}=\left( 1-2^{-s}\right) \zeta _{R}\left( s\right) $. Thus,
if $\varepsilon _{3}=-1$, 
\begin{eqnarray*}
\eta _{-\overline{\lambda }}\left( 0\right) &=&\left. -2m_{v}\frac{\left(
2\pi \right) ^{-s}}{r^{-s}}2^{s-1}\left( 1-2^{-\left( s-1\right) }\right)
\zeta _{R}\left( s-1\right) \right\vert _{s=0} \\
&=&-2m_{v}2^{-1}\left( 1-2\right) \zeta _{R}\left( -1\right) \\
&=&m_{v}\zeta _{R}\left( -1\right) =-\frac{m_{v}}{12}
\end{eqnarray*}%
In all cases,%
\begin{equation*}
\eta _{-\overline{\lambda }}\left( 0\right) =\frac{m_{v}}{24}\left(
3\varepsilon _{3}+1\right) .
\end{equation*}

From (\cite[Section 2]{Eb}), 
\begin{equation*}
\mathrm{Scal}=\frac{1}{4}\mathrm{Tr}\left( j\left( Z\right) ^{2}\right) =%
\frac{1}{4}\mathrm{Tr}\left( \left( 
\begin{array}{cc}
0 & -\frac{1}{A} \\ 
\frac{1}{A} & 0%
\end{array}%
\right) ^{2}\right) =-\frac{1}{2A^{2}}.
\end{equation*}%
Also,%
\begin{equation*}
\mathrm{vol}\left( M\right) =rA\det \left( 
\begin{array}{cc}
\frac{rm_{v}}{w_{2}} & \frac{rm_{w}}{w_{2}} \\ 
0 & w_{2}%
\end{array}%
\right) =r^{2}Am_{v}.
\end{equation*}%
From the expressions for the eigenvalues of $\sigma \left( D\right) $, we
see that $\#\left( \sigma \left( D\right) \cap \left\{ -\frac{1}{4A}\right\}
\right) $ is nonzero only if the part of the spectrum corresponding to $%
\alpha =0\in \mathfrak{g}^{\ast }$ is nontrivial. This happens only if $%
\varepsilon _{1}=\varepsilon _{2}=\varepsilon _{3}=1$. Thus,%
\begin{equation*}
\#\left( \sigma \left( D\right) \cap \left\{ -\frac{1}{4A}\right\} \right)
=\left\{ 
\begin{array}{ll}
2 & \text{if }\varepsilon =\mathbf{1} \\ 
0~ & \text{otherwise}%
\end{array}%
\right. .
\end{equation*}

To count $\#\left( \sigma \left( D\right) \cap \left( -\frac{1}{4A},0\right)
\right) $, the toral eigenvalues, i.e. the ones from the finite-dimensional
irreducible subspaces, are (see (\ref{torusSpectrum3dHeisenberg}))%
\begin{eqnarray*}
&&\left\{ -\frac{1}{4A}+\tau :0<\tau =2\pi \sqrt{\frac{\alpha
_{1}^{2}+\alpha _{2}^{2}}{A}}<\frac{1}{4A}\right\} \\
&=&\left\{ -\frac{1}{4A}+\tau :\tau =2\pi \frac{\left\Vert \alpha
\right\Vert }{\sqrt{A}},0<\left\Vert \alpha \right\Vert <\frac{1}{8\pi \sqrt{%
A}}\right\}
\end{eqnarray*}%
With fixed $r>0,w_{2}>0,m_{v}\in \mathbb{Z}_{>0},m_{w}\in \mathbb{Z}$, by (%
\ref{torusCondition1}), (\ref{torusCondition2}) the coadjoint orbit
represented by $\alpha =\alpha _{1}X^{\ast }+\alpha _{2}Y^{\ast }$ has an
associated irreducible representation that occurs with multiplicity one if
and only if 
\begin{align*}
\frac{\alpha _{1}rm_{v}}{w_{2}}& \in \mathbb{Z}+\frac{1-\varepsilon _{_{1}}}{%
4}, \\
\alpha _{2}w_{2}+\frac{rm_{w}}{w_{2}}\alpha _{1}& \in \mathbb{Z}+\frac{%
1-\varepsilon _{_{2}}}{4}.
\end{align*}%
The relevant nontoral eigenvalues, i.e. those from the infinite-dimensional
irreducible subspaces, are

\begin{equation*}
\sigma _{\alpha }=\left\{ 
\begin{array}{c}
-\frac{1}{4A}+2\sqrt{\frac{\pi p}{rA}\left\vert \kappa +\frac{1-\varepsilon
_{3}}{4}\right\vert +\frac{\pi ^{2}}{r^{2}}\left( \kappa +\frac{%
1-\varepsilon _{3}}{4}\right) ^{2}}:p\in \mathbb{Z}_{>0},\kappa \in \mathbb{Z%
}, \\ 
0<rp\left\vert \kappa +\frac{1-\varepsilon _{3}}{4}\right\vert +\pi A\left(
\kappa +\frac{1-\varepsilon _{3}}{4}\right) ^{2}<\frac{r^{2}}{64\pi A}%
\end{array}%
\right\} ,
\end{equation*}%
with multiplicity $m_{\alpha }=m_{v}\left\vert \kappa +\frac{1-\varepsilon
_{3}}{4}\right\vert >0$. Letting $\mu =\kappa +\frac{1-\varepsilon _{3}}{4}%
\in \mathbb{Z}+\frac{1-\varepsilon _{3}}{4}$, the inequality $0<rp\left\vert
\mu \right\vert +\pi A\mu ^{2}<\frac{r^{2}}{64\pi A}$ is equivalent to 
\begin{equation*}
0<\left\vert \mu \right\vert <\frac{r}{2\pi A}\left( \sqrt{\frac{1}{16}+p^{2}%
}-p\right) ,
\end{equation*}%
so the relevant nontoral eigenvalues in the open interval $\left( -\frac{1}{%
4A},0\right) $ associated to $\left. D\right\vert _{\mathcal{H}_{\alpha }}$
are 
\begin{equation*}
\sigma _{\alpha }=\left\{ 
\begin{array}{c}
-\frac{1}{4A}+2\sqrt{\frac{\pi p}{rA}\left\vert \mu \right\vert +\frac{\pi
^{2}}{r^{2}}\mu ^{2}}:p\in \mathbb{Z}_{>0},\mu \in \mathbb{Z}+\frac{%
1-\varepsilon _{3}}{4}, \\ 
0<\left\vert \mu \right\vert <\frac{r}{2\pi A}\left( \sqrt{\frac{1}{16}+p^{2}%
}-p\right)%
\end{array}%
\right\} ,
\end{equation*}%
with multiplicities $m_{\alpha }=m_{v}\left\vert \mu \right\vert $.

In summary, summing over all coadjoint orbits whose associated irreducible
representation occurs in $\rho _{\varepsilon }$ ,%
\begin{eqnarray*}
\#\left( \sigma \left( D\right) \cap \left( -\frac{1}{4A},0\right) \right)
&=&\#\left\{ 
\begin{array}{c}
\left( \alpha _{1},\alpha _{2}\right) :\frac{\alpha _{1}rm_{v}}{w_{2}}\in 
\mathbb{Z}+\frac{1-\varepsilon _{_{1}}}{4}, \\ 
\alpha _{2}w_{2}+\frac{rm_{w}}{w_{2}}\alpha _{1}\in \mathbb{Z}+\frac{%
1-\varepsilon _{_{2}}}{4},~0<\left\Vert \alpha \right\Vert <\frac{1}{8\pi 
\sqrt{A}}%
\end{array}%
\right\} \\
&&+m_{v}\sum_{p\in \mathbb{Z}_{>0}}\sum_{\substack{ \mu \in \mathbb{Z}+\frac{%
1-\varepsilon _{3}}{4}  \\ 0<\left\vert \mu \right\vert <\frac{r}{2\pi A}%
\left( \sqrt{\frac{1}{16}+p^{2}}-p\right) }}\left\vert \mu \right\vert .
\end{eqnarray*}

Likewise,%
\begin{eqnarray*}
\#\left( \sigma \left( D\right) \cap \left\{ 0\right\} \right) &=&\#\left\{ 
\begin{array}{c}
\left( \alpha _{1},\alpha _{2}\right) :\frac{\alpha _{1}rm_{v}}{w_{2}}\in 
\mathbb{Z}+\frac{1-\varepsilon _{_{1}}}{4}, \\ 
\alpha _{2}w_{2}+\frac{rm_{w}}{w_{2}}\alpha _{1}\in \mathbb{Z}+\frac{%
1-\varepsilon _{_{2}}}{4},~\left\Vert \alpha \right\Vert =\frac{1}{8\pi 
\sqrt{A}}%
\end{array}%
\right\} \\
&&+m_{v}\sum_{p\in \mathbb{Z}_{>0}}\sum_{\substack{ \mu \in \mathbb{Z}+\frac{%
1-\varepsilon _{3}}{4}  \\ \left\vert \mu \right\vert =\frac{r}{2\pi A}%
\left( \sqrt{\frac{1}{16}+p^{2}}-p\right) }}\left\vert \mu \right\vert .
\end{eqnarray*}

We now show that the last line produces at most two nonzero terms. If $\mu
_{1},\mu _{2}\in \mathbb{Z}+\frac{1-\varepsilon _{3}}{4}$ both satisfy $%
\left\vert \mu _{j}\right\vert =\frac{r}{2\pi A}\left( \sqrt{\frac{1}{16}%
+p_{j}^{2}}-p_{j}\right) >0$ and $\left\vert \mu _{1}\right\vert \neq
\left\vert \mu _{2}\right\vert $, solving for $\frac{r}{2\pi A}$ yields 
\begin{equation*}
k\left( \sqrt{1+16p_{1}^{2}}+4p_{1}\right) -h\left( \sqrt{1+16p_{2}^{2}}%
+4p_{2}^{2}\right) =0
\end{equation*}%
for some positive $h,k\in \mathbb{Z}+\frac{1-\varepsilon _{3}}{4}$, and 
\begin{equation*}
1-\frac{h^{2}}{k^{2}}+32\frac{h}{k}p_{1}p_{2}-32\frac{h^{2}}{k^{2}}%
p_{2}^{2}=8\frac{h}{k}\left( \allowbreak \frac{h}{k}p_{2}-p_{1}\right) \sqrt{%
16p_{2}^{2}+1}.
\end{equation*}%
If $\frac{h}{k}p_{2}=p_{1}$, then the equation above implies $p_{1}=p_{2}$.
On the other hand,if $\left( \allowbreak \frac{h}{k}p_{2}-p_{1}\right) $ is
not zero,

\begin{equation*}
\frac{1-\frac{h^{2}}{k^{2}}+32\frac{h}{k}p_{1}p_{2}-32\frac{h^{2}}{k^{2}}%
p_{2}^{2}}{8\frac{h}{k}\left( \allowbreak \frac{h}{k}p_{2}-p_{1}\right) }=%
\sqrt{16p_{2}^{2}+1}.
\end{equation*}%
Since the left side is rational and the right side is irrational, this is
impossible.

\vspace{1pt}

Thus, there are at most two nonzero summands in the expression below.

\begin{multline*}
m_{v}\sum_{p\in \mathbb{Z}_{>0}}\sum_{\substack{ \mu \in \mathbb{Z}+\frac{%
1-\varepsilon _{3}}{4}  \\ \left\vert \mu \right\vert =\frac{r}{2\pi A}%
\left( \sqrt{\frac{1}{16}+p^{2}}-p\right) }}\left\vert \mu \right\vert \\
=\left\{ 
\begin{array}{ll}
\frac{m_{v}r}{2\pi A}\left( \sqrt{\frac{1}{16}+p^{2}}-p\right) ~ & \text{if }%
\frac{r}{2\pi A}\left( \sqrt{\frac{1}{16}+p^{2}}-p\right) \text{ }\in 
\mathbb{Z}+\frac{1-\varepsilon _{3}}{4}\text{ for some }p\in \mathbb{Z}_{>0}
\\ 
0 & \text{otherwise}%
\end{array}%
\right.
\end{multline*}%
Then%
\begin{multline*}
m_{v}\sum_{p\in \mathbb{Z}_{>0}}\sum_{\substack{ \mu \in \mathbb{Z}+\frac{%
1-\varepsilon _{3}}{4}  \\ \mu =\pm \frac{r}{2\pi A}\left( \sqrt{\frac{1}{16}%
+p^{2}}-p\right) }}\left\vert \mu \right\vert \\
=\left\{ 
\begin{array}{ll}
\frac{m_{v}r}{\pi A}\left( \sqrt{\frac{1}{16}+p^{2}}-p\right) ~ & \text{if }%
\frac{r}{2\pi A}\left( \sqrt{\frac{1}{16}+p^{2}}-p\right) \text{ }\in 
\mathbb{Z}+\frac{1-\varepsilon _{3}}{4}\text{ for some }p\in \mathbb{Z}_{>0}
\\ 
0 & \text{otherwise}%
\end{array}%
\right.
\end{multline*}

In summary,

\begin{multline*}
\#\left( \sigma \left( D\right) \cap \left\{ 0\right\} \right) =\#\left\{ 
\begin{array}{c}
\left( \alpha _{1},\alpha _{2}\right) :\frac{\alpha _{1}rm_{v}}{w_{2}}\in 
\mathbb{Z}+\frac{1-\varepsilon _{_{1}}}{4}, \\ 
\alpha _{2}w_{2}+\frac{rm_{w}}{w_{2}}\alpha _{1}\in \mathbb{Z}+\frac{%
1-\varepsilon _{_{2}}}{4},~\left\Vert \alpha \right\Vert =\frac{1}{8\pi 
\sqrt{A}}%
\end{array}%
\right\} \\
+\left\{ 
\begin{array}{ll}
\frac{m_{v}r}{\pi A}\left( \sqrt{\frac{1}{16}+p^{2}}-p\right) ~ & \text{if }%
\frac{r}{2\pi A}\left( \sqrt{\frac{1}{16}+p^{2}}-p\right) \text{ }\in 
\mathbb{Z}+\frac{1-\varepsilon _{3}}{4}\text{ for some }p\in \mathbb{Z}_{>0}
\\ 
0 & \text{otherwise}%
\end{array}%
\right. .
\end{multline*}

Putting these calculations together, we have 
\begin{eqnarray*}
\eta \left( 0\right) -\eta _{-\overline{\lambda }}\left( 0\right) &=&-\frac{%
\overline{\lambda }^{3}}{3\pi ^{2}}\mathrm{vol}\left( M\right) +\frac{%
\overline{\lambda }}{24\pi ^{2}}\int_{M}\mathrm{Scal}-2\#\left( \sigma
\left( -D\right) \cap \left( \overline{\lambda },0\right) \right) -\#\left(
\sigma \left( -D\right) \cap \left\{ 0,\overline{\lambda }\right\} \right) \\
&=&-\frac{\left( -\frac{1}{4A}\right) ^{3}}{3\pi ^{2}}r^{2}Am_{v}+\frac{%
\left( -\frac{1}{4A}\right) }{24\pi ^{2}}\left( -\frac{1}{2A^{2}}\right)
\left( r^{2}Am_{v}\right) -N\left( A,r,w_{2},m_{v},m_{w},\varepsilon \right)
\\
&=&\frac{r^{2}m_{v}}{192\pi ^{2}A^{2}}+\frac{r^{2}m_{v}}{192\pi ^{2}A^{2}}%
-N\left( A,r,w_{2},m_{v},m_{w},\varepsilon \right) \\
&=&\frac{r^{2}m_{v}}{96\pi ^{2}A^{2}}-N\left(
A,r,w_{2},m_{v},m_{w},\varepsilon \right) ,
\end{eqnarray*}%
where $N\left( A,r,w_{2},m_{v},m_{w},\varepsilon \right) $ is the
nonnegative integer defined by

\begin{eqnarray}
N(\cdot ) &=&2\#\left( \sigma \left( D\right) \cap \left( \overline{\lambda }%
,0\right) \right) +\#\left( \sigma \left( D\right) \cap \left\{ 0,\overline{%
\lambda }\right\} \right)  \notag \\
&=&2\#\left\{ 
\begin{array}{c}
\left( \alpha _{1},\alpha _{2}\right) :\frac{\alpha _{1}rm_{v}}{w_{2}}\in 
\mathbb{Z}+\frac{1-\varepsilon _{_{1}}}{4}, \\ 
\alpha _{2}w_{2}+\frac{rm_{w}}{w_{2}}\alpha _{1}\in \mathbb{Z}+\frac{%
1-\varepsilon _{_{2}}}{4},~0<\left\Vert \alpha \right\Vert <\frac{1}{8\pi 
\sqrt{A}}%
\end{array}%
\right\}  \notag \\
&&+2m_{v}\sum_{p\in \mathbb{Z}_{>0}}\sum_{\substack{ \mu \in \mathbb{Z}+%
\frac{1-\varepsilon _{3}}{4}  \\ 0<\left\vert \mu \right\vert <\frac{r}{8\pi
A\left( \sqrt{1+16p^{2}}+4p\right) }}}\left\vert \mu \right\vert  \notag \\
&&+\#\left\{ 
\begin{array}{c}
\left( \alpha _{1},\alpha _{2}\right) :\frac{\alpha _{1}rm_{v}}{w_{2}}\in 
\mathbb{Z}+\frac{1-\varepsilon _{_{1}}}{4}, \\ 
\alpha _{2}w_{2}+\frac{rm_{w}}{w_{2}}\alpha _{1}\in \mathbb{Z}+\frac{%
1-\varepsilon _{_{2}}}{4},~\left\Vert \alpha \right\Vert =\frac{1}{8\pi 
\sqrt{A}}%
\end{array}%
\right\}  \notag \\
&&+\left\{ 
\begin{array}{ll}
\frac{m_{v}r}{\pi A}\left( \sqrt{\frac{1}{16}+p^{2}}-p\right) ~ & \text{if }%
\frac{r}{2\pi A}\left( \sqrt{\frac{1}{16}+p^{2}}-p\right) \text{ }\in 
\mathbb{Z}+\frac{1-\varepsilon _{3}}{4}\text{ for some }p\in \mathbb{Z}_{>0}
\\ 
0 & \text{otherwise}%
\end{array}%
\right.  \notag \\
&&+\left\{ 
\begin{array}{ll}
2 & \text{if }\varepsilon _{1}=\varepsilon _{2}=\varepsilon _{3}=1 \\ 
0~ & \text{otherwise}%
\end{array}%
\right. .  \label{N_equation}
\end{eqnarray}%
All the sums above are finite.

We summarize this result in the following theorem.

\begin{theorem}
The eta invariant of the spin Dirac operator on a three-dimensional
Heisenberg manifold with parameters $A,r,w_{2}>0$, $m_{v}\in \mathbb{Z}_{>0}$%
, $m_{w}\in \mathbb{Z}$ with spin structure determined by $\varepsilon
=\left( \varepsilon _{1},\varepsilon _{2},\varepsilon _{3}\right) \in
\left\{ \pm 1\right\} ^{3}$ satisfies 
\begin{equation*}
\eta \left( 0\right) =\frac{r^{2}m_{v}}{96\pi ^{2}A^{2}}+\frac{m_{v}}{24}%
\left( 3\varepsilon _{3}+1\right) -N\left( A,r,w_{2},m_{v},m_{w},\varepsilon
\right) ,
\end{equation*}%
where $N\left( A,r,w_{2},m_{v},m_{w},\varepsilon \right) $ is the
nonnegative integer given by the expression (\ref{N_equation}).
\end{theorem}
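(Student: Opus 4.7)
The plan is to apply the three-dimensional eta invariant formula (\ref{etaInvt3MfldsNegCase}) directly, using the Dirac spectrum computed in Section \ref{3DHeisenbergExampleSection}. The key ingredients are the point of spectral symmetry, the three geometric quantities $\mathrm{vol}(M)$, $\int_{M}\mathrm{Scal}$, and $\int_{M}\mathrm{Tr}(F^{W})$, and a careful enumeration of the small eigenvalues.

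First I would record the geometric inputs. Inspection of the eigenvalue lists (\ref{torusSpectrum3dHeisenberg}) and of the nontoral case shows that every eigenvalue has the form $-\tfrac{1}{4A}+\xi$, where the $\xi$-values appear symmetrically with matching multiplicities about $0$, so $\overline{\lambda}=-\tfrac{1}{4A}<0$ is a point of spectral symmetry. The Riemannian volume equals $\mathrm{vol}(M)=r^{2}Am_{v}$, which is the determinant of the basis $\{v,w,rZ\}$ in the metric $g_{A}$; the scalar curvature formula for two-step nilmanifolds in \cite[Section 2]{Eb} gives $\mathrm{Scal}=\tfrac{1}{4}\mathrm{Tr}(j(Z)^{2})=-\tfrac{1}{2A^{2}}$ (constant), and the trivial twisting bundle $W$ forces $\mathrm{Tr}(F^{W})=0$.

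Substituting these values with $\widehat{n}=2$ into (\ref{etaInvt3MfldsNegCase}) yields the local contribution
\[
-\frac{\overline{\lambda}^{3}}{3\pi^{2}}\,r^{2}Am_{v}+\frac{\overline{\lambda}}{24\pi^{2}}\left(-\frac{r^{2}m_{v}}{2A}\right)=\frac{r^{2}m_{v}}{192\pi^{2}A^{2}}+\frac{r^{2}m_{v}}{192\pi^{2}A^{2}}=\frac{r^{2}m_{v}}{96\pi^{2}A^{2}},
\]
so the theorem reduces to identifying the spectral count $2\#(\sigma(D)\cap(\overline{\lambda},0))+\#(\sigma(D)\cap\{0,\overline{\lambda}\})$ with $N(A,r,w_{2},m_{v},m_{w},\varepsilon)$ of (\ref{N_equation}). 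I would split the enumeration into the toral part ($\alpha_{3}=0$) and the nontoral part ($\alpha_{3}\neq 0$), apply the occurrence conditions (\ref{torusCondition1})--(\ref{centralCondition}) to parameterize the contributing coadjoint orbits, and impose the strict and equality versions of the inequalities defining $(\overline{\lambda},0)$ and $\{0,\overline{\lambda}\}$. Multiplicities are tracked via $m_{\alpha}=1$ for toral orbits and $m_{\alpha}=m_{v}|\mu|$ with $\mu=m+\tfrac{1-\varepsilon_{3}}{4}$ for nontoral ones. The single toral eigenvalue equal to $\overline{\lambda}$ arises only when $\alpha=0$ is admissible, i.e.\ when $\varepsilon=\mathbf{1}$, which accounts for the last bracketed $\{2,0\}$-term in (\ref{N_equation}).

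The main obstacle is the count $\#(\sigma(D)\cap\{0\})$ coming from the nontoral spectrum: it requires showing that the equation $|\mu|=\tfrac{r}{2\pi A}\bigl(\sqrt{\tfrac{1}{16}+p^{2}}-p\bigr)$ has at most one admissible pair $(|\mu|,p)\in(\mathbb{Z}+\tfrac{1-\varepsilon_{3}}{4})\times\mathbb{Z}_{>0}$. I would carry out the rationality argument: if two distinct admissible pairs $(\mu_{1},p_{1})$, $(\mu_{2},p_{2})$ both solved the equation, eliminating $r/(2\pi A)$ would force $\sqrt{16p_{2}^{2}+1}$ to be rational, which is impossible since $16p_{2}^{2}+1$ is not a perfect square for $p_{2}\geq 1$. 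Finiteness of all sums then follows automatically: the inequality $|\mu|<\tfrac{r}{2\pi A}(\sqrt{1/16+p^{2}}-p)=O(1/p)$ restricts $p$ to a bounded range once $\mu\neq 0$ is confined to a coset of $\mathbb{Z}$, and the toral counts are bounded because $\alpha$ lies in a dual lattice intersected with a Euclidean disk of radius $1/(8\pi\sqrt{A})$. Collecting the five contributions reproduces (\ref{N_equation}) exactly.
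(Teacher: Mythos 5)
Your proposal is correct and follows the paper's own proof essentially verbatim: both substitute $\overline{\lambda}=-\tfrac{1}{4A}$, $\mathrm{vol}(M)=r^{2}Am_{v}$, $\mathrm{Scal}=-\tfrac{1}{2A^{2}}$, and $\mathrm{Tr}(F^{W})=0$ into (\ref{etaInvt3MfldsNegCase}) with $\widehat{n}=2$ to obtain the local term $\tfrac{r^{2}m_{v}}{96\pi^{2}A^{2}}$, and both identify $N$ by enumerating small eigenvalues from the toral and nontoral coadjoint orbits via the occurrence conditions, using the same rationality argument to control the zero-eigenvalue count. The only addition beyond the paper is your explicit remark that $16p^{2}+1$ is never a perfect square for $p\geq 1$, which the paper leaves implicit; everything else matches the paper's strategy.
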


\begin{remark}
The expression above is consistent with the calculation of W. Zhang in \cite%
{Zhang}, who calculates the adiabatic limit of the $\mathrm{mod}1$ reduction
of the eta invariant on specific types of circle bundles $M\rightarrow B$,
where the metric on the base $B$ is blown up $\left( A\rightarrow \infty
\right) $. The Zhang formula for the case of the spin Dirac operator twisted
by a line bundle $L$ over the base (corresponding to an induced spin
structure) is 
\begin{eqnarray*}
\lim_{A\rightarrow \infty }\overline{\eta }\left( D_{M,L}\right) &=&\frac{1}{%
2}\dim \ker \left( D_{B,L}\right) \\
&&+\left\langle \widehat{A}\left( TB\right) ch\left( L\right) \frac{\tanh
\left( \frac{e}{2}\right) -\frac{e}{2}}{e\tanh \left( \frac{e}{2}\right) },%
\left[ B\right] \right\rangle \mathrm{mod}1,
\end{eqnarray*}%
where $e$ is the Euler class of the circle bundle. In our case, the base is
a flat torus, and the fibers of the circle bundle are the $Z$-parameter
curves.\ First, we consider the right side of the equation. The integer $%
\dim \ker \left( D_{B,L}\right) $ is either $2$ or zero (depending on
whether the line bundle $L$ is trivial or not), so that term is zero. The
Euler number (integral of the Euler class) of the circle bundle can be shown
to be the same as the positive integer $m_{v}$. We also note that since the
fiber spin structure considered by Zhang is induced from a trivial spin
structure on a disk, the induced spin structure restricted to each circle
fiber is the the nontrivial one, so that $\varepsilon _{3}=-1$. For this and
other reasons, we note that the Euler number $m_{v}$ is even. The relevant
characteristic forms are $\widehat{A}\left( TB\right) =1,~ch\left( L\right)
=1+\left( 2\text{-form}\right) ,\frac{\tanh \left( \frac{e}{2}\right) -\frac{%
e}{2}}{e\tanh \left( \frac{e}{2}\right) }=-\frac{e}{12}$, so the second term
is $-\frac{m_{v}}{12}$. The left side of Zhang's equation is%
\begin{equation*}
\lim_{A\rightarrow \infty }\left( \frac{r^{2}m_{v}}{96\pi ^{2}A^{2}}-\frac{%
m_{v}}{12}-N(...)\mathrm{mod}1\right) =-\frac{m_{v}}{12}
\end{equation*}%
in our case, so we see that our formula is consistent with this result.
\end{remark}

\begin{corollary}
From the expressions for $\eta \left( 0\right) $, we may consider families
of Heisenberg manifolds with constant $\eta \left( 0\right) $. For example,
if we let%
\begin{eqnarray*}
A &=&b_{1}r \\
w_{2} &=&b_{2}\sqrt{r}
\end{eqnarray*}%
for some constants $b_{1},b_{2}>0$. Holding $m_{v}$, $m_{w}$, $\varepsilon $%
, $b_{1}$, $b_{2}$ constant and letting $r$ vary, we obtain a family of
Heisenberg manifolds with constant $\eta \left( 0\right) $ yet with
different eigenvalues for $D$; even the point of symmetry $-\frac{1}{4A}$
varies with $r$.
\end{corollary}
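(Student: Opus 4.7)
The plan is to substitute the parametrization $A = b_1 r$, $w_2 = b_2 \sqrt{r}$ directly into the formula from the preceding theorem and verify term-by-term that the $r$-dependence cancels. The local contribution $\frac{r^2 m_v}{96 \pi^2 A^2}$ immediately reduces to $\frac{m_v}{96 \pi^2 b_1^2}$, so the substance of the argument lies in showing that the counting quantity $N(A, r, w_2, m_v, m_w, \varepsilon)$ of (\ref{N_equation}) is also independent of $r$.

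My strategy will be to introduce the change of variable $\tilde\alpha_j := \sqrt{r}\,\alpha_j$ on the toral $\alpha$-counts while leaving $\mu$ unchanged on the nontoral counts. I expect the integrality constraints $\frac{\alpha_1 r m_v}{w_2} \in \mathbb{Z} + \frac{1-\varepsilon_1}{4}$ and $\alpha_2 w_2 + \frac{r m_w}{w_2}\alpha_1 \in \mathbb{Z} + \frac{1-\varepsilon_2}{4}$ to transform into $\frac{\tilde\alpha_1 m_v}{b_2} \in \mathbb{Z} + \frac{1-\varepsilon_1}{4}$ and $\tilde\alpha_2 b_2 + \frac{m_w \tilde\alpha_1}{b_2} \in \mathbb{Z} + \frac{1-\varepsilon_2}{4}$, involving only $\tilde\alpha$, $b_1$, $b_2$, $m_v$, $m_w$, $\varepsilon$; likewise the norm bounds $\|\alpha\| < \frac{1}{8\pi\sqrt{A}}$ and $\|\alpha\| = \frac{1}{8\pi\sqrt{A}}$ become $\|\tilde\alpha\| < \frac{1}{8\pi\sqrt{b_1}}$ and $\|\tilde\alpha\| = \frac{1}{8\pi\sqrt{b_1}}$. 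Every factor of $r$ should cancel.

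For the nontoral pieces, the bounds on $|\mu|$ of the form $\frac{r}{8\pi A}$ or $\frac{r}{8\pi A(\sqrt{1+16p^2}+4p)}$ simplify to the $r$-free expressions $\frac{1}{8\pi b_1}$ and $\frac{1}{8\pi b_1(\sqrt{1+16p^2}+4p)}$, and the explicit ``boundary'' contributions $\frac{m_v r}{\pi A}(\sqrt{1/16+p^2}-p)$ and $\frac{m_v r}{8\pi A}$ collapse to $\frac{m_v}{\pi b_1}(\sqrt{1/16+p^2}-p)$ and $\frac{m_v}{8\pi b_1}$. The integrality test $\frac{r}{2\pi A}(\sqrt{1/16+p^2}-p) \in \mathbb{Z} + \frac{1-\varepsilon_3}{4}$ likewise becomes $\frac{1}{2\pi b_1}(\sqrt{1/16+p^2}-p) \in \mathbb{Z} + \frac{1-\varepsilon_3}{4}$, independent of $r$. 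Thus the whole corollary reduces to a uniform verification that each of the six summands of $N$ in (\ref{N_equation}) is homogeneous of weight zero under the rescaling $(r, A, w_2, \alpha) \mapsto (r, b_1 r, b_2\sqrt{r}, \alpha/\sqrt{r})$.

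The last step will be to confirm that the family is genuinely nontrivial. The point of symmetry $\bar\lambda = -\frac{1}{4 b_1 r}$ manifestly varies with $r$, and every nontoral eigenvalue $-\frac{1}{4A}\pm 2\sqrt{\frac{\pi p|\mu|}{rA}+\frac{\pi^2\mu^2}{r^2}}$ is of order $r^{-1}$ with $(\mu,p)$-dependent coefficients that do not collectively rescale by a single common factor, so the spectrum truly moves with $r$; the volume $r^2 A m_v = b_1 m_v r^{3}$ is strictly monotonic in $r$, making the manifolds geometrically distinct as well. There is no serious obstacle in this corollary: it is essentially a scaling observation, and the only care required is the bookkeeping that tracks the weight-zero invariance through each of the six pieces of (\ref{N_equation}); the mild subtlety worth noting is that the correct rescaling on the toral side is $\alpha \mapsto \alpha/\sqrt{r}$ while on the nontoral side $\mu$ is already the right variable, so the two families of terms must be handled in slightly different ways.
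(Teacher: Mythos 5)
Your direct verification is correct and is exactly what the paper leaves implicit when it says the corollary follows ``from the expressions for $\eta(0)$'': the local term $\frac{r^2 m_v}{96\pi^2 A^2}$ becomes $\frac{m_v}{96\pi^2 b_1^2}$, and each of the six summands of $N$ in (\ref{N_equation}) is invariant under $(A,w_2,\alpha)\mapsto(b_1 r, b_2\sqrt{r},\tilde\alpha/\sqrt{r})$ with $\mu$ held fixed. Your bookkeeping on the integrality constraints, norm bounds, and boundary contributions is all right, and the substitution $\tilde\alpha_j=\sqrt{r}\,\alpha_j$ is the correct normalization.

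One factual slip at the end, though: you assert that the nontoral eigenvalues have ``$(\mu,p)$-dependent coefficients that do not collectively rescale by a single common factor.'' In fact the entire spectrum \emph{does} rescale by the single factor $1/r$. With $\mu=rd$ held fixed, the nontoral eigenvalues
\begin{equation*}
-\frac{1}{4A}\pm 2\sqrt{\frac{\pi p\,|d|}{A}+\pi^2 d^2}
=\frac{1}{r}\left(-\frac{1}{4b_1}\pm 2\sqrt{\frac{\pi p\,|\mu|}{b_1}+\pi^2\mu^2}\right),
\end{equation*}
and the toral eigenvalues $-\frac{1}{4A}\pm 2\pi\sqrt{(\alpha_1^2+\alpha_2^2)/A}$, rewritten in $\tilde\alpha$, take the form $\frac{1}{r}\bigl(-\frac{1}{4b_1}\pm 2\pi\sqrt{(\tilde\alpha_1^2+\tilde\alpha_2^2)/b_1}\bigr)$. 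Since the allowed sets of $\mu$ and of $\tilde\alpha$ are $r$-independent, one has exactly $\sigma(D_r)=\frac{1}{r}\,\sigma(D_1)$. This does not damage your conclusion --- the point of symmetry $-\frac{1}{4b_1 r}$ already distinguishes the spectra, and the volume $b_1 m_v r^3$ distinguishes the geometries --- but it does mean that the simplest route to the corollary is the observation that $\eta_{cD}(s)=c^{-s}\eta_D(s)$ for $c>0$, hence $\eta_{cD}(0)=\eta_D(0)$; once one knows the full spectrum scales by $1/r$, the constancy of $\eta(0)$ in $r$ is immediate from this scale-invariance, without needing to verify each summand of (\ref{N_equation}) separately.
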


\begin{corollary}
Consider the \textquotedblleft rectangular\textquotedblright\ Heisenberg
3-manifold (i.e. $m_{w}=0$). Suppose that the following conditions are met:

\begin{enumerate}
\item $A>\frac{r}{4\pi }$

\item $\frac{rm_{v}}{4\pi \sqrt{A}}<w_{2}<4\pi \sqrt{A}$
\end{enumerate}

Then if the spin structure is nontrivial ($\varepsilon \neq \mathrm{id}$),%
\begin{equation*}
\eta \left( 0\right) =\frac{r^{2}m_{v}}{96\pi ^{2}A^{2}}+\frac{m_{v}}{24}%
\left( 3\varepsilon _{3}+1\right) .
\end{equation*}%
Otherwise,%
\begin{equation*}
\eta \left( 0\right) =\frac{r^{2}m_{v}}{96\pi ^{2}A^{2}}+\frac{m_{v}}{6}-2.
\end{equation*}
\end{corollary}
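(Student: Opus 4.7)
The plan is to substitute the hypotheses into the explicit formula~\eqref{N_equation} for $N(A,r,w_{2},m_{v},m_{w},\varepsilon)$ and verify that each of the five \emph{geometric} summands vanishes, so that $N$ reduces to the final "trivial spin" indicator term alone. Decompose $N=N_{1}+N_{2}+N_{3}+N_{4}+N_{5}+N_{6}$ according to the six pieces in that display, where $N_{1},N_{4}$ come from toral coadjoint orbits ($\alpha_{3}=0$), $N_{2},N_{3},N_{5}$ come from central orbits ($\alpha_{3}\neq 0$), and $N_{6}$ is the $\{\varepsilon=\mathrm{id}\}$ contribution. The goal is to verify $N_{1}=\cdots=N_{5}=0$, after which the two stated values of $\eta(0)$ follow at once.

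First I would use Condition~(1), which rewrites as $r/(8\pi A)<1/2$, to kill the central contributions $N_{2},N_{3},N_{5}$. Any nonzero element of $\mathbb{Z}+\tfrac{1-\varepsilon_{3}}{4}$ has absolute value at least $\tfrac{1}{2}$, so the summation range $0<|\mu|<r/(8\pi A)$ appearing in $N_{2}$ is empty. The bounds governing $N_{3}$ and $N_{5}$ are strictly smaller, since $\sqrt{1/16+p^{2}}-p<1/(8p)$ for every $p\geq 1$, so the same observation empties their summation ranges and forces $N_{3}=N_{5}=0$.

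Next I would use Condition~(2) together with $m_{w}=0$ to kill the toral contributions $N_{1}$ and $N_{4}$. With $m_{w}=0$, the occurrence conditions decouple into $\alpha_{1}rm_{v}/w_{2}\in\mathbb{Z}+\tfrac{1-\varepsilon_{1}}{4}$ and $\alpha_{2}w_{2}\in\mathbb{Z}+\tfrac{1-\varepsilon_{2}}{4}$. The smallest admissible nonzero $|\alpha_{1}|$ is $w_{2}/(2rm_{v})$ (when $\varepsilon_{1}=-1$) or $w_{2}/(rm_{v})$ (when $\varepsilon_{1}=1$); the lower bound $w_{2}>rm_{v}/(4\pi\sqrt{A})$ forces either quantity to exceed $1/(8\pi\sqrt{A})$, so $\alpha_{1}=0$ is the only possibility, which additionally requires $\varepsilon_{1}=1$. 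A parallel argument using the upper bound $w_{2}<4\pi\sqrt{A}$ forces $\alpha_{2}=0$ and $\varepsilon_{2}=1$. Hence whenever $\varepsilon\neq\mathrm{id}$, either the integrality system has no solution at all (if $\varepsilon_{1}=-1$ or $\varepsilon_{2}=-1$, or if $\varepsilon_{3}=-1$, which suppresses toral orbits entirely) or else the only admissible pair has $\|\alpha\|=0$ and is excluded from both the open-ball count $N_{1}$ and the sphere count $N_{4}$; either way $N_{1}=N_{4}=0$. When $\varepsilon=\mathrm{id}$, again only $\alpha=0$ is admissible and is excluded, yielding $N_{1}=N_{4}=0$ once more.

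Combining the two steps, $N=N_{6}$, which equals $0$ when $\varepsilon\neq\mathrm{id}$ and $2$ when $\varepsilon=\mathrm{id}$; substitution into the main theorem yields the announced formulas. The main obstacle is the toral bookkeeping: one must rule out \emph{both} the strict-inequality count $N_{1}$ and the sphere count $N_{4}$ across all four sign patterns of $(\varepsilon_{1},\varepsilon_{2})$ simultaneously, and it is exactly the sharp boundary values in Condition~(2), namely $rm_{v}/(4\pi\sqrt{A})$ and $4\pi\sqrt{A}$, that force the minimum nonzero $|\alpha_{j}|$ to strictly exceed $1/(8\pi\sqrt{A})$ in every subcase.
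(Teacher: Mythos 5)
Your proposal is correct, and since the paper does not supply a proof of this corollary, direct substitution into the explicit formula~(\ref{N_equation}) for $N$ is indeed the intended argument. The key facts you use all check out: Condition~(1), i.e.\ $r/(8\pi A)<1/2$, forces $N_{2},N_{3},N_{5}$ to vanish because every nonzero element of $\mathbb{Z}+\tfrac{1-\varepsilon_{3}}{4}$ has absolute value at least $\tfrac12$, and both $\sqrt{1+16p^{2}}+4p>1$ and $0<\tfrac{r}{2\pi A}\bigl(\sqrt{1/16+p^{2}}-p\bigr)<\tfrac12$ hold for all $p\geq 1$. Condition~(2) together with $m_{w}=0$ forces $N_{1}=N_{4}=0$: with the decoupled occurrence conditions, the smallest admissible nonzero $|\alpha_{1}|$ is $w_{2}/(2rm_{v})$ or $w_{2}/(rm_{v})$, both exceeding $1/(8\pi\sqrt{A})$ when $w_{2}>rm_{v}/(4\pi\sqrt{A})$, while the smallest admissible nonzero $|\alpha_{2}|$ is $1/(2w_{2})$ or $1/w_{2}$, both exceeding $1/(8\pi\sqrt{A})$ when $w_{2}<4\pi\sqrt{A}$; since $\sqrt{\alpha_{1}^{2}+\alpha_{2}^{2}}\geq\max(|\alpha_{1}|,|\alpha_{2}|)$, the only admissible point with $\sqrt{\alpha_{1}^{2}+\alpha_{2}^{2}}\leq 1/(8\pi\sqrt{A})$ is $\alpha=0$, excluded from both the open-ball count and the sphere count. (Here one must remember that the $\|\alpha\|$ appearing in~(\ref{N_equation}) is the Euclidean quantity $\sqrt{\alpha_{1}^{2}+\alpha_{2}^{2}}$, as the derivation of the bound $1/(8\pi\sqrt{A})$ shows, rather than the dual metric norm $\sqrt{(\alpha_{1}^{2}+\alpha_{2}^{2})/A}$; your argument implicitly relies on this, and it is the right reading.) One tiny imprecision: $N_{5}$ is a conditional expression, not a sum over $\mu$, so the phrase ``empties their summation ranges'' is slightly off for that term, but the substance --- that the two test quantities lie strictly between $0$ and $\tfrac12$ and hence cannot lie in $\mathbb{Z}+\tfrac{1-\varepsilon_{3}}{4}$ --- is exactly right. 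With $N_{1}=\cdots=N_{5}=0$, only $N_{6}$ survives, which is $0$ for $\varepsilon\neq\mathrm{id}$ and $2$ for $\varepsilon=\mathrm{id}$, yielding the stated values of $\eta(0)$.
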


\subsection{Dirac Operator eigenvalues for general Heisenberg nilmanifolds 
\label{DiracEigenvaluesGenHeisenbergSection}}

We use the notation of Section \ref{2StepDiracSection}. Suppose that $%
k_{0}=1 $ is the dimension of the center $\mathfrak{z}$ and $n=1+m_{0}$ is
the dimension of $\mathfrak{g}=\mathfrak{z}\oplus \mathfrak{v}$, and we will
choose the orthonormal basis $\left\{ Z,X_{1},...,X_{m_{0}}\right\} $, with $%
m_{0}=2m$, so that $Z$ is a unit vector and $\left\{ X_{j}\right\} $ is an
orthonormal basis of $\mathfrak{v}$. From formula (\ref{Dirac2Step}), the
Dirac operator is%
\begin{equation*}
D=\sum_{i=1}^{n}\left( E_{i}\diamond \right) \rho _{\varepsilon \ast }\left(
E_{i}\right) \mathbf{+}\frac{1}{4}\sum_{b<i\leq m_{0}}\left\langle Z,\left[
X_{b},X_{i}\right] \right\rangle \left( Z\diamond X_{b}\diamond
X_{i}\diamond \right) ,
\end{equation*}%
acting on 
\begin{equation*}
\mathcal{H}=L^{2}\left( \Gamma \diagdown G,G\times _{\varepsilon }\mathbb{C}%
^{k}\right) \cong L_{\varepsilon }^{2}\left( \Gamma \diagdown G\right)
\otimes \Sigma _{n},
\end{equation*}%
which we decompose using Kirillov theory. Using notation from Section \ref%
{2StepDiracSection}, the cases are:

\textbf{Case 1:} $k_{\alpha }=n$, i.e. $\alpha \left( Z\right) =0$.

As in (\ref{FiniteDimDiracOperatorFormula}), 
\begin{equation}
\left. D\right\vert _{\mathcal{H}_{\alpha }}=\sum_{i=1}^{m_{0}}2\pi i\alpha
\left( X_{i}\right) \left( X_{i}\diamond \right) \mathbf{+}\frac{1}{4}%
\sum_{b<i\leq m_{0}}\left\langle Z,\left[ X_{b},X_{i}\right] \right\rangle
\left( Z\diamond X_{b}\diamond X_{i}\diamond \right) ,
\label{toroidalGenHeisenberg}
\end{equation}%
which is a constant matrix. The eigenvalues of $\left. D\right\vert _{%
\mathcal{H}_{\alpha }}$ are then the eigenvalues of this Hermitian matrix.%
\vspace{1pt}

\textbf{Case 2:} $k_{\alpha }<n$, so that $\alpha \left( Z\right) \neq 0$.

For every noncentral vector $v$, there exists a vector $w$ such that $%
B_{\alpha }\left( v,w\right) =\alpha \left( \left[ v,w\right] \right)
=\alpha \left( Z\right) \neq 0$; we must have $\mathfrak{g}_{\alpha }=%
\mathfrak{z}$ and $k_{\alpha }=1$. From (\ref{M_definition1}), (\ref%
{M_alpha_prime_definition}), (\ref{DequationWithSQRoots}), the Dirac
operator may be expressed in terms of the basis $\left\{ \overline{u}_{%
\mathbf{p},\mathbf{\ell }}\right\} $ as 
\begin{equation*}
D\overline{u}_{\mathbf{p},\mathbf{\ell }}=-\sum_{j}2i\sqrt{\pi d_{j}p_{j}}%
\ell _{1}\ell _{2}...\ell _{j}\overline{u}_{\mathbf{p},\mathbf{\ell }%
^{j}}+M_{\alpha }^{\prime }\overline{u}_{\mathbf{p},\mathbf{\ell }}~,
\end{equation*}%
where in this case%
\begin{equation*}
M_{\alpha }^{\prime }\overline{u}_{\mathbf{p},\mathbf{\ell }}=2\pi i\alpha
\left( Z\right) \left( Z\diamond \right) \overline{u}_{\mathbf{p},\mathbf{%
\ell }}+\frac{1}{4}\sum_{j=1}^{m}\left\langle Z,\left[ U_{j},V_{j}\right]
\right\rangle \left( Z\diamond U_{j}\diamond V_{j}\diamond \right) \overline{%
u}_{\mathbf{p},\mathbf{\ell }}\text{ .}
\end{equation*}%
We use the matrix choices of Section \ref{matrixChoicesSection}, and for
convenience, we let%
\begin{equation*}
Z=i\underset{m\text{ times}}{\underbrace{\mathbf{1}^{\prime }\otimes
...\otimes \mathbf{1}^{\prime }}}~,
\end{equation*}%
and thus, since $\left\langle Z,\left[ U_{j},V_{j}\right] \right\rangle
=d_{j}$ , the formulas (\ref{UjVjDiamond}) and (\ref{CayleyTable}) yield 
\begin{equation*}
M_{\alpha }^{\prime }\overline{u}_{\mathbf{p},\mathbf{\ell }}=\left( -2\pi
\alpha \left( Z\right) \ell _{1}...\ell _{m}-\frac{1}{4}\sum_{j\leq
m}d_{j}\ell _{1}...\widehat{\ell _{j}}...\ell _{m}\right) \overline{u}_{%
\mathbf{p},\mathbf{\ell }}.
\end{equation*}%
In summary,

\begin{equation}
D\overline{u}_{\mathbf{p},\mathbf{\ell }}=-\sum_{j}2i\sqrt{\pi d_{j}p_{j}}%
\ell _{1}\ell _{2}...\ell _{j}\overline{u}_{\mathbf{p},\mathbf{\ell }%
^{j}}+\left( -2\pi \alpha \left( Z\right) \ell _{1}...\ell _{m}-\frac{1}{4}%
\sum_{j\leq m}d_{j}\ell _{1}...\widehat{\ell _{j}}...\ell _{m}\right) 
\overline{u}_{\mathbf{p},\mathbf{\ell }},
\label{DiracHeisenbergIrredSubspaceDecomp}
\end{equation}%
and we have the following.

\begin{proposition}
\label{HeisenbergDiracDecompGenDims}The infinite-dimensional subspace $%
\mathcal{H}_{\alpha }$ decomposes on any Heisenberg manifold as a direct sum
of finite-dimensional subspaces that are invariant by the Dirac operator. In
particular, the Dirac operator acts by the formula (\ref%
{DiracHeisenbergIrredSubspaceDecomp}) on the finite-dimensional invariant
subspace 
\begin{equation*}
\mathcal{U}_{\mathbf{p}}=\mathrm{span}\left\{ \overline{u}_{\mathbf{p},%
\mathbf{\ell }}:\mathbf{\ell }\in \left\{ -1,1\right\} ^{m}\right\} .
\end{equation*}
\end{proposition}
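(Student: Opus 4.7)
The general formula (\ref{D_eqn_new_basis}) already does most of the work: applied to $\overline{u}_{\mathbf{p},\mathbf{\ell}}$ it produces a sum indexed by $j\le m$ of terms of the form $\overline{u}_{\mathbf{p},\mathbf{\ell}^{j}}$ (i.e.\ with $\ell_j$ flipped, while $\mathbf{p}$ is unchanged), plus the action of the constant endomorphism $M_\alpha'$. Therefore, once I verify that $M_\alpha'$ acts diagonally on the basis $\{\overline{u}_{\mathbf{p},\mathbf{\ell}}\}$ with respect to $\mathbf{\ell}$, every summand will lie in $\mathcal{U}_{\mathbf{p}}$, so the first assertion of the proposition and the explicit formula (\ref{DiracHeisenbergIrredSubspaceDecomp}) will both follow. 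The direct-sum decomposition $\mathcal{H}_\alpha=\bigoplus_{\mathbf{p}}\mathcal{U}_{\mathbf{p}}$ then comes from completeness of the Hermite basis in $L^2(\mathbb{R}^m)$ combined with the tensor-product description $\mathcal{H}_\alpha\cong L^2(\mathbb{R}^m)\otimes\Sigma_n$ (through the isomorphism $\beta\circ T_w^{-1}$ of Section \ref{Case2Section}), and each $\mathcal{U}_{\mathbf{p}}$ is manifestly of dimension $2^m$.

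The core task is therefore the diagonalization of $M_\alpha'=M+\sum_{j=1}^{k_\alpha}2\pi i\alpha(W_j)(W_j\diamond)$ on $\Sigma_n$ in the Heisenberg case. First I specialize: because the center $\mathfrak{z}=\mathbb{R}Z$ is one-dimensional and $\alpha(Z)\ne0$, we have $\mathfrak{g}_\alpha=\mathfrak{z}$, so $k_\alpha=1$ and $W_1=Z$. With the matrix choices of Section \ref{matrixChoicesSection}, namely $(Z\diamond)=i\mathbf{1}'\otimes\cdots\otimes\mathbf{1}'$ ($m$ factors), I read off immediately $(Z\diamond)v_\ell=i\ell_1\cdots\ell_m\,v_\ell$, which contributes $-2\pi\alpha(Z)\ell_1\cdots\ell_m$ to the eigenvalue of $M_\alpha'$ on $v_\ell$. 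For the summand $M$, I use that $\{U_1,V_1,\dots,U_m,V_m\}$ is orthonormal in $\mathfrak{v}$, and that in the Heisenberg case the conditions $\alpha([U_i,V_j])=0$ for $i\ne j$ (together with $[\mathfrak{g},\mathfrak{g}]=\mathbb{R}Z$ and $\alpha(Z)\ne0$) force the stronger equalities $[U_i,V_j]=0$ for $i\ne j$ and $[U_i,U_j]=[V_i,V_j]=0$; this block-diagonalizes the 2-form $\omega(X,Y)=\langle Z,[X,Y]\rangle$ on $\mathfrak{v}$. Since the Clifford element associated to $\omega$ from an orthonormal basis is basis-independent, I can rewrite $M$ as a scalar multiple of $\sum_{j=1}^m d_j\,(Z\diamond U_j\diamond V_j\diamond)$.

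Finally, a direct calculation with the Cayley table (\ref{CayleyTable}) and the matrix choices (\ref{UjVjDiamond}) shows $(U_j\diamond)(V_j\diamond)$ acts on $v_\ell$ by a scalar proportional to $\ell_j$ (the only nontrivial factor sits in slot $j$ as $\sigma_1\sigma_2\propto\mathbf{1}'$, while all other slots reduce to $\mathbf{1}$), hence $(Z\diamond U_j\diamond V_j\diamond)v_\ell$ is a scalar multiple of $\ell_1\cdots\widehat{\ell_j}\cdots\ell_m\,v_\ell$. Combining this with the $(Z\diamond)$ contribution above yields exactly the diagonal part of (\ref{DiracHeisenbergIrredSubspaceDecomp}); together with the off-diagonal flipping part already contained in (\ref{D_eqn_new_basis}), this completes the proof. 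The main obstacle is the Heisenberg-specific simplification of $M$ in the $\{U_j,V_j\}$ basis, where one must be careful that the block-diagonal structure of $\omega$ really relies on the one-dimensional center (without which $\alpha([U_i,V_j])=0$ would not imply $[U_i,V_j]=0$); this is precisely the feature that fails for more general two-step nilmanifolds and that allows the full infinite-dimensional space $\mathcal{H}_\alpha$ to split here into finite-dimensional $D$-invariant blocks.
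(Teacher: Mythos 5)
Your proposal is correct and follows essentially the same route as the paper's (implicit) proof, which consists of the computation immediately preceding the proposition: one observes from (\ref{D_eqn_new_basis}) that the off-diagonal part only flips $\ell_j$'s while fixing $\mathbf{p}$, and then checks with the matrix choices of Section \ref{matrixChoicesSection} that $M_\alpha'$ acts diagonally on the $v_{\mathbf{\ell}}$ basis. One tiny slip: $\dim\mathcal{U}_{\mathbf{p}}$ is $2^{m}$ only when every $p_{j}\geq 1$; if some $p_{j}=0$ then $\overline{u}_{\mathbf{p},\mathbf{\ell}}=0$ for $\ell_{j}=-1$, so the dimension can be smaller, though this does not affect the argument.
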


\begin{remark}
For any specific example of a Heisenberg manifold, the formula (\ref%
{toroidalGenHeisenberg}) allows us to calculate the eigenvalues of $D$
restricted to the finite-dimensional representations spaces $\mathcal{H}%
_{\alpha }$ with $\alpha \left( Z\right) =0$, and the previous proposition
allows us to calculate all other eigenvalues of $D$ explicitly.
\end{remark}

\subsection{Symmetries of invariants subspaces of higher dimensional
Heisenberg manifolds\label{VanishingEtaSection}}

It is well-known (see \cite[p.61, Remark 3a]{APS1}, \cite[p.174, Cor. 2.19]%
{AmHuMo}) that the spectrum of the Dirac operator on spin manifolds of
dimension congruent to $1\mod 4$ is symmetric about $0$. In following
sections, we explore the symmetry of the spectrum restricted to invariant
subspaces.

\subsubsection{Symmetry in the toroidal part of the spectrum for Heisenberg
manifolds}

Suppose that we are given a $\left( 2m+1\right) $-dimensional Heisenberg
manifold, and $\alpha \in \mathfrak{g}^{\ast }$ is chosen so that $\alpha
\left( Z\right) =0$. Then we may choose an orthonormal basis $\left\{
A_{1},A_{2},...,A_{m},B_{1},B_{2},...,B_{m}\right\} $ of $\mathfrak{z}^{\bot
}\subseteq \mathfrak{g}$ with the following properties:

\begin{enumerate}
\item $\alpha \left( A_{j}\right) =0$ if $j\geq 2$, $\alpha \left(
B_{j}\right) =0$ if $j\geq 1;$

\item $\left[ A_{i},B_{j}\right] =a_{j}\delta _{ij}Z$ for some real numbers $%
a_{j}$.
\end{enumerate}

(Simply choose $A_{1}$ orthogonal to $\ker \alpha $ and continue to form a
symplectic basis of $\mathfrak{z}^{\bot }$.) Then the restriction of $D$ to
the subspace $\mathcal{H}_{\alpha }$ is

\begin{eqnarray*}
\left. D\right\vert _{\mathcal{H}_{\alpha }} &=&\sum_{i=1}^{m_{0}}2\pi
i\alpha \left( X_{i}\right) \left( X_{i}\diamond \right) \mathbf{+}\frac{1}{4%
}\sum_{b<i\leq m_{0}}\left\langle Z,\left[ X_{b},X_{i}\right] \right\rangle
\left( Z\diamond X_{b}\diamond X_{i}\diamond \right) \\
&=&2\pi i\alpha \left( A_{1}\right) \left( A_{1}\diamond \right) \mathbf{+}%
\frac{1}{4}\sum_{j=1}^{m}a_{j}\left( Z\diamond A_{j}\diamond B_{j}\diamond
\right) .
\end{eqnarray*}

If $m$ is even, then observe that $A_{1}\diamond A_{2}\diamond
...A_{m}\diamond $ anticommutes with $\left. D\right\vert _{\mathcal{H}%
_{\alpha }}$ and is also invertible. Thus, it maps the $\lambda $ eigenspace
of $\left. D\right\vert _{\mathcal{H}_{\alpha }}$ isomorphically onto the $%
-\lambda $ eigenspace of $\left. D\right\vert _{\mathcal{H}_{\alpha }}$, and
therefore the spectrum of $\left. D\right\vert _{\mathcal{H}_{\alpha }}$ is
symmetric about zero and does not contribute to the eta invariant.

A more complicated argument can be used to show that for all $m\geq 2$, the
spectrum of $\left. D\right\vert _{\mathcal{H}_{\alpha }}$ is symmetric
about zero. Let%
\begin{equation*}
L_{j}=Z\diamond A_{j}\diamond B_{j}\diamond
\end{equation*}%
for $1\leq j\leq m$. Observe that $L_{j}$ is symmetric, $L_{j}^{2}=\mathbf{1}
$, and $L_{j}L_{k}=L_{k}L_{j}$ for all $j,k$. Also $A_{2}\diamond $ is
invertible and anticommutes with $L_{1}$, and $A_{1}\diamond $ anticommutes
with $L_{j}$ for $j>1$. Thus, the dimension of the $+1$ eigenspace of $L_{j}$
is the same as the dimension of the $-1$ eigenspace for $L_{j}$, and there
exists a basis of simulaneous eigenvectors of $\Sigma _{n}=\mathbb{C}%
^{2^{m}} $. Let $\left\{ v_{1},...,v_{2^{m-1}}\right\} $ be the subset of
this basis consisting of $+1$ eigenvectors of $L_{2}$. Since $A_{2}$
commutes with $L_{2}$ and anticommutes with $L_{1}$, the $+1$-eigenspace of $%
L_{2}$ is a direct sum of $+1$ and $-1$ eigenspaces of $L_{1}$ in equal
dimensions. Thus, we may further assume that $\left\{
v_{1},...,v_{2^{m-2}}\right\} $ are $+1$-eigenvectors of $L_{1}$ and that $%
\left\{ v_{2^{m-2}+1},...,v_{2^{m-1}}\right\} $ are $-1$-eigenvectors of $%
L_{1}$. Then $\left\{
v_{1},...,v_{2^{m-1}},iA_{1}v_{1},...,iA_{1}v_{2^{m-1}}\right\} $ provides a
basis of $\mathbb{C}^{2^{m}}$ for which $\left. D\right\vert _{\mathcal{H}%
_{\alpha }}$ corresponds to a block matrix with $2^{m-2}$-dimensional blocks
of the form%
\begin{equation*}
x\left( 
\begin{array}{cccc}
Q+R & 0 & I & 0 \\ 
0 & -Q+R & 0 & I \\ 
I & 0 & Q-R & 0 \\ 
0 & I & 0 & -Q-R%
\end{array}%
\right) ,
\end{equation*}%
where $x$ is a scalar and $Q$ and $R$ are (commuting) diagonal matrices. A
simple argument shows that the characteristic polynomial of such a matrix is
an even function, and thus the spectrum of $\left. D\right\vert _{\mathcal{H}%
_{\alpha }}$ is symmetric about zero and does not contribute to the eta
invariant, if $m\geq 2$. We summarize the results in the following theorem.

\begin{theorem}
On any Heisenberg manifold of dimension greater than $3$, the restriction of
the Dirac operator to any invariant subspace $\mathcal{H}_{\alpha }$ with $%
\alpha \left( Z\right) =0$ has spectrum that is symmetric about $0$.\label%
{HeisGreaterThan3Symmetry}
\end{theorem}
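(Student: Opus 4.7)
My plan is to exhibit, for each $m\geq 2$, a structural symmetry of the constant matrix
\[
D|_{\mathcal{H}_{\alpha}} = 2\pi i\alpha(A_{1})(A_{1}\diamond) + \frac{1}{4}\sum_{j=1}^{m} a_{j} L_{j}, \qquad L_{j} := Z\diamond A_{j}\diamond B_{j}\diamond,
\]
that forces its spectrum to be symmetric about $0$. As preparation I would record the Clifford relations implied by the orthonormality of the symplectic basis: each $L_{j}$ is a self-adjoint involution, the $L_{j}$ mutually commute, $A_{1}\diamond$ commutes with $L_{1}$ but anticommutes with $L_{j}$ for $j\geq 2$, and for $j\geq 2$ the element $A_{j}\diamond$ anticommutes with $L_{1}$ while commuting with $L_{j}$. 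These relations underlie both arguments below.

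The case $m$ even admits a clean intertwiner: the invertible Clifford element $P := A_{1}\diamond A_{2}\diamond \cdots A_{m}\diamond$ anticommutes with $A_{1}\diamond$ (one factor of $A_{1}$ among $m$ produces sign $(-1)^{m+1} = -1$) and with each $L_{j}$ (since $P$ commutes with $Z\diamond$ and $B_{j}\diamond$ but anticommutes with $A_{j}\diamond$, giving net sign $-1$). Therefore $PD = -DP$ on $\mathcal{H}_{\alpha}$, so $Dv = \lambda v$ implies $D(Pv) = -\lambda(Pv)$, providing a bijection between the $\lambda$- and $(-\lambda)$-eigenspaces. This also recovers within $\mathcal{H}_{\alpha}$ the classical fact that on spin manifolds of dimension $\equiv 1 \pmod{4}$, i.e.\ for $m$ even, the Dirac spectrum is symmetric.

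The case $m\geq 3$ odd is the genuinely new content, because a parity check rules out any monomial in the Clifford generators that simultaneously anticommutes with $A_{1}\diamond$ and every $L_{j}$. I would then pivot to the block-matrix method prefigured in the discussion: simultaneously diagonalize $L_{1}$ and $L_{2}$, using that $A_{2}\diamond$ anticommutes with $L_{1}$ and commutes with $L_{2}$ (so it equalizes the $\pm 1$-eigenspaces of $L_{1}$) and that $A_{1}\diamond$ equalizes the $\pm 1$-eigenspaces of $L_{2}$ within each $L_{1}$-eigenspace. Ordering a compatible basis of the four joint eigenspaces $V_{(\pm,\pm)}$ of common dimension $2^{m-2}$ so that $A_{1}\diamond$ pairs $V_{(\pm,+)}$ with $V_{(\pm,-)}$, the operator $D|_{\mathcal{H}_{\alpha}}$ assumes the $4\times 4$ block form
\[
x\begin{pmatrix} Q+R & 0 & I & 0 \\ 0 & -Q+R & 0 & I \\ I & 0 & Q-R & 0 \\ 0 & I & 0 & -Q-R \end{pmatrix},
\]
where $x = 2\pi i\alpha(A_{1})$ and $Q,R$ are commuting diagonal blocks encoding the scalar contributions of $\tfrac{1}{4}\sum_{j} a_{j} L_{j}$. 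A direct evaluation of the characteristic polynomial of this block matrix (exploiting the commutativity of $Q$ and $R$) would show it is an even function of the eigenvalue, which yields the desired symmetry.

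The main obstacle is precisely this characteristic-polynomial verification: the cancellations making every odd-degree coefficient vanish depend on the paired diagonal entries $(Q+R, Q-R)$ against $(-Q+R, -Q-R)$ together with the particular positioning of the off-diagonal $I$'s. This structural pairing requires at least two independent involutions $L_{1}, L_{2}$, which is why the hypothesis $m\geq 2$ is essential: for $m=1$ no second involution is available to balance $L_{1}$, and the resulting asymmetric $\tfrac{a_{1}}{4}L_{1}$ term produces the genuine shift in the point of symmetry to $-\tfrac{1}{4A}$ seen in Section~\ref{EtaInvt3DHeisenSection}.
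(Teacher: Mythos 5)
Your proposal is correct and essentially duplicates the paper's proof: the Clifford element $A_1\diamond\cdots A_m\diamond$ as an anticommuting intertwiner when $m$ is even, and the simultaneous $(L_1,L_2)$-eigenbasis giving the $4\times 4$ block matrix with even characteristic polynomial for general $m\geq 2$. The only minor slip is that the overall scalar $x$ should be the real number $2\pi\alpha(A_1)$ rather than $2\pi i\alpha(A_1)$, since the factor of $i$ is absorbed into the involution $iA_1\diamond$ whose matrix supplies the off-diagonal identity blocks.
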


\begin{remark}
No Heisenberg three-manifolds have this property; see (\ref%
{torusSpectrum3dHeisenberg}).
\end{remark}

\subsubsection{Symmetry in the infinite-dimensional irreducible subspaces}

\vspace{1pt}Next, suppose that $\alpha \in \mathfrak{g}^{\ast }$ is chosen
so that $\alpha \left( Z\right) \neq 0$. Let $\mathcal{U}=\mathrm{span}%
\left\{ \overline{u}_{\mathbf{p},\mathbf{\ell }}:\mathbf{p}=\left(
p_{1},...,p_{m}\right) \in \left( \mathbb{Z}_{\geq 0}\right) ^{m},\mathbf{%
\ell }\in \left\{ -1,1\right\} ^{m}\right\} $. Let $L:\mathcal{U}\rightarrow 
\mathcal{U}$ be the linear map defined by 
\begin{equation*}
L\left( \overline{u}_{\mathbf{p},\mathbf{\ell }}\right) =\delta _{\mathbf{%
\ell }}\overline{u}_{\mathbf{p},-\mathbf{\ell }}~~,
\end{equation*}%
where $\delta _{\mathbf{\ell }}=\pm 1$ according to an unspecified formula.
Note that $L^{-1}\left( \overline{u}_{\mathbf{p},\mathbf{\ell }}\right)
=\delta _{\mathbf{\ell }}\delta _{-\mathbf{\ell }}L\overline{u}_{\mathbf{p},%
\mathbf{\ell }}=\delta _{-\mathbf{\ell }}\overline{u}_{\mathbf{p},-\mathbf{%
\ell }}$. Now, we have

\begin{eqnarray*}
L^{-1}DL\overline{u}_{\mathbf{p},\mathbf{\ell }} &=&\delta _{\mathbf{\ell }%
}L^{-1}D\overline{u}_{\mathbf{p},-\mathbf{\ell }} \\
&=&\delta _{\mathbf{\ell }}L^{-1}\left( 
\begin{array}{c}
-\sum_{j}2i\sqrt{\pi d_{j}p_{j}}\ell _{1}\ell _{2}...\ell _{j}\left(
-1\right) ^{j}\overline{u}_{\mathbf{p},-\mathbf{\ell }^{j}} \\ 
+\left( -2\pi \alpha \left( Z\right) \ell _{1}...\ell _{m}\left( -1\right)
^{m}-\frac{1}{4}\sum_{j\leq m}d_{j}\ell _{1}...\widehat{\ell _{j}}...\ell
_{m}\left( -1\right) ^{m-1}\right) \overline{u}_{\mathbf{p},-\mathbf{\ell }}%
\end{array}%
\right) \\
&=&\delta _{\mathbf{\ell }}\left( 
\begin{array}{c}
-\sum_{j}2i\sqrt{\pi d_{j}p_{j}}\ell _{1}\ell _{2}...\ell _{j}\left(
-1\right) ^{j}\delta _{\mathbf{\ell }^{j}}\overline{u}_{\mathbf{p},\mathbf{%
\ell }^{j}} \\ 
+\left( -2\pi \alpha \left( Z\right) \ell _{1}...\ell _{m}\left( -1\right)
^{m}-\frac{1}{4}\sum_{j\leq m}d_{j}\ell _{1}...\widehat{\ell _{j}}...\ell
_{m}\left( -1\right) ^{m-1}\right) \delta _{\mathbf{\ell }}\overline{u}_{%
\mathbf{p},\mathbf{\ell }}%
\end{array}%
\right) \\
&=&\delta _{\mathbf{\ell }}\left( -\sum_{j}2i\left( -1\right) ^{j}\delta _{%
\mathbf{\ell }^{j}}\sqrt{\pi d_{j}p_{j}}\ell _{1}\ell _{2}...\ell _{j}%
\overline{u}_{\mathbf{p},\mathbf{\ell }^{j}}\right) \\
&&+\left( -1\right) ^{m-1}\left( 2\pi \alpha \left( Z\right) \ell
_{1}...\ell _{m}-\frac{1}{4}\sum_{j\leq m}d_{j}\ell _{1}...\widehat{\ell _{j}%
}...\ell _{m}\right) \overline{u}_{\mathbf{p},\mathbf{\ell }}.
\end{eqnarray*}

\begin{remark}
For the case where $m$ is even, we define $\delta _{\mathbf{\ell }}=\left(
\ell _{1}\right) ^{2}\left( \ell _{2}\right) ^{3}...\left( \ell _{m}\right)
^{m+1}$, so that $\delta _{\mathbf{\ell }^{j}}\delta _{\mathbf{\ell }}\left(
-1\right) ^{j}=\left( -1\right) ^{j+1}\left( -1\right) ^{j}=-1$, and thus,
the matrix for $L^{-1}DL$ is the negative of the matrix for $D$ with $\alpha
\left( Z\right) $ replaced by its negative. Thus the spectrum $\sigma
_{\alpha }$ satisfies $\sigma _{-\alpha }=-\sigma _{\alpha }$ if $m$ is
even. The symmetry of the eigenvalues about $0$ follows from the fact that, $%
\alpha $ occurs if and only if $-\alpha $ occurs; see (\ref%
{occurrenceConditionForSpinors}). This confirms in this case the known fact
mentioned at the beginning of this section that the spectrum of the spin
Dirac operator on manifolds of dimension congruent to $1\mod 4$ is symmetric.
\end{remark}

\begin{remark}
For the case where $m$ is odd and the dimension is $2m+1$, we define $\delta
_{\mathbf{\ell }}=\left( \ell _{1}\right) ^{1}\left( \ell _{2}\right)
^{2}...\left( \ell _{m}\right) ^{m}$, so that $\delta _{\mathbf{\ell }%
^{j}}\delta _{\mathbf{\ell }}\left( -1\right) ^{j}=1$; we see in that case
that the matrix for $L^{-1}DL$ is the same as the matrix for $D$ with $%
\alpha \left( Z\right) $ replaced by its negative. Thus the spectrum $\sigma
_{\alpha }$ satisfies $\sigma _{-\alpha }=\sigma _{\alpha }$ if $m$ is odd.
Moreover, the eigenvalues of the Dirac operator need not be symmetric about $%
0$, and the eta invariant need not be zero, as can be seen from the $m=1$
case in Section \ref{EtaInvt3DHeisenSection}. Therefore, for Heisenberg
manifolds of dimension $2m+1$ with $m$ odd and greater than $1$, because of
this remark and Theorem \ref{HeisGreaterThan3Symmetry}, the methods of
Section \ref{etaZetaPerturbedSection} do not apply and cannot be used to
obtain a formula for the eta invariant.
\end{remark}

\section{Example of a five-dimensional non-Heisenberg nilmanifold\label%
{nonHeisenbergExampleSection}}

The purpose of this section is to exhibit an example of a two-step
nilmanifold for which the techniques used above fail to produce the Dirac
eigenvalues as eigenvalues of finite-dimensional matrices. Because this
manifold is $(4(1)+1)$-dimensional, we know a priori that the eta invariant
vanishes. We use the notation of Section \ref{2StepDiracSection} with a
specific class of examples. We have that $k_{0}=2$ is the dimension of the
center $\mathfrak{z} $ and $m_{0}=3$, and we have the orthonormal basis $%
\left\{ Z_{1},Z_{2},X,Y_{1},Y_{2}\right\} $ so that each $Z_{j}$ is a unit
vector and $\left\{ X,Y_{1},Y_{2}\right\} $ is an orthonormal basis of $%
\mathfrak{v} $. The only nontrivial bracket relations are $\left[ X,Y_{1}%
\right] =Z_{1},~\left[ X,Y_{2}\right] =Z_{2}$. From formula (\ref{Dirac2Step}%
), the Dirac operator is%
\begin{equation*}
D=\sum_{i=1}^{5}\rho _{\varepsilon \ast }\left( E_{i}\right) \left(
E_{i}\diamond \right) \mathbf{+}\frac{1}{4}\sum_{i=1,2}Z_{i}\diamond
X\diamond Y_{i}\diamond ,
\end{equation*}%
acting on 
\begin{equation*}
\mathcal{H}=L^{2}\left( \Gamma \diagdown G,G\times _{\varepsilon }\mathbb{C}%
^{4}\right) \cong L_{\varepsilon }^{2}\left( \Gamma \diagdown G\right)
\otimes \Sigma _{5},
\end{equation*}%
which we decompose as follows. For $\alpha \in \mathfrak{g}^{\ast }$, the
subspace $\mathcal{H}_{\alpha }$ of $L^{2}\left( \Gamma \diagdown G,G\times
_{\varepsilon }\mathbb{C}^{k}\right) $ is invariant with respect to $\rho
_{\varepsilon }$ and invariant by $D$. If $\overline{\mathcal{H}_{\alpha }}$
is the irreducible $\rho _{\varepsilon }$-subspace of $L_{\varepsilon
}^{2}\left( \Gamma \diagdown G\right) $ corresponding to the coadjoint orbit
of $\alpha $, we have $\mathcal{H}_{\alpha }\cong \overline{\mathcal{H}%
_{\alpha }}\otimes \Sigma _{n}$. As before, define the symplectic form on $%
\mathfrak{g}$ by $B_{\alpha }\left( u,v\right) =\alpha \left( \left[ u,v%
\right] \right) $, and let $\mathfrak{g}_{\alpha }=\ker B_{\alpha }=\left\{
u\in \mathfrak{g}:B_{\alpha }\left( u,\cdot \right) =0\right\} $, $k_{\alpha
}=\dim \mathfrak{g}_{\alpha }$.

\subsection{\textbf{Finite dimensional }$\overline{\mathcal{H}_{\protect%
\alpha }}$-irreducible\textbf{\ subspaces:} $k_{\protect\alpha }=5$, i.e. $%
\protect\alpha \left( \mathfrak{z}\right) =0$.}

As in (\ref{FiniteDimDiracOperatorFormula}), 
\begin{equation*}
\left. D\right\vert _{\mathcal{H}_{\alpha }}=2\pi i\alpha \left( X\right)
\left( X\diamond \right) +\sum_{j=1,2}2\pi i\alpha \left( Y_{j}\right)
\left( Y_{j}\diamond \right) \mathbf{+}\frac{1}{4}\sum_{i=1,2}Z_{i}\diamond
X\diamond Y_{i}\diamond .
\end{equation*}%
The eigenvalues of $\left. D\right\vert _{\mathcal{H}_{\alpha }}$ are then
the eigenvalues of this constant Hermitian linear transformation.\vspace{1pt}

We make the specific choices of the matrices $\left( E_{j}\diamond \right) $
as in Section \ref{Case2Section}. Note $\Sigma _{n}=\mathbb{C}^{2^{2}}=%
\mathbb{C}^{2}\otimes \mathbb{C}^{2}$. We have%
\begin{eqnarray*}
\left( X\diamond \right) &=&i\mathbf{1}^{\prime }\otimes \mathbf{1}^{\prime
},~\left( Y_{1}\diamond \right) =i\sigma _{1}\otimes \mathbf{1},~\left(
Y_{2}\diamond \right) =i\sigma _{2}\otimes \mathbf{1}, \\
\left( Z_{1}\diamond \right) &=&i\mathbf{1}^{\prime }\otimes \sigma
_{1},~\left( Z_{2}\diamond \right) =i\mathbf{1}^{\prime }\otimes \sigma
_{2},~
\end{eqnarray*}%
Recalling (\ref{CayleyTable}), our matrix is (using basis $v_{1,1}$, $%
v_{-1,1}$, $v_{1,-1}$, $v_{-1,-1}$ )%
\begin{eqnarray*}
\left. D\right\vert _{\mathcal{H}_{\alpha }} &=&2\pi i\alpha \left( X\right)
\left( X\diamond \right) +\sum_{j=1,2}2\pi i\alpha \left( Y_{j}\right)
\left( Y_{j}\diamond \right) \mathbf{+}\frac{1}{4}\sum_{i=1,2}Z_{i}\diamond
X\diamond Y_{i}\diamond \\
&=&-2\pi \alpha \left( X\right) \mathbf{1}^{\prime }\otimes \mathbf{1}%
^{\prime }-\sum_{j=1,2}2\pi \alpha \left( Y_{j}\right) \sigma _{j}\otimes 
\mathbf{1-}\frac{i}{4}\sum_{j=1,2}\left( \mathbf{1}^{\prime }\otimes \sigma
_{j}\right) \left( \mathbf{1}^{\prime }\otimes \mathbf{1}^{\prime }\right)
\left( \sigma _{j}\otimes \mathbf{1}\right) \\
&=&-2\pi \alpha \left( X\right) \mathbf{1}^{\prime }\otimes \mathbf{1}%
^{\prime }-\sum_{j=1,2}2\pi \alpha \left( Y_{j}\right) \sigma _{j}\otimes 
\mathbf{1+}\frac{1}{4}\left( \sigma _{1}\otimes \sigma _{2}-\sigma
_{2}\otimes \sigma _{1}\right)
\end{eqnarray*}%
\begin{equation*}
=\left( 
\begin{array}{cccc}
-2\pi \alpha \left( X\right) & -2\pi \alpha \left( Y_{1}\right) -i2\pi
\alpha \left( Y_{2}\right) & 0 & 0 \\ 
-2\pi \alpha \left( Y_{1}\right) +i2\pi \alpha \left( Y_{2}\right) & 2\pi
\alpha \left( X\right) & \frac{i}{2} & 0 \\ 
0 & -\frac{i}{2} & 2\pi \alpha \left( X\right) & -2\pi \alpha \left(
Y_{1}\right) -i2\pi \alpha \left( Y_{2}\right) \\ 
0 & 0 & -2\pi \alpha \left( Y_{1}\right) +i2\pi \alpha \left( Y_{2}\right) & 
-2\pi \alpha \left( X\right)%
\end{array}%
\right) .
\end{equation*}%
We may then determine that the four eigenvalues of $\left. D\right\vert _{%
\mathcal{H}_{\alpha }}$ are:%
\begin{eqnarray*}
&&\frac{1}{4}\pm \frac{1}{4}\sqrt{64\pi ^{2}\alpha \left( X\right)
^{2}+16\pi \alpha \left( X\right) +64\pi ^{2}\alpha \left( Y_{1}\right)
^{2}+64\pi ^{2}\alpha \left( Y_{2}\right) ^{2}+1}, \\
&&-\allowbreak \frac{1}{4}\pm \frac{1}{4}\sqrt{64\pi ^{2}\alpha \left(
X\right) ^{2}-16\pi \alpha \left( X\right) +64\pi ^{2}\alpha \left(
Y_{1}\right) ^{2}+64\pi ^{2}\alpha \left( Y_{2}\right) ^{2}+1}.
\end{eqnarray*}

Using the $\alpha \mapsto -\alpha $ symmetry, for a typical nilmanifold,
this portion of the spectrum will be symmetric about zero.

\subsection{\textbf{Infinite-dimensional }$\overline{\mathcal{H}_{\protect%
\alpha }}$-irreducible subspaces\textbf{:} $k_{\protect\alpha }<n$, so that $%
\protect\alpha \left( \mathfrak{z}\right) \neq 0$.}

In this case, a typical coadjoint orbit has an element of the form $\alpha
=b_{2}Y_{2}^{\ast }+g_{1}Z_{1}^{\ast }+g_{2}Z_{2}^{\ast }$, with $g_{1}$, $%
g_{2}$ not both zero.

Choose a new orthonormal basis of $\mathfrak{g}$:%
\begin{equation*}
\left\{ W_{1}=\frac{g_{1}Z_{1}+g_{2}Z_{2}}{\sqrt{g_{1}^{2}+g_{2}^{2}}},W_{2}=%
\frac{-g_{2}Z_{1}+g_{1}Z_{2}}{\sqrt{g_{1}^{2}+g_{2}^{2}}},W_{3}=\frac{%
-g_{2}Y_{1}+g_{1}Y_{2}}{\sqrt{g_{1}^{2}+g_{2}^{2}}},U=X,V=\frac{%
g_{1}Y_{1}+g_{2}Y_{2}}{\sqrt{g_{1}^{2}+g_{2}^{2}}}\right\} ,
\end{equation*}%
where $\left\{ W_{1},W_{2},W_{3}\right\} $ is a basis of $\mathfrak{g}%
_{\alpha }$, $B_{\alpha }\left( U,U\right) =B_{\alpha }\left( V,V\right) =0$%
, and 
\begin{equation*}
d:=B_{\alpha }\left( U,V\right) =\alpha \left( \left[ U,V\right] \right)
=\alpha \left( \left[ X,\frac{g_{1}Y_{1}+g_{2}Y_{2}}{\sqrt{%
g_{1}^{2}+g_{2}^{2}}}\right] \right) =\sqrt{g_{1}^{2}+g_{2}^{2}}.
\end{equation*}%
Then the polarizing subalgebra $\mathfrak{g}^{\alpha }$ will be chosen to be%
\begin{equation*}
\mathfrak{g}^{\alpha }=\mathrm{span}\left\{ V,W_{1},W_{2},W_{3}\right\} ,
\end{equation*}%
and again $G^{\alpha }:=\exp \left( \mathfrak{g}^{\alpha }\right) $.

Equation (\ref{DOnH_alpha_No_Cjs}) becomes 
\begin{equation*}
\left. D\right\vert _{\mathcal{H}_{\alpha }}=\left( U\diamond \right) \frac{%
\partial }{\partial t}+2\pi id\left( V\diamond \right) t+M_{\alpha }^{\prime
},
\end{equation*}%
where $M_{\alpha }^{\prime }$ is the constant Hermitian matrix (using $%
X_{1}=U,X_{2}=V,X_{3}=W_{3},W_{1},W_{2},k_{0}=2,m_{0}=3,k_{\alpha }=3,m=1$)%
\begin{equation*}
M_{\alpha }^{\prime }=\frac{1}{4}\sum_{a\leq 2;~b<i\leq 3}\left\langle W_{a},%
\left[ X_{b},X_{i}\right] \right\rangle \left( W_{a}\diamond X_{b}\diamond
X_{i}\diamond \right) +\sum_{j=1}^{3}2\pi i\alpha \left( W_{j}\right) \left(
W_{j}\diamond \right) ,
\end{equation*}%
from (\ref{M_definition1}), (\ref{M_alpha_prime_definition}).

We calculate 
\begin{equation*}
\left[ X_{1},X_{2}\right] =\left[ U,V\right] =W_{1},~\left[ X_{1},X_{3}%
\right] =W_{2},~\left[ X_{2},X_{3}\right] =0,
\end{equation*}%
so that%
\begin{eqnarray*}
M &=&\frac{1}{4}\sum_{a\leq 2;~b<i\leq 3}\left\langle W_{a},\left[
X_{b},X_{i}\right] \right\rangle \left( W_{a}\diamond X_{b}\diamond
X_{i}\diamond \right) \\
&=&\frac{1}{4}W_{1}\diamond X_{1}\diamond X_{2}\diamond +\frac{1}{4}%
W_{2}\diamond X_{1}\diamond X_{3}\diamond .
\end{eqnarray*}

Again we make the specific choices of the matrices $\left( E_{j}\diamond
\right) $ as in Section \ref{matrixChoicesSection}, with

\begin{eqnarray*}
\left( X_{1}\diamond \right) &=&i\sigma _{1}\otimes \mathbf{1},~\left(
X_{2}\diamond \right) =i\sigma _{2}\otimes \mathbf{1},~\left( X_{3}\diamond
\right) =i\mathbf{1}^{\prime }\otimes \mathbf{1}^{\prime }, \\
\left( W_{1}\diamond \right) &=&i\mathbf{1}^{\prime }\otimes \sigma
_{1},~\left( W_{2}\diamond \right) =i\mathbf{1}^{\prime }\otimes \sigma _{2},
\end{eqnarray*}%
Then, since $\alpha \left( W_{2}\right) =0$, $\alpha \left( X_{3}\right) =%
\frac{g_{1}b_{2}}{d}$,$\alpha \left( W_{1}\right) =d=\sqrt{%
g_{1}^{2}+g_{2}^{2}}$, we use (\ref{CayleyTable}) to obtain 
\begin{eqnarray*}
M_{\alpha }^{\prime } &=&\frac{1}{4}W_{1}\diamond X_{1}\diamond
X_{2}\diamond +\frac{1}{4}W_{2}\diamond X_{1}\diamond X_{3}\diamond +2\pi
i\alpha \left( W_{1}\right) W_{1}\diamond \\
&&+2\pi i\alpha \left( X_{3}\right) \left( X_{3}\diamond \right) \\
&=&\frac{1}{4}\left( i\mathbf{1}^{\prime }\otimes \sigma _{1}\right) \left(
i\sigma _{1}\otimes \mathbf{1}\right) \left( i\sigma _{2}\otimes \mathbf{1}%
\right) +\frac{1}{4}\left( i\mathbf{1}^{\prime }\otimes \sigma _{2}\right)
\left( i\sigma _{1}\otimes \mathbf{1}\right) \left( i\mathbf{1}^{\prime
}\otimes \mathbf{1}^{\prime }\right) +2\pi id\left( i\mathbf{1}^{\prime
}\otimes \sigma _{1}\right) \\
&&+2\pi i\frac{g_{1}b_{2}}{d}\left( i\mathbf{1}^{\prime }\otimes \mathbf{1}%
^{\prime }\right) \\
&=&-\frac{1}{4}\mathbf{1}\otimes \sigma _{1}+\frac{1}{4}\sigma _{1}\otimes
\sigma _{1}-2\pi d\mathbf{1}^{\prime }\otimes \sigma _{1}-2\pi \frac{%
g_{1}b_{2}}{d}\mathbf{1}^{\prime }\otimes \mathbf{1}^{\prime }.
\end{eqnarray*}

We need to determine what $M_{\alpha }^{\prime }$ does to the basis $\left\{ 
\overline{u}_{p,\mathbf{\ell }}\right\} $. We have%
\begin{eqnarray*}
\overline{u}_{p,\mathbf{\ell }} &=&\left\{ 
\begin{array}{ll}
u_{p,\mathbf{\ell }} & \text{if }\ell _{1}=0 \\ 
\sqrt{2p}u_{p-1,\mathbf{\ell }}~ & \text{if }\ell _{1}=-1%
\end{array}%
\right. , \\
u_{p,\mathbf{\ell }} &=&h_{p}\left( \sqrt{2\pi d}t\right) v_{\mathbf{\ell }}
\end{eqnarray*}%
Then%
\begin{eqnarray*}
\left( \mathbf{1}\otimes \sigma _{1}\right) \overline{u}_{p,\mathbf{\ell }}
&=&\left( \mathbf{1}\otimes \sigma _{1}\right) \left\{ 
\begin{array}{ll}
u_{p,\mathbf{\ell }} & \text{if }\ell _{1}=0 \\ 
\sqrt{2p}u_{p-1,\mathbf{\ell }}~ & \text{if }\ell _{1}=-1%
\end{array}%
\right. \\
&=&\left\{ 
\begin{array}{ll}
u_{p,\mathbf{\ell }^{2}} & \text{if }\ell _{1}=0 \\ 
\sqrt{2p}u_{p-1,\mathbf{\ell }^{2}}~ & \text{if }\ell _{1}=-1%
\end{array}%
\right. \\
&=&\overline{u}_{p,\mathbf{\ell }^{2}}.
\end{eqnarray*}%
\begin{eqnarray*}
\left( \sigma _{1}\otimes \sigma _{1}\right) \overline{u}_{p,\mathbf{\ell }}
&=&\left( \sigma _{1}\otimes \sigma _{1}\right) \left\{ 
\begin{array}{ll}
u_{p,\mathbf{\ell }} & \text{if }\ell _{1}=0 \\ 
\sqrt{2p}u_{p-1,\mathbf{\ell }}~ & \text{if }\ell _{1}=-1%
\end{array}%
\right. \\
&=&\left\{ 
\begin{array}{ll}
u_{p,-\mathbf{\ell }} & \text{if }\ell _{1}=0 \\ 
\sqrt{2p}u_{p-1,-\mathbf{\ell }}~ & \text{if }\ell _{1}=-1%
\end{array}%
\right. \\
&=&\left( \sqrt{2p}\right) ^{-\ell _{1}}\overline{u}_{p+\ell _{1},-\mathbf{%
\ell }}
\end{eqnarray*}%
\begin{eqnarray*}
\left( \mathbf{1}^{\prime }\otimes \sigma _{1}\right) \overline{u}_{p,%
\mathbf{\ell }} &=&\ell _{1}\left( \mathbf{1}\otimes \sigma _{1}\right) 
\overline{u}_{p,\mathbf{\ell }} \\
&=&\ell _{1}\overline{u}_{p,\mathbf{\ell }^{2}}
\end{eqnarray*}%
\begin{equation*}
\left( \mathbf{1}^{\prime }\otimes \mathbf{1}^{\prime }\right) \overline{u}%
_{p,\mathbf{\ell }}=\ell _{1}\ell _{2}\overline{u}_{p,\mathbf{\ell }}
\end{equation*}%
Substituting,%
\begin{eqnarray*}
M_{\alpha }^{\prime }\overline{u}_{p,\mathbf{\ell }} &=&\left( -\frac{1}{4}%
\mathbf{1}\otimes \sigma _{1}+\frac{1}{4}\sigma _{1}\otimes \sigma _{1}-2\pi
d\mathbf{1}^{\prime }\otimes \sigma _{1}-2\pi \frac{g_{1}b_{2}}{d}\mathbf{1}%
^{\prime }\otimes \mathbf{1}^{\prime }\right) \overline{u}_{p,\mathbf{\ell }}
\\
&=&-\frac{1}{4}\overline{u}_{p,\mathbf{\ell }^{2}}+\frac{1}{4}\left( \sqrt{2p%
}\right) ^{-\ell _{1}}\overline{u}_{p+\ell _{1},-\mathbf{\ell }} \\
&&-2\pi d\ell _{1}\overline{u}_{p,\mathbf{\ell }^{2}}-2\pi \frac{g_{1}b_{2}}{%
d}\ell _{1}\ell _{2}\overline{u}_{p,\mathbf{\ell }} \\
&=&\left( -\frac{1}{4}-2\pi d\ell _{1}\right) \overline{u}_{p,\mathbf{\ell }%
^{2}}+\frac{1}{4}\left( \sqrt{2p}\right) ^{-\ell _{1}}\overline{u}_{p+\ell
_{1},-\mathbf{\ell }}-2\pi \frac{g_{1}b_{2}}{d}\ell _{1}\ell _{2}\overline{u}%
_{p,\mathbf{\ell }}.
\end{eqnarray*}

From (\ref{DequationWithSQRoots}), we have%
\begin{eqnarray*}
D\overline{u}_{p,\mathbf{\ell }} &=&-2i\sqrt{\pi dp}\ell _{1}\overline{u}_{p,%
\mathbf{\ell }^{1}}+M_{\alpha }^{\prime }\overline{u}_{p,\mathbf{\ell }} \\
&=&-2i\sqrt{\pi dp}\ell _{1}\overline{u}_{p,\mathbf{\ell }^{1}}+\left( -%
\frac{1}{4}-2\pi d\ell _{1}\right) \overline{u}_{p,\mathbf{\ell }^{2}} \\
&&+\frac{1}{4}\left( \sqrt{2p}\right) ^{-\ell _{1}}\overline{u}_{p+\ell
_{1},-\mathbf{\ell }}-2\pi \frac{g_{1}b_{2}}{d}\ell _{1}\ell _{2}\overline{u}%
_{p,\mathbf{\ell }}~.
\end{eqnarray*}%
There are no apparent invariant subspaces for $D$ spanned by a finite number
of the $\overline{u}_{p,\mathbf{\ell }}$ . The matrix for $D$ is an infinite
band matrix. This shows the difficulty of computing the Dirac eigenvalues
for a general nilmanifold.

\section{Appendix: CCMoore/LenRichardson Papers and Adaptations\label%
{MooreRichardsonPapersSection}}

\subsubsection{Occurrence and Multiplicity Condition}

Let $\Gamma $ be a cocompact (i.e., $\Gamma \diagdown G$ compact) discrete
subgroup of the simply connected nilpotent Lie group $G$. Let $\varepsilon
:\Gamma \rightarrow \left\{ \pm 1\right\} \subset GL\left( \mathbb{C}%
^{k}\right) \ $\ be a homomorphism. Denote by $U_{\varepsilon }$ the
representation of $G$ induced by $\varepsilon $; in particular, 
\begin{equation*}
U_{\varepsilon }=L_{\varepsilon }^{2}\left( \Gamma \diagdown G\right)
=\left\{ f:G\rightarrow \mathbb{C}^{k}:f\left( \gamma g\right) =\varepsilon
\left( \gamma \right) f\left( g\right) \text{ for all }g\in G,\gamma \in
\Gamma \right\} \text{,}
\end{equation*}%
where the (left)\ action of $G$ on $U_{\varepsilon }$ \ is given by interior
right multiplication. Note that if $\varepsilon =1,$ then $U_{\varepsilon }=$
$L_{\varepsilon }^{2}\left( \Gamma \diagdown G\right) $ is the direct sum of 
$k$ copies of the quasi-regular representation $U=L^{2}\left( \Gamma
\diagdown G\right) $. As in the quasi-regular case, standard results in
representation theory imply in general that $U_{\varepsilon }$ can be
decomposed into the direct sum of irreducible representations of $G$, each
with finite multiplicity. A good reference for the standard representation
theory used in this appendix is \cite{CoGr}.

Our motivation for this construction is that spin structures over
nilmanifolds $\Gamma \diagdown G$ correspond exactly to homomorphisms $%
\varepsilon :\Gamma \rightarrow GL\left( \mathbb{C}^{k}\right) $, where the
image of $\varepsilon $ lies in the set $\left\{ \pm 1\right\} ,$ and $%
k=2^{\left\lfloor n/2\right\rfloor }$. The resulting spinor bundle is 
\begin{equation*}
\Sigma _{\varepsilon }=G\times _{\varepsilon }\mathbb{C}^{k}=G\times \mathbb{%
C}^{k}\diagup \left\{ \left( g,v\right) :\left( g,v\right) =\left( \gamma
g,\varepsilon \left( \gamma \right) v\right) \text{ for all }\gamma \in
\Gamma \right\} ;
\end{equation*}
see \cite[Prop 3.34, p. 114]{Be-G-V}. The sections of this bundle are
elements of $U_{\varepsilon }$, on which the Dirac operator acts.

In the quasi-regular case ($\varepsilon =1$), L. Richardson and R. Howe,
building on work of C. C. Moore, independently proved an exact occurrence
condition and multiplicity formula; they determined the irreducible
representations $\pi $ of $G$ that occur in $U=L^{2}\left( \Gamma \diagdown
G\right) $ and the corresponding multiplicities $m\left( \pi ,U\right) $.
The purpose of this appendix is to generalize their occurrence and
multiplicity formula from the quasi-regular to the case of general $%
\varepsilon $.

Before stating the main results, we require the following definitions and
observations.

Denote by $\widehat{G}$ the set of equivalence classes of irreducible
unitary representations of $G$. The Kirillov Correspondence is the bijection
between the set of orbits of the co-adjoint action of $G$ on $\mathfrak{g}%
^{\ast }$ and $\widehat{G}$. In particular, Kirillov Theory proves that to
each $\alpha \in \mathfrak{g}^{\ast }$ corresponds an irreducible unitary
representation $\pi _{\alpha }$ of $G$, every irreducible representation of $%
G$ is unitarily equivalent to such a $\pi _{\alpha }$, and two such
irredicuble unitary representations $\pi _{\alpha }$ and $\pi _{\alpha
^{\prime }}$ are unitarily equivalent if and only if $\alpha ^{\prime
}=\alpha \circ $\textrm{$Ad$}$\left( x^{-1}\right) $ for some $x\in G$.
Kirillov Theory applies mainly to nilpotent Lie groups, with generalizations
to some solvable groups.

Choose $\alpha \in \mathfrak{g}^{\ast }$ and let $\mathfrak{h}$ be any
subalgebra of $\mathfrak{g}$. Let $H=\exp \left( \mathfrak{h}\right) $ be
the unique simply connected Lie subgroup of $G$ with Lie algebra $\mathfrak{h%
}$. The subalgebra $\mathfrak{h}$ or the subgroup $H$ is \emph{subordinate
to }$\alpha $ iff $\alpha \left( \left[ \mathfrak{h,h}\right] \right) \equiv
0$. If in addition $\mathfrak{h}$ is maximal with respect to being
subordinate, then $\mathfrak{h}$ is called a \emph{maximal subordinate
subalgebra} for $\alpha $, or a \emph{polarizer} for $\alpha $.

The explicit mapping between elements of $\mathfrak{g}^{\ast }$ and $%
\widehat{G}$ is as follows. Since $G$ is nilpotent and simply connected, the
exponential map is a diffeomorphism with inverse $\log $. For $\alpha \in 
\mathfrak{g}^{\ast }$, let $\mathfrak{h}$ be a maximal subordinate
subalgebra of $\alpha $. Define $\overline{\alpha }\left( \cdot \right)
=e^{2\pi i\alpha \left( \log \left( \cdot \right) \right) }$, which is a
character on $H$ --- i.e., a (complex)\ one-dimensional representation. The
irreducible unitary representation $\pi _{\alpha }$ is the representation of 
$G$ induced by the representation $\overline{\alpha }$ of $H.$

Recall that we have fixed a cocompact, discrete subgroup $\Gamma $ of $G$.
We call the pair $\left( \overline{\alpha },H\right) $ \emph{rational} (with
respect to $\Gamma $)\ if it can be constructed with respect to a rational
covector $\alpha $, i.e. $\alpha \left( \log \Gamma \right) \subset \mathbb{Q%
}$.We call the pair $\left( \overline{\alpha },H\right) $ a\emph{\ special
maximal pair} if $\log H=\mathfrak{h}$ is a maximal subordinate subalgebra
for $\alpha $ that is special in the sense that it is algorithmically and
inductively constructed from $\alpha $ and $\Gamma $ as described in \cite[%
pp. 176-178]{Ri}. As Kirillov theory dictates that the representation $\pi
_{\alpha }$ is independent of the maximal subordinate subalgebra (up to
unitary equivalence), and as Richardson's paper shows that any covector $%
\alpha $ has a special maximal subordinate subalgebra, this additional
property is not a restriction.We call $\left( \overline{\alpha },H\right) $
an $\varepsilon $\emph{-integral point} if and only if for all $\gamma \in
\Gamma \cap H$, $\overline{\alpha }\left( \gamma \right) =e^{2\pi i\alpha
\left( \log \left( \gamma \right) \right) }=\varepsilon \left( \gamma
\right) $. The equivalent condition on the Lie algebra level is, for all $%
\gamma \in \Gamma \cap H$, 
\begin{equation*}
\alpha \left( \log \gamma \right) \in \left\{ 
\begin{array}{ll}
\mathbb{Z} & \text{if~}\varepsilon \left( \gamma \right) =1 \\ 
\mathbb{Z+}\frac{1}{2} & \text{if~}\varepsilon \left( \gamma \right) =-1%
\end{array}%
\right. .
\end{equation*}

Let $\pi \in \widehat{G}$ be induced from $\alpha \in \mathfrak{g}^{\ast }$
under the Kirillov correspondence. Let $F$ be the family of special maximal
characters of $\pi $, that is all possible pairs $\left( \overline{\alpha }%
,H\right) $ that induce $\pi $ with $\mathfrak{h}=\log \left( H\right) $ a
special maximal subordinate subalgebra. Now$\ $L. Richardson proved that $x$ 
$\in G$ acts on $F$ via%
\begin{equation*}
\left( \overline{\alpha },H\right) \cdot x=\left( \overline{\alpha }%
^{x},^{x^{-1}}H\right) ,
\end{equation*}%
where $I_{x}$ denotes conjugation by $x$, the function $\overline{\alpha }%
^{x}=\overline{\alpha }\circ I_{x}$, and $^{x^{-1}}H=x^{-1}Hx=I_{x^{-1}}%
\left( H\right) $. Note that $\left( \overline{\alpha },H\right) \cdot x$ is
an $\varepsilon $-integral point if and only if%
\begin{equation*}
\overline{\alpha }^{x}\left( \gamma \right) =\varepsilon \left( \gamma
\right)
\end{equation*}%
for all $\gamma \in \Gamma \cap \left( ^{x^{-1}}H\right) $ if and only if $%
\overline{\alpha }\left( \gamma \right) =\varepsilon \left( \gamma \right) $
for all $\gamma \in \left( ^{x^{-1}}\Gamma \right) \cap H$.

We may now state the following main results of this Appendix.

\begin{theorem}
\label{occurInL2EpsTheorem}If $\pi $ is induced by the special maximal
character $\left( \overline{\alpha },H\right) $ under the Kirillov
correspondence, then $m\left( \pi ,L_{\varepsilon }^{2}\left( \Gamma
\diagdown G\right) \right) >0$ if and only if there is an $\varepsilon $%
-integral point in \ the orbit $\left( \overline{\alpha },H\right) \cdot G$.
\end{theorem}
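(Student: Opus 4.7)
The strategy is to adapt L. F. Richardson's approach from \cite{Ri}, which treats the case $\varepsilon = \mathrm{id}$, to the twisted setting where $L_{\varepsilon}^{2}\left( \Gamma \diagdown G\right) = \mathrm{Ind}_{\Gamma}^{G} \varepsilon$. The key observation is that Richardson's argument depends on $\Gamma$ only through the specific character being induced: once we replace the trivial character by $\varepsilon$, the whole machinery carries through, with the compatibility condition on each double coset twisted by $\varepsilon$.

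First I would set up Frobenius reciprocity. Since $\pi = \pi_{\alpha} = \mathrm{Ind}_{H}^{G} \overline{\alpha}$, we have
\[
m\bigl(\pi_{\alpha}, L_{\varepsilon}^{2}\left( \Gamma \diagdown G\right)\bigr) = \dim \mathrm{Hom}_{G}\bigl(\mathrm{Ind}_{H}^{G} \overline{\alpha},\, \mathrm{Ind}_{\Gamma}^{G} \varepsilon\bigr).
\]
Applying the nilpotent-group analogue of Mackey's subgroup theorem (as in Sections 2--3 of \cite{Ri}), this $\mathrm{Hom}$-space decomposes as a sum indexed by $G$-orbits of the pairs $(\overline{\alpha}^{x}, {}^{x^{-1}}H)$, where $x$ runs over representatives of $H \diagdown G \diagdown \Gamma$. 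The $x$-th summand is nonzero precisely when $\overline{\alpha}^{x}(\gamma) = \varepsilon(\gamma)$ for all $\gamma \in \Gamma \cap {}^{x^{-1}}H$, which by definition is exactly the condition that $(\overline{\alpha}, H) \cdot x$ is an $\varepsilon$-integral point. This gives the forward direction: if $m > 0$, some orbit representative must be an $\varepsilon$-integral point.

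For the reverse direction, given an $\varepsilon$-integral point in the $G$-orbit of $(\overline{\alpha}, H)$, I would construct an explicit intertwiner from $\pi_{\alpha}$ into $L_{\varepsilon}^{2}\left( \Gamma \diagdown G\right)$ by adapting Richardson's theta-sum construction: to a smooth vector $f$ in the representation space of $\pi_{\alpha}$, one assigns an appropriately averaged sum of the form
\[
Tf(g) = \sum_{\gamma \in (\Gamma \cap {}^{x^{-1}}H) \diagdown \Gamma} \varepsilon(\gamma)^{-1} f(x \gamma g),
\]
followed by the remaining averaging needed to land in the induced space. The verification that this sum converges and transforms by $\varepsilon$ under left translation by $\Gamma$ is exactly what Richardson does in the untwisted case; since $|\varepsilon(\gamma)| = 1$, all convergence and integrability estimates go through without change, and the transformation law under $\Gamma$ is precisely what forces the integrality condition.

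The main obstacle is ensuring that the Mackey double-coset decomposition is valid in the continuous nilpotent setting, which is precisely why Richardson requires the polarizer $H$ to be \emph{special} in the sense of his inductive construction. This specialness guarantees that $H \diagdown G \diagdown \Gamma$ admits a rational cross-section compatible with $\log \Gamma$, making the sum over double cosets well-defined and the intertwining integrals convergent. Since the specialness condition depends only on the algebraic data $(\mathfrak{g}, \mathfrak{h}, \log \Gamma)$ and not on the character being induced, Richardson's construction applies verbatim in our situation; the only modification is that his integrality condition $\alpha(\log \gamma) \in \mathbb{Z}$ on $\Gamma \cap G^{\alpha}$ is replaced by the shifted condition $\alpha(\log \gamma) \in \mathbb{Z} + (1 - \varepsilon(\gamma))/4$, yielding the $\varepsilon$-integral point criterion asserted in the theorem.
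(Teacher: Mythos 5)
Your proposal takes a genuinely different route from the paper, but it has a gap that is not merely technical: it is in fact the central difficulty that the Moore--Richardson machinery is designed to circumvent. You propose to write
\[
m\bigl(\pi_{\alpha}, L_{\varepsilon}^{2}(\Gamma\diagdown G)\bigr)
  = \dim\mathrm{Hom}_{G}\bigl(\mathrm{Ind}_{H}^{G}\overline{\alpha},\ \mathrm{Ind}_{\Gamma}^{G}\varepsilon\bigr)
\]
via Frobenius reciprocity, and then to decompose the right-hand side by Mackey's subgroup theorem over double cosets in $H\diagdown G \diagup \Gamma$. Mackey's theorem for induced representations of locally compact groups requires the two subgroups to be \emph{regularly related} (the double-coset space must be a standard Borel space with a suitable measure). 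When $\Gamma$ is discrete cocompact and $H$ is a connected positive-dimensional subgroup, $H\diagdown G \diagup \Gamma$ is generally not standard --- the double cosets are typically dense, and the quotient fails to be Hausdorff. This is precisely why Moore's paper and Richardson's paper do \emph{not} argue by a single Mackey decomposition; they instead induct on the codimension of the polarizer, applying Mackey only at each inductive step to the restriction to a \emph{normal} subgroup $G_{0}$ of codimension one, where the double cosets reduce to cosets of the closed subgroup $\Gamma G_{0}$ and the hypotheses of Mackey's theorem are trivially satisfied. You attribute the validity of your global double-coset decomposition to the \emph{specialness} of the polarizer $H$, but this is a misreading: specialness in Richardson's sense is a condition that makes each step of the inductive reduction compatible with $\log\Gamma$; it does not make $H$ and $\Gamma$ regularly related and does not rescue a one-shot Mackey decomposition.

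The paper's actual proof (Theorem~\ref{occurInL2EpsTheorem}) follows Richardson verbatim after substituting the $\varepsilon$-generalized lemmas: it inducts on $\mathrm{codim}\,H$, using the $\varepsilon$-Reduction Lemma (Lemma~\ref{ReductionLemma}) to pass to a quotient when $\pi$ is trivial on a rational central one-parameter subgroup, and using Moore's Algorithm (Theorem~\ref{MooreAlgTheorem}, whose proof is where Mackey's theorem is invoked, applied only to the codimension-one normal $G_{0}$) together with a rationality computation involving Campbell--Baker--Hausdorff to pass to the Kirillov subgroup $G_{1}$ when $Z(G)$ is one-dimensional and $\pi$ is nontrivial on it. To repair your argument you would need either to justify the Frobenius--Mackey step in the non-regular setting, which is false as stated, or to abandon it and adopt the inductive reduction; the latter is what the paper does. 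Your intuition for the converse direction (an explicit intertwiner built as a theta-type sum transforming by $\varepsilon$ under $\Gamma$) is closer to what happens in the construction of the intertwiners once occurrence is established, but on its own it does not supply the inductive bookkeeping that identifies which orbit data control occurrence.
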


\begin{lemma}
Assume that $m\left( \pi ,L_{\varepsilon }^{2}\left( \Gamma \diagdown
G\right) \right) >0$, and let the special maximal character $\left( 
\overline{\alpha },H\right) $ induce $\pi $ under the Kirillov
correspondence. The action satisifes $\left( \overline{\alpha },H\right)
\cdot x=\left( \overline{\alpha },H\right) $, iff $x\in H$, so that we may
identify the $G$-orbit of $\left( \overline{\alpha },H\right) $ with $%
H\diagdown G$. \ If $\left( \overline{\alpha },H\right) $ is an $\varepsilon 
$-integral point and if $\gamma _{0}\in \Gamma $, then $\left( \overline{%
\alpha },H\right) \cdot \gamma _{0}$ is also an $\varepsilon $-integral
point.
\end{lemma}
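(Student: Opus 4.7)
\medskip

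My plan is to handle the three assertions in order: first the easy inclusion $H \subseteq \mathrm{Stab}_G(\overline{\alpha},H)$, then the harder reverse inclusion using the maximality of the polarization, and finally the conjugation invariance of the $\varepsilon$-integrality condition under $\Gamma$.

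\textbf{Step 1: $H$ fixes $(\overline{\alpha},H)$.} For $x\in H$, clearly $x^{-1}Hx=H$. It remains to show $\overline{\alpha}^{x}=\overline{\alpha}$ on $H$, equivalently $\alpha\circ\mathrm{Ad}(x)=\alpha$ on $\mathfrak{h}$. Writing $x=\exp X$ with $X\in\mathfrak{h}$ and using $\mathrm{Ad}(\exp X)=e^{\mathrm{ad}(X)}$, I expand
\begin{equation*}
\alpha(\mathrm{Ad}(x)Y)-\alpha(Y)=\sum_{j\ge 1}\tfrac{1}{j!}\,\alpha(\mathrm{ad}(X)^{j}Y).
\end{equation*}
Since $\mathfrak{h}$ is a subalgebra, every $\mathrm{ad}(X)^{j}Y$ lies in $[\mathfrak{h},\mathfrak{h}]\subseteq\ker\alpha$, so the sum vanishes termwise.

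\textbf{Step 2: The stabilizer is contained in $H$.} Suppose $x=\exp X$ satisfies $x^{-1}Hx=H$ and $\overline{\alpha}^{x}=\overline{\alpha}$. The first condition gives $\mathrm{ad}(X)\mathfrak{h}\subseteq\mathfrak{h}$, so each $\mathrm{ad}(X)^{j}Y$ lies in $\mathfrak{h}$ for $Y\in\mathfrak{h}$. Let $T_{j}(Y)=\alpha(\mathrm{ad}(X)^{j}Y)$; nilpotency makes this sequence eventually zero, and the second condition reads $\sum_{j\ge 1}\tfrac{1}{j!}T_{j}(Y)=0$. The key trick is that this identity also holds with $Y$ replaced by any $\mathrm{ad}(X)^{i}Y\in\mathfrak{h}$, yielding
\begin{equation*}
\sum_{j\ge 1}\tfrac{1}{j!}\,T_{j+i}(Y)=0\qquad(i\ge 0).
\end{equation*}
Running these equations downward from the largest $i$ for which $T_{i+1}$ is nonzero, I will conclude inductively that $T_{j}(Y)=0$ for all $j\ge 1$, and in particular $\alpha([X,Y])=0$ for every $Y\in\mathfrak{h}$. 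Thus $X$ lies in the $B_{\alpha}$-orthogonal complement of $\mathfrak{h}$. Passing to $\mathfrak{g}/\mathfrak{g}_{\alpha}$, where $\overline{B_{\alpha}}$ is nondegenerate and $\mathfrak{h}/\mathfrak{g}_{\alpha}$ is Lagrangian by maximality of the polarization, we get $\mathfrak{h}^{\perp}=\mathfrak{h}$, hence $X\in\mathfrak{h}$ and $x\in H$. This identifies the orbit of $(\overline{\alpha},H)$ with $H\diagdown G$.

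\textbf{Step 3: $\varepsilon$-integrality is preserved by $\Gamma$.} Suppose $(\overline{\alpha},H)$ is $\varepsilon$-integral and $\gamma_{0}\in\Gamma$. I must check $\overline{\alpha}^{\gamma_{0}}(\gamma)=\varepsilon(\gamma)$ for every $\gamma\in\Gamma\cap(\gamma_{0}^{-1}H\gamma_{0})$. For such $\gamma$, both $\gamma$ and $\gamma_{0}$ lie in $\Gamma$, so $\gamma_{0}\gamma\gamma_{0}^{-1}\in\Gamma\cap H$. Hence
\begin{equation*}
\overline{\alpha}^{\gamma_{0}}(\gamma)=\overline{\alpha}(\gamma_{0}\gamma\gamma_{0}^{-1})=\varepsilon(\gamma_{0}\gamma\gamma_{0}^{-1})=\varepsilon(\gamma_{0})\varepsilon(\gamma)\varepsilon(\gamma_{0})^{-1}=\varepsilon(\gamma),
\end{equation*}
using that $\varepsilon$ is a homomorphism into the abelian group $\{\pm 1\}$.

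The main obstacle is Step 2: controlling the full Taylor expansion of $\mathrm{Ad}(x)$ rather than just its linear part, and extracting from the single scalar equation $\sum\tfrac{1}{j!}T_{j}=0$ the vanishing of each $T_{j}$. The trick of iterating the identity on $\mathrm{ad}(X)^{i}Y\in\mathfrak{h}$ and solving the resulting triangular system from the top down is what makes this work, and the maximality (rather than mere subordination) of $\mathfrak{h}$ is crucial for the final Lagrangian self-duality argument.
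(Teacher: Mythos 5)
Your Step 3 reproduces the paper's proof of the $\varepsilon$-integrality statement essentially word for word. For the stabilizer identification $\mathrm{Stab}_G(\overline{\alpha},H)=H$, however, the paper does not give an argument at all: it cites Lemma 5.1 of L.\ Richardson's paper and asserts that ``the proof holds verbatim.'' Your Steps 1 and 2 therefore supply a genuinely self-contained proof of something the paper outsources. The content of your Step 2 --- extracting $\alpha([X,Y])=0$ from the single relation $\sum_{j\ge 1}\tfrac1{j!}\alpha(\mathrm{ad}(X)^jY)=0$ by substituting $\mathrm{ad}(X)^iY\in\mathfrak h$ and solving the resulting triangular system from the top, then invoking the Lagrangian self-duality $\mathfrak h^{\perp_{B_\alpha}}=\mathfrak h$ of a maximal polarizer --- is correct and is a clean way to close the argument without appealing to Richardson's construction of special maximal subordinate subalgebras.

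Two small points are worth flagging. First, the step ``$x^{-1}Hx=H$ gives $\mathrm{ad}(X)\mathfrak h\subseteq\mathfrak h$'' is true but not immediate: one should note that $\mathrm{Ad}(x)=\mathbf 1+N$ with $N$ nilpotent, that $N$ preserves $\mathfrak h$ whenever $\mathrm{Ad}(x)$ does, and hence that $\mathrm{ad}(X)=\log(\mathrm{Ad}(x))=N-\tfrac12N^2+\cdots$ also preserves $\mathfrak h$; this is where simple-connected nilpotency enters. Second, your final appeal to ``$\mathfrak h/\mathfrak g_\alpha$ is Lagrangian by maximality of the polarization'' uses the standard but nontrivial fact that for nilpotent Lie algebras every maximal subordinate subalgebra attains the Lagrangian dimension $\tfrac12(\dim\mathfrak g+\dim\mathfrak g_\alpha)$; it would be worth a citation (e.g.\ Corwin--Greenleaf). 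With those two remarks supplied, the proof is complete, and it is a nice explicit alternative to the paper's citation-only treatment of this step.
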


Let $\left( H\diagdown G\right) _{\varepsilon }$ be the set of $\varepsilon $%
-integral points in $H\diagdown G$. As a result of the Lemma, $\Gamma $ acts
on $\left( H\diagdown G\right) _{\varepsilon }$.

\begin{theorem}
\label{multiplicityAndOccurInL2EpsTheorem}If the special maximal character $%
\left( \overline{\alpha },H\right) $ induces $\pi $ under the Kirillov
correspondence, then the multiplicity of $\pi $ in the $\varepsilon $-quasi
regular representation $U_{\varepsilon }=L_{\varepsilon }^{2}\left( \Gamma
\diagdown G\right) $, denoted $m\left( \pi ,L_{\varepsilon }^{2}\left(
\Gamma \diagdown G\right) \right) $, is the number of $\Gamma $-orbits in
the set $\left( H\diagdown G\right) _{\varepsilon }$ of $\varepsilon $%
-integral points in the $G$-orbit $H\diagdown G$ of $\left( \overline{\alpha 
},H\right) $. That is, 
\begin{equation*}
m\left( \pi ,L_{\varepsilon }^{2}\left( \Gamma \diagdown G\right) \right)
=\#\left( \left( H\diagdown G\right) _{\varepsilon }\diagup \Gamma \right) .
\end{equation*}
\end{theorem}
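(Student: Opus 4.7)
The plan is to follow the architecture of L. Richardson's proof in \cite{Ri} for the quasi-regular case and to systematically replace the trivial character on $\Gamma$ with the homomorphism $\varepsilon$, verifying at each stage that the integrality conditions become $\varepsilon$-integrality conditions. The core tool is the intertwining number formulation: since $\pi_\alpha = \mathrm{Ind}_H^G \overline{\alpha}$ and $U_\varepsilon = \mathrm{Ind}_\Gamma^G \varepsilon$, Frobenius reciprocity gives
\[
m\bigl(\pi_\alpha,\, L_\varepsilon^2(\Gamma \diagdown G)\bigr) \;=\; \dim \mathrm{Hom}_G(\pi_\alpha,\, U_\varepsilon),
\]
and Mackey's theory (applied carefully in the nilpotent, non-compact setting as Richardson does) decomposes this space as a sum indexed by the double cosets $H \backslash G / \Gamma$, where the contribution of a representative $x$ is the one-dimensional space of $(H \cap x\Gamma x^{-1})$-equivariant linear maps from $\overline{\alpha}$ to $\varepsilon \circ I_{x^{-1}}$ when it is nonzero, and zero otherwise. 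The nonvanishing condition is exactly that $(\overline{\alpha}, H) \cdot x^{-1}$ is an $\varepsilon$-integral point in the sense defined above.

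First, I would invoke Theorem \ref{occurInL2EpsTheorem} to reduce to the case where at least one $\varepsilon$-integral point exists in the $G$-orbit of $(\overline{\alpha}, H)$; this sets up the induction and guarantees that the count we want is positive. Next, I would use the preceding Lemma, which states both that the stabilizer of $(\overline{\alpha}, H)$ under the $G$-action is exactly $H$ (so that the orbit is identified with $H \diagdown G$) and that $\Gamma$ preserves the subset $(H \diagdown G)_\varepsilon$ of $\varepsilon$-integral points. Consequently, the nonzero contributions to the Mackey decomposition are indexed by elements of $(H \diagdown G)_\varepsilon$, and two such elements contribute to the same double coset $H x \Gamma$ precisely when they lie in the same $\Gamma$-orbit. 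Summing the one-dimensional contributions yields
\[
m\bigl(\pi_\alpha,\, L_\varepsilon^2(\Gamma \diagdown G)\bigr) \;=\; \#\bigl( (H \diagdown G)_\varepsilon \diagup \Gamma\bigr),
\]
as claimed.

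The main obstacle is the justification of the Mackey-type double coset decomposition in the nilpotent setting, where $H \backslash G / \Gamma$ need not be finite nor discrete in an obvious way. Richardson circumvents this by using his algorithmic construction of a \emph{special} maximal polarizer $\mathfrak{h}$ relative to $\Gamma$, which provides a rational cross-section and reduces the analysis to an inductive argument on $\dim G$. The inductive step works by passing to a codimension-one normal subgroup and invoking Kirillov theory in lower dimension. In adapting this to general $\varepsilon$, the delicate part is the base case: one must verify that characters $\chi$ on an abelian quotient of $\Gamma$ which previously contributed via the condition $\chi|_{\Gamma \cap H} = 1$ now contribute via the condition $\chi|_{\Gamma \cap H} = \varepsilon|_{\Gamma \cap H}$. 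Because $\varepsilon$ takes only the values $\pm 1$, this amounts to a translation of the relevant integral lattice by a half-integer vector encoded by $\varepsilon$, and the same combinatorial count of $\Gamma$-orbits goes through. Once the inductive step is checked, the theorem follows by the same argument as in \cite{Ri}, completed as in the proof of the Pesce-style occurrence proposition earlier in this paper.
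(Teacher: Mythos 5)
Your proposal takes essentially the same route as the paper, whose entire proof is the observation that Richardson's proof of Theorem 5.3 in \cite{Ri} carries over once the quasi-regular ingredients (Moore's reduction lemma and Moore's algorithm) are replaced by their $\varepsilon$-analogues established earlier in the appendix and ``integral point'' is replaced throughout by ``$\varepsilon$-integral point,'' which is exactly the substitution scheme you describe; your Frobenius--Mackey framing is the conceptual scaffolding that Richardson's inductive machinery makes rigorous, and you correctly name the special-maximal-polarizer construction, the codimension-one reduction, and the abelian base case as the pieces that must be rechecked. Two small slips worth flagging: the equivariance condition you write for $h\in H\cap x\Gamma x^{-1}$, namely $\overline{\alpha}(h)=\varepsilon(x^{-1}hx)$, is equivalent to $(\overline{\alpha},H)\cdot x$ (not $x^{-1}$) being an $\varepsilon$-integral point, and the closing appeal to the Pesce-style occurrence proposition as a way to ``complete'' the argument would be circular, since that proposition's proof in this paper itself invokes the present theorem.
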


\subsubsection{Proof of Occurrence and Multiplicity}

The proofs of the Lemma and Theorems follows the outline in L. Richardson's
paper closely. We verify that a few key Lemmas of C. C. Moore extend to the $%
\varepsilon $-quasi-regular setting, and from there the proof primarily
follows that of L.. Richardson verbatim, after substuting our Lemmas for
those of Moore, and replacing \textquotedblleft integral
point\textquotedblright\ with \textquotedblleft $\varepsilon $-integral
point.\textquotedblright

\vspace{1pt}For any $\pi \in \widehat{G}$, suppose there exists $\gamma
_{0}\in \Gamma $ such that $N=\exp \left( \mathbb{R}\log \left( \gamma
_{0}\right) \right) $ is a one-dimensional rational normal subgroup of $G$
and $\pi \left( N\right) =1$. Let $\varphi $ be the natural projection of $G$
onto $G_{0}=G\diagup N$, so $\Gamma _{0}=\Gamma \cdot N\diagup N=\varphi
\left( \Gamma \right) $ is a cocompact discrete subgroup of $G_{0}$. Then
the representation $\pi _{0}$ of $G_{0}$ defined by $\pi _{0}\left( \varphi
\left( g\right) \right) =\pi \left( g\right) $ is well-defined and
irreducible, hence an element of $\widehat{G_{0}}$ (see \cite[Lemma 2.1]{Moo}%
).

\begin{lemma}
Generalized $\varepsilon $-Reduction Lemma. (generalization of \cite[Lemma
2.2]{Moo}, quoted as \cite[Lemma 2.6]{Ri})\label{ReductionLemma}

Note that $\varepsilon $ induces a homomorphism of $\Gamma _{0}$ iff $%
\varepsilon \left( \gamma _{0}\right) =1$. With notation as above, denote by 
$U_{0\varepsilon }$ the representation of $G_{0}$ induced by the $%
\varepsilon $-homomorphism of $\Gamma _{0}$, if it exists. If $m\left( \pi
,U_{\varepsilon }\right) \neq 0$ then $\varepsilon \left( \gamma _{0}\right)
=1$, and the multiplicity $m\left( \pi ,U_{\varepsilon }\right) $ of $\pi $
in $U_{\varepsilon }$ is equal to the multiplicity $m\left( \pi
_{0},U_{0\varepsilon }\right) $ of $\pi _{0}$ in $U_{0\varepsilon }$.
\end{lemma}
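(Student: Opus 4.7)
\textbf{Proof plan for Lemma \ref{ReductionLemma}.}
My plan is to adapt Moore's original argument \cite[Lemma 2.2]{Moo} by keeping careful track of how the homomorphism $\varepsilon$ interacts with right translation by $N$. The key structural fact I would use first is that $N$, being a one-dimensional normal subgroup of the connected nilpotent group $G$, must be central: the adjoint action of $G$ on $\log N \cong \mathbb{R}$ factors through the scalars, but a connected nilpotent group has no nontrivial characters, so $\mathrm{Ad}$ acts trivially on $\log N$. This centrality makes the right $N$-action on $U_\varepsilon$ essentially diagonalizable by ordinary Fourier analysis on the circle $N/(N\cap\Gamma)$.

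Next, I would analyze how $\gamma_0 \in \Gamma \cap N$ acts on $U_\varepsilon$ by right translation $R_{\gamma_0}$. For any $f \in U_\varepsilon$ and $g \in G$, centrality gives $f(g\gamma_0) = f(\gamma_0 g) = \varepsilon(\gamma_0) f(g)$, so $R_{\gamma_0} = \varepsilon(\gamma_0) \cdot \mathrm{Id}$ on $U_\varepsilon$. Since $N$ commutes with the left $\Gamma$-action, $U_\varepsilon$ decomposes under the unitary $N$-action as a direct sum (or integral) of $\chi$-isotypic subspaces where $\chi$ ranges over characters of $N$ satisfying $\chi(\gamma_0)=\varepsilon(\gamma_0)$; concretely, parametrising $N$ by $\exp(t\log\gamma_0)$, the admissible characters are $\chi_\lambda(\exp(t\log\gamma_0)) = e^{2\pi i\lambda t}$ with $\lambda \in \mathbb{Z}$ if $\varepsilon(\gamma_0)=1$ and $\lambda \in \mathbb{Z}+\tfrac{1}{2}$ if $\varepsilon(\gamma_0)=-1$. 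Because $\pi(N)=1$, any vector in the $\pi$-isotypic subspace of $U_\varepsilon$ must lie in the $\chi_0$-piece (the $N$-invariants), which is nonzero only when $\lambda=0$ is admissible. This forces $\varepsilon(\gamma_0)=1$ whenever $m(\pi,U_\varepsilon)>0$, giving the first assertion.

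Assuming $\varepsilon(\gamma_0)=1$, I would then define $\varepsilon_0:\Gamma_0 \to \{\pm 1\}$ by $\varepsilon_0(\gamma N)=\varepsilon(\gamma)$, which is well-defined since $\Gamma \cap N = \langle \gamma_0 \rangle \subset \ker\varepsilon$. Let $V \subset U_\varepsilon$ denote the space of right $N$-invariants. I would exhibit a $G$-equivariant unitary isomorphism $\Phi : V \to U_{0\varepsilon_0}$ via $(\Phi f)(gN) = f(g)$: well-definedness uses right $N$-invariance, the $\varepsilon_0$-equivariance follows from $(\Phi f)(\gamma N \cdot gN) = f(\gamma g) = \varepsilon(\gamma) f(g) = \varepsilon_0(\gamma N)(\Phi f)(gN)$, and the unitarity uses that the $G$-invariant measure on $\Gamma\backslash G$ factors through the $N$-invariant measure and a $G_0$-invariant measure on $\Gamma_0\backslash G_0$. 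The $G$-action on $U_\varepsilon$ restricted to $V$ factors through $\varphi$ and corresponds under $\Phi$ to the $G_0$-action on $U_{0\varepsilon_0}$.

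To conclude, I would observe that the $\pi$-isotypic subspace of $U_\varepsilon$ is contained in $V$ (by the first part of the argument), and under $\Phi$ it maps isomorphically onto the $\pi_0$-isotypic subspace of $U_{0\varepsilon_0}$ because $\pi$ and $\pi_0$ correspond under the quotient $G \to G_0$. Counting multiplicities then yields $m(\pi,U_\varepsilon)=m(\pi_0,U_{0\varepsilon_0})$. The main obstacle I anticipate is a clean bookkeeping of measures in the identification $\Phi$ and the verification that the $\chi_0$-isotypic decomposition is strong enough to identify the full $\pi$-isotypic subspace rather than just a part of it; once centrality of $N$ is in hand and $\Phi$ is set up, the rest parallels Moore--Richardson's argument after replacing the word ``trivial'' by ``$\varepsilon$-compatible.''
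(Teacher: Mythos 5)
Your proposal follows the paper's argument essentially step for step: centrality of $N$, deduction of $\varepsilon(\gamma_0)=1$ from the fact that any $\pi$-isotypic vector is fixed by the $N$-action, passage to the space of right-$N$-invariants (your $V$, the paper's $\mathcal{H}^N$), and identification with $U_{0\varepsilon}$ by descent to $G_0$. One small caveat on the centrality step: a connected nilpotent Lie group can certainly have nontrivial real characters (e.g.\ $t\mapsto e^{t}$ on $\mathbb{R}$); the operative fact is rather that $\mathrm{ad}(A)$ is nilpotent, so $\mathrm{Ad}(\exp A)$ acts unipotently, hence trivially, on the one-dimensional invariant subspace $\log N$ --- which is precisely the content of the paper's Campbell--Baker--Hausdorff computation.
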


\begin{proof}
By normality, $N\subset Z\left( G\right) $. This follows from the
Campbell-Baker-Hausdorff formula, since for vectors $A\in \log G$, and $X\in
\log N$, we have $\exp \left( A\right) \exp \left( X\right) \exp \left(
A^{-1}\right) =\exp \left( rX\right) =\exp \left( X+\left[ A,X\right] +c_{2}%
\left[ A,\left[ A,X\right] \right] +...\right) $. Let $\mathrm{ad}\left(
A\right) ^{k}\left( X\right) $ be the first zero element of the sequence $%
\left( X,\left[ A,X\right] ,\left[ A,\left[ A,X\right] \right] ,...\right) $%
. Because $G$ is nilpotent, \linebreak $\left\{ X,\left[ A,X\right] ,\left[
A,\left[ A,X\right] \right] ,...,\mathrm{ad}\left( A\right) ^{k-1}\left(
X\right) \right\} $ is linearly independent. Since $rX=X+\left[ A,X\right]
+c_{2}\left[ A,\left[ A,X\right] \right] +...$, we have $\left[ A,X\right]
=0 $. Note that since $\pi \left( N\right) =1$, if $m\left( \pi
,U_{\varepsilon }\right) \neq 0$, $U_{\varepsilon }\left( n\right) f=f$ for
all $n$ in $N$ and $f$ in the corresponding invariant subspace $\mathcal{H}%
_{\pi }$. This means that while $f\left( \gamma g\right) =\varepsilon \left(
\gamma \right) f\left( g\right) $ for all $g\in G$, $\gamma \in \Gamma $, it
must also be true that $\left( U_{\varepsilon }\left( n\right) f\right)
\left( g\right) =f\left( gn\right) =f\left( ng\right) =f\left( g\right) $
for all $n\in N$. If $n\in \Gamma \cap N$, then in addition we have $f\left(
gn\right) =f\left( ng\right) =f\left( g\right) =\varepsilon \left( n\right)
f\left( g\right) $, which implies that $\varepsilon \left( \gamma \right) $
is the identity and $\varepsilon $ induces a homomorphism of $\Gamma _{0}$.
Thus $\varepsilon $ restricted to $\Gamma \cap N$ acts trivially on the
image of the sections of $\mathcal{H}_{\pi }$. Let $M=\Gamma \diagdown G$.
We can project $M$ onto $M_{0}=\Gamma _{0}\diagdown G_{0}$, and $M$ becomes
a fiber bundle over $M_{0}$ with circle $T\cong \left( \Gamma \cap N\right)
\diagdown N$ as fiber. Let 
\begin{equation*}
\mathcal{H}^{N}=\left\{ f:G\rightarrow \mathbb{C}^{k}:f\left( \gamma
g\right) =\varepsilon \left( \gamma \right) f\left( g\right) \text{ for all }%
\gamma \in \Gamma \text{ and }f\left( gn\right) =f\left( g\right) \text{ for
all }n\in N\right\} ,
\end{equation*}%
which is the set of sections on $M$ that are constant on the fibers of $%
M\rightarrow M_{0}$, i.e. such that $U_{\varepsilon }\left( n\right) f=f$
for all $n\in N$. This is an invariant subspace of $U_{\varepsilon }$,
because for such $f$, $U_{\varepsilon }\left( n\right) U_{\varepsilon
}\left( g\right) f=U_{\varepsilon }\left( g\right) U_{\varepsilon }\left(
n\right) f=U_{\varepsilon }\left( g\right) f$ for all $g\in G$, $n\in N$.
The projection of the space of all sections onto $\mathcal{H}^{N}$ lies in
the center of the commuting algebra of $U_{\varepsilon }$; that is, the
projection of $U_{\varepsilon }$ onto an invariant subspace must commute
everything that commutes with $U_{\varepsilon }$, because if $L$ commutes
with $U_{\varepsilon }$, then $\mathcal{H}^{N}$ is also an invariant
subspace of $L$, and thus the projection onto $\mathcal{H}^{N}$ commutes
with $L$. Let $U_{N}$ be the restriction of $U_{\varepsilon }$ to $\mathcal{H%
}^{N}$, and we define $U_{N_{0}}\left( \varphi \left( n\right) \right)
=U_{N}\left( n\right) $, the corresponding representation of $G_{0}$. Using
the realization of $U_{N_{0}}$ on sections of $M$ that are constant on the
fibers, we can also realize $U_{N_{0}}$ on the space $L^{2}\left(
M_{0},\Sigma _{\varepsilon }\right) $. It is clear that $U_{N_{0}}$ is
equivalent to $U_{0\varepsilon }$. We also have $m\left( \pi ,U_{\varepsilon
}\right) =m\left( \pi ,U_{N}\right) $, since $\pi $ is trivial on $N$, and $%
m\left( \pi ,U_{N}\right) =m\left( \pi _{0},U_{N_{0}}\right) =m\left( \pi
_{0},U_{0\varepsilon }\right) $, as desired.
\end{proof}

\begin{lemma}
(Pukansky, as stated in \cite[Lemma 2.2]{Ri}) Let $\mathfrak{g}$ be a
nilpotent Lie algebra with one dimensional center $\mathfrak{z=}\mathbb{%
\mathbb{R}
}Z_{1}$, with $G$ and $\Gamma $ as above. Then $\mathfrak{g=}\mathbb{%
\mathbb{R}
}X_{1}\oplus \mathbb{%
\mathbb{R}
}Y_{1}\oplus \mathbb{%
\mathbb{R}
}Z_{1}\oplus \mathfrak{g}^{\prime }$, where $\left[ X_{1},Y_{1}\right]
=Z_{1} $. Let $\mathfrak{g}_{1}\mathfrak{=}\mathbb{%
\mathbb{R}
}Y_{1}\oplus \mathbb{%
\mathbb{R}
}Z_{1}\oplus \mathfrak{g}^{\prime }=\left\{ X\in \mathfrak{g:}\left[ X,Y_{1}%
\right] =0\right\} $. The elements $Y_{1},Z_{1}$ may be chosen to lie in $%
\log \Gamma $; i.e., $\mathfrak{g}_{1}$ may be chosen to be rational with
respect to the cocompact discrete subgroup $\Gamma $ of $G$.
\end{lemma}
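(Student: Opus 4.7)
The plan is to exploit the rational structure on $\mathfrak{g}$ determined by $\log\Gamma$, together with the fact that $\dim\mathfrak{z}=1$. By Mal'cev theory, the $\mathbb{Q}$-span of $\log\Gamma$ is a $\mathbb{Q}$-form of $\mathfrak{g}$, and every subspace built from iterated brackets or kernels of $\operatorname{ad}$-maps on rational vectors is again rational; moreover, for any rational ideal $\mathfrak{h}$, $\Gamma\cap\exp(\mathfrak{h})$ is cocompact in $\exp(\mathfrak{h})$, so $\log\Gamma\cap\mathfrak{h}$ contains an $\mathbb{R}$-spanning set of $\mathfrak{h}$.

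Consider the descending central series $\mathfrak{g}=\mathfrak{g}^1\supset\mathfrak{g}^2=[\mathfrak{g},\mathfrak{g}]\supset\cdots\supset\mathfrak{g}^k\supsetneq\mathfrak{g}^{k+1}=0$. Since $[\mathfrak{g},\mathfrak{g}^k]=0$, we have $\mathfrak{g}^k\subseteq\mathfrak{z}$, and as $\mathfrak{g}^k\ne 0$ and $\dim\mathfrak{z}=1$, necessarily $\mathfrak{g}^k=\mathfrak{z}$. Moreover $\mathfrak{g}^{k-1}\supsetneq\mathfrak{z}$, for if $\mathfrak{g}^{k-1}=\mathfrak{z}$ then $\mathfrak{g}^k=[\mathfrak{g},\mathfrak{g}^{k-1}]\subseteq[\mathfrak{g},\mathfrak{z}]=0$, a contradiction. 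I would then choose $Z_1$ as a generator of the rank-one lattice $\log\Gamma\cap\mathfrak{z}$, and take $Y_1\in\log\Gamma\cap\mathfrak{g}^{k-1}$ to be any element not lying in $\mathfrak{z}$; such a $Y_1$ exists because the rational ideal $\mathfrak{g}^{k-1}$ strictly contains $\mathfrak{z}$ and $\log\Gamma\cap\mathfrak{g}^{k-1}$ spans it over $\mathbb{R}$.

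To verify the bracket condition, note that $Y_1\in\mathfrak{g}^{k-1}$ implies $\operatorname{Im}\operatorname{ad}(Y_1)\subseteq[\mathfrak{g},\mathfrak{g}^{k-1}]=\mathfrak{g}^k=\mathbb{R}Z_1$. Since $Y_1\notin\mathfrak{z}=Z(\mathfrak{g})$, this image is nonzero and therefore equals $\mathbb{R}Z_1$; I may then choose $X_1\in\mathfrak{g}$ so that $[X_1,Y_1]=Z_1$ after a harmless rescaling. Setting $\mathfrak{g}_1=\ker\operatorname{ad}(Y_1)=\{X\in\mathfrak{g}:[X,Y_1]=0\}$, which is rational since $Y_1$ is rational and the bracket is a rational operation, rank-nullity yields $\dim\mathfrak{g}_1=\dim\mathfrak{g}-1$ and $\mathfrak{g}=\mathbb{R}X_1\oplus\mathfrak{g}_1$; finally, pick any complement $\mathfrak{g}'$ to $\mathbb{R}Y_1\oplus\mathbb{R}Z_1$ inside $\mathfrak{g}_1$ to obtain $\mathfrak{g}_1=\mathbb{R}Y_1\oplus\mathbb{R}Z_1\oplus\mathfrak{g}'$.

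The principal subtlety is securing $Y_1$ in $\log\Gamma$ itself, rather than merely in its $\mathbb{Q}$-span. This is handled by the standard Mal'cev--Mostow fact that rational ideals of $\mathfrak{g}$ correspond to rational subgroups whose intersection with $\Gamma$ is cocompact; applied to $\mathfrak{h}=\mathfrak{g}^{k-1}$, this ensures that $\log\Gamma\cap\mathfrak{g}^{k-1}$ spans $\mathfrak{g}^{k-1}$ over $\mathbb{R}$ and hence contains elements outside the proper subspace $\mathfrak{z}$, yielding the desired $Y_1$.
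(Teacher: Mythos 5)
The paper states this lemma without proof; it is quoted verbatim from \cite[Lemma 2.2]{Ri} (where it is attributed to Pukansky), and the present paper relies on Richardson's proof rather than supplying one. So there is no in-paper proof to compare against, and your proposal should be judged on its own.

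Your argument is correct and is essentially the classical route. The reduction to the penultimate term $\mathfrak{g}^{k-1}$ of the lower central series is a valid (and slightly more specific) substitute for the second center $\mathfrak{z}_2=\{X:[X,\mathfrak{g}]\subseteq\mathfrak{z}\}$ that typically appears in expositions of Pukansky's lemma; since $\mathfrak{g}^{k-1}\subseteq\mathfrak{z}_2$ and $\mathfrak{g}^{k-1}$ is manifestly a rational ideal (being generated by brackets of rational elements), it does the job. The key chain of deductions is all in order: $\mathfrak{g}^{k}\subseteq\mathfrak{z}$ forces $\mathfrak{g}^{k}=\mathfrak{z}=\mathbb{R}Z_{1}$; $\mathfrak{g}^{k-1}\supsetneq\mathfrak{z}$ and the Mal'cev--Mostow fact that $\log\Gamma\cap\mathfrak{h}=\log(\Gamma\cap\exp\mathfrak{h})$ spans a rational ideal $\mathfrak{h}$ over $\mathbb{R}$ together supply a $Y_{1}\in\log\Gamma\cap\mathfrak{g}^{k-1}\setminus\mathfrak{z}$; $\operatorname{ad}(Y_{1})$ then has image exactly $\mathbb{R}Z_{1}$, so rank-nullity gives $\mathfrak{g}_{1}=\ker\operatorname{ad}(Y_{1})$ of codimension one, rational because $Y_{1}$ is rational; and rescaling $X_{1}$ (which need not lie in $\log\Gamma$) normalizes $[X_{1},Y_{1}]=Z_{1}$. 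You might add one sentence pointing out that $\mathfrak{g}_{1}$ is in fact a subalgebra (and indeed an ideal, since $[\mathfrak{g},\mathfrak{g}_{1}]$ is killed by $\operatorname{ad}(Y_{1})$ by the Jacobi identity and centrality of $Z_{1}$), since that structural fact is used downstream when $G_{1}=\exp\mathfrak{g}_{1}$ is treated as a normal Kirillov subgroup; but as written the proof of the stated lemma is complete.
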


\begin{theorem}
\vspace{1pt}Kirillov's Theorem (as quoted in \cite[Theorem 2.3]{Ri})

If $G$ has one dimensional center, then every irreducible representation $%
\pi $ of $G$ such that $\pi $ is non-constant on the center is induced by a
necessarily irreducible representation of $G_{1}=\exp \left( \mathfrak{g}%
_{1}\right) $, with $\mathfrak{g}_{1}$ as in the previous Lemma.
\end{theorem}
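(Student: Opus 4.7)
The plan is to prove this via the Mackey machine, using the one-dimensional center hypothesis together with the previous lemma (Pukansky's) to identify $G_1$ as a codimension-one normal subgroup that is the stabilizer of any irreducible summand of $\pi|_{G_1}$. First I would observe that $\mathfrak{g}_1=\{X\in\mathfrak{g}:[X,Y_1]=0\}$ is an ideal: since $[X_1,Y_1]=Z_1\in\mathfrak{z}\subset\mathfrak{g}_1$ and $[W,Y_1]=0$ for $W\in\mathfrak{g}_1$, bracketing with any element of $\mathfrak{g}=\mathbb{R}X_1\oplus\mathfrak{g}_1$ preserves $\mathfrak{g}_1$. Hence $G_1$ is a normal codimension-one subgroup, and $G/G_1\cong\mathbb{R}$ acts on $\widehat{G_1}$ by conjugation through $X_1$.

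Next I would restrict $\pi$ to $G_1$ and pick any irreducible subrepresentation $\sigma$ of $\pi|_{G_1}$. Since $\pi$ is nontrivial on the center, by Schur's lemma $\pi(\exp(uZ_1))=e^{2\pi i u \alpha(Z_1)}\cdot\mathrm{Id}$ for some $\alpha(Z_1)\neq 0$, and this scalar character is inherited by $\sigma$. For $t\in\mathbb{R}$, define the conjugate representation $\sigma^t=\sigma\circ\mathrm{Ad}(\exp(-tX_1))$ of $G_1$. The key computation uses Baker--Campbell--Hausdorff in the nilpotent setting: because $Z_1$ is central,
\begin{equation*}
\mathrm{Ad}(\exp(tX_1))(Y_1)=Y_1+t[X_1,Y_1]=Y_1+tZ_1,
\end{equation*}
so $\exp(tX_1)\exp(sY_1)\exp(-tX_1)=\exp(sY_1)\exp(stZ_1)$. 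Applying $\sigma$, this gives
\begin{equation*}
\sigma^t(\exp(sY_1))=e^{2\pi i st\,\alpha(Z_1)}\,\sigma(\exp(sY_1)),
\end{equation*}
while $\sigma^t$ agrees with $\sigma$ on $\mathbb{R}Z_1\oplus\mathfrak{g}'$ (since $\mathrm{Ad}(\exp(tX_1))$ acts trivially there modulo terms in the kernel; more precisely one checks using the structure $[X_1,\mathfrak{g}']\subset\mathbb{R}Z_1$ and keeps track via a BCH/intertwining calculation). Since $\alpha(Z_1)\neq 0$, the scalar multiplier on $\exp(sY_1)$ varies continuously and nontrivially with $t$, so $\sigma^t\not\simeq\sigma^{t'}$ for $t\neq t'$; in particular the stabilizer of $[\sigma]$ in $G/G_1=\mathbb{R}$ is trivial.

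Having established that the $G/G_1$-orbit of $[\sigma]$ in $\widehat{G_1}$ consists of pairwise inequivalent representations with trivial stabilizer $G_1$, I would apply the Mackey imprimitivity/subgroup theorem: in this situation $\pi$ must be equivalent to $\mathrm{Ind}_{G_1}^G\sigma$. Irreducibility of $\sigma$ is then forced, because if $\sigma$ decomposed then $\mathrm{Ind}_{G_1}^G\sigma$ would decompose in a manner incompatible with the irreducibility of $\pi$ (alternatively, since all $\sigma^t$ are inequivalent, Mackey's criterion for irreducibility of the induced representation applies only when the inducing representation is irreducible). This gives the conclusion.

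The main obstacle will be making the Mackey machine argument fully rigorous in this specific nilpotent context, especially verifying the claim that the conjugation orbit of $\sigma$ in $\widehat{G_1}$ consists of pairwise inequivalent representations; since simply connected nilpotent Lie groups are type I the abstract machinery is available, but the BCH bookkeeping on the full basis $\{X_1,Y_1,Z_1\}\cup\{\text{basis of }\mathfrak{g}'\}$ must be handled carefully. The inductive structure (Pukansky's lemma plus reduction to $G_1$) is precisely what allows the standard inductive proof of Kirillov's theorem to proceed from this base step.
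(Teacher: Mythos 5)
The paper does not prove this theorem at all: it is quoted verbatim from Richardson \cite[Theorem 2.3]{Ri}, who in turn cites Kirillov, and the authors treat it as a black box. So there is no ``paper proof'' to compare against; what you have given is a sketch of the classical proof of Kirillov's lemma, and in its overall structure (normal codimension-one subgroup $G_1$, free $G/G_1\cong\mathbb{R}$-action on the conjugates of an irreducible constituent $\sigma$ of $\pi|_{G_1}$, then the Mackey machine) it is the right argument and goes through.

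One intermediate claim is wrong, though not fatally. You assert that $\sigma^t$ agrees with $\sigma$ on $\exp(\mathbb{R}Z_1\oplus\mathfrak{g}')$ because ``$[X_1,\mathfrak{g}']\subset\mathbb{R}Z_1$.'' This inclusion fails in general: for example in the filiform algebra with $[X_1,W_1]=W_2$, $[X_1,W_2]=Z_1$, one takes $Y_1=W_2$ and finds $\mathfrak{g}'=\mathbb{R}W_1$ with $[X_1,W_1]=Y_1\notin\mathbb{R}Z_1$. Fortunately the claim is not needed. The clean route is to observe that $Y_1$ is central in $\mathfrak{g}_1$ by the very definition $\mathfrak{g}_1=\{X:[X,Y_1]=0\}$, so $\sigma$ restricted to $\exp(\mathbb{R}Y_1)$ is a scalar character $e^{2\pi i s\beta}$; your BCH computation then gives $\sigma^t(\exp(sY_1))=e^{2\pi i s(\beta-t\alpha(Z_1))}\,\mathrm{Id}$, and since unitarily equivalent irreps have the same scalar character on a central subgroup and $\alpha(Z_1)\neq 0$, the $\sigma^t$ are pairwise inequivalent. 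That is the full content needed for the stabilizer to be $G_1$, after which Mackey's theorem for the normal type-I subgroup $G_1$ gives $\pi\cong\mathrm{Ind}_{G_1}^{G}\sigma$ with $\sigma$ necessarily irreducible. So your proposal is correct in substance; you should replace the unnecessary (and false) claim about $[X_1,\mathfrak{g}']$ with the remark that $Y_1\in Z(\mathfrak{g}_1)$, which is what actually makes the freeness argument close.
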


\begin{definition}
We call the subgroup $G_{1}$ from the previous theorem a \emph{Kirillov
subgroup.}
\end{definition}

\begin{theorem}
$\varepsilon $-Generalized Moore's Algorithm (generalization of Moore's
Algorithm, quoted as \cite[Moore's Algorithm 2.7]{Ri}).\label%
{MooreAlgTheorem}\newline
Let $\pi $ be an irreducible representation of $G$, where $G$ has a one
dimensional center \vspace{1pt}$Z\left( G\right) $, and $\pi |_{Z\left(
G\right) }\neq \mathrm{id}$. Let $\pi _{1}$ be an irreducible representation
of $G_{1}$, a rational Kirillov subgroup of $G$ having codimension one, such
that $\pi _{1}$ induces $\pi $. Define $\pi _{1}^{x}\left( x_{1}\right) =\pi
_{1}\left( xx_{1}x^{-1}\right) $ for $x$ in $G$ and $x_{1}$ in $G_{1}$. Let $%
U_{1\varepsilon }$ be defined for $G_{1}$ and $\Gamma _{1}=\Gamma \cap G_{1}$
as $U_{\varepsilon }$ is defined for $G$ and $\Gamma $. Let $\widehat{G_{1}}$
denote the dual space of equivalence classes of unitary irreducible
representations of $G_{1}$. Let $A^{\prime }=\left\{ \rho _{1}\in \widehat{%
G_{1}}:m\left( \rho _{1},U_{1\varepsilon }\right) >0\text{ and }\rho
_{1}|_{Z\left( G\right) }\neq Id\right\} $. For all $\gamma \in \Gamma $,
since $G_{1}$ is normal and $\gamma \Gamma \gamma ^{-1}=\Gamma
,U_{1\varepsilon }^{\gamma }\cong U_{1\varepsilon }$. Thus $m\left( \rho
_{1}^{\gamma },U_{1\varepsilon }\right) =m\left( \rho _{1},U_{1\varepsilon
}\right) $, or $\left\{ \rho _{1}^{\gamma }:\rho _{1}\in A^{\prime }\right\}
=A^{\prime }$.Let $A$ be a subset of $A^{\prime }$ that meets each $\Gamma $%
-orbit in $A^{\prime }$ in exactly one element. Then 
\begin{equation*}
m\left( \pi ,U_{\varepsilon }\right) =\sum_{\rho _{1}\in \pi _{1}^{G}\cap
A}m\left( \rho _{1},U_{1\varepsilon }\right) \text{.}
\end{equation*}
\end{theorem}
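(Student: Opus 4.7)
The plan is to follow L.~Richardson's proof of Moore's algorithm \cite[Theorem 2.7]{Ri} step by step, replacing the quasi--regular representation $U$ with the twisted representation $U_{\varepsilon}$ throughout, and substituting the Generalized $\varepsilon$-Reduction Lemma (Lemma \ref{ReductionLemma}) for the original reduction lemma of Moore wherever it is invoked. The key structural facts are already in place: $G_{1}$ is a rational, normal, codimension-one subgroup of $G$, and the projection $G\to G/G_{1}\cong \mathbb{R}$ carries $\Gamma$ onto a cocompact lattice, so that the quotient $\Gamma\cdot G_{1}\backslash G$ is a circle and $\Gamma\backslash G$ fibers over this circle with fiber $\Gamma_{1}\backslash G_{1}$.

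First I would use this fibration to decompose $\mathrm{Res}^{G}_{G_{1}}U_{\varepsilon }$ via Fourier analysis along the base circle. Sections $f\in U_{\varepsilon}$ satisfy $f(\gamma_{1}g)=\varepsilon(\gamma_{1})f(g)$ for all $\gamma_{1}\in\Gamma_{1}$ and thus restrict to elements of $U_{1\varepsilon}$ on each fiber; writing Fourier series in the base variable yields an isomorphism of $G_{1}$--representations
\begin{equation*}
\mathrm{Res}^{G}_{G_{1}}U_{\varepsilon}\;\cong\;\bigoplus_{n}U_{1\varepsilon}^{(n)},
\end{equation*}
where the twist from one summand to the next records how $\varepsilon$ evaluates on the distinguished generator of $\Gamma/\Gamma_{1}\cong\mathbb{Z}$ and, crucially, how conjugation by that generator permutes $\widehat{G_{1}}$. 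This step is exactly where Lemma \ref{ReductionLemma} is used: on each one-dimensional central subgroup $N$ contained in $G_{1}$, the lemma guarantees that the relevant summand does carry a well-defined $\varepsilon$-structure (forcing $\varepsilon(\gamma_{0})=1$ on appropriate central elements) and that its multiplicities match $U_{1\varepsilon}$ down to the quotient group.

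Next, by Kirillov theory, since $\pi_{1}|_{Z(G)}\ne \mathrm{id}$ the stabilizer in $G$ of the class of $\pi_{1}$ is exactly $G_{1}$, so Mackey's irreducibility criterion makes $\pi=\mathrm{Ind}_{G_{1}}^{G}\pi_{1}$ irreducible and $\mathrm{Res}^{G}_{G_{1}}\pi$ is supported precisely on the $G$-orbit $\pi_{1}^{G}\subset\widehat{G_{1}}$, each orbit element occurring with multiplicity one. Applying Frobenius reciprocity to the decomposition above gives
\begin{equation*}
m(\pi,U_{\varepsilon})\;=\;m\bigl(\pi_{1},\mathrm{Res}^{G}_{G_{1}}U_{\varepsilon}\bigr)\;=\;\sum_{\rho_{1}\in\pi_{1}^{G}}m(\rho_{1},U_{1\varepsilon}),
\end{equation*}
where the apparent overcounting is removed by collapsing each $\Gamma$-orbit in $\pi_{1}^{G}$ to a single representative --- this is legitimate because $\Gamma$-conjugate representations $\rho_{1}^{\gamma}$ satisfy $m(\rho_{1}^{\gamma},U_{1\varepsilon})=m(\rho_{1},U_{1\varepsilon})$, as $\gamma\Gamma_{1}\gamma^{-1}=\Gamma_{1}$ and $U_{1\varepsilon}$ is $\Gamma$-invariant under conjugation up to unitary equivalence. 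Choosing $A$ to be a transversal for $\Gamma\backslash A'$ yields the stated formula.

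The main obstacle is the bookkeeping of $\varepsilon$ through the Fourier/Mackey decomposition in the first step: one must verify that the action of $\gamma_{0}$ (a generator of $\Gamma/\Gamma_{1}$) on $\widehat{G_{1}}$ via conjugation, combined with the scalar $\varepsilon(\gamma_{0})$, is exactly what identifies different Fourier summands $U_{1\varepsilon}^{(n)}$ as $\Gamma$-conjugates of one another. Concretely, the representation of $G_{1}$ appearing in the $n$-th Fourier mode is $\rho_{1}^{\gamma_{0}^{n}}$ whenever $\rho_{1}$ appears in the zeroth mode, and the scalar $\varepsilon(\gamma_{0})$ enters as a phase that must be absorbed correctly so that the total count of $\rho_{1}$-occurrences across modes equals the number of $\Gamma$-orbits in $A'\cap \pi_{1}^{G}$ weighted by $m(\rho_{1},U_{1\varepsilon})$. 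This careful tracking is the only genuinely new ingredient beyond Richardson's argument, and once Lemma \ref{ReductionLemma} is in hand the rest of the proof is essentially identical to \cite[pp.~176--178]{Ri} with \textquotedblleft integral point\textquotedblright\ replaced by \textquotedblleft $\varepsilon$-integral point\textquotedblright.
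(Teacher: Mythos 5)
Your overall strategy—restrict to the codimension-one normal subgroup $G_1$, use Mackey/Kirillov to see the orbit structure, then count—is the right one, but the specific mechanism you describe diverges from the paper's argument in ways that leave real gaps.

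First, the paper's proof of this theorem does \emph{not} invoke Lemma~\ref{ReductionLemma} at all; the Reduction Lemma is deployed in the proof of the occurrence Theorem~\ref{occurInL2EpsTheorem} to reduce dimension when $\pi$ is trivial on part of the center, whereas here the hypothesis $\pi|_{Z(G)}\neq\mathrm{id}$ is in force and the reduction step is unnecessary. The only echo of that lemma in the paper's argument is the remark that the projection onto $(U_{\varepsilon})^{s}$ (the part on which $Z(G)$ acts nontrivially) lies in the center of the commuting algebra — a point you omit, and which is what licenses passing from $U_{\varepsilon}$ to $(U_{\varepsilon})^{s}$ without changing the relevant multiplicities.

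Second, your ``Fourier series along the base circle yields a direct sum $\bigoplus_{n}U_{1\varepsilon}^{(n)}$'' is not what Mackey's subgroup theorem gives. Because $G_1$ is normal, the double cosets $\Gamma x G_1=\Gamma G_1 x$ sweep out the circle $\Gamma G_1\backslash G$, and Mackey produces a \emph{direct integral} $\overline{U_{\varepsilon}}=\int_{\Gamma G_1\backslash G}U_{1\varepsilon}^{y}\,dy$, with the $G_1$-representation type varying continuously with $y$. Converting this into a discrete Fourier sum of copies of $U_{1\varepsilon}$ conjugated by powers of a lattice generator requires a nontrivial argument (a trivialization of the underlying bundle of Hilbert spaces compatible with the $G_1$-action) that you assert but do not supply, and it is precisely the step that a sketch needs to make believable.

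Third, the appeal to Frobenius reciprocity, $m(\pi,U_{\varepsilon})=m(\pi_1,\mathrm{Res}^{G}_{G_1}U_{\varepsilon})$, is dubious when the right-hand side is a direct integral rather than a direct sum: the notion of ``multiplicity of $\pi_1$'' in a direct integral needs care, and Frobenius reciprocity in the Mackey setting does not reduce to the finite-group statement. The paper avoids this entirely: it writes down two decompositions of $\overline{(U_{\varepsilon})^{s}}$ as direct integrals over $G/G_1$ — one from restricting each irreducible summand $\pi$ of $(U_{\varepsilon})^{s}$ and using $\overline{\pi}=\int_{G/G_1}\pi_0^{x}\,dx$, the other from Mackey plus the $\Gamma$-orbit structure of $A'$ — and then equates coefficients by invoking essential uniqueness of direct-integral decompositions for the type~I group $G_1$. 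That uniqueness step is the crux that makes the multiplicity formula come out exactly, and your sketch does not mention it; ``collapsing $\Gamma$-orbits to remove the overcounting'' is standing in for it but does not explain why the overcounting is by $\Gamma G_1$-cosets rather than by some other equivalence. You should replace the Frobenius-reciprocity step with the two-decomposition-and-compare argument (closely following Moore, not the pages of Richardson you cite, which concern the special maximal subordinate subalgebra, a different part of the story).
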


\begin{proof}
The proof closely follows that in \cite[pp. 151--153]{Moo}.

Let $Z_{2}\left( G\right) $ be the subgroup of $G$ such that $Z_{2}\left(
G\right) \diagup Z\left( G\right) $ is the center of $G\diagup Z\left(
G\right) $. The group $Z_{2}\left( G\right) $ is a rational subgroup of $G$
(with respect to any lattice), see for example \cite[Chapter 5]{CoGr}, and
we may choose a rational subgroup $W$ of $Z_{2}\left( G\right) $ (and $G$)
of dimension $2$ that contains $Z\left( G\right) $. The centralizer $G_{0}$
of $W$ then has codimension $1$ in $G$ and is a rational normal subgroup
(see \cite[Lemma 2.2]{Ri}, quoted from \cite{Kir1}). Finally, since $G_{0}$
is codimension one and normal, we can find a rational one-parameter group $S$
such that $G=G_{0}\rtimes S$.

We now use the following, whose proof can be found in any book on Kirillov
theory. Denote by $U_{0\varepsilon }$the representation of $G_{0}$ induced
by the homomorphism $\varepsilon .$

\begin{lemma}
If $\pi \in \widehat{G}$ and if $\pi $ is nontrivial on $Z\left( G\right) $,
then $\pi $ is induced by some $\pi _{0}\in \widehat{G_{0}}$. The set of all
representations of $G_{0}$ that induce $\pi $ is the orbit of $\pi _{0}$
under $G$; that is, $\left\{ \pi _{0}^{x}:x\in G\right\} $ and $\pi
_{0}^{x}=\pi _{0}^{y}$ iff $x=y\mathrm{mod}G_{0}$, where $\pi _{0}^{x}=\pi
_{0}\circ i_{x}$. If $\overline{\pi }$ is the restriction of $\pi $ to $%
G_{0} $, then $\overline{\pi }=\int_{G\diagup G_{0}}\pi
_{0}^{x}~dx=\int_{S}\pi _{0}^{x^{\prime }}~dx^{\prime }$, where $dx$ and $%
dx^{\prime }$ refer to Haar measure in $G\diagup G_{0}\cong S$.
\end{lemma}

Now let $\left( U_{\varepsilon }\right) ^{s}$ be the subspace of $%
U_{\varepsilon }$ complementary to the stabilizer of $U_{\varepsilon
}|_{Z\left( G\right) }$. The projection onto the subspace corresponding to $%
\left( U_{\varepsilon }\right) ^{s}$ is in the center of the commuting
algebra of $U_{\varepsilon }$ (see similar argument in the proof of Lemma %
\ref{ReductionLemma}). Thus if $\pi $ is nontrivial on $Z\left( G\right) $
and occurs in $U_{\varepsilon }$, then it occurs in $\left( U_{\varepsilon
}\right) ^{s}$ just as often. Thus, 
\begin{eqnarray*}
U_{\varepsilon } &=&\sum_{\pi \in \widehat{G}}m\left( \pi ,U_{\varepsilon
}\right) \pi \\
\left( U_{\varepsilon }\right) ^{s} &=&\sum_{\pi \in B}m\left( \pi
,U_{\varepsilon }\right) \pi ,
\end{eqnarray*}%
where $B$ is the subset of $\widehat{G}$ consisting of those $\pi $ that are
nontrivial on $Z\left( G\right) $ and such that $m\left( \pi ,U_{\varepsilon
}\right) >0$. For each $\pi \in B$, choose a $\pi _{0}\in \widehat{G_{0}}$
that induces $\pi $. If $\overline{\left( U_{\varepsilon }\right) ^{s}}$ is
the restriction of $\left( U_{\varepsilon }\right) ^{s}$ to $G_{0}$, 
\begin{eqnarray}
\overline{\left( U_{\varepsilon }\right) ^{s}} &=&\sum_{\pi \in B}m\left(
\pi ,U_{\varepsilon }\right) \overline{\pi }  \notag \\
&=&\sum_{\pi \in B}m\left( \pi ,U_{\varepsilon }\right) \int_{G\diagup
G_{0}}\pi _{0}^{x}~dx.  \label{nonMackey}
\end{eqnarray}

On the other hand, we can decompose $\overline{U_{\varepsilon }}$, the
restriction of $U_{\varepsilon }$ to $G_{0}$, by using Mackey's subgroup
theorem. Indeed, let $U_{0\varepsilon }^{x}$ be the representation of $G_{0}$
induced by the\textbf{\ }$\varepsilon $-representation of $x\Gamma
x^{-1}\cap G_{0}=x\left( \Gamma \cap G_{0}\right) x^{-1}$\ (since $G_{0}$\
is normal). Note that as $x$ is fixed, we can extend the definition of $%
\varepsilon $ to $x\Gamma x^{-1}$. \ It is clear that $U_{0\varepsilon }^{x}$%
\ is the conjugate by $x$\ of $U_{0\varepsilon }$; i.e., $U_{0\varepsilon
}^{x}\left( n\right) =\left( U_{0\varepsilon }\right) ^{x}\left( n\right)
=U_{0\varepsilon }\left( xnx^{-1}\right) $. Then by Mackey's Theorem (\cite[%
Theorem 12.1]{Mackey}), $U_{0\varepsilon }^{x}$\ depends only on the double
coset $\Gamma \cdot x\cdot G_{0}$\ of $x$.\textbf{\ }But $G_{0}$ is normal,
and $\Gamma \cdot x\cdot G_{0}=\Gamma \cdot G_{0}\cdot x$ is a coset of the
subgroup $\Gamma G_{0}$. We know that $\Gamma G_{0}$ is closed ( basic fact
about nilpotent groups: $\Gamma $ is cocompact discrete, $G_{0}$ is normal
in $G$ ), and thus the double cosets fill out the group, allowing us to
apply Mackey's Theorem.

Finally, (also by Mackey) 
\begin{equation*}
\overline{U_{\varepsilon }}=\int_{\Gamma \cdot G_{0}\diagdown
G}U_{0\varepsilon }^{y}~dy.
\end{equation*}%
Now, if $\left( \overline{U_{\varepsilon }}\right) ^{s}$ is the part of $%
\overline{U_{\varepsilon }}$ that is orthogonal to the stabilizer of $%
Z\left( G\right) $, then $\left( \overline{U_{\varepsilon }}\right) ^{s}=%
\overline{\left( U_{\varepsilon }\right) ^{s}}$, since the center is in $%
G_{0}$. Finally, if $\left( U_{0\varepsilon }\right) ^{s}$ is the similar
subrepresentation of $U_{0\varepsilon }$ on which $Z\left( G\right) $ acts
nontrivially, then one immediately deduces from the above that%
\begin{equation*}
\left( \overline{U_{\varepsilon }}\right) ^{s}=\overline{\left(
U_{\varepsilon }\right) ^{s}}=\int_{\Gamma \cdot G_{0}\diagdown G}^{y}\left(
\left( U_{0\varepsilon }\right) ^{s}\right) ^{y}~dy.
\end{equation*}%
We write 
\begin{equation*}
\left( U_{0\varepsilon }\right) ^{s}=\sum_{\lambda _{0}\in A^{\prime
}}m\left( \lambda _{0},U_{0\varepsilon }\right) \lambda _{0},
\end{equation*}%
where $A^{\prime }$ is the set of elements of $\widehat{G_{0}}$ that do not
vanish on $Z\left( G\right) $ and for which $m\left( \lambda
_{0},U_{0\varepsilon }\right) >0$. We are using the fact that $m\left(
\lambda _{0},U_{0\varepsilon }\right) =m\left( \lambda _{0},\left(
U_{0\varepsilon }\right) ^{s}\right) $ for $\lambda _{0}\in A^{\prime }$.

If $\gamma \in \Gamma $, then $\gamma \Gamma \gamma ^{-1}\cap G_{0}=\Gamma
\cap G_{0}$, and from this it follows that $\left( \left( U_{0\varepsilon
}\right) ^{s}\right) ^{\gamma }=$ $\left( U_{0\varepsilon }\right) ^{s}$.
Therefore, we have $m\left( \lambda _{0}^{\gamma },U_{0\varepsilon }\right)
=m\left( \lambda _{0},U_{0\varepsilon }\right) $, and thus $\gamma \cdot
A^{\prime }=A^{\prime }$. Now let $A$ be a subset of $A^{\prime }$ such that 
$A$ meets each orbit of $\Gamma $ on $A^{\prime }$ in exactly one element.
Since $G_{0}$ acts trivially on $\widehat{G_{0}}$ and hence on $A^{\prime }$%
, a $\Gamma G_{0}$-orbit in $A^{\prime }$ is just a $\Gamma $-orbit in $%
A^{\prime }$. Moreover, $G_{0}$ (by Kirillov) is the subgroup of $\Gamma
G_{0}$ leaving any point in $A^{\prime }$ fixed. Therefore, we can write 
\begin{equation*}
\left( U_{0\varepsilon }\right) ^{s}=\sum_{\lambda _{0}\in A}m\left( \lambda
_{0},U_{0\varepsilon }\right) \sum_{s\in \Gamma \cdot G_{0}\diagdown
G_{0}}\lambda _{0}^{s},
\end{equation*}%
and thus 
\begin{equation*}
\left( \overline{U_{\varepsilon }}\right) ^{s}=\sum_{\lambda _{0}\in
A}m\left( \lambda _{0},U_{0\varepsilon }\right) \left[ \int_{\Gamma \cdot
G_{0}\diagdown G}\left( \sum_{s\in \Gamma \cdot G_{0}\diagdown G_{0}}\lambda
_{0}^{s}\right) ^{y}dy\right] .
\end{equation*}%
But since $G\diagup G_{0}$ is equivalent as a Borel space and measure space
to $\Gamma \cdot G_{0}\diagdown G\times \left( G_{0}\diagdown \Gamma \cdot
G_{0}\right) $ by choosing a Borel cross section, the representation in
square brackets is just%
\begin{equation*}
\int_{G\diagup G_{0}}\lambda _{0}^{x}~dx.
\end{equation*}%
Thus,%
\begin{equation}
\left( \overline{U_{\varepsilon }}\right) ^{s}=\sum_{\lambda _{0}\in
A}m\left( \lambda _{0},U_{0\varepsilon }\right) \int_{G\diagup G_{0}}\lambda
_{0}^{x}~dx.  \label{Mackey}
\end{equation}%
Now, since $G_{0}$ is type I and direct integral decompositions are
essentially unique, we may equate coefficients in (\ref{Mackey}) and (\ref%
{nonMackey}). We find then that the family of orbits $\left\{ \pi
_{0}^{G}:\pi _{0}\in B\right\} $ and $\left\{ \lambda _{0}^{G}:\lambda
_{0}\in A\right\} $ are the same. Moreover, the orbits of $\pi _{0}^{G}$ are
all distinct, whereas some of the orbits of $\lambda _{0}^{G}$ may coincide.
Thus, we can equate the multiplicities as follows:%
\begin{equation*}
m\left( \pi ,U_{\varepsilon }\right) =\sum_{\lambda _{0}\in \pi _{0}^{G}\cap
A}m\left( \lambda _{0},U_{0\varepsilon }\right) .
\end{equation*}

(End of generalized Moore Algorithm Proof)
\end{proof}

\begin{corollary}
\label{MooreInductionCorollary}Under the conditions of Moore's algorithm, $%
m\left( \pi ,U_{\varepsilon }\right) >0$ if and only if there is an
irreducible representation $\pi _{1}$ of the rational Kirillov subgroup $%
G_{1}$ such that $m\left( \pi _{1},U_{1\varepsilon }\right) >0$ and $\pi
=Ind_{G_{1}}^{G}\left( \pi _{1}\right) $.
\end{corollary}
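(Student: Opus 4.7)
The plan is to derive this corollary directly from Theorem \ref{MooreAlgTheorem} (the $\varepsilon$-Generalized Moore's Algorithm). The theorem furnishes the multiplicity formula
\[
m(\pi,U_{\varepsilon}) = \sum_{\rho_{1}\in \pi_{1}^{G}\cap A} m(\rho_{1},U_{1\varepsilon}),
\]
where $\pi_{1}$ is any irreducible representation of the rational Kirillov subgroup $G_{1}$ inducing $\pi$, and $A$ is a set of $\Gamma$-orbit representatives in $A'=\{\rho_{1}\in \widehat{G_{1}}: m(\rho_{1},U_{1\varepsilon})>0,\ \rho_{1}|_{Z(G)}\neq \mathrm{id}\}$. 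Both directions of the corollary are essentially the observation that the right-hand side is a nonnegative sum, so it is positive if and only if at least one summand is.

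For the ``if'' direction, I would first take $\pi_{1}$ as in the hypothesis, with $m(\pi_{1},U_{1\varepsilon})>0$ and $\pi=\mathrm{Ind}_{G_{1}}^{G}(\pi_{1})$. Since $\pi_{1}|_{Z(G)}=\pi|_{Z(G)}\neq \mathrm{id}$ by the standing assumption of Moore's algorithm, we have $\pi_{1}\in A'$. After choosing $A$ to contain the $\Gamma$-orbit representative of $\pi_{1}$, the term corresponding to that representative contributes $m(\pi_{1},U_{1\varepsilon})>0$ to the sum, so $m(\pi,U_{\varepsilon})>0$.

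For the ``only if'' direction, assume $m(\pi,U_{\varepsilon})>0$. Applying Kirillov's theorem (as recalled before Theorem \ref{MooreAlgTheorem}), $\pi$ is induced from some $\pi_{1}\in \widehat{G_{1}}$, and the collection of irreducibles of $G_{1}$ inducing $\pi$ is exactly the $G$-orbit $\pi_{1}^{G}$. Invoking the multiplicity formula of Theorem \ref{MooreAlgTheorem}, the positivity of $m(\pi,U_{\varepsilon})$ forces some $\rho_{1}\in \pi_{1}^{G}\cap A$ to satisfy $m(\rho_{1},U_{1\varepsilon})>0$. Writing $\rho_{1}=\pi_{1}^{x}$ for some $x\in G$, the normality of $G_{1}$ in $G$ gives $\mathrm{Ind}_{G_{1}}^{G}(\rho_{1})\cong \mathrm{Ind}_{G_{1}}^{G}(\pi_{1})\cong \pi$, so $\rho_{1}$ itself is an irreducible representation of $G_{1}$ with the required two properties.

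The main (minor) obstacle is really bookkeeping rather than mathematics: one must check that the choice of the $\Gamma$-orbit transversal $A$ in Theorem \ref{MooreAlgTheorem} can be made to contain a given representative $\pi_{1}$, and that inducing a conjugate representation $\pi_{1}^{x}$ from a normal subgroup $G_{1}$ yields the same representation of $G$. Both are standard (the first is a free choice and the $\Gamma$-invariance of $A'$ is noted in the statement of Theorem \ref{MooreAlgTheorem}; the second is immediate from the definition of induced representation and the normality of $G_{1}$). With these observations in place, the corollary is an immediate consequence of the theorem.
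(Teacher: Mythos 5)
Your proposal is correct, and it is exactly the intended derivation: the paper states this as a corollary without a separate proof because the multiplicity formula of Theorem~\ref{MooreAlgTheorem} is a finite sum of nonnegative integers, so positivity of $m(\pi,U_\varepsilon)$ is equivalent to positivity of at least one summand. Your bookkeeping remarks (that the $\Gamma$-orbit transversal $A$ may be chosen to contain any prescribed element of $A'$ since $m(\cdot,U_{1\varepsilon})$ is constant on $\Gamma$-orbits, that $\pi_1|_{Z(G)}=\pi|_{Z(G)}\neq\mathrm{id}$ since $Z(G)\subset G_1$ and the central character is preserved under induction, and that conjugate representations of the normal subgroup $G_1$ induce equivalent representations of $G$) are precisely the observations needed to make the reduction airtight.
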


\begin{remark}
\textbf{Abelian case:}\label{AbelianCaseSubsection}\newline
Suppose $\Gamma $ is a lattice in $G=\mathbb{R}^{n}$, given by generators $%
\gamma _{1},\gamma _{2},...,\gamma _{n}$. The coadjoint orbit of any $\alpha
\in \mathfrak{g}^{\ast }$ is $\left\{ \alpha \right\} $, and the maximal
abelian subalgebra is $\mathfrak{h=g}=\mathbb{R}^{n}$. By the Kirillov
correspondence this implies that irreducible representations of $G$ are
characters $x\mapsto e^{2\pi i\alpha \left( x\right) }$of $G$ determined by
elements $\alpha \in \mathfrak{g}^{\ast }=\left( \mathbb{R}^{n}\right)
^{\ast }$. Such an $\alpha $ occurs as a representation induced by $%
\varepsilon $ if%
\begin{equation*}
e^{2\pi i\alpha \left( \gamma \right) }=\varepsilon \left( \gamma \right)
\end{equation*}%
for all $\gamma \in \Gamma $.

This condition occurs exactly when $\alpha \left( \gamma \right) \in \mathbb{%
Z}$ whenever $\varepsilon \left( \gamma \right) =1$ and $\alpha \left(
\gamma \right) \in \mathbb{Z}+\frac{1}{2}$ when $\varepsilon \left( \gamma
\right) =-1$; i.e., the pair $\left( \overline{\alpha },H\right) $ is an $%
\varepsilon $-integral point. This means that there exists $k_{j}$, $%
l_{j}\in \mathbb{Z}$ such that 
\begin{equation*}
\alpha =\sum_{j,~\varepsilon \left( \gamma _{j}\right) =1}k_{j}\gamma
_{j}^{\ast }+\sum_{j,~\varepsilon \left( \gamma _{j}\right) =-1}\left( \frac{%
1}{2}+l_{j}\right) \gamma _{j}^{\ast }\text{,}
\end{equation*}%
where $\left\{ \gamma _{1}^{\ast },\gamma _{2}^{\ast },...,\gamma _{n}^{\ast
}\right\} $ is the basis of $\mathfrak{g}^{\ast }$ dual to $\left\{ \gamma
_{1},\gamma _{2},...,\gamma _{n}\right\} $. So $\pi _{\alpha }$ can be
written as 
\begin{equation*}
\pi _{\alpha }\left( t\right) =\underset{j,~\varepsilon \left( \gamma
_{j}\right) =1}{\prod }e^{2\pi ik_{j}t_{j}}\underset{j,~\varepsilon \left(
\gamma _{j}\right) =-1}{\prod }e^{\pi i\left( 2l_{j}+1\right) t_{j}},
\end{equation*}%
with $t=\sum t_{j}\gamma _{j}\in \mathbb{C}^{n}$.
\end{remark}

We now prove Theorem \ref{occurInL2EpsTheorem}, the $\varepsilon $%
-generalized Richardson occurrence condition.

\begin{proof}
\textbf{Forward Direction:} \newline
Suppose $H$ has codimension zero. This implies that $\alpha \left( \left[ 
\mathfrak{g},\mathfrak{g}\right] \right) =0$, by the definition of maximal
subordinate subalgebra. By possibly repeated application of Lemma~\ref%
{ReductionLemma}, we can factor out $\left[ \mathfrak{g},\mathfrak{g}\right] 
$, and the occurrence and multiplicity remain unchanged. This reduces the
problem to the abelian case, which is proved in Remark \ref%
{AbelianCaseSubsection}.

We now proceed inductively on the codimension of $H$: assume that the
theorem is known for codimension $k-1$ or less. Now suppose $\pi \in 
\widehat{G}$ and that $m\left( \pi ,U_{\varepsilon }\right) $ is greater
than zero. Let $\pi $ be induced from $\left( \overline{\alpha },H\right) $,
where the codimension of $H$ is $k$.

\textbf{Cases:}

\begin{enumerate}
\item Suppose that $\pi =1$ on $Z\left( G\right) $. Since the center is
always a rational subalgebra (for nilpotent groups, for any cocompact
lattice), then we pick a one-dimensional rational subgroup $N\subset Z\left(
G\right) $ on which $\pi $ is trivial. Then we can apply Lemma~\ref%
{ReductionLemma}, and we have reduced the codimension of $H$ by one.

\item Suppose that $\pi $ acts nontrivially on $Z\left( G\right) \neq G$ and
that $\dim \left( Z\left( G\right) \right) >1$. We have that $U_{\varepsilon
}\left( z\right) $ is multiplication by $\varepsilon \left( z\right) $ for
all $z\in \Gamma \cap Z\left( G\right) $ by the definition of $%
U_{\varepsilon }$. Write $\pi =\pi _{\lambda }$ for some rational $\lambda
\in \mathfrak{g}^{\ast }$. Since the kernel of $\lambda $ restricted to $%
\mathfrak{z}$ is rational and at least dimension one, we can pick a
one-dimensional rational subgroup $N\subset Z\left( G\right) $ on which $\pi 
$ is trivial. We now apply Lemma~\ref{ReductionLemma} and reduce the
codimension of $H$ by one.

\item Suppose that $\pi $ acts nontrivially on $Z\left( G\right) \neq G$ and
that $\dim \left( Z\left( G\right) \right) =1$. Let $G_{1}$ be the rational
Kirillov subgroup of $G$ corresponding to $\pi $, and note that the
codimension of $G_{1}$ is $1$ and $H\subset G_{1}$, by construction. Let $%
U_{1\varepsilon }$ be the restriction of $U_{\varepsilon }$ to $G_{1}$. By
Corollary~\ref{MooreInductionCorollary}, there is an irreducible
representation $\pi _{1}$ of $G_{1}$ such that $m\left( \pi
_{1},U_{1\varepsilon }\right) >0$ and $\pi _{1}$ induces $\pi $. Let $\pi
_{1}^{\prime }=\mathrm{Ind}_{H}^{G_{1}}\overline{\alpha }$, which then
induces $\pi $, and $\pi _{1}^{\prime }$ is also an irreducible
representation of $G_{1}$ by the Kirillov theory. But $\pi _{1}$ must be
equivalent to $\pi _{1}^{\prime \prime }\left( \cdot \right) =\left( \pi
_{1}^{\prime }\right) ^{x}\left( \cdot \right) :=\pi _{1}\left( x\left(
\cdot \right) x^{-1}\right) $ for some $x\in G$ by the Kirillov
correspondence. Since $m\left( \pi _{1}^{\prime \prime },U_{1\varepsilon
}\right) >0$, there exists $g_{1}\in G_{1}$ such that $f\circ \mathrm{Ad}%
\left( x\right) \circ \mathrm{Ad}\left( g_{1}\right) :\log \left( \Gamma
\cap G_{1}\right) \rightarrow \mathbb{Q}$ (again, see \cite[Cor. 2, p. 154]%
{Moo}). Note that we do not know that $\left( \overline{\alpha },H\right)
\cdot x$ is maximal. Write $\log \left( xg_{1}\right) =aX_{1}+P_{1}$, where $%
P_{1}\in \mathfrak{g}_{1}$ and $X_{1}$ is the first external vector for $%
\mathfrak{h}$, as in the construction of the special maximal subordinate
subalgebra in \cite[Section 3]{Ri}. Note that $Y_{1}\in \log \left( \Gamma
\right) $ from the construction satisfies $\left[ X_{1},Y_{1}\right]
=Z_{1}\in \log \left( \Gamma \right) $, which generates $Z\left( G\right) $.
Since $\mathfrak{g}_{1}\ $is the centralizer $C\left( Y_{1},\mathfrak{g}%
\right) $, we have%
\begin{eqnarray*}
\alpha \circ \mathrm{Ad}\left( x\right) \circ \mathrm{Ad}\left( g_{1}\right)
\left( Y_{1}\right) &=&\alpha \left( Y_{1}+\left[ aX_{1}+P_{1},Y_{1}\right] +%
\frac{1}{2}\left[ aX_{1}+P_{1},\left[ aX_{1}+P_{1},Y_{1}\right] \right]
...\right) \\
&=&\alpha \left( Y_{1}+a\left[ X_{1},Y_{1}\right] +0+0+...\right) \\
&=&\alpha \left( Y_{1}+aZ_{1}\right) ,
\end{eqnarray*}%
by the Campbell-Baker-Hausdorff formula. Since $Y_{1}\in \log \left( \Gamma
\right) $, $\alpha \left( Y_{1}+aZ_{1}\right) =\alpha \left( Y_{1}\right)
+a\alpha \left( Z_{1}\right) \in \mathbb{Q}$, but since $Y_{1},Z_{1}\in \log
\left( \Gamma \right) $ we have $a\in \mathbb{Q}$. Let $g_{0}=\exp \left(
aX_{1}\right) $. Then $\left( \overline{\alpha },H\right) \cdot g_{0}$
induces $\rho _{1}$ on $G_{1}$, which induces $\pi $ on $G$, where $\left( 
\overline{\alpha },H\right) \cdot g_{0}$ is a rational maximal character on $%
G_{1}$ and $m\left( \rho _{1},U_{1\varepsilon }\right) >0$. By construction, 
$\left( \overline{\alpha },H\right) \cdot g_{0}$ is maximal.

By the induction hypothesis, there is an $\varepsilon $-integral point in $%
\left( \overline{\alpha },H\right) \cdot g_{0}\cdot G_{1}$, so that $\left( 
\overline{\alpha },H\right) \cdot G$ has an $\varepsilon $-integral point.%
\newline
\end{enumerate}

\textbf{Converse:}

Suppose $\left( \overline{\alpha },H\right) \cdot G$ has an $\varepsilon $%
-integral point $\left( \overline{\alpha },H\right) \cdot g_{0}$. As above,
we reduce to the case where the dimension of the center is $1$ and $\pi $
restricted to $Z\left( G\right) $ is nontrivial. We know that the Kirillov
subgroup $G_{1}$ is normal in $G$, so our $\varepsilon $-integral point $%
\left( \overline{\alpha },H\right) \cdot g_{0}$ induces $\pi _{1}^{g_{0}}$,
which induces $\pi ^{g_{0}}$, which is equivalent to $\pi $. Also, $\left( 
\overline{\alpha },H\right) $ induces $\pi _{1}$, which induces $\pi $, and $%
\left( \overline{\alpha },H\right) \cdot g_{0}$ is a maximal character in $%
G_{1}$. It follows from the induction hypothesis that $m\left( \pi
_{1}^{g_{0}},U_{1\varepsilon }\right) >0$ which by Moore's induction implies
that $m\left( \pi ,U_{\varepsilon }\right) >0$.
\end{proof}

Assume that $m\left( \pi ,L_{\varepsilon }^{2}\left( \Gamma \diagdown
G\right) \right) >0$, and $\left( \overline{\alpha },H\right) $ induces $\pi 
$, where $\log \left( H\right) $ is a special maximal subordinate subalgebra
to $\alpha $ with respect to $\Gamma $. See \cite[Section 3]{Ri} for the
construction for the special subordinate subalgebra.

\begin{lemma}
If $x=\exp \left( X\right) $, and if $\left( \overline{\alpha },H\right)
\cdot x=\left( \overline{\alpha },H\right) $, then $x\in H$.
\end{lemma}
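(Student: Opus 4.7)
The plan is to unpack the two conditions encoded in $(\overline{\alpha},H)\cdot x=(\overline{\alpha},H)$, translate them to the Lie algebra using $X=\log x$, and then invoke the maximality of $\mathfrak{h}=\log H$ as a subordinate subalgebra for $\alpha$ to conclude $X\in\mathfrak{h}$.

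First I would observe that $(\overline{\alpha},H)\cdot x=(\overline{\alpha}^{x},{}^{x^{-1}}H)=(\overline{\alpha},H)$ means two things: (i) $x$ normalizes $H$, and (ii) $\overline{\alpha}\circ I_{x}=\overline{\alpha}$ on $H$. Condition (i) transfers at once to the Lie algebra as $\operatorname{ad}(X)\mathfrak{h}\subset\mathfrak{h}$, since $\mathfrak{h}=\log H$ and $G$ is simply connected nilpotent. For condition (ii), apply it to $\exp(tY)$ for $Y\in\mathfrak{h}$ and arbitrary $t\in\mathbb{R}$: this yields $t(\alpha(\operatorname{Ad}(x)Y)-\alpha(Y))\in\mathbb{Z}$ for all $t$, so by continuity
\begin{equation*}
\alpha\circ(\operatorname{Ad}(x)-\mathbf{1})\big|_{\mathfrak{h}}=0.
\end{equation*}

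Next I would exploit nilpotency. Since $\mathfrak{g}$ is nilpotent, $T:=\operatorname{ad}(X)$ is a nilpotent endomorphism of $\mathfrak{g}$, and by (i) its restriction to $\mathfrak{h}$ is also nilpotent. Factor
\begin{equation*}
\operatorname{Ad}(x)-\mathbf{1}=e^{T}-\mathbf{1}=T\cdot\varphi(T),\qquad \varphi(T):=\sum_{k\geq0}\tfrac{T^{k}}{(k+1)!}.
\end{equation*}
Because $T|_{\mathfrak{h}}$ is nilpotent, $\varphi(T)|_{\mathfrak{h}}=\mathbf{1}+\tfrac{1}{2}T+\tfrac{1}{6}T^{2}+\cdots$ is $\mathbf{1}+\text{nilpotent}$, hence an automorphism of $\mathfrak{h}$. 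Thus $\alpha\circ T\circ\varphi(T)=0$ on $\mathfrak{h}$ forces $\alpha\circ T=0$ on $\mathfrak{h}$, i.e.\ $\alpha([X,Y])=0$ for all $Y\in\mathfrak{h}$.

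Finally, set $\mathfrak{h}':=\mathfrak{h}+\mathbb{R}X$. By (i), $\mathfrak{h}'$ is a subalgebra, and by the polarizer hypothesis together with what we just proved, $\alpha([\mathfrak{h}',\mathfrak{h}'])=0$. Since $\mathfrak{h}$ is a \emph{maximal} subordinate subalgebra for $\alpha$ containing itself, we must have $\mathfrak{h}'=\mathfrak{h}$, so $X\in\mathfrak{h}$ and $x=\exp X\in H$. The main subtlety I anticipate is the clean passage from the multiplicative condition $\overline{\alpha}\circ I_{x}=\overline{\alpha}$ to the linear condition $\alpha\circ\operatorname{Ad}(x)=\alpha$ on $\mathfrak{h}$; the $\mathbb{Z}/\mathbb{R}$-continuity trick above handles it cleanly and avoids any need to invoke BCH term-by-term, which is why I would organize the proof around the operator identity $e^{T}-\mathbf{1}=T\,\varphi(T)$ rather than a recursive argument on iterated brackets.
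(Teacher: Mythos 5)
Your proof is correct and self-contained, which is arguably an improvement in context: the paper simply refers the reader to Lemma 5.1 of L.~Richardson's paper (``The proof holds verbatim'') without reproducing an argument. Your route is the standard one, and almost certainly the one Richardson takes: extract the two conditions (normalization of $H$, invariance of $\overline{\alpha}$ under $I_x$), linearize them to $\operatorname{ad}(X)\mathfrak{h}\subset\mathfrak{h}$ and $\alpha([X,\mathfrak{h}])=0$, and then contradict maximality of the polarizer by adjoining $\mathbb{R}X$. Two small points worth tightening. First, the passage from $\operatorname{Ad}(x)\mathfrak{h}=\mathfrak{h}$ to $\operatorname{ad}(X)\mathfrak{h}\subset\mathfrak{h}$ is not automatic for general Lie groups; it holds here because $\operatorname{Ad}(x)$ is unipotent (nilpotency of $\mathfrak{g}$), so $\operatorname{ad}(X)=\log\operatorname{Ad}(x)$ is a \emph{polynomial} in $\operatorname{Ad}(x)-\mathbf{1}$, which manifestly preserves $\mathfrak{h}$. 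You gesture at this with ``simply connected nilpotent,'' but the reader benefits from seeing the polynomial-logarithm reason stated. Second, note that your argument uses only maximality of $\mathfrak{h}$ as a subordinate subalgebra, not the ``special'' construction; this is fine — the lemma is indeed true for any maximal polarizer — but it is worth remarking that the speciality hypothesis appearing in the surrounding text plays no role in this particular lemma, to avoid a reader wondering whether something was dropped.
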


\begin{proof}
See \cite[Section 5]{Ri}, Lemma 5.1. The proof holds verbatim.
\end{proof}

As a result, we may identify the $G$-orbit of $\left( \overline{\alpha }%
,H\right) $ with $H\diagdown G$.

\begin{lemma}
If $\left( \overline{\alpha },H\right) $ is an $\varepsilon $-integral point
and if $\gamma _{0}\in \Gamma $, then $\left( \overline{\alpha },H\right)
\cdot \gamma _{0}$ is an $\varepsilon $-integral point.
\end{lemma}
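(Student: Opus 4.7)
The proof will be a direct chase of the definitions, using only that $\varepsilon$ takes values in the abelian group $\{\pm 1\}$. The plan is to unpack what it means for the translate $(\overline{\alpha},H)\cdot\gamma_0=(\overline{\alpha}^{\gamma_0},{}^{\gamma_0^{-1}}H)$ to be $\varepsilon$-integral, and then reduce the required identity to the $\varepsilon$-integrality hypothesis for $(\overline{\alpha},H)$.

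First I would fix an arbitrary element $\gamma \in \Gamma \cap {}^{\gamma_0^{-1}}H = \Gamma \cap \gamma_0^{-1}H\gamma_0$ and, following the remark just before the statement, note that the $\varepsilon$-integrality condition at the translated point $(\overline{\alpha},H)\cdot\gamma_0$ asserts exactly that $\overline{\alpha}^{\gamma_0}(\gamma)=\varepsilon(\gamma)$ for every such $\gamma$. By the definition $\overline{\alpha}^{\gamma_0}=\overline{\alpha}\circ I_{\gamma_0}$, this rewrites as $\overline{\alpha}(\gamma_0\gamma\gamma_0^{-1})=\varepsilon(\gamma)$.

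Next I would observe that the element $\gamma':=\gamma_0\gamma\gamma_0^{-1}$ lies in $\Gamma\cap H$: it lies in $H$ because $\gamma\in\gamma_0^{-1}H\gamma_0$, and it lies in $\Gamma$ because $\Gamma$ is a subgroup and $\gamma_0,\gamma\in\Gamma$. Thus the hypothesis that $(\overline{\alpha},H)$ is an $\varepsilon$-integral point applies directly to $\gamma'$, giving
\begin{equation*}
\overline{\alpha}(\gamma_0\gamma\gamma_0^{-1})=\overline{\alpha}(\gamma')=\varepsilon(\gamma').
\end{equation*}

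Finally, I would close the argument by using that $\varepsilon:\Gamma\to\{\pm 1\}$ is a homomorphism into an abelian group, so
\begin{equation*}
\varepsilon(\gamma')=\varepsilon(\gamma_0)\varepsilon(\gamma)\varepsilon(\gamma_0)^{-1}=\varepsilon(\gamma),
\end{equation*}
which yields $\overline{\alpha}^{\gamma_0}(\gamma)=\varepsilon(\gamma)$, as required. There is no real obstacle here; the only subtlety worth highlighting is that the cancellation in the last step works precisely because the image of $\varepsilon$ is abelian, a property that would fail for a general spin-type cocycle but is available in this setting.
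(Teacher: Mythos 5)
Your proof is correct and is essentially the same as the paper's: both rely on the observation that conjugation by $\gamma_0$ identifies $\Gamma\cap{}^{\gamma_0^{-1}}H$ with $\Gamma\cap H$ (the paper phrases this as ${}^{\gamma_0^{-1}}(\Gamma\cap H)=\Gamma\cap{}^{\gamma_0^{-1}}H$, you instead start from a generic $\gamma$ in the conjugated intersection and show $\gamma_0\gamma\gamma_0^{-1}\in\Gamma\cap H$), and both use that $\varepsilon$ is conjugation-invariant because it maps into the abelian group $\{\pm1\}$. The only cosmetic difference is the direction of the parametrization, and your explicit remark about where the abelian image is used is a nice touch.
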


\begin{proof}
Consider $\left( \overline{\alpha },H\right) \cdot \gamma _{0}$. Note that $%
\Gamma \cap ^{\gamma _{0}^{-1}}H=^{\gamma _{0}^{-1}}\left( \Gamma \cap
H\right) $ since $^{\gamma _{0}^{-1}}\Gamma =\Gamma $. But if $\gamma
_{0}^{-1}\gamma \gamma _{0}\in \Gamma \cap ^{\gamma _{0}^{-1}}H$, then $%
\overline{\alpha }^{\gamma _{0}}\left( \gamma _{0}^{-1}\gamma \gamma
_{0}\right) =\overline{\alpha }\left( \gamma \right) =\varepsilon \left(
\gamma \right) =\varepsilon \left( \gamma _{0}^{-1}\gamma \gamma _{0}\right) 
$ for every $\gamma \in \Gamma \cap H$. Also, $^{\gamma _{0}^{-1}}\left(
\Gamma \cap H\right) $ is uniform in $^{\gamma _{0}^{-1}}H$.
\end{proof}

Let $\left( H\diagdown G\right) _{\varepsilon }$ be the set of $\varepsilon $%
-integral points in $H\diagdown G$. As a result of the second Lemma, $\Gamma 
$ acts on $\left( H\diagdown G\right) _{\varepsilon }$.

We now prove Theorem \ref{multiplicityAndOccurInL2EpsTheorem}.

\begin{proof}
The proof of \cite[Theorem 5.3]{Ri} goes through, replacing the reference to
Lemma 2.6 with Lemma \ref{ReductionLemma} and the reference to Lemma 2.7
with Theorem \ref{MooreAlgTheorem}, and replacing the phrase
\textquotedblleft integral point\textquotedblright\ with \textquotedblleft $%
\varepsilon $-integral point\textquotedblright .
\end{proof}



\begin{thebibliography}{99}
\bibitem{Am-Ba} Ammann, B., and B\"{a}r, C., \emph{The Dirac operator on
nilmanifolds and collapsing circle bundles}, Ann. Global Anal. Geom. \textbf{%
16} (1998), no. 3, 221-253.

\bibitem{AmHuMo} Ammann, B., Humbert, E., and Morel, B. \emph{Mass
endomorphism and spinorial Yamabe type problems on conformally flat manifolds%
}, Comm. Anal. Geom. \textbf{14} (2006), no. 1, 163--182.

\bibitem{AmSp} Ammann, B., Sprouse, C., \emph{Manifolds with small Dirac
eigenvalues are nilmanifolds}, Ann. Global Anal. Geom. \textbf{31} (2007),
no. 4, 409--425.

\bibitem{ADS} Atiyah, M. F., Donnelly, H., and Singer, I. M., \emph{Eta
invariants, signature defects of cusps, and values of L-functions}, Ann. of
Math. (2) \textbf{118} (1983), no. 1, 131--177.

\bibitem{APS1} Atiyah, M. F., Patodi, V. K., and Singer, I. M., \emph{%
Spectral asymmetry and Riemannian geometry. I}, Math. Proc. Camb. Phil. Soc. 
\textbf{77} (1975), 43--69.

\bibitem{APS2} Atiyah, M. F., Patodi, V. K., and Singer, I. M., \emph{%
Spectral asymmetry and Riemannian geometry. II}, Math. Proc. Camb. Phil.
Soc. \textbf{78} (1975), 405--432.

\bibitem{APS3} Atiyah, M. F., Patodi, V. K., and Singer, I. M., \emph{%
Spectral asymmetry and Riemannian geometry. III}, Math. Proc. Cambridge
Philos. Soc. \textbf{79} (1976), no. 1, 71--99.

\bibitem{Ba} B\"{a}r, C., \emph{Das Spektrum von Dirac-Operatoren},
Dissertation, Rheinische Friedrich-Wilhelms-Universit\"{a}t Bonn, Bonn,
1990. Bonner Mathematische Schriften, \textbf{217}, Universit\"{a}t Bonn,
Mathematisches Institut, Bonn, 1991.

\bibitem{Be-G-V} Berline, N., Getzler, E., and Vergne, M., \emph{Heat
Kernels and Dirac operators}, Grundlehren der mathematischen Wissenschaften 
\textbf{298}, Springer-Verlag, Berlin, 1992.

\bibitem{BoGi} Botvinnik, B. and P. B. Gilkey, \emph{The eta invariant and
the equivariant spin bordism of spherical space form }$2$\emph{\ groups},
New developments in differential geometry (Debrecen, 1994), Math. Appl. 
\textbf{350} (1996), 213--223.

\bibitem{BransGi} Branson, Thomas P. and Gilkey, Peter B., \emph{Residues of
the eta function for an operator of Dirac type}, J. Funct. Anal. \textbf{108}
(1992), no. 1, 47--87.

\bibitem{BransGi2} Branson, Thomas P., and Gilkey, Peter B. \emph{Residues
of the eta function for an operator of Dirac type with local boundary
conditions}, Differential Geom. Appl. \textbf{2} (1992), no. 3, 249--267.

\bibitem{BKR1} Br\"{u}ning, J., Kamber, F. W., and Richardson, K., \emph{The
equivariant index theorem for transversally elliptic operators and the basic
index theorem for Riemannian foliations}, Electronic Research Announcements
in Mathematical Sciences \textbf{17} (2010), 138--154.

\bibitem{BKR2} Br\"{u}ning, J., Kamber, F. W., and Richardson, K., \emph{\
The eta invariant and equivariant index of transversally elliptic operators}%
, preprint arXiv:1005.3845v1 [math.DG].

\bibitem{BKR3} Br\"{u}ning, J., Kamber, F. W., and Richardson, K., \emph{%
Index theory for basic Dirac operators on Riemannian foliations},
Contemporary Mathematics \textbf{546} (2011), 39--81.

\bibitem{Cas} Castro-Villarreal, Pavel, \emph{Brownian motion meets Riemann
curvature}, J. Stat. Mech. (2010) P08006
doi:10.1088/1742-5468/2010/08/P08006.

\bibitem{CoGr} Corwin, Lawrence J., and Greenleaf, Frederick P., \emph{%
Representations of nilpotent Lie groups and their applications. Part I.
Basic theory and examples. }Cambridge Studies in Advanced Mathematics, 
\textbf{18}, Cambridge: Cambridge University Press, 1990.

\bibitem{DenSing} Deninger, C. and Singhof, W., \emph{The e-invariant and
the spectrum of the Laplacian for compact nilmanifolds covered by Heisenberg
groups }, Invent. Math. \textbf{78} (1984), pp. 101-112.

\bibitem{Do} Donnelly, H., Eta invariants for $G$-spaces, Indiana Univ.
Math. J. \textbf{27} (1978), no. 6, 889-918.

\bibitem{Eb} P. Eberlein, \emph{Geometry of 2-step nilpotent groups with a
left invariant metric}, Ann. scient. \'{E}c. Norm. Sup., 4$^{\mathrm{e}}$ s%
\'{e}rie, \textbf{27} (1994), pp. 611-660.

\bibitem{Gi} Gilkey, P. B., \emph{The eta invariant and the }$K$\emph{%
-theory of odd-dimensional spherical space forms}, Invent. Math. \textbf{76}
(1984), no. 3, 421--453.

\bibitem{Gilk} Gilkey, P. B., \emph{Invariance theory, the heat equation,
and the Atiyah-Singer index theorem}, second edition, Studies in Advanced
Mathematics, CRC Press, Boca Raton, FL, 1995.

\bibitem{Go} Goette, S., \emph{Eta invariants of homogeneous spaces}, Pure
Appl. Math. Q. \textbf{5} (2009), no. 3, part 2, 915--946.

\bibitem{Gord} Gordon, C.\ S., \emph{Riemannian manifolds isospectral on
functions but not on }$1$\emph{-forms}, J. Diff. Geom. \textbf{24} (1986),
no. 1, 79--96.

\bibitem{GGt} Gordon, C. S., Gornet, R., \emph{Spectral geometry on
nilmanifolds}, in Trends. Math., Progress in inverse spectral geometry, Birkh%
\"{a}user, Basel, 1997, 23--49.

\bibitem{GW1} Gordon, C. S., Wilson, E. N., \emph{The spectrum of the
Laplacian on Riemannian Heisenberg manifolds}, Michigan Math. J. \textbf{33}
(1986), no. 2, 253--271.

\bibitem{Hi} Hitchin, N. J., \emph{Einstein metrics and the eta-invariant},
Boll. Un. Mat. Ital. B (7) \textbf{11} (1997), no. 2, suppl., 95--105.

\bibitem{Kir1} Kirillov, A. A., \emph{Unitary representations of nilpotent
Lie groups}, Uspehi Mat. Nauk\textbf{\ 17} (1962) no. 4 (106), 57--110.

\bibitem{Kir2} Kirillov, A. A., \emph{Lectures on the orbit method},
Graduate Studies in Mathematics, \textbf{64}. American Mathematical Society,
Providence, RI, 2004.

\bibitem{LawM} Lawson, H. Blaine, Jr., and Michelsohn, Marie-Louise, \emph{%
Spin geometry}, Princeton Mathematical Series, \textbf{38}, Princeton
University Press, Princeton, NJ, 1989.

\bibitem{LMP} Loya, P., Moroianu, S., and Park, J., \emph{Adiabatic limit of
the eta invariant over cofinite quotients of }$\emph{PSL(2,}\mathbb{R}\emph{)%
}$, Compos. Math. \textbf{144} (2008), no. 6, 1593--1616.

\bibitem{Mackey} Mackey, G. W., \emph{Induced representations of locally
compact groups, I}, Ann. of Math. \textbf{55}(1952), 101-139.

\bibitem{MiaPod} Miatello, Roberto J. and Podest\'{a}, Ricardo A., \emph{Eta
invariants and class numbers}, Pure Appl. Math. Q. \textbf{5} (2009), no. 2,
729--753.

\bibitem{MiaPod2} Miatello, Roberto J. and Podest\'{a}, Ricardo A., \emph{%
The }$\eta $\emph{\ invariant of the Atiyah-Patodi-Singer operator on
compact flat manifolds}. Ann. Global Anal. Geom. \textbf{42 }(2012), no. 2,
171--194.

\bibitem{Moo} Moore, C. C., \emph{Decomposition of unitary representations
defined by discrete subgroups of nilpotent groups}, Ann. of Math. \textbf{82}%
, no. 1 (1965), pp. 146-182.

\bibitem{Pe} Pesce, H., \emph{Calcul du spectre d'une nilvari\'{e}t\'{e} de
rang deux et applications}, Trans. Amer. Math. Soc. \textbf{339}(1993), no.
1, 433--461.

\bibitem{Pet} Petrow, A. S., \emph{Einstein-R\"{a}ume}, Akademie-Verlag,
Berlin, 1964.

\bibitem{Ri} Richardson, L. F., \emph{Decomposition of the L}$^{2}$\emph{%
-space of a general compact nilmanifold}, Amer. J. Math. \textbf{93} (1971),
173--190.

\bibitem{Roe} Roe, John , \emph{Elliptic operators, topology and asymptotic
methods.} Second edition. Pitman Research Notes in Mathematics Series, 395.
Longman, Harlow, 1998.

\bibitem{W1} Witten, E., \emph{Quantum field theory and the Jones polynomial}%
, Comm. Math. Phys. \textbf{121} (1989), no. 3, 351--399.

\bibitem{Zhang} Zhang, Weiping, \emph{Circle bundles, adiabatic limits of }$%
\eta $\emph{-invariants and Rokhlin congruences}, Ann. Inst. Fourier
(Grenoble) \textbf{44} (1994), no. 1, 249--270.


\bibitem{Stone} Stone, M., Howland, P., and Kim, J.Y., \emph{Gravitational
Landau Levels and the Chiral Anomaly}, IOP Publishing, \textbf{57} (2024), no. 11, 115401.

\end{thebibliography}
\end{document}